\newtheorem{theorem}{Theorem}
\newtheorem{lemma}{Lemma}
\theoremstyle{definition}
\newtheorem{problem}{Problem}
\theoremstyle{remark}
\newtheorem{remark}{Remark}
\newtheorem{corollary}{Corollary}
\newtheorem{exercise}{Exercise}
\numberwithin{equation}{section}
\newcommand\restr[2]{{% we make the whole thing an ordinary symbol
		\left.\kern-\nulldelimiterspace % automatically resize the bar with \right
		#1 % the function
		\vphantom{\big|} % pretend it's a little taller at normal size
		\right|_{#2} % this is the delimiter
}}
\numberwithin{theorem}{subsection}
\numberwithin{lemma}{subsection}
\numberwithin{remark}{subsection}
\numberwithin{exercise}{subsection}
\numberwithin{corollary}{subsection}
\title[Inertial manifolds and foliations]
{Inertial manifolds and foliations for asymptotically compact cocycles in Banach spaces}
\author{Mikhail Anikushin}
\address{Department of Applied Cybernetics, Faculty of Mathematics and Mechanics, St. Petersburg University, 28 Universitetskiy prospekt, Peterhof, 198504, Russia}
\email{demolishka@gmail.com}
\begin{document}

%    Information for first author

\date{\today}
%    \thanks will become a 1st page footnote.
%\thanks{}

%    Information for second author

%    General info
\subjclass[2010]{35B42, 37L25, 37L45, 37L15}

\keywords{Inertial manifolds, Invariant manifolds, Center manifolds, Frequency theorem, Parabolic equations, Delay equations, Spatial averaging, Spectral gap condition, Stability, Convergence, Poincar\'{e}-Bendixson theory}

\begin{abstract}
We study asymptotically compact nonautonomous dynamical systems given by abstract cocycles in Banach spaces. Our main assumptions are given by a squeezing property in a quadratic cone field (given by a family of indefinite quadratic Lyapunov-like functionals) and asymptotic compactness. Under such conditions it is possible to reconstruct foliations as in the theory of normally hyperbolic manifolds. Our approach allows to unify many ``practical'' theories of local and nonlocal invariant manifolds, such as stable/unstable/center manifolds and inertial manifolds, and, moreover, it often leads to optimal conditions for their existence. 

We give applications for semilinear parabolic equations and neutral delay equations, where the Frequency Theorem is used to construct constant cone fields.

%The most prospective way to construct such cone fields is concerned with solving infinite-horizon linear-quadratic optimization problems posed for variational systems, where the quadratic performance functional is indefinite and even may be unbounded. Studying the latter problem gives rise to a study of linear Hamiltonian systems associated with the optimization problem. On this way various versions of the Frequency Theorem arise to provide sufficient (and in a sense necessary) conditions.

\end{abstract}

\maketitle

\tableofcontents

%% The correct journal style for \specialsection is all uppercase; a known bug
%% in amsart.cls prevents this, so input must be uppercase until it is fixed.
%\specialsection*{This is a Special Section Head}
% !TeX spellcheck = russian_english
\section{Introduction}
\subsection{Preliminary historical background and motivation}
The concept of an inertial manifold provides the most desired type of finite-dimensional asymptotic behavior for infinite-dimensional dynamical systems. These manifolds, being finite-dimensional invariant Lipschitz or even differentiable manifolds, attract all trajectories at an exponential rate by trajectories lying on the manifold. Thus, any trajectory has a ``fast part'', which exponentially decays, and a ``slow part'', which corresponds to a trajectory from the inertial manifold. If a differential equation is given, then the dynamics on the inertial manifold can be described by a system of ordinary differential equations (the so-called inertial form). Usually, inertial manifolds are normally hyperbolic that allows us to expect them to be stable under small perturbations of the system coefficients and this may be essential, for example, in numerical simulations.

To the best of our knowledge, the closest (to the concept of an inertial manifold) idea was firstly proposed by E.~Hopf in \cite{Hopf1948} from consideration of several examples arising in hydrodynamics. The term ``inertial manifold'' was introduced in the seminal paper of C.~Foias, G.R.~Sell and R.~Temam \cite{FoiasSellTemam1988} and related to the inertial scale from the Kolmogorov theory of hydrodynamic turbulence.

This work is devoted to the construction of inertial manifolds for nonautonomous dynamical systems given by abstract asymptotically compact cocycles in Banach spaces. Our main assumption is based on the squeezing property w.~r.~t. a family of indefinite quadratic Lyapunov functionals (representing a field of quadratic cones). We will show that many existing ``practical'' theories of invariant manifolds (especially, inertial manifolds) can be embedded into this context. This work is an extended and reworked version of the main results from author's PhD thesis defended\footnote{See \url{https://disser.spbu.ru/zashchita-uchenoj-stepeni-spbgu/550-anikushin-mikhail-mikhajlovich.html}.} at St.~Petersburg University \cite{Anikushin2021Thesis}.

Historically, our basic aim was to unite the frequency-domain approach presented in the works of R.A.~Smith (see further) and the Frequency Theorem of V.A.~Yakubovich and A.L.~Likhtarnikov \cite{Likhtarnikov1977,Likhtarnikov1976}, including its recent developments due to A.V.~Proskurnikov \cite{Proskurnikov2015} and the present author \cite{Anikushin2020FreqDelay,Anikushin2020FreqParab}. More specifically, our interest was concerned with the results of R.A.~Smith on developments of the Poincar\'{e}-Bendixson theory \cite{Smith1994PB2,Smith1994PB1,Smith1992,Smith1987OrbStab,Smith1980}, Poincar\'{e} index theorem \cite{Smith1981IndexTh1,Smith1984PBindex}, Poincar\'{e} isolation theorem \cite{Smith1984IsolatedOrbits} and convergence theorems for periodic equations \cite{Smith1986Massera,Smith1990ConvDelay}. As we will see, his frequency conditions guarantee the existence of two-dimensional inertial (or slow) manifolds that makes his main results geometrically clear, elegant and being applicable to a broader context. As a byproduct, we establish connections with many other works (to be discussed below).

It should be noted that R.A.~Smith used quadratic functionals in his works on ODEs (for example, \cite{Smith1987OrbStab,Smith1986Massera}) and  even in the earliest paper concerned with delay equations \cite{Smith1980}, but he abandoned such an approach in future works for delay and parabolic equations \cite{Smith1990ConvDelay,Smith1992,Smith1994PB1,Smith1994PB2} probably being unable to provide adequate conditions for their existence. He also seemed to be unfamiliar with the Frequency Theorem, which provides such conditions.

\subsection{Dialectics between the Lyapunov-Perron and Hadamard methodologies as a historical perspective}
Before we proceed further, let us mention that there are two known approaches for the construction of invariant manifolds. Namely, the so-called Lyapunov-Perron method, in which one constructs the invariant manifold as a fixed point of an integral operator derived from the variation of constants formula. The other one is the so-called Hadamard method\footnote{Named after J.~Hadamard, who used this method to construct the unstable manifold near a hyperbolic fixed point of a diffeomorphism of the plane \cite{Hadamard1901}.} or the Graph Transform technique, in which one iterates an in some sense admissible submanifold under the dynamics and expect the iterations to converge to the desired invariant manifold. Both methods allow to show that a certain linear structure (semidichotomy, dichotomy or trichotomy) persists under small nonlinear perturbations. Besides the standard constructions of stable/unstable/center manifolds in a neighborhood of a stationary point, these methods are widely used in the hyperbolic theory (see L.~Barreira and Ya.B.~Pesin \cite{BarreiraPesin2007} and references therein) or in the theory of normally hyperbolic invariant manifolds (see P.W.~Bates, K.~Lu and C.~Zeng \cite{BatesLuZeng2000,BatesLuZeng1998} references therein).

However, the application of both methods in the theory of inertial manifolds demands obtaining non-local and in some sense optimal conditions for the persistence of linear structures. At this place (as time has shown, nontrivial) modifications of the methods are required. It turned out that for standard problems, where the class of perturbations is given by nonlinearities with a fixed Lipschitz constant, the Fourier transform plays a crucial role. 

For the Lyapunov-Perron method, the Fourier transform was firstly applied by V.M.~Popov \cite{Popov1961} to obtain conditions for the absolute stability and since then this method has been known for mathematical engineers as the method of \textit{a priori integral estimates}. This method was developed by R.A.Smith in his works on delay and parabolic equations \cite{Smith1994PB2,Smith1994PB1,Smith1992,Smith1990ConvDelay} as an alternative to his approach via quadratic functionals. In the theory of inertial manifolds such an approach is mostly known due to M.~Miklav\v{c}i\v{c} \cite{Miclavcic1991}, who was motivated by the perturbation theory from functional analysis.

On the other hand, the work of V.M.~Popov opened the problem on the equivalence between the method of a priori integral estimates and the method of Lyapunov functionals (for the problems studied by V.M.~Popov such functionals are known as the Lur'e-Postnikov functionals) in the theory of absolute stability. This problem stimulated the appearance of the Frequency Theorem and the theorem on the losslessness of the $S$-procedure proved by V.A.~Yakubovich \cite{Gelig1978}, who thus obtained a positive solution to the problem. In this regard, the papers of R.A.~Smith, where quadratic functionals are used, represent a proper modification of the Hadamard method\footnote{It seems that R.A.~Smith reinvented the Hadamard method in accordance with the famous quote by D.V.~Anosov: ``Every five years or so, if not more often, someone ``discovers'' the theorem of Hadamard and Perron, proving it by Hadamard's method of proof or by Perron's''.} for purposes of inertial manifolds. Although, he was unfamiliar or unable to prove\footnote{Apparently, he wondered about the connections of his works and works on inertial manifolds only in his last work \cite{Smith1994PB2}.} most properties of inertial manifolds (such as the exponential tracking, $C^1$-differentiability, normal hyperbolicity or the existence $C^{1}$-foliations), while only implicitly utilizing some of their consequences, this provide an essential contribution to the theory developed in the present paper.

Thus, one may ask for an equivalence between the Lyapunov-Perron and Hadamard methods in the nonlocal context. For example, are there quadratic functionals under the conditions used by R.A.~Smith \cite{Smith1992,Smith1994PB2} or M.~Miclav\v{c}i\v{c} \cite{Miclavcic1991}, which allow to apply the Hadamard method? From the extensions of the Frequency Theorem presented in our works \cite{Anikushin2020FreqDelay,Anikushin2020FreqParab} and results of the present paper the answer is positive for Smith's frequency inequality and the ``quadratic'' version of M.~Miclav\v{c}i\v{c}'s conditions (see \cite{Anikushin2020FreqParab} for a discussion).

Despite both methods are equivalent at least in the case of perturbations describing by quadratic constraints, deriving optimal conditions from the Lyapunov-Perron method was a challenging task for many authors. For example, even in the case of semilinear parabolic problems with a self-adjoint linear part, first conditions for the existence of inertial manifolds were non-optimal and ineffective as a consequence of rough estimates. This includes the papers of C.~Foias, G.R.~Sell and R.~Temam\cite{FoiasSellTemam1988}, R.~Man\'{e} \cite{Mane1977}, G.R.~Sell and Y.~You \cite{SellYou1992}, R.~Rosa and R.~Temam \cite{RosaTemam1996}, J.C.~Robinson \cite{Robinson1996AsympComl} and many others. It took several years before optimal conditions (known as the \textit{Spectral Gap Condition}) were established by M.~Miclav\v{c}i\v{c} \cite{Miclavcic1991} (via the Lyapunov-Perron method) and A.V.~Romanov \cite{Romanov1994} (via the Hadamard method). See Subsection \ref{SUBSEC: SemilinearParabolicEquations} for further discussions on semilinear parabolic equations. We also refer to the survey of S.~Zelik \cite{Zelik2014} for more discussions in the context finite-dimensional reduction for semilinear parabolic equations.

In the case of delay equations, the list can be extended by the works of Yu.A.~Ryabov \cite{Ryabov1967}, R.D.~Driver \cite{Driver1968SmallDelays}, C.~Chicone \cite{Chicone2003}, S.~Chen and J.~Shen \cite{ChenShen2021}, L.~Boutet~de~Monvel, I.D.~Chueshov and A.V.~Rezounenko \cite{BoutetChueshovRezounenko1998}. See Subsection \ref{SUBSEC: DelayEquations} for further discussions on delay equations. See also our papers \cite{AnikushinNDE2022,Anikushin2020Semigroups,Anikushin2021DiffJ} for more discussions.

In this regard, the Frequency Theorem not only immediately leads to optimal conditions even for non-self-adjoint linear parts, but also states in what sense such conditions are optimal, thanks to the theorem on the losslessliness of $S$-procedure (see \cite{Anikushin2020FreqParab} for a discussion). In addition, this approach also separates geometry and analytics, clearly highlighting the properties of linear and nonlinear problems required for verification. Note that some authors (for example, A.~Kostianko and S.~Zelik \cite{KostiankoZelik2019Kwak}; V.V.~Chepyzhov, A.~Kostianko and S.~Zelik \cite{ChepyzhovKostiankoZelik2019}) understand the optimality in a stronger sense, but, in fact, what they prove is that the frequency inequality is optimal in that stronger sense.

We also note the attempt of N.~Koksch and S.~Siegmund \cite{KokschSiegmund2002} to generalize inertial manifolds theory for nonautonomous dynamical systems by developing the Hadamard method. However, verification of their abstract assumptions even in the case of semilinear parabolic problems with self-adjoint linear parts lead to non-optimal conditions. Moreover, what seems to be more restrictive, is the considered class of non-quadratic cones, which is given by a naive generalization of the self-adjoint case. Since optimal conditions are derived through the Frequency Theorem and quadratic cones, this may possibly explain why their generalization did not succeed in applications. However, it is interesting to obtain a proper generalization of our results for general cones and establish their existence under the non-quadratic conditions of M.~Miclav\v{c}i\v{c} \cite{Miclavcic1991} that will require a ``non-quadratic'' version of the Frequency Theorem.

It is shown in the works of A.~Kostianko and S.~Zelik \cite{KostiankoZelik2015}, A.~Kostianko et. al \cite{KostiankoZelikSA2020} that the \textit{Spatial Averaging Principle}, which was proposed by J.~Mallet-Paret and G.R.~Sell \cite{MalletParetSell1988IM} to construct inertial manifolds for certain scalar parabolic problems in 2D and 3D domains in the case when the Spectral Gap Condition is not satisfied, also lead to a certain squeezing property w.~r.~t. a quadratic functional. In this case the exponent depends on the point and the resulting inertial manifold may be nonuniformly normally hyperbolic (see Subsection \ref{EQ: SPEspatialaveraginge}). Examples of nonuniformly hyperbolic inertial manifolds are constructed by A.V.~Romanov \cite{Romanov2016}.

In \cite{Anikushin2020FreqParab} we posed a question concerned with the connection between the Spatial Averaging Principle and the Frequency Theorem for nonstationary problems. In finite dimensions, R.~Fabbri, R.~Johnson and C. N\'{u}\~{n}ez \cite{Fabbri2003FreqTh} obtained a generalization of the Frequency Theorem for nonstationary problems, which can be used to construct quadratic cone fields. For such systems an analog of the frequency inequality is that a certain Hamiltonian system associated with the optimization problem studied in the Frequency Theorem admits an exponential dichotomy and a nonoscillation condition. For dissipative systems, the theorem can be used to obtain the existence of proper spectral gaps in the Sacker-Sell spectrum of the linearization cocycle over the global attractor. Although, the existence of a proper gap is not sufficient to embed the global attractor into a locally invariant manifold and there exist topological obstacles concerned with nontrivial intersections of strongly stable manifolds (see C.~Bonatti and S.~Crovisier \cite{BonattiCrovisier2016}). We shall develop with this approach in more details somewhere.

\subsection{Specificity of our approach}

As we just noted, the general approach for the construction of inertial manifolds as in the work of C.~Bonatti and S.~Crovisier \cite{BonattiCrovisier2016}, which decomposes the problem into the studying of spectral properties and the elimination of topological obstacles, is still awaiting developments that will make it applicable to differential equations. Moreover, such an approach does not allow to include some of the results discussed above (see also Subsection \ref{SEC: AbstractApplic}) since they are highly based on the trivial topology of the inertial manifold constructed as a Lipschitz graph over a spectral subspace. In addition, we find it more promising for studying concrete models to combine constant cone fields constructed by the Frequency Theorem with a proper change of variables simplifying the geometry.

For our purposes, it is sufficient to work directly with a nonlinear cocycle in a given Banach space. This significantly simplifies the proofs compared with the ones used in the theory of normally hyperbolic manifolds \cite{BatesLuZeng2000,BatesLuZeng1998}. Namely, our main aim is to construct what we call horizontal and vertical leaves, which are just the equivalence classes corresponding to the ordering of trajectories w.~r.~t. a quadratic cone field. The fact that such an ordering is equivalent to an asymptotic relation between the trajectories follows almost immediately from the squeezing property in the quadratic cone field. We also note that, despite it was known since the beginning that inertial manifolds are normally hyperbolic, almost no one tried to construct the foliations around it. We only mention here the work of R.~Rosa and R.~Temam \cite{RosaTemam1996} as a rare exception. It is worth noting that it is also more convenient to study the robustness of inertial manifolds w.~r.~t. perturbations under the method of construction. Thus, our results become independent from the abstract theories of normally hyperbolic manifolds. This is especially important considering the fact that we do not know such theories for abstract nonuniformly hyperbolic and noncompact invariant manifolds in Banach spaces. We restrict ourselves by proving only the $C^{1}$-differentiability of leaves, which is sufficient for the purposes of inertial manifolds (and usually, this is the maximum that we can achieve).

This paper is organized as follows.

In Section \ref{SUBSEC: DefinitionsAndHypotheses} we expound and discuss our main assumptions. Namely, we give definitions of semicocycles, cocycles and associated skew-product semiflows (Subsection \ref{SUBSE: DefinitionCocycle}); the definition of the squeezing property in a quadratic cone field (Subsection \ref{SUBSEC: SqueezingQuadraticConeFields}); the definition of asymptotic compactness and Lipschitzity (Subsection \ref{SUBSEC: AsymptoticCompactnessLipschitzity}); and we add few remarks concerned with the introduced notions (Subsection \ref{SUBSEC: Motivation}). 

Section \ref{SEC: ConstructionIM} contains our main results. Namely, the construction of horizontal leaves (Subsection \ref{SUBSEC: HorizontalLeaves}), vertical leaves (Subsection \ref{SUBSEC: VerticalFoliation}), their $C^{1}$-differentiability (Subsection \ref{SUBSEC: DifferentiabilityLeaves}) and Subsection \ref{SUBSEC: AdmissibleProjectors} is devoted to remarks concerned with admissible projectors. 

In Section \ref{SEC: InertialManifolds} the developed theory is applied to construct inertial (Subsection \ref{SUBSEC: AmenableSets}) and slow (Subsection \ref{SUBSEC: SlowManifolds}) manifolds. We establish the exponential tracking (Subsection \ref{SUBSEC: ExponentialTracking}), the normal hyperbolicity (Subsection \ref{SUBSEC: NormalHyperbolicity}), invertibility and stability properties of the reduced system (Subsection \ref{SUBSEC: StabilityReducedDynamics}) and study robustness of inertial manifolds (Subsection \ref{SUBSEC: RobustnessManifolds}). 

Section \ref{SEC: AbstractApplic} contains applications for the problems concerned with low-dimensional dynamics. Namely, for the problems of exponential stability and almost automorphy in almost periodic cocycles (Subsection \ref{SUBSEC: ExpStabAndAAAP}), the Lyapunov stability convergence theorems in periodic cocycles (Subsection \ref{SUBSEC: ConvergencePerCoc}), the Poincar\'{e}-Bendixson theory (Subsection \ref{SUBSEC: PoincareBendixsonTheory}). 

In Section \ref{SEC: Applications} we give applications for ODEs (Subsection \ref{SUBSEC: ODEs}), neutral delay equations (Subsection \ref{SUBSEC: DelayEquations}), semilinear parabolic equations (Subsection \ref{SUBSEC: SemilinearParabolicEquations}).
\section{Preliminaries: definitions, main hypotheses and auxiliary lemmas}
\label{SUBSEC: DefinitionsAndHypotheses}

\subsection{Semicocycles, cocycles and skew-product semiflows}
\label{SUBSE: DefinitionCocycle}
Consider a complete metric space $\mathcal{X}$. Let $\mathbb{T}$ be either $\mathbb{R}_{+}$ or $\mathbb{R}$ and consider a one-parameter family of maps $\varphi^{t} \colon \mathcal{X} \to \mathcal{X}$, where $t \in \mathbb{T}$, such that
\begin{enumerate}
	\item[1.] $\varphi^{0}(x) = x$ and $\varphi^{t+s}(x)=\varphi^{t}(\varphi^{s}(x))$ for all $t,s \in \mathbb{T}$ and $x \in \mathcal{X}$.
	\item[2.] The map $(t,x) \mapsto \varphi^{t}(x)$ is continuous as a map from $\mathbb{T} \times \mathcal{X}$ to $\mathcal{X}$.
\end{enumerate}
For the sake of brevity, we often denote the family $\{\varphi^{t}\}_{t \in \mathbb{T}}$ by $\varphi$ or $(\mathcal{X},\varphi)$. In the case $\mathbb{T} = \mathbb{R}_{+}$ (resp. $\mathbb{T} = \mathbb{R}$) we say that $\varphi$ is a \textit{semiflow} (resp. \textit{flow}) on $\mathcal{X}$.

Now let $\mathcal{Q}$ be another complete metric space and consider a flow $\vartheta$ on $\mathcal{Q}$. Let $\mathbb{E}$ be a Banach space over $\mathbb{R}$ or $\mathbb{C}$. A given two-parameter family of mappings $\psi^{t}(q,\cdot) \colon \mathbb{E} \to \mathbb{E}$, where $q \in \mathcal{Q}$ and $t \geq 0$, is called a \textit{cocycle} in $\mathbb{E}$ over the driving system $(\mathcal{Q},\vartheta)$ if the following properties are satisfied:
\begin{enumerate}
	\item[1.] $\psi^{0}(q,v)=v$ and $\psi^{t+s}(q,v)=\psi^{t}(\vartheta^{s}(q),\psi^{s}(q,v))$ for all $t,s\geq 0$, $q \in \mathcal{Q}$ and $v \in \mathbb{E}$.
	\item[2.] The map $(t,q,v) \mapsto \psi^{t}(q,v)$ is continuous as a map from $\mathbb{R}_{+} \times \mathcal{Q} \times \mathbb{E}$ to $\mathbb{E}$.
\end{enumerate}
Similarly, the cocycle will be denoted by $(\psi,\vartheta)$ or just by $\psi$. If we let $\vartheta$ from the previous definition to be only a semiflow, then $\psi$ will be called a \textit{semicocycle} in $\mathbb{E}$ over $(\mathcal{Q},\vartheta)$.

A huge advantage of the cocycle is the possibility to define complete trajectories, which are essential for our construction of inertial manifolds. Namely, a continuous function $v(\cdot) \colon \mathbb{R} \to \mathbb{E}$ is a \textit{complete trajectory} over $q \in \mathcal{Q}$ if $v(t+s)=\psi^{t}(\vartheta^{s}(q),v(s))$ for all $t \geq 0$ and $s \in \mathbb{R}$. In this case we also say that $v(\cdot)$ is \textit{passing through $v(0)$} over $q$. For results below the following assumption will be used.
\begin{description}
	\item[\textbf{(BA)}] For any $q \in \mathcal{Q}$ there is a bounded in the past complete trajectory $w^{*}_{q}(\cdot)$ over $q$ and there exists a constant $M_{b}>0$ such that 
	\begin{equation}
		\label{EQ: BAUniformBound}
		\sup_{q \in \mathcal{Q}}\sup_{t \leq 0}\|w^{*}_{q}(t)\|_{\mathbb{E}} =: M_{b} < +\infty.
	\end{equation}
\end{description}

With each semicocycle we can associate a semiflow called the \textit{skew-product semiflow} $\pi^{t} \colon \mathcal{Q} \times \mathbb{E} \to \mathcal{Q} \times \mathbb{E}$ given by $\pi^{t}(q,v) = (\vartheta^{t}(q), \psi^{t}(q,v))$ for all $t \geq 0$, $q \in \mathcal{Q}$ and $v \in \mathbb{E}$.

\subsection{Squeezing in quadratic cone fields and orthogonal projectors}
\label{SUBSEC: SqueezingQuadraticConeFields}

Let $\langle v, f \rangle$ be the dual pairing between $v \in \mathbb{E}$ and $f \in \mathbb{E}^{*}$. We call an operator\footnote{Throughout this paper by $\mathcal{L}(\mathbb{F}_{1};\mathbb{F}_{2})$ we denote the space of bounded linear operators between Banach spaces $\mathbb{F}_{1}$ and $\mathbb{F_{2}}$, which is endowed with the operator norm. If $\mathbb{F}_{1} = \mathbb{F}_{2}$, we usually write $\mathcal{L}(\mathbb{F}_{1})$.} $P \in \mathcal{L}(\mathbb{E};\mathbb{E}^{*})$ \textit{symmetric} if $\langle v_{1}, Pv_{2} \rangle = \overline{\langle v_{2}, Pv_{1} \rangle}$ for all $v_{1},v_{2} \in \mathbb{E}$. For a linear subspace $\mathbb{L} \subset \mathbb{E}$ we say that $P$ is \textit{positive} (resp. \textit{negative}) on $\mathbb{L}$ if $\langle v, P v \rangle > 0$ (resp. $\langle v, Pv \rangle < 0$) for all non-zero vectors $v \in \mathbb{L}$.

\begin{description}
	\item[\textbf{(H1)}] For every $q \in \mathcal{Q}$ there exists a symmetric operator $P(q) \in \mathcal{L}(\mathbb{E};\mathbb{E}^{*})$ and a splitting of $\mathbb{E}$ into a direct sum as $\mathbb{E} = \mathbb{E}^{+}(q) \oplus \mathbb{E}^{-}(q)$ such that $P(q)$ is positive on $\mathbb{E}^{+}(q)$ and negative on $\mathbb{E}^{-}(q)$. Moreover, the norms of $P(q)$ in $\mathcal{L}(\mathbb{E};\mathbb{E}^{*})$ are uniformly bounded
	\begin{equation}
		\label{EQ: OperatorsBound}
		\sup_{q \in \mathcal{Q}} \|P(q)\| = M_{P} < +\infty.
	\end{equation}
	\item[\textbf{(H2)}] For some integer $j \geq 0$ we have $\operatorname{dim}\mathbb{E}^{-}(q)=j$ for all $q \in \mathcal{Q}$.
\end{description}

Under \textbf{(H1)} we define the quadratic forms $V_{q}(v):=\langle v, P(q)v \rangle$ for $q \in \mathcal{Q}$ and $v \in \mathbb{E}$.

\begin{remark}
	\label{REM: Orthogonality}
	Consider an operator $P \in \mathcal{L}(\mathbb{E};\mathbb{E}^{*})$ and suppose that $\mathbb{E}$ splits into a direct sum $\mathbb{E} = \mathbb{E}^{+} \oplus \mathbb{E}^{-}$ such that $P$ is positive on $\mathbb{E}^{+}$ and negative on $\mathbb{E}^{-}$. Suppose 
	$\mathbb{E}^{-}$ is finite-dimensional. Let $V(v):=\langle v,Pv \rangle$ be the quadratic form of $P$. We define the $V$-orthogonal projector $\Pi \colon \mathbb{E} \to \mathbb{E}^{-}$ as follows. Since $P$ is symmetric and negative on $\mathbb{E}^{-}$ the bilinear form $(v_{1},v_{2}) \mapsto -\langle v_{1}, v_{2} \rangle$ is an inner product on $\mathbb{E}^{-}$. Let $v \in \mathbb{E}$ then $\langle \cdot, Pv \rangle$ is a linear functional on $\mathbb{E}^{-}$ and, by the Riesz representation theorem, there is a unique element $\Pi v \in \mathbb{E}^{-}$ such that $\langle \cdot, Pv \rangle = \langle \cdot, P\Pi v \rangle$. From this and since the norm $\sqrt{-V(\cdot)}$ is equivalent to $\| \cdot \|_{\mathbb{E}}$ on $\mathbb{E}^{-}$, we have that $\Pi$ is bounded. Moreover, $\mathbb{E} = \operatorname{Ker}\Pi \oplus \mathbb{E}^{-}$ and $V(v)=V(v^{+}) + V(v^{-})$, where $v=v^{+} + v^{-}$ is the unique decomposition with $v^{+} \in \operatorname{Ker}\Pi$ and $v^{-} \in \mathbb{E}^{-}$. It is also clear that $P$ is positive on $\operatorname{Ker}\Pi$. Thus, under \textbf{(H1)} and \textbf{(H2)} we can always construct new subspaces $\mathbb{E}^{+}(q)$ and $\mathbb{E}^{-}(q)$, which are $V_{q}$-orthogonal (in the given sense) and the corresponding $V_{q}$-orthogonal projector $\Pi_{q}$ is bounded. For further investigations we need an estimate for the norm $\| \Pi_{q}\|$ as follows. Let us consider the constants
	\begin{equation}
		\label{EQ: ConstantProjectorNorm}
		C_{q} = C_{q}(P)  := \inf_{\|\zeta\| = 1, \zeta \in \mathbb{E}^{-}(q)} \left(-V_{q}(\zeta)\right)^{1/2}.
	\end{equation}
    Then it is not hard to see that $\| \Pi_{q}\| \leq C^{-2}_{q}\|P(q)\|$.
\end{remark}

With the above remark we will also use the following assumption, which will be essential for our proof of the exponential tracking property.
\begin{description}
	\item[\textbf{(PROJ)}] The norms of $V_{q}$-orthogonal projectors $\Pi_{q}$ are uniformly bounded
	\begin{equation}
		\sup_{q \in \mathcal{Q}} \| \Pi_{q} \| =: M_{\Pi} < +\infty.
	\end{equation}
\end{description}

The following assumption will be used to show that the fibers of inertial manifolds depend continuously on $q \in \mathcal{Q}$.
\begin{description}
	\item[\textbf{(CD)}] The operators $P(q)$ from \textbf{(H1)} depend continuously on $q \in \mathcal{Q}$ in the norm of $\mathcal{L}(\mathbb{E};\mathbb{E}^{*})$ and the projectors $\Pi^{d}_{q} \colon \mathbb{E} \to \mathbb{E}^{-}(q)$ w.~r.~t. the decomposition $\mathbb{E} = \mathbb{E}^{+}(q) \oplus \mathbb{E}^{-}(q)$ from \textbf{(H1)} depend continuously on $q \in \mathcal{Q}$ in the norm of $\mathcal{L}(\mathbb{E})$.
\end{description}
Note that within \textbf{(CD}) we do not assume that the projectors $\Pi^{d}_{q}$ are $V_{q}$-orthogonal. In practice, the spaces $\mathbb{E}^{+}(q)$ and $\mathbb{E}^{-}(q)$ are obtained from the exponential dichotomy of some linear problem. Thus we refer to $\Pi^{d}_{q}$ as the \textit{dichotomy projector} over $q$.

Let us show that \textbf{(CD)} implies that the $V_{q}$-orthogonal projectors $\Pi_{q}$ (constructed in Remark \ref{REM: Orthogonality}) are also depend continuously on $q \in \mathcal{Q}$. This is contained in the following lemma.

\begin{lemma}
	\label{LEM: VOrtogonalProjectorsContinuity}
	Under \textbf{(H1)} let \textbf{(CD)} be satisfied. Then the $V_{q}$-orthogonal projectors $\Pi_{q}$  depend continuously on $q \in \mathcal{Q}$ in the norm of $\mathcal{L}(\mathbb{E})$.
\end{lemma}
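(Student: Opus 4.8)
The plan is to obtain, near an arbitrary fixed $q_{0}\in\mathcal{Q}$, an explicit formula for $\Pi_{q}$ in terms of a continuously varying basis of the finite-dimensional subspace $\mathbb{E}^{-}(q)$ and the (invertible) Gram matrix of $V_{q}$ restricted to it, and then to deduce norm-continuity at $q_{0}$ from continuity of each ingredient. Since continuity is a local property, it suffices to prove continuity of $q\mapsto\Pi_{q}$ at such a $q_{0}$.

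The first step is to produce a norm-continuous local basis of $\mathbb{E}^{-}(q)$. Fix a basis $e^{0}_{1},\dots,e^{0}_{j}$ of $\mathbb{E}^{-}(q_{0})$ (here $\dim\mathbb{E}^{-}(q)=j$ by \textbf{(H2)}; in any case the rank of the continuous family $\Pi^{d}_{q}$ is locally constant) and, for $q$ near $q_{0}$, set $e_{i}(q):=\Pi^{d}_{q}e^{0}_{i}\in\operatorname{Range}\Pi^{d}_{q}=\mathbb{E}^{-}(q)$. By \textbf{(CD)} the family $\Pi^{d}_{q}$ is norm-continuous and $\Pi^{d}_{q_{0}}$ restricts to the identity on $\mathbb{E}^{-}(q_{0})$, so $e_{i}(q)\to e^{0}_{i}$ in $\mathbb{E}$ as $q\to q_{0}$. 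As all norms on a $j$-dimensional space are equivalent, for $q$ in a small neighbourhood $\mathcal{U}$ of $q_{0}$ these vectors stay linearly independent, hence form a basis of $\mathbb{E}^{-}(q)$.

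Next, writing $\Pi_{q}v=\sum_{i=1}^{j}c_{i}(q,v)e_{i}(q)$ for $v\in\mathbb{E}$ and using the characterization of the $V_{q}$-orthogonal projector from Remark \ref{REM: Orthogonality}, i.e. $\langle e_{k}(q),P(q)(v-\Pi_{q}v)\rangle=0$ for $k=1,\dots,j$, one is led to the $j\times j$ linear system $G(q)\,c(q,v)=b(q,v)$, where $G_{ki}(q)=\langle e_{k}(q),P(q)e_{i}(q)\rangle$ and $b_{k}(q,v)=\langle e_{k}(q),P(q)v\rangle$. The matrix $G(q)$ represents the form $V_{q}$ restricted to $\mathbb{E}^{-}(q)$ in the basis $(e_{i}(q))_{i}$; by \textbf{(H1)} it is negative definite, hence invertible for every $q$, and it is continuous on $\mathcal{U}$ because $q\mapsto e_{i}(q)$ is continuous in $\mathbb{E}$, $q\mapsto P(q)$ is continuous in $\mathcal{L}(\mathbb{E};\mathbb{E}^{*})$ by \textbf{(CD)}, and the dual pairing is bounded. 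Therefore $G(q)^{-1}$ is continuous and bounded on a neighbourhood of $q_{0}$. The right-hand sides $b(q,v)$ are bounded for $\|v\|_{\mathbb{E}}\le1$, $q\in\mathcal{U}$, and, via the splitting $b_{k}(q,v)-b_{k}(q_{0},v)=\langle e_{k}(q)-e_{k}(q_{0}),P(q)v\rangle+\langle e_{k}(q_{0}),(P(q)-P(q_{0}))v\rangle$, one gets $b(q,\cdot)\to b(q_{0},\cdot)$ uniformly on the unit ball. Hence $c(q,v)=G(q)^{-1}b(q,v)$ is bounded and converges to $c(q_{0},v)$ uniformly for $\|v\|_{\mathbb{E}}\le1$. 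Feeding this into
\[
	\|\Pi_{q}v-\Pi_{q_{0}}v\|_{\mathbb{E}}\le\sum_{i=1}^{j}|c_{i}(q,v)|\,\|e_{i}(q)-e_{i}(q_{0})\|_{\mathbb{E}}+\sum_{i=1}^{j}|c_{i}(q,v)-c_{i}(q_{0},v)|\,\|e_{i}(q_{0})\|_{\mathbb{E}}
\]
shows that the right-hand side tends to $0$ uniformly in $\|v\|_{\mathbb{E}}\le1$, i.e. $\|\Pi_{q}-\Pi_{q_{0}}\|_{\mathcal{L}(\mathbb{E})}\to0$ as $q\to q_{0}$.

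The one point that is not routine bookkeeping is the continuity and local boundedness of $G(q)^{-1}$: this is precisely where the definiteness of $P(q)$ on $\mathbb{E}^{-}(q)$ from \textbf{(H1)} enters, to guarantee invertibility of $G(q)$, and where the genuine norm-continuity of the dichotomy projectors in \textbf{(CD)} (not merely continuity of the subspaces $\mathbb{E}^{-}(q)$ ``as directions'') is used to build a basis of $\mathbb{E}^{-}(q)$ that varies continuously inside $\mathbb{E}$. The remaining estimates are straightforward.
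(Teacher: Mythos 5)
Your proof is correct, and it takes a genuinely different route from the one in the paper. The paper fixes a sequence $q_{k}\to q_{0}$, uses the compactness of the set $\{(q,w): q\in\{q_{1},q_{2},\dots\}\cup\{q_{0}\},\ w\in\mathbb{E}^{-}(q),\ \|w\|_{\mathbb{E}}=1\}$ to extract a uniform lower bound $-V_{q}(v)\ge C_{V}\|v\|_{\mathbb{E}}^{2}$ on $\mathbb{E}^{-}(q)$, and then estimates $V_{q_{0}}(\Pi_{q_{k}}w-\Pi_{q_{0}}w)$ directly, term by term, by expanding the quadratic form and using the defining relation $\langle w,P(q)v\rangle=\langle w,P(q)\Pi_{q}v\rangle$ together with \textbf{(CD)}. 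Your proof instead builds a continuously varying local basis $e_{i}(q)=\Pi^{d}_{q}e^{0}_{i}$ of $\mathbb{E}^{-}(q)$, passes to the coordinate vector $c(q,v)=G(q)^{-1}b(q,v)$ of $\Pi_{q}v$ in that basis, and reduces the whole question to continuity of $j\times j$ matrices $G(q)$, $b(q,\cdot)$ and of $G(q)^{-1}$. The two arguments are comparable in length, but they buy slightly different things: the paper's argument never needs to choose a basis and establishes en route the uniform coercivity constant $C_{V}$ (which reappears in the proof of Corollary~\ref{COR: OrtProjectosBounded}); your argument is more constructive, yields an explicit formula for $\Pi_{q}$, and makes transparent exactly which piece of \textbf{(H1)} (definiteness of $V_{q}$ on $\mathbb{E}^{-}(q)$, i.e.\ invertibility of the Gram matrix) and which piece of \textbf{(CD)} (norm-continuity of $\Pi^{d}_{q}$, not just continuity of the fibres as subsets) are being used. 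One small remark: when $\mathbb{E}$ is complex the symmetry convention $\langle v_{1},Pv_{2}\rangle=\overline{\langle v_{2},Pv_{1}\rangle}$ makes $G(q)$ Hermitian negative definite rather than symmetric, but invertibility and continuity of $G(q)^{-1}$ are unaffected, so the argument goes through in both cases.
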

\begin{proof}
	Recall that $\Pi_{q} \colon \mathbb{E} \to \mathbb{E}^{-}(q)$ is defined by the relation $\langle w, P(q)v \rangle = \langle w, P(q)\Pi_{q} v \rangle$ for all $w \in \mathbb{E}^{-}(q)$ and $v \in \mathbb{E}$. Suppose we are given with a sequence $q_{k} \in \mathcal{Q}$, where $k=1,2,\ldots$, converging to some $q_{0} \in \mathcal{Q}$. 
	
	Let us show that for our purposes it is sufficient to show that $V_{q_{0}}(\Pi_{q_{k}}w - \Pi_{q_{0}}w) \to 0$ uniformly in $w \in \mathbb{E}$ such that $\|w\|=1$. Since $P(q)$ continuously depend on $q$, the map $\mathcal{Q} \times \mathbb{E} \ni (q,v) \mapsto V_{q}(v) \in \mathbb{R}$ is continuous. The set $\mathcal{K} \subset \mathcal{Q}\times \mathbb{E}$ consisting of all $(q,w)$ such that $q \in \{ q_{1},q_{2},\dots \} \cup \{ q_{0} \}$ and $w \in \mathbb{E}^{-}(q)$ with $\|w\|_{\mathbb{E}}=1$ is compact since $\mathbb{E}^{-}(q)$ depend continuously on $q$ in the sense of \textbf{(CD)}. Thus, there exists $C_{V}>0$ such that
	\begin{equation}
		\label{EQ: VorthogonalEnergyAsUnifNorm}
		-V_{q}(v) \geq C_{V} \cdot \| v \|^{2}_{\mathbb{E}}
	\end{equation} 
	for all $q \in \{ q_{1},q_{2},\dots \} \cup \{ q_{0} \}$ and $v \in \mathbb{E}^{-}(q)$. From \eqref{EQ: VorthogonalEnergyAsUnifNorm} it is also follows that the norms of $\Pi_{q}$ are bounded uniformly in $q \in \{ q_{1},q_{2},\dots \} \cup \{ q_{0} \}$ since $\| \Pi_{q}\| \leq C^{-2}_{V}\|P(q)\|$ (see \eqref{EQ: ConstantProjectorNorm}). In particular, for all $w \in \mathbb{E}$ with $\|w\|_{\mathbb{E}}=1$
	\begin{equation}
		\label{EQ: PiOrtProjectorEstimate}
		-V_{q_{0}}(\Pi^{d}_{q_{0}}(\Pi_{q_{k}}w - \Pi_{q_{0}}w)) \geq C_{V} \cdot \| \Pi^{d}_{q_{0}}(\Pi_{q_{k}}w - \Pi_{q_{0}}w )\|^{2}_{\mathbb{E}}.
	\end{equation}
	Since $\Pi_{q_{k}}w = \Pi^{d}_{q_{k}} \Pi_{q_{k}} w$, we have, thanks to \textbf{(CD)}, $\Pi^{d}_{q_{0}}(\Pi_{q_{k}}w - \Pi_{q_{0}}w) = \Pi_{q_{k}}w - \Pi_{q_{0}}w + o(1)$. From this and \eqref{EQ: PiOrtProjectorEstimate} it follows that if $V_{q_{0}}(\Pi_{q_{k}}w - \Pi_{q_{0}}w) \to 0$ then $\|\Pi_{q_{k}}w - \Pi_{q_{0}}w \|_{\mathbb{E}} \to 0$ (uniformly in the considered $w$).
	
	Now we have
	\begin{equation}
		\begin{split}
			V_{q_{0}}(\Pi_{q_{k}}w - \Pi_{q_{0}}w) &= \langle \Pi_{q_{k}} w, P(q_{0})\Pi_{q_{k}} w \rangle - \langle \Pi_{q_{k}} w, P(q_{0}) \Pi_{q_{0}}w \rangle - \langle \Pi_{q_{0}} w, P(q_{0}) \Pi_{q_{k}}w \rangle \\ &+ \langle \Pi_{q_{0}}w,P(q_{0})\Pi_{q_{0}}w\rangle.
		\end{split}    	
	\end{equation}
	From \textbf{(CD)}, the definition of $\Pi_{q}$ and the uniform boundedness of the norms of $\Pi_{q}$ we have the following relations as $k \to +\infty$, which are uniform in $w \in \mathbb{E}$ with $\|w\|_{\mathbb{E}}=1$:
	\begin{equation}
		\begin{split}
			\langle \Pi_{q_{k}} w, P(q_{0})\Pi_{q_{k}} w \rangle &= \langle \Pi_{q_{k}}w, P(q_{0})w  \rangle + o(1),\\
			\langle \Pi_{q_{k}} w, P(q_{0}) \Pi_{q_{0}}w \rangle &= \langle \Pi_{q_{k}}w, P(q_{0}) w \rangle + o(1),\\
			\langle \Pi_{q_{0}} w, P(q_{0}) \Pi_{q_{k}}w \rangle &= \langle \Pi_{q_{0}} w, P(q_{0})w \rangle + o(1),\\
			\langle \Pi_{q_{0}}w,P(q_{0})\Pi_{q_{0}}w\rangle &= \langle \Pi_{q_{0}}w,P(q_{0})w\rangle.
		\end{split}
	\end{equation}
	Thus, $V_{q_{0}}(\Pi_{q_{k}}w - \Pi_{q_{0}}w) = o(1)$ that shows the required continuity.
\end{proof}

An immediate corollary of Lemma \ref{LEM: VOrtogonalProjectorsContinuity} is the following.
\begin{corollary}
	\label{COR: OrtProjectosBounded}
	Let \textbf{(CD)} be satisfied and let $\mathcal{Q}$ be compact. Then we have \textbf{(PROJ)} satisfied and the constants $C_{q}$ from \eqref{EQ: ConstantProjectorNorm} are uniformly (in $q \in \mathcal{Q}$) bounded from above and below.
\end{corollary}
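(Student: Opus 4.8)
The plan is to derive both conclusions directly from Lemma \ref{LEM: VOrtogonalProjectorsContinuity} together with compactness of $\mathcal{Q}$. First I would observe that by Lemma \ref{LEM: VOrtogonalProjectorsContinuity}, the map $q \mapsto \Pi_{q} \in \mathcal{L}(\mathbb{E})$ is continuous, hence $q \mapsto \|\Pi_{q}\|$ is a continuous real-valued function on the compact space $\mathcal{Q}$; therefore it attains a finite maximum, which is exactly \textbf{(PROJ)} with $M_{\Pi} = \max_{q \in \mathcal{Q}} \|\Pi_{q}\|$.

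For the statement about the constants $C_{q}$ from \eqref{EQ: ConstantProjectorNorm}, the upper bound is immediate: since $\langle v, P(q) v \rangle \leq \|P(q)\| \cdot \|v\|^{2}_{\mathbb{E}}$ in absolute value, we get $C_{q}^{2} \leq \|P(q)\| \leq M_{P}$ (here \textbf{(CD)} together with compactness gives the uniform bound $M_{P}$, or one may simply invoke \textbf{(H1)}), so $C_{q} \leq \sqrt{M_{P}}$ uniformly. For the lower bound, the key point is that the inequality $\|\Pi_{q}\| \leq C^{-2}_{q}\|P(q)\|$ from Remark \ref{REM: Orthogonality} runs the wrong way; instead I would argue that $C_{q}$ is itself continuous in $q$ and strictly positive everywhere, so that compactness yields a uniform positive lower bound. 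Continuity of $q \mapsto C_{q}$ follows because $(q,\zeta) \mapsto -V_{q}(\zeta)$ is jointly continuous (as $P(q)$ depends continuously on $q$ by \textbf{(CD)}), the sets $\{\zeta \in \mathbb{E}^{-}(q) : \|\zeta\| = 1\}$ vary continuously in $q$ by \textbf{(CD)} (the dichotomy projectors $\Pi^{d}_{q}$ are norm-continuous), and an infimum of a jointly continuous function over a continuously varying family of compact sets (the unit spheres of the finite-dimensional spaces $\mathbb{E}^{-}(q)$, which are compact by \textbf{(H2)}) depends continuously on the parameter. Positivity of $C_{q}$ for each fixed $q$ is guaranteed by \textbf{(H1)}, which says $P(q)$ is negative on $\mathbb{E}^{-}(q)$, so $-V_{q}(\zeta) > 0$ for $\zeta \neq 0$, and the infimum over a compact sphere of a positive continuous function is positive. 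Hence $q \mapsto C_{q}$ is continuous and positive on the compact space $\mathcal{Q}$, so $0 < \min_{q} C_{q} \leq \max_{q} C_{q} < +\infty$.

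Alternatively, one can extract the uniform lower bound on $C_{q}$ more cheaply from the proof of Lemma \ref{LEM: VOrtogonalProjectorsContinuity} itself: that proof constructs, along any convergent sequence $q_{k} \to q_{0}$, a constant $C_{V} > 0$ with $-V_{q}(v) \geq C_{V} \|v\|^{2}_{\mathbb{E}}$ for all $q$ in the sequence and its limit, which is precisely $C_{q}^{2} \geq C_{V}$; a standard subsequence argument then promotes this to a uniform bound over all of $\mathcal{Q}$. Either route works, and the main (mild) obstacle is simply making the continuity of the set-valued map $q \mapsto \{\zeta \in \mathbb{E}^{-}(q): \|\zeta\|=1\}$ precise enough to conclude continuity of the infimum $C_{q}$; this is where finite-dimensionality of $\mathbb{E}^{-}(q)$ from \textbf{(H2)} and norm-continuity of $\Pi^{d}_{q}$ from \textbf{(CD)} are used. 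Once the constants $C_{q}$ are bounded below by some $c > 0$, the bound $\|\Pi_{q}\| \leq C^{-2}_{q}\|P(q)\| \leq c^{-2} M_{P}$ from Remark \ref{REM: Orthogonality} gives \textbf{(PROJ)} again, so the two assertions are in fact tightly linked.
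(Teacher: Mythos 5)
Your proof is correct, and it correctly identifies the one nontrivial point: the uniform lower bound on $C_q$, which is exactly what makes (PROJ) hold (via $\|\Pi_q\| \leq C_q^{-2}\|P(q)\|$), whereas (PROJ) alone would not imply a lower bound on $C_q$. The paper offers this as an ``immediate corollary'' of Lemma~\ref{LEM: VOrtogonalProjectorsContinuity} without spelling out the argument, and what you supply is a reasonable reconstruction. Your two routes are both sound: the Berge-style continuity of $q \mapsto C_q$ is valid (joint continuity of $(q,\zeta)\mapsto -V_q(\zeta)$ plus Hausdorff-continuity of the finite-dimensional unit spheres, both coming from \textbf{(CD)}), and so is the leaner approach of reusing the compactness step inside the proof of Lemma~\ref{LEM: VOrtogonalProjectorsContinuity}, where the set $\mathcal{K}\subset\mathcal{Q}\times\mathbb{E}$ of pairs $(q,\zeta)$ with $\zeta\in\mathbb{E}^-(q)$, $\|\zeta\|=1$ is shown compact and $-V$ attains a positive minimum $C_V^2$ there; when $\mathcal{Q}$ itself is compact that set $\mathcal{K}$ is already compact without restricting to a sequence, which is probably what the author means by ``immediate.'' One small stylistic note: your first derivation of \textbf{(PROJ)} (continuity of $\Pi_q$ plus compactness) and your closing derivation of \textbf{(PROJ)} (lower bound on $C_q$ plus Remark~\ref{REM: Orthogonality}) are both fine, but the second is the one that is genuinely linked to the new content of this corollary, while the first only repackages Lemma~\ref{LEM: VOrtogonalProjectorsContinuity}; either is acceptable.
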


Let $\mathbb{E}$ be continuously embedded into some Hilbert space $\mathbb{H}$. We identify elements of $\mathbb{E}$ and $\mathbb{H}$ under such an embedding. 
\begin{remark}
	It is not essential for the abstract part of the theory from Section \ref{SEC: ConstructionIM} that $\mathbb{H}$ is a Hilbert space, but it will be convenient for discussions and applications to think so (since in all known situations $\mathbb{H}$ is a Hilbert space). The weaker norm $|\cdot|_{\mathbb{H}}$ and the Hilbert space structure are connected with the construction of quadratic functionals via the Frequency Theorem \cite{Anikushin2020FreqDelay,Anikushin2020FreqParab}.
\end{remark}

Our main assumption is the following squeezing property w.~r.~t. the family of quadratic forms $V_{q}$.
\begin{description}
	\item[\textbf{(H3)}] There exist constants $\nu^{-} < \nu^{+}, \delta_{P}>0$ and $\tau_{P} \geq 0$ and a Borel measurable function $\nu_{0}(\cdot) \colon \mathcal{Q} \to \mathbb{R}_{+}$ such that $\nu^{-} \leq \nu_{0}(\cdot) \leq \nu^{+}$ and for $\nu(t,q) := \int_{0}^{t} \nu_{0}(\vartheta^{s}(q)) ds$ we have the inequality
	\begin{equation}
		\label{EQ: PropH3inequality}
		\begin{split}
		e^{2 \nu(r, q)} V_{\vartheta^{r}(q)}(\psi^{r}(q,v_{1})-\psi^{r}(q,v_{2})) &- e^{2 \nu(l,q)}V_{\vartheta^{l}(q)}(\psi^{l}(q,v_{1})-\psi^{l}(q,v_{2})) \leq \\ &\leq -\delta_{P} \int_{l}^{r} e^{2 \nu(s,q)} |\psi^{s}(q,v_{1})-\psi^{s}(q,v_{2})|^{2}_{\mathbb{H}}ds	
		\end{split}	
	\end{equation}
    satisfied for all $q \in \mathcal{Q}$, $v_{1},v_{2} \in \mathbb{E}$ and $r, l \geq 0$ such that $r-l \geq \tau_{P}$.
\end{description}
To construct inertial manifolds one restricts the exponent sign as $\nu^{-} > 0$. However, this is not essential for many other results.

\begin{remark}
	Let the operators $P(q)$ and the spaces $\mathbb{E}^{+}(q)$ and $\mathbb{E}^{-}(q)$ be independent of $q$. So, $P(q)=P$ and $V_{q}=V$. For specialists more recognizable are the differential versions of \textbf{(H3)} given by
	\begin{equation}
		\label{EQ: ExampleDiffFormSmith}
		\frac{d}{ds}\left[ e^{2 \nu(t,q) s} V\left(\psi^{s}(q, v_{1}) - \psi^{s}(q,v_{2}) \right) \right] \leq -\delta_{P} e^{2\nu(t,q) s} |\psi^{s}(q, v_{1}) - \psi^{s}(q,v_{2})|^{2}_{\mathbb{H}}.
	\end{equation}
    or, after using the Leibniz rule,
    \begin{equation}
    	\label{EQ: ExampleDiffFormConeConstant}
    	\begin{split}
             \frac{d}{ds} V(\psi^{s}(q, v_{1}) - \psi^{s}(q,v_{2}) ) + 2\nu_{0}(\vartheta^{s}(q)) V(\psi^{s}(q, v_{1}) - \psi^{t}(q,v_{2}) )  \leq \\ \leq -\delta_{P} |\psi^{s}(q, v_{1}) - \psi^{s}(q,v_{2})|^{2}_{\mathbb{H}}.
    	\end{split}
    \end{equation}
    For the case of semilinear parabolic equations, where $\mathbb{E}=\mathbb{H}$, and for special quadratic forms, a similar to \eqref{EQ: ExampleDiffFormConeConstant} inequality posed for the semicocycle generated by the linearized (variational) system is known as the \textit{strong cone property in a differential form} introduced by A.~Kostianko and S.~Zelik \cite{KostiankoZelikSA2020,KostiankoZelik2015}. For the case of ODEs and with constant $\nu_{0}$ this condition was used by R.A.~Smith \cite{Smith1986Massera,Smith1987OrbStab}.
\end{remark}

%We call a cocycle satisfying \textbf{(H1)}, \textbf{(H2)}, \textbf{(H3)}, \textbf{(PROJ)} and \textbf{(CD)} a \textit{strong contraction in the quadratic cone field} corresponding to the operators $P=P(q)$.

Let $v \colon [T,+\infty) \to \mathbb{R}$, where $T \in \mathbb{R}$, be a continuous function. We say that $v(\cdot)$ is a \textit{trajectory} of the cocycle over $q$ defined for $t \geq T$ if $v(t+s)=\psi^{t}(\vartheta^{s}(q),v(s))$ holds for all $t \geq 0$ and $s \geq T$. Let $v_{1}(\cdot)$ and $v_{2}(\cdot)$ be two trajectories over $q$ defined for $t \geq T$. Then under \textbf{(H3)} for all $r - l \geq \tau_{P}$ and $l \geq T$ we have
\begin{equation}
	\label{EQ: H3trajectoryform}
	\begin{split}
		e^{2\nu(r,q)}V_{\vartheta^{r}(q)}(v_{1}(r)-v_{2}(r)) - e^{2\nu(l,q)}V_{\vartheta^{l}(q)}(v_{1}(l)-v_{2}(l)) \leq \\ \leq -\delta \int_{l}^{r}e^{2\nu(s,q)} |v_{1}(s)-v_{2}(s)|^{2}_{\mathbb{H}}ds.
	\end{split}
\end{equation}
We usually use \textbf{(H3)} and similar to it conditions in this form.
\begin{remark}
	\label{REM: TrajectoriesOfSemicocycles}
	It will be also convenient (especially in Subsection \ref{SUBSEC: VerticalFoliation} and further references to it) to consider trajectories over $q$ defined for $t \geq T$ with $T < 0$ in the case when $(\mathcal{Q},\vartheta)$ is a semiflow. This is possible if there exists (not necessarily unique) a backward extension of $q$ up to time $T$, i.~e. a point $q_{0} \in \vartheta^{-T}(q)$. Then we can speak of a trajectory over $q$ defined for $t \geq T$ having in mind that a backward extension for $q$, say $\hat{q}(t)=\vartheta^{t-T}(q_{0})$, is fixed. It is also convenient to denote it by $\vartheta^{s}(q) := \hat{q}(s)$ for $s \geq T$. In such a notation \eqref{EQ: H3trajectoryform} still holds.
\end{remark}

\begin{remark}
	Further we will relax \textbf{(H3)} a bit to include the Spatial Averaging Principle (see Subsection \ref{SUBSEC: SemilinearParabolicEquations}) into our general theory.
\end{remark}

\subsection{Asymptotic compactness and uniform Lipschitzity}
\label{SUBSEC: AsymptoticCompactnessLipschitzity}

Recall that the Kuratowski measure of noncompactness in $\mathbb{E}$ is a nonnegative function $\alpha_{K}(\cdot)$ defined on bounded subsets $\mathcal{B}$ in $\mathbb{E}$, which is given by
\begin{equation}
	\alpha_{K}(\mathcal{B}) := \inf\{ r > 0 \ | \ \text{there is a finite cover of } \mathcal{B} \text{ with diameter } \leq r  \}.
\end{equation}
It is clear that $\alpha(\mathcal{B}) \leq \operatorname{diam}\mathcal{B}$, where $\operatorname{diam}\mathcal{B}$ denotes the diameter of $\mathcal{B}$. We also recall some of standard properties, which are easy to check (see I.~Chueshov \cite{Chueshov2015}; J.~K.~Hale \cite{Hale2000}):
\begin{description}
	\item[\textbf{(MNC1)}] $\alpha_{K}(\mathcal{B}) = 0$ if and only if $\mathcal{B}$ is precompact.
	\item[\textbf{(MNC2)}] $\alpha_{K}(\mathcal{B}_{1} \cup \mathcal{B}_{2}) \leq \operatorname{max}\{\alpha_{K}(\mathcal{B}_{1}), \alpha_{K}(\mathcal{B}_{2})\}$,
	\item[\textbf{(MNC3)}] $\alpha_{K}(\mathcal{B}_{1} + \mathcal{B}_{2}) \leq \alpha_{K}(\mathcal{B}_{1}) + \alpha_{K}(\mathcal{B}_{2})$,
	\item[\textbf{(MNC4)}] $\alpha_{K}(\mathcal{B}) \leq \operatorname{diam}\mathcal{B}$.
\end{description}
Property \textbf{(MNC1)} is a reformulation of the well-known Hausdorff criterion. The above properties of $\alpha_{K}(\cdot)$ are the only required for further considerations, so one can replace the Kuratowski measure of noncompactness by an arbitrary measure of noncompactness, which satisfies these four properties.

Our second main assumption is the following \textit{asymptotic compactness} property.
\begin{description}
	\item[\textbf{(ACOM)}] There exists a constant $\gamma^{+}$ such that for every compact set $\mathcal{C} \subset \mathcal{Q}$ there is a function $\gamma_{0} \colon \mathbb{R}_{+} \to \mathbb{R}_{+}$ such that $\gamma_{0}(t) \to 0$ as $t \to +\infty$ and for all $t \geq 0$ we have
	\begin{equation}
		\label{EQ: AsymptoticCompactnessEstimate}
		\alpha_{K}\left( \psi^{t}(\mathcal{C},\mathcal{B}) \right) \leq \gamma_{0}(t) e^{-\gamma^{+} t} \alpha_{K}(\mathcal{B})
	\end{equation}
    for any bounded set $\mathcal{B} \subset \mathbb{E}$.
\end{description}
In contrast to some known results, which utilize only the decay of the Kuratowski measure, it turns out that for our purposes it is essential to emphasize the order ($\gamma^{+} = \nu^{+}$) of the decay. In the following remark it is shown that this assumption is ``almost necessary'' for the existence of inertial manifolds.
\begin{remark}
	Let us for simplicity discuss the case $\nu_{0}(q) = \nu_{0} = \operatorname{const} > 0$. Below we will show that there exists a continuous nonlinear map $\Pi^{c}_{q} \colon \mathbb{E} \to \mathbb{E}$ such that $S(t,q,v) := \psi^{t}(q,v) - \psi^{t}(q,\Pi^{c}_{q}(v))$ satisfies $\| S(t,q,v) \|_{\mathbb{E}}= O(e^{-\nu_{0} t})$ as $t \to +\infty$ uniformly in $q$ from compact sets and $v$ from bounded subsets. Thus, we have the decomposition
	\begin{equation}
		\label{EQ: RemAsymptoticComp1}
		\psi^{t}(q,v) = S(t,q,v) + \psi^{t}(q,\Pi^{c}_{q}(v)).
	\end{equation}
    The image of $\Pi^{c}_{q}$ is the inertial manifold fibre over $q$, which is a finite-dimensional manifold. Thus, the second term on the right-hand side of \eqref{EQ: RemAsymptoticComp1} is a compact map. From this, \textbf{(MNC3)} and \textbf{(MNC4)} we have that
    \begin{equation}
    	\label{EQ: CocycleDecomposition}
    	\alpha_{K}\left( \psi^{t}(q,\mathcal{B}) \right) = O(e^{-\nu_{0} t}) \text{ as } t \to +\infty
    \end{equation}
    uniformly in $q \in \mathcal{Q}$ from compact subsets and any $\mathcal{B}$ lying in a ball of fixed radius. In applications, we can always vary $\nu_{0}$ a bit (and this will be essential for the normal hyperbolicity property), so the exponent $\nu_{0}$ can be changed to $\nu_{0} + \varepsilon$ for a sufficiently small $\varepsilon>0$. So, the decomposition in \eqref{EQ: RemAsymptoticComp1} ``almost implies'' \textbf{(ACOM)}. Note also that for linear cocycles it is necessary in the exact sense.
\end{remark}
\begin{remark}
    In applications, the true squeezing exponent for the Kuratowski measure is usually larger than $\nu_{0}$. For example, for damped hyperbolic equations given by
    \begin{equation}
    	\label{EQ: ExampleAcomHyperbolic}
    	\partial^{2}_{t} v + \varkappa \partial_{t}v - \Delta v + f(v) = h
    \end{equation}
    in appropriate spaces and with proper assumptions on the functions $f$ and $h$, it can be taken arbitrarily close to the friction $\varkappa > 0$. Although, we usually take $\nu_{0} \ll \varkappa$ to increase the ``spectral gap'' that allows to construct inertial manifolds for a wider class of nonlinearities $f$. Note that to show \textbf{(ACOM)} is satisfied for \eqref{EQ: ExampleAcomHyperbolic}, it is used a similar to \eqref{EQ: CocycleDecomposition} decomposition, where the role of the squeezing operators $S(t,q,v)$ is played by the infinite-dimensional part of the linear semigroup generated by \eqref{EQ: ExampleAcomHyperbolic} with $f=0$ and $h=0$. Analogous results can be obtained for neutral delay equations \cite{AnikushinNDE2022,Hale1977}.
\end{remark}

An important case of \textbf{(ACOM)} is given by the (eventual) \textit{uniform compactness} of the cocycle as in the following assumption.
\begin{description}
	\item[\textbf{(UCOM)}] There exists $\tau_{ucom}>0$ such that the set $\psi^{\tau_{ucom}}(\mathcal{P},\mathcal{B})$ is precompact (in $\mathbb{E}$) for every precompact subset $\mathcal{P} \subset \mathcal{Q}$ and bounded subset $\mathcal{B} \subset \mathbb{E}$.
\end{description}
This is the case encountered in the study of delay equations and parabolic equations in bounded domains.

We use \textbf{(ACOM)} mainly in the situation covered by the following lemma.
\begin{lemma}
	\label{LEM: GeneralCompactnessLemma}
	Let \textbf{(ACOM)} be satisfied. Suppose that $v_{k}(\cdot)$, where $k=1,2,\ldots$, is a sequence of trajectories over $q_{k} \in \mathcal{Q}$, which are defined for $t \geq T_{k}$ with $T_{k} \to -\infty$. Let $q_{k}$ converge to some $q$ as $k \to +\infty$. Suppose also that there are a sequence $t_{l}$ and a family of bounded sets $\mathcal{B}_{l}$ in $\mathbb{E}$, where $l=1,2,\ldots$, such that
	\begin{enumerate}
		\item[1)] $t_{l} \to -\infty$ as $l \to +\infty$.
		\item[2)] $v_{k}(t_{l}) \in \mathcal{B}_{l}$ for all sufficiently large $k$ (depending on $l$ such that $T_{k} \leq t_{l}$) and $\operatorname{diam}\mathcal{B}_{l} \leq D e^{-\gamma^{+} t_{l}}$ for some constant $D$ (independent of $l$).
	\end{enumerate}
	Then there exists a complete trajectory $v^{*}(\cdot)$ over $q$ and a subsequence $v_{k_{m}}(\cdot)$, where $m=1,2,\ldots$, such that $v_{k_{m}}(t) \to v^{*}(t)$ as $m \to +\infty$ for every $t \in \mathbb{R}$.
\end{lemma}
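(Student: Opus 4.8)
The plan is to extract a convergent subsequence by a diagonal argument over the times $t_l$, using \textbf{(ACOM)} to control the Kuratowski measure of the relevant sets of ``snapshots'' of the trajectories. First I would fix $l$ and consider the set $\mathcal{S}_l := \{ v_k(t_l) : k \text{ large enough that } T_k \le t_l \} \subset \mathcal{B}_l$. Since $t_l \le 0$ for $l$ large and $\gamma^+$ may be taken positive in the relevant regime (and in any case $e^{-\gamma^+ t_l}$ is just a fixed constant for each $l$), the set $\mathcal{S}_l$ is bounded with $\operatorname{diam}\mathcal{S}_l \le D e^{-\gamma^+ t_l}$. Now for any fixed time $t \ge t_l$ (and hence for $t$ in any fixed compact interval, once $l$ is large), the cocycle property gives $v_k(t) = \psi^{t - t_l}(\vartheta^{t_l}(q_k), v_k(t_l))$; the base points $\vartheta^{t_l}(q_k)$ together with $\vartheta^{t_l}(q)$ form a precompact subset $\mathcal{C}$ of $\mathcal{Q}$ (as $q_k \to q$ and $\vartheta$ is continuous), so \textbf{(ACOM)} yields
\begin{equation}
	\alpha_K\left( \{ v_k(t) : k \text{ large} \} \right) \le \gamma_0(t - t_l)\, e^{-\gamma^+ (t - t_l)} \operatorname{diam}\mathcal{S}_l \le \gamma_0(t - t_l)\, e^{-\gamma^+ (t - t_l)} D e^{-\gamma^+ t_l},
\end{equation}
i.e. $\alpha_K\left( \{ v_k(t) \}_k \right) \le D\, \gamma_0(t - t_l)\, e^{-\gamma^+ t}$. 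Letting $l \to +\infty$ (so $t - t_l \to +\infty$) and using $\gamma_0(\cdot) \to 0$, we conclude that for each fixed $t$ the set $\{ v_k(t) : k \text{ large} \}$ has Kuratowski measure zero, hence is precompact by \textbf{(MNC1)}.

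Having established precompactness at each time $t$, I would next carry out the diagonal extraction. Enumerate a countable dense set of times, say $\mathbb{Q} \subset \mathbb{R}$, or more conveniently the integers together with a dense set; extract a subsequence along which $v_k(t)$ converges for each rational $t$ (possible since each time-slice is precompact, and we only throw away finitely many early indices where $T_k > t$). Call the diagonal subsequence $v_{k_m}(\cdot)$ and define $v^*(t) := \lim_m v_{k_m}(t)$ for rational $t$. To upgrade convergence to all $t \in \mathbb{R}$ and to show $v^*$ extends to a continuous complete trajectory, I would use the cocycle relation plus local equicontinuity: for $t$ in a fixed bounded interval and $l$ large with $t_l$ below that interval, $v_{k_m}(t) = \psi^{t - t_l}(\vartheta^{t_l}(q_{k_m}), v_{k_m}(t_l))$, and the joint continuity of $(t, q, v) \mapsto \psi^t(q, v)$ together with convergence of $v_{k_m}(t_l)$ (for $t_l$ rational, which we may arrange) and of $\vartheta^{t_l}(q_{k_m}) \to \vartheta^{t_l}(q)$ gives that $v_{k_m}$ converges uniformly on compact time-intervals to a continuous limit; passing to the limit in $v_{k_m}(t + s) = \psi^t(\vartheta^s(q_{k_m}), v_{k_m}(s))$ shows $v^*$ is a complete trajectory over $q$.

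The main obstacle is bookkeeping the order of quantifiers so that the estimate $\alpha_K(\{v_k(t)\}_k) \le D\gamma_0(t-t_l) e^{-\gamma^+ t}$ is genuinely useful: we need the bound to be uniform over the tail of indices $k$, which requires first discarding the finitely many $k$ with $T_k > t_l$, and we need to be able to send $l \to \infty$ after fixing $t$, which is fine because $t_l \to -\infty$ independently of everything else; the product $e^{-\gamma^+ t_l}$ in the diameter of $\mathcal{B}_l$ is exactly cancelled by the $e^{\gamma^+ t_l}$ coming from the longer evolution time $t - t_l$, which is the whole point of demanding the sharp exponent $\gamma^+$ in \textbf{(ACOM)} and the matching growth of $\operatorname{diam}\mathcal{B}_l$ in hypothesis 2). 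A secondary technical point is ensuring the base points stay in a single compact set $\mathcal{C}$ to which \textbf{(ACOM)} applies; this follows since $\{\vartheta^{t_l}(q_{k_m})\}_m \cup \{\vartheta^{t_l}(q)\}$ is compact for each fixed $l$, and one applies \textbf{(ACOM)} with $\mathcal{C}$ equal to this set (or its union over the finitely many $l$ needed on a given compact time-interval).
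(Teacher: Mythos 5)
Your proof follows essentially the same route as the paper's: the same cancellation of exponents (the $e^{-\gamma^+ t_l}$ from $\operatorname{diam}\mathcal{B}_l$ is exactly offset by the longer flow time $t-t_l$ in the \textbf{(ACOM)} estimate), the same use of the Kuratowski measure to get precompactness of each time-slice, and the same Cantor diagonal plus cocycle continuity to assemble a limiting complete trajectory. The paper diagonalizes over the discrete times $t_l$ rather than a dense set of reals, and reconstructs the limit trajectory as $\psi^{t-t_l}(\vartheta^{t_l}(q),\overline{v}_l)$, but that is only a cosmetic difference.

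One remark on the ``secondary technical point'' you raise at the end: the compact set of base points one must feed into \textbf{(ACOM)} is $\vartheta^{t_l}\bigl(\{q_k\}_k \cup \{q\}\bigr)$, and this set genuinely moves as $l\to\infty$; your parenthetical fix (``its union over the finitely many $l$ needed on a given compact time-interval'') does not apply here, because to make $\alpha_K(\{v_k(t)\}_k)$ vanish for a \emph{fixed} $t$ you must send $l\to\infty$, i.e.\ you use infinitely many $l$'s and hence potentially infinitely many choices of $\gamma_0$. The paper's own proof has exactly the same loose end (it writes $\mathcal{C}=\{q_N,q_{N+1},\ldots\}\cup\{q\}$ where the set being flowed forward is really $\vartheta^{t_{l+m}}$ of that), so you are not doing anything the authors did not; but it is worth noting that, as stated, the argument is clean only when the backward $\vartheta$-orbit of $\{q_k\}\cup\{q\}$ is precompact (e.g.\ $\mathcal{Q}$ compact, which holds in the paper's intended applications).
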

\begin{proof}
	Let $l$ be fixed and let $N_{1}$ be such that $T_{k} \leq t_{l}$ for any $k=N_{1},N_{1}+1,\ldots$. We have to show that the sequence $v_{k}(t_{l})$, where $k=1,2,\ldots$, is precompact in $\mathbb{E}$.  Let $m$ be a fixed number and $N_{2} \geq N_{1}$ be such that $T_{k} \leq t_{l+m}$ for all $k=N_{2},N_{2}+1,\ldots$. Put $\mathcal{C} = \{ q_{N}, q_{N+1},\ldots \} \cup \{ q \}$ and $s_{m} = t_{l} - t_{l+m} > 0$. From \textbf{(ACOM)}, \textbf{(MNC2)}, \textbf{(MNC4)} and item 2) of the lemma we have
	\begin{equation}
		\label{EQ: CompactnessLemmaMainEstimate}
		\begin{split}
			\alpha_{K}\left( \bigcup_{k=N_{1}}^{\infty} \{ v_{k}(t_{l}) \} \right) = \alpha_{K}\left( \bigcup_{k=N_{2}}^{\infty} \{ v_{k}(t_{l}) \} \right) \leq \alpha_{K}\left( \psi^{s_{m}}(\mathcal{C},\mathcal{B}_{l+m}) \right) \leq \\ \leq \gamma_{0}(s_{m}) e^{-\gamma^{+} s_{m}} \cdot \alpha_{K}\left( \mathcal{B}_{l+m} \right) \leq \gamma_{0}(s_{m}) D e^{-\gamma^{+} t_{l}}.
		\end{split}
	\end{equation}
	Since $s_{m}$ tends to $+\infty$ when $m$ tends to $+\infty$, from \eqref{EQ: CompactnessLemmaMainEstimate} and \textbf{(MNC1)} we get that the sequence $v_{k}(t_{l})$ is precompact for every $l$.
	
	Now, using Cantor's diagonal procedure, we may get a subsequence $v_{k_{m}}(\cdot)$, where $m=1,2,\ldots$, such that for every $l$ the sequence $v_{k_{m}}(t_{l})$ converges to some $\overline{v}_{l}$ as $m \to +\infty$. Consider the trajectories
	\begin{equation}
		\overline{w}_{l}(t):=\psi^{t-t_{l}}(\vartheta^{t_{l}}(q), \overline{v}_{l}) \text{ for } t \geq t_{l}.
	\end{equation}
	Since $v_{k_{m}}(t_{l}) \to \overline{v}_{l}$, we have the convergence $v_{k_{m}}(t) \to \overline{w}_{l}(t)$ for all $t \in [t_{l},+\infty)$. From this it follows that $w_{l}(t)$ and $w_{l-1}(t)$ coincide for all $t \in [t_{l},+\infty)$. Thus, the equality
	\begin{equation}
		v^{*}(t):=\overline{w}_{l}(t), \text{ for arbitrary } l \text{ such that } t \geq t_{l}
	\end{equation}
	correctly defines a complete trajectory of the cocycle such that $v_{k_{m}}(t) \to v^{*}(t)$ for every $t \in (-\infty,+\infty)$. The lemma is proved.
\end{proof}

The following property is usually linked with smoothing-like properties of differential equations. Note that it is always satisfied if $\mathbb{E}=\mathbb{H}$ since in this case we may put $\tau_{S} = 0$.
\begin{description}
	\item[\textbf{(S)}] There exists $\tau_{S} \geq 0$ and $C_{S}>0$ such that
	\begin{equation}
		\| \psi^{\tau_{S}}(q,v_{1})-\psi^{\tau_{S}}(q,v_{2}) \|_{\mathbb{E}} \leq C_{S} |v_{1}-v_{2}|_{\mathbb{H}}.
	\end{equation}
\end{description}

We will usually require the following uniform Lipschitz property, which strengthens \textbf{(S)}, to be satisfied. 
\begin{description}
	\item[\textbf{(ULIP)}] There exists $\tau_{S} \geq 0$ such that for any $T>0$ there exists a constant $L_{T}>0$ such that for all $q \in \mathcal{Q}$ and $v_{1},v_{2} \in \mathbb{E}$ we have
	\begin{equation}
		\|\psi^{t}(q,v_{1})-\psi^{t}(q,v_{2})\|_{\mathbb{E}} \leq L_{T}|v_{1}-v_{2} |_{\mathbb{H}} \text{ for } t \in [t_{S},t_{S}+T]
	\end{equation}
	and also
	\begin{equation}
		\|\psi^{t}(q,v_{1})-\psi^{t}(q,v_{2})\|_{\mathbb{E}} \leq L_{T}\|v_{1}-v_{2} \|_{\mathbb{E}} \text{ for } t \in [0,T].
	\end{equation}
\end{description}

\subsection{A few remarks}
\label{SUBSEC: Motivation}

Let us work within this subsection under the assumptions \textbf{(H1)},\textbf{(H2)} and \textbf{(H3)}. Here we try to expound several basic principles, which should provide a geometric insight onto these assumptions and present ideas, which will be used and developed in the next sections.

We say that two points $v_{1},v_{2} \in \mathbb{E}$ are \textit{pseudo-ordered} (resp. \textit{strictly pseudo-ordered}) over $q \in \mathcal{Q}$ if $V_{q}(v_{1}-v_{2}) \leq 0$ (resp. $V_{q}(v_{1}-v_{2}) < 0$). Sometimes we omit mentioning $q$ if it is clear from the context of which fiber we are referring to.

Let us consider the $V_{q}$-orthogonal projector $\Pi_{q}$ defined in Remark \ref{REM: Orthogonality} (however, the orthogonality is not essential for the following lemma).
\begin{lemma}[Injectivity of projectors on pseudo-ordered sets]
	\label{LEM: OrderProjectorPrinciple}
	Let two different points $v_{1},v_{2} \in \mathbb{E}$ be pseudo-ordered over some $q \in \mathcal{Q}$. Then $\Pi_{q}v_{1} \not= \Pi_{q} v_{2}$.
\end{lemma}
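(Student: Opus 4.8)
The statement is essentially the assertion that the $V_q$-orthogonal projector $\Pi_q$ is injective on any pair of pseudo-ordered points, and the natural route is a proof by contradiction using the splitting $\mathbb{E} = \operatorname{Ker}\Pi_q \oplus \mathbb{E}^-(q)$ together with the additivity of $V_q$ along this splitting, both established in Remark \ref{REM: Orthogonality}. So suppose, for contradiction, that $v_1 \neq v_2$ are pseudo-ordered over $q$, i.e.\ $V_q(v_1 - v_2) \leq 0$, but $\Pi_q v_1 = \Pi_q v_2$. Set $w := v_1 - v_2 \neq 0$; then $\Pi_q w = 0$, so $w \in \operatorname{Ker}\Pi_q$.

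The key fact to invoke is that $P(q)$ (equivalently $V_q$) is \emph{positive} on $\operatorname{Ker}\Pi_q$ — this is precisely one of the conclusions recorded in Remark \ref{REM: Orthogonality} (``It is also clear that $P$ is positive on $\operatorname{Ker}\Pi$''). Since $w$ is a nonzero element of $\operatorname{Ker}\Pi_q$, positivity gives $V_q(w) > 0$, contradicting $V_q(w) = V_q(v_1 - v_2) \leq 0$. Hence $\Pi_q v_1 \neq \Pi_q v_2$, which is the claim. Alternatively, without invoking positivity on $\operatorname{Ker}\Pi_q$ directly, one can use the decomposition $V_q(w) = V_q(w^+) + V_q(w^-)$ with $w^+ \in \operatorname{Ker}\Pi_q$, $w^- = \Pi_q w \in \mathbb{E}^-(q)$: the hypothesis $\Pi_q w = 0$ forces $w^- = 0$ and $w = w^+ \neq 0$, and then positivity of $P(q)$ on $\operatorname{Ker}\Pi_q$ (which in turn follows because $P(q)$ is a direct summand complement to a subspace on which it is negative definite, cf.\ the Riesz-representation argument in the remark) yields $V_q(w) > 0$, again the contradiction.

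There is essentially no analytic obstacle here; the entire content is bookkeeping about the $V_q$-orthogonal splitting, and everything needed has already been assembled in Remark \ref{REM: Orthogonality}. The only point requiring a moment's care is to make sure one uses the \emph{$V_q$-orthogonal} projector $\Pi_q$ rather than the dichotomy projector $\Pi^d_q$ — it is the $V_q$-orthogonality that guarantees both the splitting $\mathbb{E} = \operatorname{Ker}\Pi_q \oplus \mathbb{E}^-(q)$ respects $V_q$ additively and that $P(q)$ is positive on $\operatorname{Ker}\Pi_q$; the parenthetical remark in the lemma statement that ``orthogonality is not essential'' presumably refers to the fact that the conclusion holds for any projector onto $\mathbb{E}^-(q)$ whose kernel is a subspace on which $P(q)$ is positive, of which the $V_q$-orthogonal one is a canonical instance. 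I would phrase the final write-up around the $V_q$-orthogonal projector for definiteness, keeping the argument to the three-line contradiction above.
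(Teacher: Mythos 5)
Your proof is correct and takes essentially the same route as the paper's: both reduce to the observation that $\Pi_q v_1 = \Pi_q v_2$ forces $v_1 - v_2$ into the complementary subspace on which $P(q)$ is positive, which is incompatible with $V_q(v_1-v_2)\leq 0$ unless $v_1 = v_2$. The paper writes this as $v_1 - v_2 \in \mathbb{E}^+(q)$, hence $V_q(v_1-v_2) \geq 0$, hence $=0$, hence $v_1=v_2$; you phrase it via $w \in \operatorname{Ker}\Pi_q$ and positivity there, which is the same subspace under the $V_q$-orthogonal splitting of Remark \ref{REM: Orthogonality}.
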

\begin{proof}
	Indeed, if $\Pi_{q}v_{1} = \Pi_{q} v_{2}$ then $v_{1} - v_{2} \in \mathbb{E}^{+}(q)$ and, consequently, $V_{q}(v_{1}-v_{2}) \geq 0$. Thus $V_{q}(v_{1}-v_{2}) = 0$ and $v_{1}=v_{2}$.
\end{proof}

The set $\mathcal{C}_{q} := \{ v \in \mathbb{E} \ | \ V_{q}(v) \leq 0 \}$ is a quadratic cone of rank $j$, i.~e. it is a closed set such that $\alpha v \in \mathcal{C}_{q}$ for all $\alpha \in \mathbb{R}$ and $v \in \mathcal{C}_{q}$ and the maximal dimension of a linear subspace $\mathbb{L} \subset \mathbb{E}$, which entirely lies in $\mathcal{C}_{q}$, is $j$.

The following lemma shows that the pseudo-ordering provided by these cones is strongly preserved under the dynamics.
\begin{lemma}[Cone invariance]
	\label{LEM: MonotonicityPrinciple}
	Suppose two different points $v_{1},v_{2} \in \mathbb{E}$ are pseudo-ordered over some $q \in \mathcal{Q}$. Then their trajectories $\psi^{t}(q,v_{1})$ and $\psi^{t}(q,v_{2})$ are strictly pseudo-ordered over $\vartheta^{t}(q)$ for all $t \geq \tau_{V}$.
\end{lemma}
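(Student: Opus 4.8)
The plan is to apply the trajectory form \eqref{EQ: H3trajectoryform} of \textbf{(H3)} to the two trajectories $v_1(t) := \psi^t(q,v_1)$ and $v_2(t) := \psi^t(q,v_2)$, taking $l = 0$ and $r = t$ for $t \geq \tau_P$ (so that $\tau_V$ should be taken to be $\tau_P$, or more precisely $\max\{\tau_P, \tau_S\}$; I will take $\tau_V = \tau_P$). From \eqref{EQ: H3trajectoryform} with $l=0$ we get
\begin{equation*}
	e^{2\nu(t,q)} V_{\vartheta^{t}(q)}(v_1(t)-v_2(t)) \leq V_{q}(v_1-v_2) - \delta_P \int_0^t e^{2\nu(s,q)}|v_1(s)-v_2(s)|^2_{\mathbb{H}} ds.
\end{equation*}
Since $v_1,v_2$ are pseudo-ordered over $q$, the first term on the right is $\leq 0$, so the whole right-hand side is $\leq 0$; multiplying by $e^{-2\nu(t,q)} > 0$ gives $V_{\vartheta^{t}(q)}(v_1(t)-v_2(t)) \leq 0$, i.e. the trajectories stay pseudo-ordered for all $t \geq \tau_P$.

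To upgrade this to \emph{strict} pseudo-ordering, I would argue by contradiction: suppose $V_{\vartheta^{t_0}(q)}(v_1(t_0)-v_2(t_0)) = 0$ for some $t_0 \geq \tau_P$. Applying the displayed inequality at $t = t_0$ forces both $V_q(v_1-v_2) = 0$ (which we already knew was $\leq 0$) and $\int_0^{t_0} e^{2\nu(s,q)}|v_1(s)-v_2(s)|^2_{\mathbb{H}} ds = 0$. The exponential weight is strictly positive and continuous, so this integral vanishing means $|v_1(s)-v_2(s)|_{\mathbb{H}} = 0$ for all $s \in [0,t_0]$, i.e. $\psi^s(q,v_1) = \psi^s(q,v_2)$ in $\mathbb{H}$ for all $s \in [0,t_0]$. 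The remaining point is to conclude that then $v_1 = v_2$ in $\mathbb{E}$, contradicting the hypothesis that the points are different. For $s$ in the range where \textbf{(S)} (or \textbf{(ULIP)}) applies — say $s = \tau_S$, assuming $\tau_S \leq t_0$ — we would get $\|\psi^{\tau_S}(q,v_1) - \psi^{\tau_S}(q,v_2)\|_{\mathbb{E}} \leq C_S |v_1-v_2|_{\mathbb{H}} = 0$; but this only gives coincidence at time $\tau_S$, not at time $0$. The cleaner route: coincidence in $\mathbb{H}$ on $[0,t_0]$ together with the cocycle identity propagates forward, and then one uses that the initial segment can be recovered — but actually the simplest fix is that $\psi^0(q,v_i) = v_i$, and if $v_1 = v_2$ already in $\mathbb{H}$ we still need them equal in $\mathbb{E}$. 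Here I would observe that the hypothesis is stated for $v_1, v_2 \in \mathbb{E}$ distinct \emph{as elements of} $\mathbb{E}$; since $\mathbb{E}$ embeds continuously (hence injectively) into $\mathbb{H}$, equality in $\mathbb{H}$ at $s=0$ already gives $v_1 = v_2$ in $\mathbb{E}$. Thus $|v_1 - v_2|_{\mathbb{H}} = 0$ at $s=0$ directly contradicts $v_1 \neq v_2$.

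So the only genuine content is: the integral term vanishes $\Rightarrow$ $|v_1(0) - v_2(0)|_{\mathbb{H}} = 0$ $\Rightarrow$ (by injectivity of $\mathbb{E}\hookrightarrow\mathbb{H}$) $v_1 = v_2$, contradiction. I expect the main subtlety — the step requiring the most care — to be making sure the time $\tau_V$ is chosen correctly so that \eqref{EQ: H3trajectoryform} is applicable with $l=0$ and $r=t$ for the claimed range $t \geq \tau_V$; this just needs $\tau_V \geq \tau_P$. A secondary point to state carefully is that the continuity of $s \mapsto |v_1(s)-v_2(s)|^2_{\mathbb{H}}$ (which follows from continuity of the cocycle into $\mathbb{E}$ and the embedding $\mathbb{E}\hookrightarrow\mathbb{H}$) is what lets us pass from vanishing of the weighted integral to pointwise vanishing on $[0,t]$. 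Everything else is a one-line application of \textbf{(H3)} in the form already derived in the excerpt.
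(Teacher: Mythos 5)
Your proposal is correct and takes essentially the same approach as the paper: apply the trajectory form of \textbf{(H3)} with $l=0$, $r=t$, and observe that the resulting right-hand side is $\leq 0$, and in fact $<0$ since the integral cannot vanish (by continuity of the integrand and injectivity of $\mathbb{E}\hookrightarrow\mathbb{H}$ applied at $s=0$). The paper compresses this into ``immediately implies the conclusion''; your identification $\tau_V=\tau_P$ is also right, as $\tau_V$ appears nowhere else to be defined and is evidently a notational slip for $\tau_P$.
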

\begin{proof}
	From \textbf{(H3)} we have for $t \geq \tau_{V}$
	\begin{equation}
		e^{2\nu t}V_{\vartheta^{t}(q)}(\psi^{t}(q,v_{1}) - \psi^{t}(q,v_{2})) \leq V_{q}(v_{1}-v_{2}) - \delta \int_{0}^{t}e^{2\nu s}|\psi^{s}(q,v_{1}) - \psi^{s}(q,v_{2})|^{2}_{\mathbb{H}}ds
	\end{equation}
	that immediately implies the conclusion of the lemma.
\end{proof}

\begin{lemma}[Squeezing property]
	\label{LEM: SqueezingPrinciple}
	Suppose two different points $v_{1},v_{2} \in \mathbb{E}$ and $q \in \mathcal{Q}$ are given such that $V_{q}(\psi^{T}(q,v_{1}) - \psi^{T}(q,v_{2}) \geq 0$ for some $T \geq \tau_{V}$. Then for all $t \in [\tau_{V},T]$ we have
	\begin{equation}
		\label{EQ: SqueezingPrincipleIntegral}
		\delta^{-1} V_{q}(v_{1}-v_{2}) \geq \int_{0}^{t}e^{2\nu s}| \psi^{s}(q,v_{1}) - \psi^{s}(q,v_{2}) |^{2}_{\mathbb{H}}ds.
	\end{equation}
	Moreover, in the case \textbf{(ULIP)} satisfied and $T \geq \tau_{S}+1$, we have for all $t \in [\max\{\tau_{V},\tau_{S}+1\};T]$ the estimate
	\begin{equation}
		\label{EQ: SqueezingPrincipleExponential}
		\| \psi^{t}(q,v_{1}) - \psi^{t}(q,v_{2}) \|^{2}_{\mathbb{E}} \leq \delta^{-1} (C'_{\tau_{S}+1})^{2} e^{2\nu (\tau_{S}+1)} V_{q}(v_{1}-v_{2}) \cdot e^{-2\nu t}.
	\end{equation}
\end{lemma}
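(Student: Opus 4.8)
The plan is to extract both estimates from the squeezing inequality \textbf{(H3)} in the trajectory form \eqref{EQ: H3trajectoryform}, taking $v_i(\cdot) = \psi^{\cdot}(q,v_i)$, $l = 0$, and letting the right endpoint vary. The sign hypothesis at time $T$ is what lets us throw away the terminal quadratic term with a favorable sign.

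\emph{First part.} Fix $t \in [\tau_V, T]$. Apply \eqref{EQ: H3trajectoryform} (with $q$, $v_1(\cdot)=\psi^\cdot(q,v_1)$, $v_2(\cdot)=\psi^\cdot(q,v_2)$) on the interval $[0,T]$, so that $r-l = T \geq \tau_P = \tau_V$:
\begin{equation*}
e^{2\nu(T,q)} V_{\vartheta^T(q)}(\psi^T(q,v_1)-\psi^T(q,v_2)) - V_q(v_1-v_2) \leq -\delta \int_0^T e^{2\nu(s,q)} |\psi^s(q,v_1)-\psi^s(q,v_2)|^2_{\mathbb{H}}\, ds.
\end{equation*}
Since $V_{\vartheta^T(q)}(\psi^T(q,v_1)-\psi^T(q,v_2)) \geq 0$ and $e^{2\nu(T,q)} > 0$, the first term on the left is nonnegative, hence
\begin{equation*}
\delta^{-1} V_q(v_1-v_2) \geq \int_0^T e^{2\nu(s,q)} |\psi^s(q,v_1)-\psi^s(q,v_2)|^2_{\mathbb{H}}\, ds \geq \int_0^t e^{2\nu(s,q)} |\psi^s(q,v_1)-\psi^s(q,v_2)|^2_{\mathbb{H}}\, ds,
\end{equation*}
which is \eqref{EQ: SqueezingPrincipleIntegral} (note the paper writes $e^{2\nu s}$ for the abbreviation $e^{2\nu(s,q)}$ in this subsection, and $\delta$ for $\delta_P$). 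The only subtlety is that this requires $T \geq \tau_V$, which is assumed; for intermediate $t < \tau_V$ the statement is vacuous as written since the range of $t$ starts at $\tau_V$.

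\emph{Second part.} Now assume \textbf{(ULIP)} and $T \geq \tau_S + 1$, and fix $t \in [\max\{\tau_V,\tau_S+1\}, T]$. The idea is to recover a pointwise bound at time $t$ from the integral bound \eqref{EQ: SqueezingPrincipleIntegral} by using the Lipschitz/smoothing property to go from an average of $|\cdot|_{\mathbb{H}}$ over a unit-length window just before $t$ to the $\mathbb{E}$-norm at $t$. Concretely, set $a = t - (\tau_S+1) \geq 0$. First integrate \eqref{EQ: SqueezingPrincipleIntegral} over $s \in [a, a+1]$ (a window of length $1$, contained in $[0,t]$): using that $e^{2\nu(s,q)}$ is monotone and comparing with its value at the right endpoint, there is some $s_* \in [a,a+1]$ with
\begin{equation*}
e^{2\nu(s_*,q)} |\psi^{s_*}(q,v_1)-\psi^{s_*}(q,v_2)|^2_{\mathbb{H}} \leq \int_a^{a+1} e^{2\nu(s,q)} |\psi^s(q,v_1)-\psi^s(q,v_2)|^2_{\mathbb{H}}\, ds \leq \delta^{-1} V_q(v_1-v_2).
\end{equation*}
Then apply \textbf{(ULIP)} over the interval of length $T' := \tau_S + 1$ starting the $\mathbb{H}$-estimate at $s_*$: the second inequality of \textbf{(ULIP)} (with $\tau_S$-shift, $T = \tau_S+1$, constant $L_{\tau_S+1}$) propagates $|\psi^{s_*}(q,v_1)-\psi^{s_*}(q,v_2)|_{\mathbb{H}}$ forward to control $\|\psi^{t}(q,v_1)-\psi^{t}(q,v_2)\|_{\mathbb{E}}$ for $t$ lying in the appropriate window after $s_*$, which covers $t$ since $t - s_* \in [\tau_S, \tau_S + 1]$. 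Finally, absorb the ratio of exponentials $e^{2\nu(t,q)}/e^{2\nu(s_*,q)} = e^{2\int_{s_*}^t \nu_0(\vartheta^r q)\,dr} \leq e^{2\nu^+(\tau_S+1)}$ (using $\nu_0 \leq \nu^+$ and $t - s_* \leq \tau_S+1$) and the Lipschitz constant into the single constant $C'_{\tau_S+1}$, obtaining
\begin{equation*}
\|\psi^t(q,v_1)-\psi^t(q,v_2)\|^2_{\mathbb{E}} \leq \delta^{-1} (C'_{\tau_S+1})^2 e^{2\nu(\tau_S+1)} V_q(v_1-v_2)\, e^{-2\nu t},
\end{equation*}
which is \eqref{EQ: SqueezingPrincipleExponential}.

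\emph{Main obstacle.} The first part is immediate once the sign is used correctly. The delicate bookkeeping is entirely in the second part: one must track the interplay of the three time scales $\tau_V$, $\tau_S$, and the length-$1$ averaging window, make sure the window $[a,a+1]$ and the subsequent propagation interval both fit inside $[0,t]$ and $[s_*, s_* + (\tau_S+1)]$ respectively, and correctly bound the exponential weight ratio using only $\nu_0 \leq \nu^+$ rather than the (unsigned) $\nu^-$. The constant $C'_{\tau_S+1}$ should be read as a composite of $L_{\tau_S+1}$ from \textbf{(ULIP)} and the exponential factors; I would define it explicitly at this point in the paper rather than leave it implicit. None of this requires new ideas beyond \textbf{(H3)} and \textbf{(ULIP)}, but the choice $a = t-(\tau_S+1)$ and the mean-value selection of $s_*$ are the crux that makes the pointwise exponential decay follow from the integral estimate.
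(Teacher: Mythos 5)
Your proof is correct and takes essentially the same route as the paper. The second part (mean value theorem on the window $[t-\tau_S-1,\,t-\tau_S]$, then \textbf{(ULIP)} propagation from $|\cdot|_{\mathbb{H}}$ at $s_*$ to $\|\cdot\|_{\mathbb{E}}$ at $t$, and absorption of the exponential ratio into the constant) matches the paper's argument; your first part is a mild streamlining, applying \textbf{(H3)} once on $[0,T]$ and truncating the nonnegative integral rather than invoking Lemma~\ref{LEM: MonotonicityPrinciple} to propagate the sign of $V$ to each intermediate time before applying \textbf{(H3)} at that time.
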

\begin{proof}
	Since $V_{q}(\psi^{T}(q,v_{1}) - \psi^{T}(q,v_{2})) \geq 0$, by Lemma \ref{LEM: MonotonicityPrinciple} we have that $V_{q}(\psi^{t}(q,v_{1}) - \psi^{t}(q,v_{2})) \geq 0$ for all $t \in [\tau_{V},T]$. Thus \eqref{EQ: SqueezingPrincipleIntegral} immediately follows from \textbf{(H3)}. Now from the mean value theorem and \textbf{(ULIP)} we get
	\begin{equation}
		\begin{split}
			\int_{0}^{t}e^{2\nu s}| \psi^{s}(q,v_{1}) - \psi^{s}(q,v_{2}) |^{2}_{\mathbb{H}}ds \geq \\ \geq \int_{t-\tau_{S}-1}^{t-\tau_{S}}e^{2\nu s}| \psi^{s}(q,v_{1}) - \psi^{s}(q,v_{2}) |^{2}_{\mathbb{H}}ds = e^{2\nu s_{0}}| \psi^{s_{0}}(q,v_{1}) - \psi^{s_{0}}(q,v_{2}) |^{2}_{\mathbb{H}}ds \geq \\ \geq e^{-2\nu(\tau_{S}+1)}(C'_{\tau_{S}+1})^{-2} e^{2\nu t} \| \psi^{t}(q,v_{1}) - \psi^{t}(q,v_{2}) \|^{2}_{\mathbb{E}},
		\end{split}
	\end{equation}
	for some $s_{0} \in [t - \tau_{S}-1, t -\tau_{S}]$. From this and \eqref{EQ: SqueezingPrincipleIntegral} we immediately get \eqref{EQ: SqueezingPrincipleExponential}.
\end{proof}

Lemmas \ref{LEM: OrderProjectorPrinciple}, \ref{LEM: MonotonicityPrinciple} and \ref{LEM: SqueezingPrinciple} present three general principles, the ideas of which will be developed throughout this paper.

For one-dimensional cones (i.~e. for $j=1$) there are two natural convex parts of $\mathcal{C}_{q}$, say $\mathcal{C}^{+}_{q}$ and $-\mathcal{C}^{+}_{q}$. Then the relation $v_{1} \prec v_{2}$ iff $v_{2} - v_{1} \in \mathcal{C}^{+}_{q}$ defines a partial order in $\mathbb{E}$ (the convexity implies transitivity). This is no longer true for cones of higher rank. However, as we will see in Subsection \ref{SUBSEC: HorizontalLeaves}, the negative or positive pseudo-ordering relations become an equivalence relation (in particular, a transitive relation) if we apply them for trajectories of the cocycle.

\section{Recovery of foliations}
\label{SEC: ConstructionIM}

Basic ideas behind our construction of inertial manifolds (or, more generally, foliations) are inspired by R.A.~Smith's paper on non-autonomous ODEs \cite{Smith1986Massera}, where the operators $P(q)$ (and the corresponding sign spaces) were independent of $q \in \mathcal{Q}$ and the exponent $\nu_{0}$ was constant. In \cite{Anikushin2020Red} the present author generalized Smith's approach for cocycles in Hilbert spaces (i.~e. when $\mathbb{E}=\mathbb{H}$). Some notes on possible extensions of results from \cite{Anikushin2020Red} for Banach spaces, motivated by considering delay equations, are presented in \cite{Anikushin2020Semigroups}. Consideration of the fibre-dependent exponent $\nu_{0}$ is motivated by the works of A.~Kostianko and S.~Zelik \cite{KostiankoZelik2015} and A.~Kostianko et. al. \cite{KostiankoZelikSA2020} on the Spatial Averaging Principle. 

As in \cite{Smith1986Massera}, to construct inertial manifolds (or, more generally, what we call \textit{horizontal leaves}) we use the Graph Transform technique, which is also frequently used in hyperbolic dynamics (see, for example, \cite{BarreiraPesin2007}) or in the theory of normally hyperbolic invariant manifolds (see, for example, \cite{BatesLuZeng2000, BatesLuZeng1998}). This approach is concerned with iterates (pullback iterates in the nonautonomous case) of certain submanifolds, which converge to the desired invariant manifold. In our opinion, an essential contribution of R.A.~Smith was to consider a general class of quadratic cones. This is important for applications of the theory, where such cones are constructed via the Frequency Theorem.

\subsection{Admissible manifolds and horizontal leaves}
\label{SUBSEC: HorizontalLeaves}

Throughout this section we will make use of a bit weaker form of \textbf{(H3)}. We also state it for a different family of operators $Q=Q(q)$ for purposes of further developments. This hypothesis can be stated as follows.
\begin{description}
	\item[$\textbf{(H3)}^{-}_{w}$] There exists a family of operators $Q(q) \in \mathcal{L}(\mathbb{E};\mathbb{E}^{*})$, which satisfies \textbf{(H1)} with $P(q)$ changed to $Q(q)$, $\mathbb{E}^{-}(q)$ and $\mathbb{E}^{+}(q)$ changed to $\mathbb{E}^{-}_{Q}(q)$, $\mathbb{E}^{+}_{Q}(q)$ and \textbf{(H2)} with $j$ changed to $j_{Q}$. Moreover, for some constants $-\infty < \alpha^{-} < \alpha^{+} < +\infty$, $\delta_{Q}>0$, $\tau_{Q} \geq 0$ and a Borel measurable function $\alpha_{0} \colon \mathcal{Q} \to \mathbb{R}$ such that $\alpha^{-} \leq \alpha_{0}(\cdot) \leq \alpha^{+}$ and for $\alpha(t;q) := \int_{0}^{t}\alpha_{0}(\vartheta^{s}(q))ds$ and $V^{Q}_{q}(v):=\langle v, Q(q)v \rangle$ we have the inequality
	\begin{equation}
		\label{EQ: H3WeakMinus}
		\begin{split}
		e^{2\alpha(r;q)}V^{Q}_{\vartheta^{r}(q)}(\psi^{r}(q,v_{1})-\psi^{r}(q,v_{2})) - V^{Q}_{q} (v_{1} - v_{2}) \leq \\ \leq -\delta_{Q} \int_{0}^{r} e^{2\alpha(s;q)}|\psi^{s}(q,v_{1})-\psi^{s}(q,v_{2})|^{2}_{\mathbb{H}}ds 
		\end{split}
	\end{equation}
    satisfied for all $q \in \mathcal{Q}$, $r \geq \tau_{Q}$ and $v_{1},v_{2} \in \mathbb{E}$ such that $V^{Q}_{q}(v_{1}-v_{2}) \leq 0$.
\end{description}

\begin{remark}
	We have to emphasize the two changes made in \textbf{(H3)}. Firstly, we require the function $\nu_{0}(\cdot)$ (that is $\alpha_{0}(\cdot)$ now) to be bounded only (not necessarily positive). In further sections, we apply results of the present section for both positive $\alpha^{-}>0$ and negative $\alpha^{+} < 0 $ cases. Secondly, we require \eqref{EQ: PropH3inequality} (that is \eqref{EQ: H3WeakMinus} now) to be satisfied only for negatively pseudo-ordered initial points $v_{1}$ and $v_{2}$. 
\end{remark}

\begin{remark}
	\label{REM: NotationQomited}
	For simplicity of notation, throughout this subsection we write $V_{q}$, $\mathbb{E}^{-}(q)$ and $\mathbb{E}^{+}(q)$ instead of $V^{Q}_{q}$, $\mathbb{E}^{-}_{Q}(q)$ and $\mathbb{E}^{+}_{Q}(q)$ respectively. For the $V^{Q}_{q}$-orthogonal projector and its complementary one, which should have been denoted, for example, as $\Pi^{Q}_{q}$ and $\Pi^{Q,+}_{q}$ respectively, we use the notations $\Pi_{q}$ and $\Pi^{+}_{q}$ respectively.
\end{remark}

\subsubsection{Dynamics of admissible manifolds}

At first, we shall emphasize that the results of this subsection are proved for \underline{semicocycles}, i.e. allow the family of maps $\vartheta^{t} \colon \mathcal{Q} \to \mathcal{Q}$ to be only a semiflow on $\mathcal{Q}$. This will be especially important in the study of vertical foliations in Section \ref{SUBSEC: VerticalFoliation}.

In what follows we strictly use the $V_{q}$-orthogonality of the spaces $\mathbb{E}^{+}(q)$ and $\mathbb{E}^{-}(q)$, which was described in Remark \ref{REM: Orthogonality}. Let us recall that this means the relation $V_{q}(v)=V_{q}(v^{+})+V_{q}(v^{-})$ holds for all $v \in \mathbb{E}$, where $v=v^{+}+v^{-}$ is the unique decomposition with $v^{+} \in \mathbb{E}^{+}(q)$ and $v^{-} \in \mathbb{E}^{-}(q)$. We consider the corresponding $V_{q}$-orthogonal projector given by $\Pi_{q}(v):=v^{-}$.

For a real number $\varkappa$ (we are interested in $\varkappa \in (0,1)$) and $q \in \mathcal{Q}$ let us consider the quadratic form $V^{(\varkappa)}_{q}(v):= V_{q}(v^{+}) + \varkappa^{2} V_{q}(v^{-})$, where, as usual, $v=v^{+} + v^{-}$ is the unique decomposition with $v^{+} = \Pi^{+}_{q}v \in \mathbb{E}^{+}(q)$ and $v^{-}=\Pi_{q}v \in \mathbb{E}^{-}(q)$. We start with the following lemma showing that $\textbf{(H3)}^{-}_{w}$ is preserved for the family of quadratic forms  $V^{(\varkappa)}_{q}$ if $\varkappa \in (0,1)$ is chosen sufficiently close to $1$. This idea is due to A.V.~Romanov \cite{Romanov1994}.

\begin{lemma}[Cone Perturbation Lemma]
	\label{LEM: ConePerturbation}
	Suppose $\textbf{(H3)}^{-}_{w}$, \textbf{(ULIP)} and \textbf{(PROJ)} (for the $V_{q}$-orthogonal projectors $\Pi_{q}$) are satisfied. Then there exists $\varkappa_{0} \in (0,1)$ such that for any $\varkappa \in [\varkappa_{0},1]$ we have the inequality
	\begin{equation}
		\label{EQ: PerturbedConeCondition}
		\begin{split}
			e^{2\alpha(r;q)}V^{(\varkappa)}_{\vartheta^{r}(q)}(\psi^{r}(q,v_{1})-\psi^{r}(q,v_{2})) - V^{(\varkappa)}_{q}(v_{1}-v_{2}) \leq \\ \leq -\frac{\delta_{Q}}{2} \int_{0}^{r} e^{2\alpha(s;q)}|\psi^{s}(q,v_{1})-\psi^{s}(q,v_{2})|^{2}_{\mathbb{H}}ds.
		\end{split}	
	\end{equation}
	satisfied for all $q \in \mathcal{Q}$, $r \geq \max\{\tau_{Q}, \tau_{S}+1\}$ and $v_{1},v_{2} \in \mathbb{E}$ such that $V_{q}(v_{1}-v_{2}) \leq 0$.
\end{lemma}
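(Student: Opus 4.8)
The idea is to write $V^{(\varkappa)}_q = V_q + (\varkappa^2 - 1)V_q\circ\Pi_q$, so that $V^{(\varkappa)}_q - V_q = (\varkappa^2-1)V_q(\Pi_q\,\cdot\,)$ is a "small" (negative semidefinite) perturbation of $V_q$ when $\varkappa$ is close to $1$. Plugging trajectories $v_1(t)=\psi^t(q,v_1)$, $v_2(t)=\psi^t(q,v_2)$ and writing $w(t)=v_1(t)-v_2(t)$, the left-hand side of \eqref{EQ: PerturbedConeCondition} equals the left-hand side of \eqref{EQ: H3WeakMinus} (which by $\textbf{(H3)}^{-}_{w}$ is $\leq -\delta_Q\int_0^r e^{2\alpha(s;q)}|w(s)|^2_{\mathbb H}\,ds$) plus the error term
\begin{equation*}
	(\varkappa^2-1)\Bigl[ e^{2\alpha(r;q)}V_{\vartheta^r(q)}(\Pi_{\vartheta^r(q)}w(r)) - V_q(\Pi_q w(0)) \Bigr].
\end{equation*}
So it suffices to bound this error by $\tfrac{\delta_Q}{2}\int_0^r e^{2\alpha(s;q)}|w(s)|^2_{\mathbb H}\,ds$ for $\varkappa$ close enough to $1$, uniformly in $q$, $r\ge\max\{\tau_Q,\tau_S+1\}$ and in pseudo-ordered pairs.

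\textbf{Key steps.} First, drop the term $-(\varkappa^2-1)V_q(\Pi_q w(0)) = (1-\varkappa^2)V_q(\Pi_q w(0)) \le 0$ since $V_q$ is negative on $\mathbb E^-(q)$; this only helps. Thus we must estimate $(1-\varkappa^2)e^{2\alpha(r;q)}(-V_{\vartheta^r(q)}(\Pi_{\vartheta^r(q)}w(r)))$ from above. Using $|V_{\vartheta^r(q)}(\Pi_{\vartheta^r(q)}w(r))| \le M_Q \|\Pi_{\vartheta^r(q)}\|^2\|w(r)\|^2_{\mathbb E} \le M_Q M_\Pi^2 \|w(r)\|^2_{\mathbb E}$ (using \textbf{(PROJ)} and the $Q$-analogue of \eqref{EQ: OperatorsBound}), the task reduces to controlling $e^{2\alpha(r;q)}\|w(r)\|^2_{\mathbb E}$ by the integral $\int_0^r e^{2\alpha(s;q)}|w(s)|^2_{\mathbb H}\,ds$. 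This is exactly the mechanism in the proof of Lemma \ref{LEM: SqueezingPrinciple}: by \textbf{(ULIP)} (the first inequality there, applied on the window $[r-\tau_S-1,\ r-\tau_S]$ which is nonempty since $r\ge\tau_S+1$) one has, for a suitable $s_0\in[r-\tau_S-1,r-\tau_S]$,
\begin{equation*}
	\int_0^r e^{2\alpha(s;q)}|w(s)|^2_{\mathbb H}\,ds \ge \int_{r-\tau_S-1}^{r-\tau_S} e^{2\alpha(s;q)}|w(s)|^2_{\mathbb H}\,ds = e^{2\alpha(s_0;q)}|w(s_0)|^2_{\mathbb H} \ge c\, e^{2\alpha(r;q)}\|w(r)\|^2_{\mathbb E},
\end{equation*}
where $c = L_1^{-2} e^{-2\alpha^+(\tau_S+1)} > 0$ absorbs the $L_1$ from \textbf{(ULIP)} (comparing $\|w(r)\|_{\mathbb E}$ to $|w(s_0)|_{\mathbb H}$ over a time step of length $\le \tau_S+1$) and the factor $e^{2(\alpha(r;q)-\alpha(s_0;q))}\le e^{2\alpha^+(\tau_S+1)}$. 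Combining, the error term is $\le (1-\varkappa^2) M_Q M_\Pi^2 c^{-1}\int_0^r e^{2\alpha(s;q)}|w(s)|^2_{\mathbb H}\,ds$, so choosing $\varkappa_0\in(0,1)$ with $(1-\varkappa_0^2) M_Q M_\Pi^2 c^{-1} \le \tfrac{\delta_Q}{2}$ finishes the proof; note the same $\varkappa_0$ works for all $\varkappa\in[\varkappa_0,1]$ since $1-\varkappa^2$ is decreasing.

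\textbf{Main obstacle.} Two small points need care. One must check the pseudo-ordering hypotheses match up: \eqref{EQ: PerturbedConeCondition} is stated for $v_1,v_2$ with $V_q(v_1-v_2)\le 0$ (not $V^{(\varkappa)}_q(v_1-v_2)\le 0$), but since $V^{(\varkappa)}_q(w) = V_q(w^+) + \varkappa^2 V_q(w^-) \le V_q(w^+) + V_q(w^-) = V_q(w)$ for $\varkappa\le 1$ and $w^-\in\mathbb E^-$, the condition $V_q(w(0))\le 0$ implies $V^{(\varkappa)}_q(w(0))\le 0$, and $\textbf{(H3)}^{-}_{w}$ (which needs $V^Q_q(v_1-v_2)\le 0$) is applicable. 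The genuinely delicate step is the uniform lower bound $|w(s_0)|_{\mathbb H}^2 \ge L_1^{-2}\|w(r)\|_{\mathbb E}^2$: this requires the first \textbf{(ULIP)} inequality $\|\psi^{\tau_S+t}(q',v_1')-\psi^{\tau_S+t}(q',v_2')\|_{\mathbb E}\le L_T|v_1'-v_2'|_{\mathbb H}$ applied with $q'=\vartheta^{s_0-\tau_S}(q)$ (or $\vartheta^{r-\tau_S-1}(q)$), $v_i' = v_i(s_0-\tau_S)$, and $t = r-s_0 \in [\tau_S,\tau_S+1]$, so that $T$ can be taken to be $1$; one should double-check that the time-indices fall in the allowed range $[\tau_S,\tau_S+T]$ and that $r-\tau_S-1\ge 0$, which holds precisely because $r\ge\max\{\tau_Q,\tau_S+1\}$. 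Everything else is bookkeeping with the uniform bounds $M_Q$, $M_\Pi$, $\alpha^\pm$.
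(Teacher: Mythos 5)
Your proof follows exactly the paper's own strategy: rewrite $V^{(\varkappa)}_q - V_q = (\varkappa^2 - 1)\,V_q\circ\Pi_q$, express the left-hand side of \eqref{EQ: PerturbedConeCondition} as the left-hand side of \eqref{EQ: H3WeakMinus} (controlled by $\textbf{(H3)}^{-}_{w}$) plus the error $(\varkappa^2-1)\bigl[e^{2\alpha(r;q)}V_{\vartheta^r(q)}(\Pi_{\vartheta^r(q)}w(r)) - V_q(\Pi_q w(0))\bigr]$, drop the favourable endpoint term at $t=0$, bound the unfavourable one at $t=r$ by $M_Q M_\Pi^2\|w(r)\|^2_{\mathbb E}$ via \textbf{(PROJ)}, and then control $e^{2\alpha(r;q)}\|w(r)\|^2_{\mathbb E}$ by the $\mathbb H$-integral using the mean value theorem on the length-$1$ window $[r-\tau_S-1,r-\tau_S]$ and \textbf{(ULIP)}. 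This is the paper's proof almost line for line, and your bookkeeping is actually cleaner: you correctly use $L_1^{-2}$ at the point where the paper's \eqref{EQ: ConePerturbationLipIntegralEstimate} has an apparent typo $(L_{\tau_S+1})^{2}$ where $(L_{\tau_S+1})^{-2}$ is meant (and $M_P$ should read $M_Q$ in the paper's final inequality).

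Two tiny slips worth flagging. First, the perturbation $(\varkappa^2-1)V_q\circ\Pi_q$ is positive semidefinite (not negative): $\varkappa^2-1<0$ multiplied by a nonpositive form gives a nonnegative one; your subsequent reasoning is correct, but the parenthetical mislabels the sign. Second, the exponential constant $e^{-2\alpha^+(\tau_S+1)}$ only provides a valid lower bound for $e^{2(\alpha(s_0;q)-\alpha(r;q))}$ when $\alpha^+\ge 0$; since $\textbf{(H3)}^{-}_{w}$ allows $\alpha^+$ of either sign, you should use the symmetric bound $e^{-2\max\{|\alpha^-|,|\alpha^+|\}(\tau_S+1)}$, which is what the paper does. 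Both are cosmetic and do not affect the correctness of the overall argument.
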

\begin{proof}
	In terms of \eqref{EQ: PerturbedConeCondition} let $w_{i}(s):=\psi^{s}(q,v_{i})$ for $i=1,2$ and $s \geq 0$. It is easy to see that \eqref{EQ: H3WeakMinus} is equivalent to the inequality
	\begin{equation}
		\begin{split}
			e^{2\alpha(r;q)}V^{(\varkappa)}_{\vartheta^{r}(q)}\left(w_{1}(r)-w_{2}(r)\right) - V^{(\varkappa)}_{q} \left(w_{1}(0)-w_{2}(0)\right) \leq \\ \leq (1-\varkappa^{2}) V_{q}\left( \Pi_{q} w_{1}(0)-\Pi_{q}w_{2}(0)\right) -\\- (1-\varkappa^{2})e^{2\alpha(r;q)} V_{\vartheta^{r}(q)}\left( \Pi_{\vartheta^{r}(q)} w_{1}(r)-\Pi_{\vartheta^{r}(q)}w_{2}(r)\right) - \\ - \delta_{Q}\int_{0}^{r}e^{2\alpha(s;q)}|w_{1}(s)-w_{2}(s)|^{2}_{\mathbb{H}}ds.
		\end{split}
	\end{equation}
	Since $V_{q}( \Pi_{q} w_{1}(0)-\Pi_{q}w_{2}(0)) \leq 0$, it is sufficient to show that if $1-\varkappa^{2}$ is small then we have
	\begin{equation}
		\begin{split}
			(1-\varkappa^{2})e^{2\alpha(r;q)} V_{\vartheta^{r}(q)}\left( \Pi_{\vartheta^{r}(q)} w_{1}(r)-\Pi_{\vartheta^{r}(q)}w_{2}(r)\right) + \\ + \frac{\delta_{Q}}{2}\int_{0}^{r}e^{2\alpha(s;q) s}|w_{1}(s)-w_{2}(s)|^{2}_{\mathbb{H}}ds \geq 0.
		\end{split}
	\end{equation}
	Since for some constants $M_{Q},M_{\Pi} >0$ we have $\|Q(q)\| \leq M_{Q}$ (thanks to \textbf{(H1)}) and $\| \Pi_{q} \| \leq M_{\Pi}$ (thanks to \textbf{(PROJ)}), we get
	\begin{equation}
		\begin{split}
			\left|(1-\varkappa^{2})e^{2\alpha(r;q)} V_{\vartheta^{r}(q)}\left( \Pi_{\vartheta^{r}(q)} w_{1}(r)-\Pi_{\vartheta^{r}(q)}w_{2}(r)\right)\right| \leq \\ \leq (1-\varkappa^{2}) e^{2\alpha(r;q)} M_{Q} M^{2}_{\Pi} \cdot \| w_{1}(r) -w_{2}(r) \|^{2}_{\mathbb{E}}.
		\end{split}	
	\end{equation}
	By the mean value theorem, \textbf{(ULIP)} and since $r \geq \tau_{S} + 1$, for some $s_{0} \in [r-\tau_{S}-1,r-\tau_{S}]$ we have
	\begin{equation}
		\label{EQ: ConePerturbationLipIntegralEstimate}
		\begin{split}
			\int_{0}^{r}e^{2\alpha(s;q)}|w_{1}(s)-w_{2}(s)|^{2}_{\mathbb{H}}ds \geq \int_{r-\tau_{S}-1}^{r-\tau_{S}}e^{2\alpha(s;q)}|w_{1}(s)-w_{2}(s)|^{2}_{\mathbb{H}}ds = \\ = e^{2\alpha(s_{0};q)}|w_{1}(s_{0})-w_{2}(s_{0})|^{2}_{\mathbb{H}} \geq \\ \geq e^{-2\max\{ |\alpha^{-}|, |\alpha^{+}| \}(\tau_{S}+1)}e^{2\alpha(r;q)} \left(L_{\tau_{S}+1}\right)^{2} \cdot \| w_{1}(r) - w_{2}(r) \|^{2}_{\mathbb{E}}.
		\end{split}    	
	\end{equation}
	Thus, we have \eqref{EQ: PerturbedConeCondition} satisfied for all $\varkappa \in (0,1)$ such that
	\begin{equation}
		(1-\varkappa^{2}) M_{P} M^{2}_{\Pi} < \frac{\delta_{Q}}{2}  e^{-2\max\{ |\alpha^{-}|, |\alpha^{+}| \}(\tau_{S}+1)}\left(L_{\tau_{S}+1}\right)^{2}.
	\end{equation}
	The proof is finished.
\end{proof}
\begin{remark}
	Note that in the proof of Lemma \ref{LEM: ConePerturbation} we were dealing only with the inequality in \eqref{EQ: H3WeakMinus} and the pseudo-ordering assumption $V_{q}(v_{1}-v_{2}) \leq 0$ was not essential. Thus, the same perturbation result holds under \textbf{(H3)} and other similar conditions.
\end{remark}

In the following lemma we assume the agreement from Remark \ref{REM: TrajectoriesOfSemicocycles} that when speaking of a trajectory over $q$ defined for $t \geq T$ with $T<0$ we also assume that a backward extension for $q$ is fixed and denoted by $\vartheta^{s}(q)$ for $s \geq T$.
\begin{lemma}
	\label{LEM: BackwardEstimate}
	Suppose $\textbf{(H3)}^{-}_{w}$ and \textbf{(ULIP)} are satisfied. Let $w_{1}(\cdot)$ and $w_{2}(\cdot)$ be two trajectories over $q_{0}$, which are defined for $t \geq t_{1}$. Suppose that $t_{2} - t_{1} \geq \max\{ \tau_{Q}, \tau_{S}+1 \}$ and $V_{\vartheta^{t_{1}}(q)}(w_{1}(t_{1})-w_{2}(t_{1})) \leq 0$. Then for any $t_{0} \in [t_{1} + \max\{ \tau_{Q},\tau_{S}+1 \}, t_{2}]$ we have
	\begin{equation}
		\label{EQ: BackwardEstimateLemma}
		\begin{split}
			\| w_{1}(t_{0}) - w_{2}(t_{0}) \|^{2}_{\mathbb{E}} \leq \\ \leq \delta^{-1}_{Q} \| Q(\vartheta^{t_{2}}(q_{0}) ) \| \cdot L^{2}_{\tau_{S}+1} \cdot e^{2\max\{ |\alpha^{+}|,|\alpha^{-}| \} (\tau_{S}+1)} \cdot e^{2\alpha(t_{2};q_{0}) - 2 \alpha(t_{0};q_{0})} \cdot \\ \cdot \| \Pi_{\vartheta^{t_{2}}(q_{0})}w_{1}(t_{2}) - \Pi_{\vartheta^{t_{2}}(q_{0})}w_{2}(t_{2}) \|^{2}_{\mathbb{E}}.
		\end{split}
	\end{equation}
\end{lemma}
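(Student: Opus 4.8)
The plan is to track the quadratic form $V^{(\varkappa)}$ along the two trajectories and use the Cone Perturbation Lemma to convert the hypothesis at time $t_1$ into a controllable estimate at later times. First I would fix $\varkappa \in [\varkappa_0, 1)$ as provided by Lemma \ref{LEM: ConePerturbation}, so that the perturbed cone inequality \eqref{EQ: PerturbedConeCondition} holds. Applying it on the interval $[t_1, t_0]$ (using the shift to the driving point $\vartheta^{t_1}(q_0)$ and the hypothesis $V_{\vartheta^{t_1}(q_0)}(w_1(t_1)-w_2(t_1)) \leq 0$, which also gives $V^{(\varkappa)}_{\vartheta^{t_1}(q_0)}(w_1(t_1)-w_2(t_1)) \leq 0$ since $\varkappa < 1$ only shrinks the negative part further — more precisely $V^{(\varkappa)} \leq V \leq 0$ when $V^- \leq 0$), one obtains that $V^{(\varkappa)}$ stays nonpositive along the trajectory on $[t_1,t_2]$, hence in particular $V_{\vartheta^{t_0}(q_0)}(w_1(t_0)-w_2(t_0)) \leq 0$. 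This cone-invariance is the analogue of Lemma \ref{LEM: MonotonicityPrinciple} for the weak hypothesis.

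Next I would exploit the integral term. Applying the squeezing-type inequality \eqref{EQ: PerturbedConeCondition} (or directly \eqref{EQ: H3WeakMinus}) on the interval $[t_0, t_2]$, and throwing away the (nonpositive, by cone invariance at $t_2$) term $e^{2\alpha(t_2;q_0)}V^{(\varkappa)}_{\vartheta^{t_2}(q_0)}(\cdots)$ except for its $\mathbb{E}^-$-component, I get
\begin{equation}
\delta_Q \int_{t_0}^{t_2} e^{2\alpha(s;q_0)}|w_1(s)-w_2(s)|^2_{\mathbb{H}}\,ds \leq e^{2\alpha(t_0;q_0)}V_{\vartheta^{t_0}(q_0)}(w_1(t_0)-w_2(t_0)) - e^{2\alpha(t_2;q_0)}V_{\vartheta^{t_2}(q_0)}(w_1(t_2)-w_2(t_2)).
\end{equation}
Since $V_{\vartheta^{t_0}(q_0)}(w_1(t_0)-w_2(t_0)) \leq 0$, the first term on the right is nonpositive, so the whole right side is bounded above by $-e^{2\alpha(t_2;q_0)}V_{\vartheta^{t_2}(q_0)}(w_1(t_2)-w_2(t_2))$. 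Now using $V_{\vartheta^{t_2}(q_0)}(v) = V_{\vartheta^{t_2}(q_0)}(\Pi^+_{\vartheta^{t_2}(q_0)}v) + V_{\vartheta^{t_2}(q_0)}(\Pi_{\vartheta^{t_2}(q_0)}v)$ and discarding the nonnegative $\mathbb{E}^+$-part, one bounds $-V_{\vartheta^{t_2}(q_0)}(w_1(t_2)-w_2(t_2)) \leq -V_{\vartheta^{t_2}(q_0)}(\Pi_{\vartheta^{t_2}(q_0)}(w_1(t_2)-w_2(t_2))) \leq \|Q(\vartheta^{t_2}(q_0))\| \cdot \|\Pi_{\vartheta^{t_2}(q_0)}w_1(t_2) - \Pi_{\vartheta^{t_2}(q_0)}w_2(t_2)\|^2_{\mathbb{E}}$.

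Finally I would lower-bound the left-hand integral by a single well-placed Lipschitz estimate, exactly as in the proof of Lemma \ref{LEM: SqueezingPrinciple} and in \eqref{EQ: ConePerturbationLipIntegralEstimate}: restrict the integral to $[t_0-\tau_S-1, t_0-\tau_S] \subset [t_1, t_2]$ (legitimate because $t_0 - t_1 \geq \tau_S + 1$ and $t_0 \leq t_2$, though one should check the subinterval sits inside $[t_0, t_2]$ — actually here we need the interval around $t_0$ going \emph{forward}, i.e. $[t_0+\tau_S, t_0+\tau_S+1]$, if $t_0 + \tau_S + 1 \leq t_2$; otherwise rephrase using a point near $t_0$ from the right), apply the mean value theorem to get a point $s_0$, bound $e^{2\alpha(s_0;q_0)} \geq e^{2\alpha(t_0;q_0)} e^{-2\max\{|\alpha^-|,|\alpha^+|\}(\tau_S+1)}$, and use \textbf{(ULIP)} to replace $|w_1(s_0)-w_2(s_0)|_{\mathbb{H}}$ by $L_{\tau_S+1}^{-1}\|w_1(t_0)-w_2(t_0)\|_{\mathbb{E}}$. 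Dividing by $\delta_Q$ and the accumulated constants, and rearranging the exponential factors into $e^{2\alpha(t_2;q_0) - 2\alpha(t_0;q_0)}$, yields \eqref{EQ: BackwardEstimateLemma}. The main obstacle — really a bookkeeping subtlety rather than a conceptual one — is getting the direction and placement of the Lipschitz subinterval right: one must verify it lies within the interval of integration $[t_0,t_2]$ while still letting \textbf{(ULIP)} push the estimate back to time $t_0$, which is why the hypothesis demands $t_0 \geq t_1 + \max\{\tau_Q, \tau_S+1\}$ and $t_0 \leq t_2$ with enough room; a careful reader should check whether the propagation in \textbf{(ULIP)} is used in its ``$\mathbb{H}$-to-$\mathbb{E}$'' form over a window of length $\tau_S+1$ sitting to the left or right of $t_0$, adjusting the sign of the exponent bound accordingly.
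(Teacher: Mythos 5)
The core plan — squeeze an integral estimate out of $\textbf{(H3)}^{-}_{w}$ and then extract the pointwise bound from a Lipschitz window — is the right one, but the proposal runs into a genuine directional error that makes it fail as written. By applying the squeezing inequality only on $[t_0,t_2]$ you restrict the integral to $\int_{t_0}^{t_2}$, and then you look for a window of length $\tau_S+1$ near $t_0$ that lies inside $[t_0,t_2]$, proposing $[t_0+\tau_S,\,t_0+\tau_S+1]$. But \textbf{(ULIP)} only propagates \emph{forward}: from a point $s_0$ it gives $\|\psi^{t}(\cdot,v_1)-\psi^{t}(\cdot,v_2)\|_{\mathbb{E}} \le L_{T}|v_1-v_2|_{\mathbb{H}}$ for $t \in [\tau_S,\tau_S+T]$, i.e.\ an upper bound on the $\mathbb{E}$-distance at time $s_0+t$ in terms of the $\mathbb{H}$-distance at $s_0$. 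To control $\|w_1(t_0)-w_2(t_0)\|_{\mathbb{E}}$ from below by the integrand $|w_1(s_0)-w_2(s_0)|^2_{\mathbb{H}}$, one must have $s_0 < t_0$ with $t_0 - s_0 \in [\tau_S,\tau_S+1]$; a window to the \emph{right} of $t_0$ produces an estimate of the wrong sign (it would bound the integrand from \emph{above} by $\|w_1(t_0)-w_2(t_0)\|$), and no rephrasing "from the right" can reverse that. So the obstacle you flag is conceptual, not bookkeeping.

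The fix is to not shrink the interval in the first place: apply $\textbf{(H3)}^{-}_{w}$ on all of $[t_1,t_2]$ (the pseudo-ordering hypothesis at $t_1$ is exactly what is assumed, so no intermediate cone-invariance step is needed), drop the nonnegative term $-e^{2\alpha(t_1;q_0)}V_{\vartheta^{t_1}(q_0)}(\cdot)$, and bound $-e^{2\alpha(t_2;q_0)}V_{\vartheta^{t_2}(q_0)}(\cdot)$ by $e^{2\alpha(t_2;q_0)}\|Q(\vartheta^{t_2}(q_0))\|\,\|\Pi_{\vartheta^{t_2}(q_0)}(w_1(t_2)-w_2(t_2))\|^2_{\mathbb{E}}$ exactly as you do. Now the integral is $\int_{t_1}^{t_2}$, and the window $[t_0-\tau_S-1,\,t_0-\tau_S]$ is legitimately inside it precisely because the lemma's hypothesis demands $t_0 \ge t_1 + \max\{\tau_Q,\tau_S+1\}$; applying the mean value theorem there and pushing forward to $t_0$ via \textbf{(ULIP)} gives \eqref{EQ: BackwardEstimateLemma}. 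Two further remarks: the Cone Perturbation Lemma is an unnecessary detour here, and moreover it assumes \textbf{(PROJ)}, which is not among the hypotheses of this lemma, so invoking it would introduce an extra assumption; and when you compare the exponential weight at $s_0$ with the one at $t_0$ you need $|\alpha(s_0;q_0)-\alpha(t_0;q_0)| \le \max\{|\alpha^-|,|\alpha^+|\}(\tau_S+1)$, which accounts for the factor $e^{2\max\{|\alpha^+|,|\alpha^-|\}(\tau_S+1)}$ in the final bound.
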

\begin{proof}
	From $\textbf{(H3)}^{-}_{w}$ we have
	\begin{equation}
		\begin{split}
			\delta^{-1}_{Q} \cdot \|Q(\vartheta^{t_{2}}(q_{0}))\| \cdot \| \Pi_{\vartheta^{t_{2}}(q_{0}) }w_{1}(t_{2}) - \Pi_{\vartheta^{t_{2}}(q_{0})} w_{2}(t_{2}) \|^{2}_{\mathbb{E}} \geq \\ \geq \int_{t_{1}}^{t_{2}}e^{2\alpha(s;q_{0})}|w_{1}(s)-w_{2}(s)|^{2}_{\mathbb{H}}ds \geq \int_{t_{0}-\tau_{S}-1}^{t_{0}}e^{2\alpha(s;q_{0})}|w_{1}(s)-w_{2}(s)|^{2}_{\mathbb{H}}ds.
		\end{split}
	\end{equation}
	Applying the mean value theorem to the last integral and using \textbf{(ULIP)} in the same way we did in \eqref{EQ: ConePerturbationLipIntegralEstimate}, we get \eqref{EQ: BackwardEstimateLemma}.
\end{proof}

Let $\mathcal{M} \subset \mathbb{E}$ be a subset. We say that $\mathcal{M}$ is \textit{strongly negatively pseudo-ordered} over a given $q \in \mathcal{Q}$ if for some $\varkappa \in (0,1)$ and all $v_{1},v_{2} \in \mathcal{M}$ we have $V^{(\varkappa)}_{q}(v_{1}-v_{2}) \leq 0$. We call $\mathcal{M}$ an \textit{admissible set} over $q$ if it is strongly negatively pseudo-ordered and the map $\Pi_{q} \colon \mathcal{M} \to \mathbb{E}^{-}(q)$ is a homeomorphism. Note also that any strongly negatively pseudo-ordered set is strictly negatively pseudo-ordered. Sometimes we will say that $\mathcal{M}$ is $\varkappa$-admissible to emphasize the choice of $\varkappa$.

We call an admissible over $q \in \mathcal{Q}$ set $\mathcal{M}$ \textit{Lipschitz admissible} if the inverse to $\Pi_{q} \colon \mathcal{M} \to \mathbb{E}^{-}(q)$ is Lipschitz. Let $L(\mathcal{M})$ be the Lipschitz constant of the inverse map. We will sometimes say that $\mathcal{M}$ is $L(\mathcal{M})$-Lipschitz admissible (or $L(\mathcal{M})$-Lipschitz $\varkappa$-admissible) to emphasize the chosen parameters.

The following lemma describes dynamics of admissible sets (similar properties are also known in hyperbolic dynamics \cite{BarreiraPesin2007}). It is of special interest in the case of non-invertible dynamics (for example, given by parabolic problems or delay equations) since it reveals non-invariant structures which are homeomorphically mapped onto each other. Note also that its proof heavily relies on the Brouwer theorem on invariance of domain.
\begin{figure}
	\centering
	\includegraphics[width=1\linewidth]{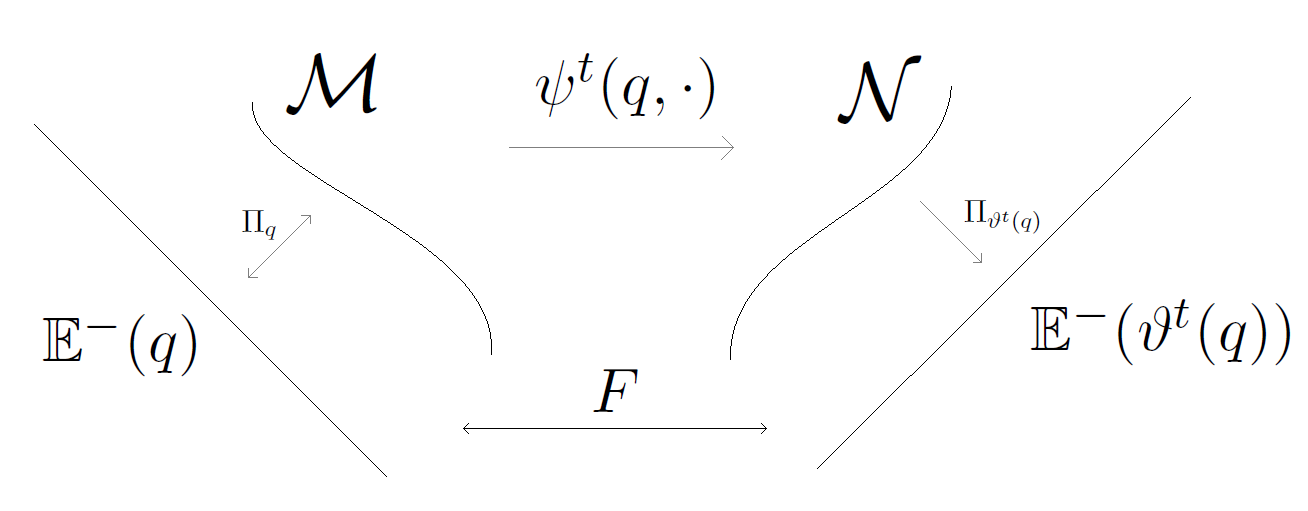}
	\caption{An illustation to Lemma \ref{LEM: GraphTransform}. Here arrows show directions for which the continuity is given a priori or is proved in the lemma. Clearly, this is sufficient to obtain continuity in the other (not drawn) directions.}
	\label{FIG: GraphTransformMN}
\end{figure}
\begin{lemma}
	\label{LEM: GraphTransform}
	Under the hypotheses of Lemma \ref{LEM: ConePerturbation}, let $\mathcal{M}$ be a $\varkappa$-admissible set over $q \in \mathcal{Q}$ for some $\varkappa \geq \varkappa_{0}$ (here $\varkappa_{0}$ is given by Lemma \ref{LEM: ConePerturbation}). Then for any $t \geq \max\{ \tau_{Q},\tau_{S}+1 \}$ the set $\mathcal{N} := \psi^{t}(q,\mathcal{M})$ is $C_{Lip}(\vartheta^{t}(q))$-Lipschitz $\varkappa$-admissible over $\vartheta^{t}(q)$ and the map $\psi^{t}(q,\cdot) \colon \mathcal{M} \to \mathcal{N}$ is a homeomorphism. Here $C_{Lip}(\vartheta^{t}(q))$ is given by
	\begin{equation}
		\label{EQ: LipschitzConstantOverQ}
		C_{Lip}(q) := \left( \delta^{-1}_{Q} \| Q(q) \| \right)^{1/2} \cdot L_{\tau_{S}+1} \cdot e^{\max\{ |\alpha^{+}|, |\alpha^{-}| \}(\tau_{S}+1)}.
	\end{equation}
\end{lemma}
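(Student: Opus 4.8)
The plan is to establish three things about $\mathcal{N} = \psi^{t}(q,\mathcal{M})$: that it is strongly negatively pseudo-ordered over $\vartheta^{t}(q)$ with the same parameter $\varkappa$, that $\Pi_{\vartheta^{t}(q)} \colon \mathcal{N} \to \mathbb{E}^{-}(\vartheta^{t}(q))$ is a homeomorphism onto the whole subspace $\mathbb{E}^{-}(\vartheta^{t}(q))$, and that the inverse of this projection is Lipschitz with the explicit constant $C_{Lip}(\vartheta^{t}(q))$; the homeomorphism claim for $\psi^{t}(q,\cdot) \colon \mathcal{M} \to \mathcal{N}$ then comes along the way. The pseudo-ordering is the easy part: given $v_1, v_2 \in \mathcal{M}$ we have $V^{(\varkappa)}_{q}(v_1 - v_2) \leq 0$ by admissibility, so in particular $V_{q}(v_1-v_2) \leq 0$ (since $\varkappa^2 < 1$ and the $V_q$-orthogonal splitting gives $V_q \le V_q^{(\varkappa)}$ on the cone), hence Lemma \ref{LEM: ConePerturbation} applies with $l=0$, $r=t$ and yields $e^{2\alpha(t;q)} V^{(\varkappa)}_{\vartheta^{t}(q)}(\psi^{t}(q,v_1) - \psi^{t}(q,v_2)) \le V^{(\varkappa)}_{q}(v_1-v_2) \le 0$, i.e. $\mathcal{N}$ is $\varkappa$-admissibly pseudo-ordered over $\vartheta^{t}(q)$.

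Next I would show injectivity and the Lipschitz estimate simultaneously. For $v_1 \neq v_2$ in $\mathcal{M}$, write $w_i(\cdot) = \psi^{\cdot}(q,v_i)$; these are trajectories over $q$ defined for all nonnegative times, so Lemma \ref{LEM: BackwardEstimate} applies with $t_1 = 0$, $t_2 = t$, $t_0 = t$ (using $t \geq \max\{\tau_Q, \tau_S+1\}$) and gives
\begin{equation}
	\| w_1(t) - w_2(t) \|^{2}_{\mathbb{E}} \leq \delta^{-1}_{Q} \| Q(\vartheta^{t}(q)) \| \cdot L^{2}_{\tau_{S}+1} \cdot e^{2\max\{|\alpha^{+}|,|\alpha^{-}|\}(\tau_{S}+1)} \cdot \| \Pi_{\vartheta^{t}(q)}w_1(t) - \Pi_{\vartheta^{t}(q)}w_2(t) \|^{2}_{\mathbb{E}},
\end{equation}
which is exactly $\| w_1(t) - w_2(t) \|_{\mathbb{E}} \leq C_{Lip}(\vartheta^{t}(q)) \cdot \| \Pi_{\vartheta^{t}(q)}w_1(t) - \Pi_{\vartheta^{t}(q)}w_2(t) \|_{\mathbb{E}}$. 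This shows at once that $\psi^{t}(q,\cdot)$ is injective on $\mathcal{M}$ (if the projections agreed the left side would be zero), that $\Pi_{\vartheta^{t}(q)}$ restricted to $\mathcal{N}$ is injective, and — once surjectivity is in hand — that its inverse is $C_{Lip}(\vartheta^{t}(q))$-Lipschitz. Note that $\Pi_{\vartheta^{t}(q)}$ restricted to $\mathcal{N}$ is also injective directly from Lemma \ref{LEM: OrderProjectorPrinciple} applied on the fiber $\vartheta^{t}(q)$, since the points of $\mathcal{N}$ are pairwise pseudo-ordered there; but the quantitative bound above is what we need for Lipschitzness. Continuity of $\Pi_{\vartheta^{t}(q)}|_{\mathcal{N}}$ is automatic (restriction of a bounded linear map), and continuity of its inverse follows from the Lipschitz bound, so once we know the projection is onto we get that $\Pi_{\vartheta^{t}(q)}|_{\mathcal{N}}$ is a homeomorphism; composing with the homeomorphism $\Pi_{q}|_{\mathcal{M}}$ and noting that $\psi^{t}(q,\cdot) = (\Pi_{\vartheta^{t}(q)}|_{\mathcal{N}})^{-1} \circ (\text{the continuous map } v \mapsto \Pi_{\vartheta^{t}(q)}\psi^{t}(q,v))$ together with its continuous inverse shows $\psi^{t}(q,\cdot)\colon \mathcal{M}\to\mathcal{N}$ is a homeomorphism as well.

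The main obstacle is surjectivity of $\Pi_{\vartheta^{t}(q)} \colon \mathcal{N} \to \mathbb{E}^{-}(\vartheta^{t}(q))$, and this is where the Brouwer invariance of domain theorem enters (as the lemma statement itself flags). The idea: the composed map $F := \Pi_{\vartheta^{t}(q)} \circ \psi^{t}(q,\cdot) \circ (\Pi_{q}|_{\mathcal{M}})^{-1} \colon \mathbb{E}^{-}(q) \to \mathbb{E}^{-}(\vartheta^{t}(q))$ is a continuous injective map between two finite-dimensional spaces of the same dimension $j_Q$ (equal dimensions by $\textbf{(H2)}$ applied to $Q$), so by invariance of domain its image is open. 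One then argues the image is also closed: using the Lipschitz lower bound on $F^{-1}$ derived above, $\|F(\xi_1) - F(\xi_2)\|_{\mathbb{E}} \geq C_{Lip}(\vartheta^t(q))^{-1}\|\psi^t(q,\cdot)\text{-preimages}\|$, one shows $F$ is proper — preimages of bounded sets are bounded, using the Lipschitz admissibility of $\mathcal{M}$ to control $(\Pi_q|_{\mathcal{M}})^{-1}$ and $\textbf{(ULIP)}$ to control $\psi^t$ — hence $F$ maps closed sets to closed sets, so $\operatorname{im} F$ is clopen in the connected space $\mathbb{E}^{-}(\vartheta^{t}(q))$ and nonempty, therefore everything. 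The delicate point is making the properness argument precise: one needs that $\|F(\xi)\|_{\mathbb{E}} \to \infty$ as $\|\xi\|_{\mathbb{E}} \to \infty$, which should follow by combining the reverse Lipschitz estimate with the fact that $\|\psi^t(q,v_1)-\psi^t(q,v_2)\|_{\mathbb{E}}$ is comparable to $\|\Pi_{\vartheta^t(q)}\psi^t(q,v_1) - \Pi_{\vartheta^t(q)}\psi^t(q,v_2)\|_{\mathbb{E}}$ and, on the admissible set $\mathcal{M}$, $\|v_1 - v_2\|_{\mathbb{E}}$ is comparable to $\|\Pi_q v_1 - \Pi_q v_2\|_{\mathbb{E}}$; I would also need the fixed reference point argument, e.g. track one trajectory through a fixed point of $\mathcal{M}$ so that the estimate on differences translates into an estimate on norms.
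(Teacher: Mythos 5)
Your pseudo-ordering step, your use of Lemma \ref{LEM: BackwardEstimate} for the Lipschitz constant, the reduction to the map $F = \Pi_{\vartheta^t(q)}\circ\psi^t(q,\cdot)\circ\Phi_q$, and the appeal to invariance of domain for openness of $\operatorname{im}F$ all match the paper's argument. The gap is precisely where you flag it: the closedness/properness of $F$.

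Your sketch proposes to get properness ``using the Lipschitz admissibility of $\mathcal{M}$ to control $(\Pi_q|_\mathcal{M})^{-1}$ and \textbf{(ULIP)} to control $\psi^t$,'' but this cannot work for two reasons. First, $\mathcal{M}$ is only assumed to be admissible, not Lipschitz admissible, so $(\Pi_q|_\mathcal{M})^{-1}$ is merely continuous and you have no quantitative upper bound on it. Second, and more fundamentally, even if $(\Pi_q|_\mathcal{M})^{-1}$ were Lipschitz, both it and $\psi^t$ (via \textbf{(ULIP)}) only give \emph{upper} bounds on the distance between images; properness of $F$ requires a \emph{lower} bound, i.e.\ that $\|F(\zeta_1)-F(\zeta_2)\|$ be bounded below by something that grows with $\|\zeta_1-\zeta_2\|$. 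Your estimate from Lemma \ref{LEM: BackwardEstimate} is also an upper bound on $\|w_1(t)-w_2(t)\|$ in terms of $\|F(\zeta_1)-F(\zeta_2)\|$ and cannot be turned around. Nothing you have written down excludes, a priori, that $F$ is bounded on all of $\mathbb{E}^-(q)$.

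What the paper does instead is derive the coercivity of $F$ directly: using $\textbf{(H3)}^{-}_{w}$ to pass the sign of $V$ forward in time, then the identity $-V_q(v_1-v_2) = -V^{(\varkappa)}_q(v_1-v_2) - (1-\varkappa^2)V_q(\zeta_1-\zeta_2)$ together with the $\varkappa$-admissibility $V^{(\varkappa)}_q(v_1-v_2)\le 0$, and finally the lower bound $-V_q(\zeta_1-\zeta_2)\ge C_q^2\|\zeta_1-\zeta_2\|^2$ on $\mathbb{E}^-(q)$, to obtain
\begin{equation}
e^{2\alpha(t;q)}\|Q\|\,\|F(\zeta_1)-F(\zeta_2)\|^2_{\mathbb{E}} \;\ge\; (1-\varkappa^2)\,C_q^2\,\|\zeta_1-\zeta_2\|^2_{\mathbb{E}}.
\end{equation}
This single inequality gives injectivity, properness, and closedness of the image at once, without ever needing a lower bound on $\psi^t$ or a Lipschitz property of $\mathcal{M}$. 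It is the strong ($\varkappa<1$) admissibility that is doing the work here — the slack $(1-\varkappa^2)$ is exactly what yields the quantitative lower bound — which is why admissibility, rather than mere pseudo-ordering, is essential for the conclusion. You should replace your properness sketch with this chain of inequalities; the rest of your argument then goes through as written.
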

\begin{proof}
	Since $\mathcal{M}$ is $\varkappa$-admissible, for arbitrary points $v_{1},v_{2} \in \mathcal{M}$ we have $V^{(\varkappa)}_{q}(v_{1}-v_{2}) \leq 0$. Then, by \eqref{EQ: PerturbedConeCondition} we have
	\begin{equation}
		e^{2\alpha(t;q)}V^{(\varkappa)}_{\vartheta^{t}(q)}(\psi^{t}(q,v_{1})-\psi^{t}(q,v_{2})) \leq V^{(\varkappa)}_{q}(v_{1}-v_{2}) \leq 0.
	\end{equation}
	Thus, $\mathcal{N}$ is also strongly negatively pseudo-ordered. Let $\Pi^{-1}_{q} \colon \mathbb{E}^{-}(q) \to \mathcal{M}$ denote the inverse to $\Pi_{q} \colon \mathcal{M} \to \mathbb{E}^{-}(q)$ map, which is a homeomorphism by definition. Note that for our purposes it is sufficient to show that the map $F \colon \mathbb{E}^{-}(q) \to \mathbb{E}^{-}(\vartheta^{t}(q))$, which is given by $F(\zeta) = \Pi_{\vartheta^{t}(q)}\psi^{t}(q,\Pi^{-1}_{q} \zeta)$ for $\zeta \in \mathbb{E}^{-}(q)$, is a homeomorphism (see Fig. \ref{FIG: GraphTransformMN}). Suppose that $\Pi_{q} v_{1} = \zeta_{1}$  and $\Pi_{q} v_{2} = \zeta_{2}$. Thus, $F(\zeta_{i}) = \Pi_{\vartheta^{t}(q)}\psi^{t}(q,v_{i})$ for $i = 0,1$. From this and \eqref{EQ: H3WeakMinus} we get
	\begin{equation}
		\label{EQ: AdmissibleSetsLemmaMain}
		\begin{split}
			e^{2\alpha(t;q)} \| Q(q) \| \cdot \| F(\zeta_{1}) - F(\zeta_{2}) \|^{2}_{\mathbb{E}} \geq -e^{2\alpha(t;q)}V_{\vartheta^{t}(q)}(\psi^{t}(q,v_{1})-\psi^{t}(q,v_{2})) \\ \geq -V_{q}(v_{1}-v_{2}) = -V^{(\varkappa)}_{q}(v_{1}-v_{2}) - (1-\varkappa^{2})V_{q}(\zeta_{1}-\zeta_{2}) \geq \\ \geq -(1-\varkappa^{2})V_{q}(\zeta_{1}-\zeta_{2}) \geq (1-\varkappa^{2})C^{2}_{q}\|\zeta_{1} - \zeta_{2}\|^{2}_{\mathbb{E}},
		\end{split}
	\end{equation}
	where $C_{q}>0$ is given by \eqref{EQ: ConstantProjectorNorm}. Since $\mathcal{M}$ is strongly negatively pseudo-ordered, it is strictly negatively pseudo-ordered, i.~e. we have $V_{q}(v_{1}-v_{2}) < 0$. This and the second inequality in \eqref{EQ: AdmissibleSetsLemmaMain} shows that the map $\psi^{t}(q,\cdot) \colon \mathcal{M} \to \mathcal{N}$ is injective. By similar reasonings concerned with $\mathcal{N}$, the map $\Pi_{\vartheta^{t}(q)} \colon \mathcal{N} \to \mathbb{E}^{-}(\vartheta^{t}(q))$ is injective. Thus, the map $F$ is a continuous injective map between $j$-dimensional spaces. By the Brouwer theorem on invariance of domain, the image $F(\mathbb{E}^{-}(q))$ is open in $\mathbb{E}^{-}(q)$ and $F$ is a homeomorphism onto the image. Moreover, the relation between the first and last term in \eqref{EQ: AdmissibleSetsLemmaMain} shows that the image must be also closed. Therefore $F(\mathbb{E}^{-}(q))=\mathbb{E}^{-}(\vartheta^{t}(q))$ and $F$ is a homeomorphism.
	
	To show that $\mathcal{N}$ is Lipschitz admissible we use Lemma \ref{LEM: BackwardEstimate} with $q_{0} = \vartheta^{t}(q)$, $t_{2} = 0$, $t_{1} = -t$ and $t_{0} = 0$.
\end{proof}

\begin{exercise}
	\label{EXERCISE: GraphTransformPlane}
	Consider the special case encountered in Lemma \ref{LEM: GraphTransform} given by $\mathcal{M} = \mathbb{F} + v$ for some $v \in \mathbb{E}$ and a strictly pseudo-ordered (over $q$) linear subspace. Show that if we only require that $\textbf{(H3)}^{-}_{w}$ is satisfied then for all $t \geq \tau_{Q}$ the set $\mathcal{N} = \psi^{t}(q,\mathcal{M})$ is strictly pseudo-ordered and the map $\Pi_{\vartheta^{t}(q)} \colon \mathcal{N} \to \mathbb{E}^{-}(\vartheta^{t}(q))$ is a homeomorphism.
	
	\textit{Hint:} Show that the map $F$ from the lemma is a homeomorphism by noting that an analog of the last estimate in \eqref{EQ: AdmissibleSetsLemmaMain} is valid due to the equivalence of norms given by $\sqrt{-V_{q}(\cdot)}$ and $\|\cdot\|_{\mathbb{E}}$ in the finite-dimensional space $\mathbb{F}$.
\end{exercise}

\subsubsection{Negatively pseudo-ordered manifolds along complete orbits}
\label{SUBSUBSECTION: NegativePseudoOrderedOrbits}
We say that two complete trajectories $v_{1}(\cdot)$ and $v_{2}(\cdot)$ over some $q \in \mathcal{Q}$ are \textit{pseudo-ordered} if the points $v_{1}(t)$ and $v_{2}(t)$ are pseudo-ordered over $\vartheta^{t}(q)$ for all $t \in \mathbb{R}$, i.~e. $V_{\vartheta^{t}(q)}(v_{1}(t)-v_{2}(t)) \leq 0$ for all $t \in \mathbb{R}$. Note that due to the monotonicity it is sufficient to verify the latter property for a sequence $t=t_{k}$ tending to $-\infty$. We have already mentioned that the pseudo-ordering relation in $\mathbb{E}$ over each $q$ is not transitive. However, as the following lemma shows, there is a some sort of dynamical transitivity. Namely, its above given extension to the set of complete trajectories (over a given $q \in \mathcal{Q}$) is transitive and, consequently, defines an equivalence relation. 
\begin{lemma}
	\label{LEM: CompleteTrajectoriesOrdering}
	Suppose $\textbf{(H3)}^{-}_{w}$ and \textbf{(S)} are satisfied. Let $v_{i}(\cdot)$, $i=1,2,3$, be complete trajectories over some $q \in \mathcal{Q}$. Then we have
	\begin{enumerate}
		\item[1)] The trajectories $v_{1}(\cdot)$ and $v_{2}(\cdot)$ are pseudo-ordered if and only if for some (consequently, all) $t \in \mathbb{R}$ we have finiteness (and, consequently, the estimate) of the integral
		\begin{equation}
			\label{EQ: OrderedCompleteTrajEstimate}
			\int_{-\infty}^{t}e^{2\alpha(s;q)} |v_{1}(s)-v_{2}(s)|^{2}_{\mathbb{H}}ds \leq -\delta^{-1}_{Q}e^{2\alpha(t;q)}V_{\vartheta^{t}(q)}(v_{1}(t)-v_{2}(t)) <+\infty.
		\end{equation}
	    \item[2)] If $v_{1}(\cdot)$ and $v_{2}(\cdot)$ are pseudo-ordered and $v_{2}(\cdot)$ and $v_{3}(\cdot)$ are pseudo-ordered, then $v_{1}(\cdot)$ and $v_{3}(\cdot)$ are pseudo-ordered.
	    \item[3)] If $v_{1}(\cdot)$ and $v_{2}(\cdot)$ are pseudo-ordered and $\Pi_{\vartheta^{t}(q)}v_{1}(t) = \Pi_{\vartheta^{t}(q)}v_{2}(t)$ for some $t \in \mathbb{R}$, then $v_{1}(\cdot)$ and $v_{2}(\cdot)$ coincide.
	\end{enumerate}
\end{lemma}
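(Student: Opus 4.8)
The plan is to build everything on the key inequality \eqref{EQ: H3trajectoryform} (the trajectory form of $\textbf{(H3)}^{-}_{w}$), specialized to complete trajectories. For item~1), suppose $v_{1}(\cdot)$ and $v_{2}(\cdot)$ are pseudo-ordered, so $V_{\vartheta^{s}(q)}(v_{1}(s)-v_{2}(s)) \leq 0$ for all $s \in \mathbb{R}$. Fix $t \in \mathbb{R}$ and apply \eqref{EQ: H3trajectoryform} (with the base point shifted back to time $l$, using that the trajectories are complete) on intervals $[l,t]$ with $l \to -\infty$ and $t - l \geq \tau_{Q}$: this gives
\[
e^{2\alpha(t;q)}V_{\vartheta^{t}(q)}(v_{1}(t)-v_{2}(t)) - e^{2\alpha(l;q)}V_{\vartheta^{l}(q)}(v_{1}(l)-v_{2}(l)) \leq -\delta_{Q}\int_{l}^{t}e^{2\alpha(s;q)}|v_{1}(s)-v_{2}(s)|^{2}_{\mathbb{H}}\,ds.
\]
Since $V_{\vartheta^{l}(q)}(v_{1}(l)-v_{2}(l)) \leq 0$, the second term on the left is nonnegative, so dropping it yields
\[
\delta_{Q}\int_{l}^{t}e^{2\alpha(s;q)}|v_{1}(s)-v_{2}(s)|^{2}_{\mathbb{H}}\,ds \leq -e^{2\alpha(t;q)}V_{\vartheta^{t}(q)}(v_{1}(t)-v_{2}(t)),
\]
and letting $l \to -\infty$ (monotone convergence) gives finiteness of the integral and the stated estimate. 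Conversely, if the integral $\int_{-\infty}^{t}e^{2\alpha(s;q)}|v_{1}(s)-v_{2}(s)|^{2}_{\mathbb{H}}\,ds$ is finite for \emph{some} $t$, then I would pick a sequence $l_{k} \to -\infty$ along which $e^{2\alpha(l_{k};q)}|v_{1}(l_{k})-v_{2}(l_{k})|^{2}_{\mathbb{H}} \to 0$ (such a sequence exists since otherwise the integrand would be bounded below near $-\infty$ by a positive constant, contradicting finiteness); by \textbf{(S)} this forces $\|v_{1}(l_{k}+\tau_{S})-v_{2}(l_{k}+\tau_{S})\|_{\mathbb{E}} \to 0$ along a comparable sequence, hence (using boundedness of $Q(\cdot)$ and of $e^{\alpha}$ over bounded time-shifts) $e^{2\alpha(l_{k}';q)}V_{\vartheta^{l_{k}'}(q)}(v_{1}(l_{k}')-v_{2}(l_{k}')) \to 0$ along some $l_{k}' \to -\infty$. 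Feeding this into \eqref{EQ: H3trajectoryform} on $[l_{k}',t]$ and letting $k \to \infty$ shows $e^{2\alpha(t;q)}V_{\vartheta^{t}(q)}(v_{1}(t)-v_{2}(t)) \leq 0$, i.e. $v_{1}(t)$ and $v_{2}(t)$ are pseudo-ordered; since $t$ was arbitrary, the trajectories are pseudo-ordered. (One also gets ``for some $t$ $\iff$ for all $t$'' as a byproduct: if the integral is finite at one $t$ the trajectories are pseudo-ordered, hence by the forward direction it is finite at every $t$.)

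For item~2), I would use the characterization from item~1) together with the elementary inequality $|v_{1}(s)-v_{3}(s)|^{2}_{\mathbb{H}} \leq 2|v_{1}(s)-v_{2}(s)|^{2}_{\mathbb{H}} + 2|v_{2}(s)-v_{3}(s)|^{2}_{\mathbb{H}}$. Since $v_{1}(\cdot),v_{2}(\cdot)$ are pseudo-ordered and $v_{2}(\cdot),v_{3}(\cdot)$ are pseudo-ordered, item~1) gives finiteness of $\int_{-\infty}^{t}e^{2\alpha(s;q)}|v_{1}(s)-v_{2}(s)|^{2}_{\mathbb{H}}\,ds$ and of $\int_{-\infty}^{t}e^{2\alpha(s;q)}|v_{2}(s)-v_{3}(s)|^{2}_{\mathbb{H}}\,ds$; adding, $\int_{-\infty}^{t}e^{2\alpha(s;q)}|v_{1}(s)-v_{3}(s)|^{2}_{\mathbb{H}}\,ds < +\infty$, and the converse direction of item~1) then yields that $v_{1}(\cdot)$ and $v_{3}(\cdot)$ are pseudo-ordered. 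Transitivity plus the obvious reflexivity and symmetry of the pseudo-ordering relation on complete trajectories give the promised equivalence relation.

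For item~3), suppose $v_{1}(\cdot)$ and $v_{2}(\cdot)$ are pseudo-ordered and $\Pi_{\vartheta^{t_{0}}(q)}v_{1}(t_{0}) = \Pi_{\vartheta^{t_{0}}(q)}v_{2}(t_{0})$ for some $t_{0}$. Then $v_{1}(t_{0}) - v_{2}(t_{0}) \in \mathbb{E}^{+}(\vartheta^{t_{0}}(q))$, so $V_{\vartheta^{t_{0}}(q)}(v_{1}(t_{0})-v_{2}(t_{0})) \geq 0$; combined with pseudo-orderedness this forces $V_{\vartheta^{t_{0}}(q)}(v_{1}(t_{0})-v_{2}(t_{0})) = 0$, hence (positivity of $V$ on $\mathbb{E}^{+}$) $v_{1}(t_{0}) = v_{2}(t_{0})$, so the trajectories coincide for all $t \geq t_{0}$ by the cocycle property. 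To get coincidence for $t < t_{0}$ as well, apply \eqref{EQ: H3trajectoryform} on intervals $[l, t_{0}]$: the left-hand side is $0 - e^{2\alpha(l;q)}V_{\vartheta^{l}(q)}(v_{1}(l)-v_{2}(l)) \geq 0$ (by pseudo-orderedness), while the right-hand side is $-\delta_{Q}\int_{l}^{t_{0}}e^{2\alpha(s;q)}|v_{1}(s)-v_{2}(s)|^{2}_{\mathbb{H}}\,ds \leq 0$; hence the integral vanishes, forcing $v_{1}(s) = v_{2}(s)$ for a.e. $s \in [l,t_{0}]$ and, by continuity, for all such $s$. Since $l$ was arbitrary, $v_{1}(\cdot) \equiv v_{2}(\cdot)$.

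\medskip
\noindent The main obstacle I anticipate is the converse direction of item~1): extracting from mere finiteness of the weighted $\mathbb{H}$-integral a sequence $l_{k} \to -\infty$ along which the \emph{$\mathbb{E}$-norm} (and hence $V$) of the difference vanishes, which is exactly where \textbf{(S)} is needed to upgrade $\mathbb{H}$-smallness to $\mathbb{E}$-smallness (after a small time shift $\tau_{S}$); once that is in place, everything else follows from \eqref{EQ: H3trajectoryform} and routine bookkeeping with the bounded factors $\|Q(q)\|$ and $e^{\alpha}$.
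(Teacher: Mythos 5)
Your proof follows essentially the same route as the paper's: item 1) is obtained by applying the $\textbf{(H3)}^{-}_{w}$ integral inequality on $[l,t]$ and letting $l\to -\infty$, with \textbf{(S)} used to extract a sequence $l_{k}\to-\infty$ along which the weighted $\mathbb{E}$-norm (hence the weighted $V$-form) of the difference vanishes in the converse direction; item 2) is a triangle-type inequality in weighted $L^{2}$ (you use $|a+b|^{2}\leq 2|a|^{2}+2|b|^{2}$ where the paper uses Minkowski's inequality, which is equivalent in effect); and item 3) is identical. There is no gap relative to the paper's own argument.
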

\begin{proof}
	1) Using \textbf{(S)} we get
	\begin{equation}
		\label{EQ: CompleteTrajLemma1}
		\int_{-\infty}^{0}e^{2\alpha(s;q)} \|v_{1}(s)-v_{2}(s)\|^{2}_{\mathbb{E}} ds \leq e^{2 \alpha^{+} \tau_{S}} \cdot C^{2}_{S} \cdot \int_{-\infty}^{0}e^{2\alpha(s;q)} |v_{1}(s)-v_{2}(s)|^{2}_{\mathbb{H}}ds.
	\end{equation}
	Applying \textbf{(H3)}, we get for all $l \leq t - \tau_{Q}$ the inequality
	\begin{equation}
		\label{EQ: CompleteTrajLemmaBeforeLimit}
		\begin{split}
			e^{2\alpha(t;q)} V_{\vartheta^{t}(q)}(v_{1}(t)-v_{2}(t)) - e^{2\alpha(l;q)}V_{\vartheta^{l}(q)}(v_{1}(l)-v_{2}(l)) \leq \\ \leq -\delta_{Q} \int_{l}^{t}e^{2\alpha(s;q)}| v_{1}(s)-v_{2}(s) |^{2}_{\mathbb{H}} ds.
		\end{split}
	\end{equation}
	If $v_{1}(\cdot)$ and $v_{2}(\cdot)$ are pseudo-ordered, then we can eliminate the second term in \eqref{EQ: CompleteTrajLemmaBeforeLimit} that leads to the estimate in \eqref{EQ: OrderedCompleteTrajEstimate}. On the other hand, if the last integral in \eqref{EQ: CompleteTrajLemma1} converges, then there exists a sequence $l=l_{k}$, where $k=1,2,\ldots$, tending to $-\infty$ such that $e^{2\alpha (l_{k};q)}\|v_{1}(l_{k})-v_{2}(l_{k})\|^{2}_{\mathbb{E}}$ tends to $0$ as $k \to +\infty$. From this and due to the uniform boundedness of $Q(q)$ we have $e^{2\alpha(l_{k};q)} V_{\vartheta^{l_{k}}(q)}(v_{1}(l_{k})-v_{2}(l_{k})) \to 0$. Putting in \eqref{EQ: CompleteTrajLemmaBeforeLimit} $l=l_{k}$ and taking it to the limit as $k \to +\infty$, we get the estimate in \eqref{EQ: OrderedCompleteTrajEstimate} and, as a corollary, the pseudo-ordering property of $v_{1}(\cdot)$ and $v_{2}(\cdot)$. Thus item 1) is proved.
	
	2) The statement follows from item 1) and the Minkowsky inequality as
	\begin{equation}
		\begin{split}
			\left(\int_{-\infty}^{0}e^{2\alpha(s;q)} |v_{1}(s)-v_{3}(s)|^{2}_{\mathbb{H}}ds\right)^{1/2} \leq \left(\int_{-\infty}^{0}e^{2\alpha(s;q)} |v_{1}(s)-v_{2}(s)|^{2}_{\mathbb{H}}ds\right)^{1/2} + \\ + \left(\int_{-\infty}^{0}e^{2\alpha(s;q)} |v_{2}(s)-v_{3}(s)|^{2}_{\mathbb{H}}ds\right)^{1/2}.
		\end{split}
	\end{equation}

    3) Since $V_{\vartheta^{t}(q)}(v_{1}(t)-v_{2}(t)) \leq 0$, from $\Pi_{\vartheta^{t}(q)}v_{1}(t) = \Pi_{\vartheta^{t}(q)}v_{2}(t)$ we have that $v_{1}(t) = v_{2}(t)$. Now the required statement follows from \eqref{EQ: OrderedCompleteTrajEstimate}.
\end{proof}

Now for a given complete trajectory $v^{*}(\cdot)$ over $q \in \mathcal{Q}$ let $[v^{*}(\cdot)]$ denote its equivalence class, i.e. the set of all complete trajectories over $q$ which are pseudo-ordered with $v^{*}(\cdot)$. For any $t \in \mathbb{R}$ we also consider the values which take these trajectories at time $t$. Namely, we define
\begin{equation}
	\label{EQ: EquivalenceSectionDefinition}
	[v^{*}(\cdot)](t) := \{ v_{0} \in \mathbb{E} \ | \ v_{0}=v(t) \text{ for some } v(\cdot) \in [v^{*}(\cdot)] \}.
\end{equation}
\begin{remark}
	Clearly, we have $\psi^{t}(\vartheta^{s}(q), [v^{*}(\cdot)](s)) = [v^{*}(\cdot)](s+t)$ for $s \in \mathbb{R}$ and $t \geq 0$. Note that this gives us the invariance along the orbit $s \mapsto \vartheta^{s}(q)$ only if the point $q$ is not stationary or periodic. For example, in the autonomous case (i.e. when $\mathcal{Q}$ is a one point set and, consequently, the cocycle is a semiflow) the set $[v^{*}(\cdot)](0)$ is not invariant in general. This is caused by the fact that the trajectories $v^{*}(\cdot)$ and $v^{*}(\cdot+s)$ (i.~e. translated by time $s$) may belong to different equivalence classes! In further sections we will show that there is a natural equivalence class which possesses the invariance. In applications, where $\alpha^{-} > 0$ it corresponds to the equivalence class of a bounded complete orbit $v^{*}(\cdot)$ and the corresponding sets (the \textit{principal leafs}) $\mathfrak{A}(q):=[v^{*}(\cdot)](0)$ form what we call the \textit{inertial manifold} for the cocycle.
\end{remark}
\begin{remark}
	A given complete trajectory $v^{*}(\cdot)$ may be a complete trajectory over different points $q$, which admit backward continuation (for example, when the semicocycle is linear and $v^{*}(\cdot)$ is the zero complete trajectory). Thus, the equivalence class $[v^{*}(\cdot)]$ and its sections strictly depend on the choice $q$. Further, when speaking about these equivalence classes, we always emphasize this choice if it is not clear from the context as it was before.
\end{remark}

 In particular, from this and \eqref{EQ: OrderedCompleteTrajEstimate} we get
\begin{equation}
	\label{EQ: CompleteTrajPiEstimate}
	\delta^{-1}_{Q} \cdot \|Q(q)\| \cdot \| \Pi_{q} v_{1}(0) - \Pi_{q} v_{2}(0) \|^{2}_{\mathbb{E}} \geq \int_{-\infty}^{0} e^{2\alpha(s,q)} | v_{1}(s)-v_{2}(s) |^{2}_{\mathbb{H}} ds
\end{equation}
for any two complete trajectories $v_{1}(\cdot)$ and $v_{2}(\cdot)$ over $q$ which are pseudo-ordered.

Now we start to describe the topological structure of the set $[v^{*}(\cdot)](0)$. By definition, the set $[v^{*}(\cdot)](0)$ is strictly negatively pseudo-ordered over $q$, i.~e. for any of its distinct points $v_{1}$ and $v_{2}$ we have $V_{q}(v_{1}-v_{2}) < 0$. This gives us that the $V_{q}$-orthogonal projector $\Pi_{q}$ is injective on this set. The following lemma strengthens this property.

\begin{lemma}
	\label{EQ: ProjectorHomeomorphismOntoImage}
	Let $\textbf{(H3)}^{-}_{w}$ and \textbf{(S)} be satisfied. Then the map $\Pi_{q} \colon [v^{*}(\cdot)](0) \to \mathbb{E}^{-}(q)$ is a homeomorphism onto its image.
\end{lemma}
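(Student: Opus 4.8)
The plan is to verify three things: that $\Pi_{q}$ is continuous on $[v^{*}(\cdot)](0)$ (immediate, being the restriction of a bounded linear operator), that $\Pi_{q}$ is injective on $[v^{*}(\cdot)](0)$ (already recorded above, since $[v^{*}(\cdot)](0)$ is strictly negatively pseudo-ordered over $q$ and Lemma~\ref{LEM: OrderProjectorPrinciple} applies), and that the inverse map is continuous. Since $\mathbb{E}^{-}(q)$ is finite-dimensional, hence metrizable, it suffices to check sequential continuity of the inverse. So I would fix $v_{\infty} \in [v^{*}(\cdot)](0)$ and a sequence $v_{k} \in [v^{*}(\cdot)](0)$, $k=1,2,\ldots$, with $\Pi_{q} v_{k} \to \Pi_{q} v_{\infty}$, and show $v_{k} \to v_{\infty}$ in $\mathbb{E}$.

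Denote by $v_{k}(\cdot)$ and $v_{\infty}(\cdot)$ the complete trajectories over $q$ with $v_{k}(0)=v_{k}$ and $v_{\infty}(0)=v_{\infty}$ (with the fixed backward extension of $q$ understood as in Remark~\ref{REM: TrajectoriesOfSemicocycles} when $\vartheta$ is only a semiflow). By definition each is pseudo-ordered with $v^{*}(\cdot)$, so, by item 2) of Lemma~\ref{LEM: CompleteTrajectoriesOrdering}, $v_{k}(\cdot)$ and $v_{\infty}(\cdot)$ are pseudo-ordered for every $k$, and \eqref{EQ: CompleteTrajPiEstimate} gives
\begin{equation*}
	\int_{-\infty}^{0} e^{2\alpha(s,q)} |v_{k}(s) - v_{\infty}(s)|^{2}_{\mathbb{H}} \, ds \leq \delta^{-1}_{Q} \|Q(q)\| \cdot \|\Pi_{q} v_{k} - \Pi_{q} v_{\infty}\|^{2}_{\mathbb{E}} ,
\end{equation*}
whose right-hand side tends to $0$ as $k \to +\infty$. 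The crux of the matter is that this is only a weighted $L^{2}$-in-time bound in the weaker norm $|\cdot|_{\mathbb{H}}$, whereas the conclusion must be pointwise in $\|\cdot\|_{\mathbb{E}}$, and here — unlike in later results — we may invoke neither \textbf{(ULIP)} nor any compactness. I would close this gap by using \textbf{(S)} in integrated form: writing $v_{i}(s) = \psi^{\tau_{S}}(\vartheta^{s-\tau_{S}}(q), v_{i}(s-\tau_{S}))$ one gets $\|v_{k}(s) - v_{\infty}(s)\|_{\mathbb{E}} \leq C_{S}\,|v_{k}(s-\tau_{S}) - v_{\infty}(s-\tau_{S})|_{\mathbb{H}}$ for $s \leq 0$; integrating over $s \in [-1,0]$, substituting $\sigma=s-\tau_{S}$, and bounding $e^{-2\alpha(\sigma,q)}$ from above on $[-1-\tau_{S},-\tau_{S}]$ (possible because $\alpha^{-}\leq\alpha_{0}(\cdot)\leq\alpha^{+}$), one arrives at $\int_{-1}^{0} \|v_{k}(s) - v_{\infty}(s)\|^{2}_{\mathbb{E}}\, ds \to 0$.

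Finally, by the mean value theorem for integrals there is $s_{k} \in [-1,0]$ with $\|v_{k}(s_{k}) - v_{\infty}(s_{k})\|_{\mathbb{E}} \to 0$. Passing to a subsequence I may assume $s_{k} \to -t_{0}$ for some $t_{0}\in[0,1]$; then $v_{k}(s_{k}) \to v_{\infty}(-t_{0})$ in $\mathbb{E}$ (using continuity of $v_{\infty}(\cdot)$) and $\vartheta^{s_{k}}(q) \to \vartheta^{-t_{0}}(q)$, so the joint continuity of the cocycle together with the identity $v_{i}(0)=\psi^{-s_{k}}(\vartheta^{s_{k}}(q),v_{i}(s_{k}))$ yields
\begin{equation*}
	v_{k}(0) = \psi^{-s_{k}}(\vartheta^{s_{k}}(q), v_{k}(s_{k})) \longrightarrow \psi^{t_{0}}(\vartheta^{-t_{0}}(q), v_{\infty}(-t_{0})) = v_{\infty}(0) .
\end{equation*}
Since every subsequence of $(v_{k}(0))$ admits a further subsequence converging to $v_{\infty}(0)$, the whole sequence converges to $v_{\infty}(0)$, which establishes continuity of $\Pi^{-1}_{q}$ on the image and finishes the argument. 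I expect the main obstacle to be precisely this last point — converting the weak weighted-$L^{2}$ control intrinsic to $\textbf{(H3)}^{-}_{w}$ into a pointwise $\mathbb{E}$-estimate in the absence of \textbf{(ULIP)} and compactness — which I resolve via \textbf{(S)} combined with a compactness argument in the time variable over the bounded interval $[-1,0]$.
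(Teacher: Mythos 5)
Your proof is correct and follows essentially the same route as the paper: both deduce the weighted $L^{2}$-in-$\mathbb{H}$ smallness from \eqref{EQ: CompleteTrajPiEstimate}, localize in time via the mean value theorem, upgrade from $\mathbb{H}$ to $\mathbb{E}$ with \textbf{(S)}, and then propagate to $s=0$ by joint continuity of the cocycle. The only (cosmetic) difference is the order of the last two ingredients — the paper picks $s_{k}\in[-\tau_{S}-2,-\tau_{S}-1]$ by the mean value theorem applied to the $\mathbb{H}$-integral and only then uses \textbf{(S)} at $s_{k}+\tau_{S}$, whereas you first apply \textbf{(S)} pointwise to produce an $\mathbb{E}$-integral over $[-1,0]$ and then apply the mean value theorem there.
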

\begin{proof}
	As it was noted, $\Pi_{q}$ is injective on $[v^{*}(\cdot)](0)$ and it is also clear that $\Pi_{q}$ is continuous.
	
	To show that the inverse map is continuous let $v_{k}(\cdot)$, where $k=1,2,\ldots$, be a sequence of complete trajectories over $q$ such that $v_{k}(\cdot) \in [v^{*}(\cdot)]$ and $\Pi_{q}v_{k}(0)$ converges to $\Pi_{q}v(0)$ for some complete trajectory $v(\cdot) \in [v^{*}(\cdot)]$. From \eqref{EQ: CompleteTrajPiEstimate} we have
	\begin{equation}
		\begin{split}
			\delta^{-1}_{Q} \|Q(q)\| \cdot \| \Pi_{q} v_{k}(0) - \Pi_{q} v(0) \|^{2}_{\mathbb{E}} &\geq \int_{-\infty}^{0} e^{2\alpha(s;q)} | v_{k}(s)-v(s) |^{2}_{\mathbb{H}} ds \geq \\ &\geq \int_{-\tau_{S} - 2}^{-\tau_{S} - 1}e^{2\alpha(s;q)} | v_{k}(s)-v(s) |^{2}_{\mathbb{H}} ds.
		\end{split}	
	\end{equation}
	Applying the mean value theorem to the last integral, we get a sequence $s_{k} \in [-\tau_{S}-2,-\tau_{S}-1]$ such that $| v_{k}(s_{k})-v(s_{k})|_{\mathbb{H}} \to 0$ as $k \to +\infty$. Let us suppose that $v_{k}(0)$ does not converge to $v(0)$ in $\mathbb{E}$. Then there is a subsequence (for convenience, we keep the same index) and a number $\delta>0$ such that $\|v_{k}(0)-v(0)\|_{\mathbb{E}} \geq \delta$ for all $k$. We may also assume that $s_{k}$ converges to some $\overline{s}$. By \textbf{(S)} we have that $\| v_{k}(s_{k}+\tau_{S})-v(s_{k}+\tau_{S})\|_{\mathbb{E}} \to 0$ as $k \to +\infty$. From continuity of the cocycle we get that $v_{k}(s) \to v(s)$ for all $s > \overline{s}+\tau_{S} \geq -1$. In particular, the convergence for $s=0$ contradicts the inequality $\|v_{k}(0) - v(0)\| \geq \delta$. Thus, the inverse map is continuous.
\end{proof}

Now our aim is to show that $\Pi_{q} [v^{*}(\cdot)](0) = \mathbb{E}^{-}(q)$ and here the Graph Transform plays a crucial role. However, before proceeding to this we shall present a more detailed construction which has its intrinsic interest. This will require additional assumptions, which can be relaxed for purposes of the particular case (see Exercise \ref{EXERCISE: GraphTransformPlane}), but will be anyway used throughout other sections.

Let $T_{1} < T_{2}$ be some real numbers and let $q \in \mathcal{Q}$ be fixed. Moreover, let $\mathcal{M}$ be admissible over $\vartheta^{T_{1}}(q)$. We define the map $G_{T_{1}}^{T_{2}} \colon \mathcal{M} \to \mathbb{E}^{-}(\vartheta^{T_{2}}(q))$ as
\begin{equation}
	\label{EQ: GraphTransformDefinition}
	G_{T_{1}}^{T_{2}}(v) = G_{T_{1}}^{T_{2}}(v;q,\mathcal{M}) := \Pi_{\vartheta^{T_{2}}(q)} \psi^{T_{2}-T_{1}}(\vartheta^{T_{1}}(q),v) \text{ for } v \in \mathcal{M}.
\end{equation}
From Lemma \ref{LEM: GraphTransform} and Exercise \ref{EXERCISE: GraphTransformPlane} we have the following corollary.

\begin{corollary}
	\label{COR: GraphTransformHomeo}
	Under the hypotheses of Lemma \ref{LEM: ConePerturbation} the map $G_{T_{1}}^{T_{2}} \colon \mathbb{E}^{-} \to \mathbb{E}^{-}(\vartheta^{T_{2}}(q))$ defined in \eqref{EQ: GraphTransformDefinition} is a homeomorphism for all $T_{2}-T_{1} \geq \tau_{Q}$. Moreover, for $\mathcal{M} = \mathbb{F} + v$, where $\mathbb{F}$ is a strictly pseudo-ordered (over $q$) $j_{Q}$-dimensional linear subspace, it is sufficient to require only $\textbf{(H3)}^{-}_{w}$ to be satisfied.
\end{corollary}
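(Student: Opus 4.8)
The plan is to read the map $G_{T_{1}}^{T_{2}}$ from \eqref{EQ: GraphTransformDefinition} as a map on $\mathbb{E}^{-}(\vartheta^{T_{1}}(q))$: precomposing with the homeomorphism $(\Pi_{\vartheta^{T_{1}}(q)}|_{\mathcal{M}})^{-1}$ provided by the admissibility of $\mathcal{M}$ over $\vartheta^{T_{1}}(q)$, we obtain $F \colon \mathbb{E}^{-}(\vartheta^{T_{1}}(q)) \to \mathbb{E}^{-}(\vartheta^{T_{2}}(q))$, $F(\zeta) := \Pi_{\vartheta^{T_{2}}(q)}\psi^{T_{2}-T_{1}}(\vartheta^{T_{1}}(q), \Pi_{\vartheta^{T_{1}}(q)}^{-1}\zeta)$, which is precisely the map $F$ appearing in the proof of Lemma \ref{LEM: GraphTransform} with $q$ replaced by $\vartheta^{T_{1}}(q)$ and $t = T_{2}-T_{1}$. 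For $T_{2}-T_{1}\geq\max\{\tau_{Q},\tau_{S}+1\}$ the claim is then a direct restatement of Lemma \ref{LEM: GraphTransform}, so the only work is to (i) push the admissible range down to $T_{2}-T_{1}\geq\tau_{Q}$ and (ii) derive the ``moreover'' part from Exercise \ref{EXERCISE: GraphTransformPlane}.

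For (i), I would revisit the four ingredients of the proof of Lemma \ref{LEM: GraphTransform} and observe that the bound $\tau_{S}+1$ is genuinely used only in its last sentence, where Lemma \ref{LEM: BackwardEstimate} is invoked to upgrade $\mathcal{N} := \psi^{T_{2}-T_{1}}(\vartheta^{T_{1}}(q),\mathcal{M})$ to a \emph{Lipschitz} admissible set --- a property not claimed here. Concretely: (a) $F$ is continuous as a composition of continuous maps; (b) $\mathcal{M}$ being $\varkappa$-admissible is strictly negatively pseudo-ordered over $\vartheta^{T_{1}}(q)$, and \eqref{EQ: H3WeakMinus} applied with $r = T_{2}-T_{1}\geq\tau_{Q}$ gives $e^{2\alpha(r;\vartheta^{T_{1}}(q))}V_{\vartheta^{T_{2}}(q)}(\psi^{r}(\vartheta^{T_{1}}(q),v_{1})-\psi^{r}(\vartheta^{T_{1}}(q),v_{2})) \leq V_{\vartheta^{T_{1}}(q)}(v_{1}-v_{2}) < 0$ for distinct $v_{1},v_{2}\in\mathcal{M}$, so $\mathcal{N}$ is strictly negatively pseudo-ordered, $\psi^{r}(\vartheta^{T_{1}}(q),\cdot)\colon\mathcal{M}\to\mathcal{N}$ is injective, and by Lemma \ref{LEM: OrderProjectorPrinciple} so is $\Pi_{\vartheta^{T_{2}}(q)}|_{\mathcal{N}}$; hence $F$ is injective; (c) the coercivity estimate \eqref{EQ: AdmissibleSetsLemmaMain} goes through verbatim --- it uses only \eqref{EQ: H3WeakMinus} with $r\geq\tau_{Q}$, the inequality $V^{(\varkappa)}_{\vartheta^{T_{1}}(q)}(v_{1}-v_{2})\leq 0$, and $-V_{\vartheta^{T_{1}}(q)}(\zeta)\geq C^{2}_{\vartheta^{T_{1}}(q)}\|\zeta\|^{2}_{\mathbb{E}}$ from \eqref{EQ: ConstantProjectorNorm} --- and it shows that $F$ has a Lipschitz inverse on its image, so $F^{-1}$ is continuous and $F$ is a closed map; (d) since $\dim\mathbb{E}^{-}(\vartheta^{T_{1}}(q)) = \dim\mathbb{E}^{-}(\vartheta^{T_{2}}(q)) = j_{Q}$, the Brouwer invariance of domain theorem makes $F$ an open map, so $F(\mathbb{E}^{-}(\vartheta^{T_{1}}(q)))$ is a nonempty open and (by (c)) closed subset of the connected space $\mathbb{E}^{-}(\vartheta^{T_{2}}(q))$, hence equal to it. This yields that $F$ is a homeomorphism for all $T_{2}-T_{1}\geq\tau_{Q}$.

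For (ii), with $\mathcal{M}=\mathbb{F}+v$ and $\mathbb{F}$ a strictly pseudo-ordered $j_{Q}$-dimensional subspace, I would follow the hint to Exercise \ref{EXERCISE: GraphTransformPlane}: $\mathcal{M}$ is automatically strictly negatively pseudo-ordered, and $\Pi_{\vartheta^{T_{1}}(q)}|_{\mathcal{M}}$ is a linear homeomorphism onto $\mathbb{E}^{-}(\vartheta^{T_{1}}(q))$ by Lemma \ref{LEM: OrderProjectorPrinciple} together with equality of finite dimensions, so admissibility of $\mathcal{M}$ needs no appeal to the Cone Perturbation Lemma \ref{LEM: ConePerturbation}. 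In step (c) one then replaces the chain through $V^{(\varkappa)}$ by the direct bound $-V_{\vartheta^{T_{1}}(q)}(v_{1}-v_{2}) \geq c_{\mathbb{F}}\|v_{1}-v_{2}\|^{2}_{\mathbb{E}} \geq c_{\mathbb{F}}\|\Pi_{\vartheta^{T_{1}}(q)}|_{\mathbb{F}}\|^{-2}\|\zeta_{1}-\zeta_{2}\|^{2}_{\mathbb{E}}$, which holds since $\sqrt{-V_{\vartheta^{T_{1}}(q)}}$ and $\|\cdot\|_{\mathbb{E}}$ are equivalent norms on the finite-dimensional $\mathbb{F}$; steps (a), (b), (d) are unchanged, so only $\textbf{(H3)}^{-}_{w}$ is required.

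I do not expect a genuine obstacle here --- the argument is essentially an audit of the proof of Lemma \ref{LEM: GraphTransform}. The one spot that must be handled with care is the range $\tau_{Q}\leq T_{2}-T_{1}<\tau_{S}+1$: there one cannot use the assertion, made in that proof, that $\mathcal{N}$ is \emph{strongly} negatively pseudo-ordered, since it rested on the perturbed cone inequality \eqref{EQ: PerturbedConeCondition}, which is only available for $r\geq\max\{\tau_{Q},\tau_{S}+1\}$; instead one settles for the weaker ``$\mathcal{N}$ is strictly negatively pseudo-ordered'', which by step (b) costs only $r\geq\tau_{Q}$ and still suffices for the injectivity of $\Pi_{\vartheta^{T_{2}}(q)}|_{\mathcal{N}}$ and for \eqref{EQ: AdmissibleSetsLemmaMain}.
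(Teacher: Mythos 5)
Your proposal is correct and follows the same route as the paper: the paper's ``proof'' is a one-line citation of Lemma \ref{LEM: GraphTransform} and Exercise \ref{EXERCISE: GraphTransformPlane}, and your argument is an explicit audit of that lemma's proof. Two of your observations are genuinely needed to make the citation precise, and you handle both correctly: first, Lemma \ref{LEM: GraphTransform} as stated only covers $t \geq \max\{\tau_{Q},\tau_{S}+1\}$, and your step-by-step check verifies that the estimate \eqref{EQ: AdmissibleSetsLemmaMain} together with Lemma \ref{LEM: OrderProjectorPrinciple} and Brouwer's invariance-of-domain theorem needs only $r\geq\tau_{Q}$ once one drops the Lipschitz-admissibility-of-$\mathcal{N}$ conclusion (which is the only place $\tau_{S}+1$ and Lemma \ref{LEM: BackwardEstimate} are used, and which the Corollary does not assert); second, you correctly see that ``strongly negatively pseudo-ordered'' for $\mathcal{N}$, supplied by the Cone Perturbation Lemma, can be weakened to ``strictly negatively pseudo-ordered,'' which $\textbf{(H3)}^{-}_{w}$ already gives for $r\geq\tau_{Q}$ and which suffices for injectivity of $\Pi_{\vartheta^{T_{2}}(q)}|_{\mathcal{N}}$. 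The treatment of the ``moreover'' clause via the equivalence of $\sqrt{-V_{\vartheta^{T_{1}}(q)}(\cdot)}$ and $\|\cdot\|_{\mathbb{E}}$ on the finite-dimensional $\mathbb{F}$ is exactly the hint in Exercise \ref{EXERCISE: GraphTransformPlane}, and you are right that neither \textbf{(ULIP)} nor \textbf{(PROJ)} is invoked there.
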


The reader is also encouraged to complete the following exercise to improve his understanding.
\begin{exercise}
	\label{EXER: CompactOperatorExercise}
	Let $\textbf{(H3)}_{w}^{-}$ be satisfied and suppose that $\mathbb{E} = \mathbb{H}$. Show that if for some $q \in \mathcal{Q}$ the operator $Q(q)$ is compact\footnote{It is known that $Q(q)$ is compact for certain parabolic problems \cite{Anikushin2019+OnCom,Likhtarnikov1976}.}, then the map $\psi^{r}(q,\cdot) \colon \mathbb{H} \to \mathbb{H}$ is compact if it takes bounded sets into bounded sets and $r \geq \tau_{Q}$.
	
	\textit{Hint:} let $v_{k} \in \mathbb{H}$ be a bounded sequence and consider $w_{k}(s):=\psi^{s}(q,v_{k})$ for $s \geq 0$. Suppose $v_{k}$ converges weakly in $\mathbb{H}$ to some $\overline{v}$ and let also $\psi^{r}(q,v_{k})$ converge weakly to some $\overline{w} \in \mathbb{H}$. Show that for
	\begin{equation}
		v(s):=\psi^{s}\left(q,\left(G^{r}_{0}\right)^{-1}(\Pi \overline{w}; q, \overline{v}, \mathbb{E}^{-}(q)) \right),
	\end{equation}
	which is given for $s \geq 0$, we have the convergence $v_{k}(s) \to v(s)$ for all $s > 0$.
\end{exercise}

Now we will use the map $G_{T_{1}}^{T_{2}}$ to describe the section $[v^{*}(\cdot)](0)$. The following lemma is the first place where the asymptotic compactness (via Lemma \ref{LEM: GeneralCompactnessLemma}) is utilized.
\begin{lemma}
	\label{LEM: ManifoldConstruction}
	Suppose $\textbf{(H3)}^{-}_{w}$, \textbf{(ACOM)} with $\gamma^{+} = \alpha^{+}$ and \textbf{(ULIP)} are satisfied. Let $v^{*}(\cdot)$ be a complete trajectory over some $q \in \mathcal{Q}$. Then for every $\zeta \in \mathbb{E}^{-}(q)$ the sequence of trajectories over $q$ given by
	\begin{equation}
		v_{\theta}(s):=\psi^{s-\theta}\left( \vartheta^{\theta}(q), \left( G^{0}_{\theta} \right)^{-1}\left(\zeta; q,v^{*}(\theta)+\mathbb{E}^{-}(\vartheta^{\theta}(q)) \right) \right), \text{ for } s \geq \theta 
	\end{equation}
	converges as $\theta \to -\infty$ to the unique complete trajectory $w^{*}(\cdot)$ from $[v^{*}(\cdot)]$ such that $\Pi_{q} w^{*}(0)=\zeta$.
\end{lemma}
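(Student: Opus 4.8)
The plan is to realize $w^{*}(\cdot)$ as a limit (as $\theta\to-\infty$) of the graph-transform iterates $v_{\theta}(\cdot)$ and to read off the three asserted properties along the way. Throughout, $q$ is fixed and I restrict to $\theta$ negative enough (in particular $\theta\le-\tau_{Q}$); then, since the admissible set $v^{*}(\theta)+\mathbb{E}^{-}(\vartheta^{\theta}(q))$ is of the form $\mathbb{F}+v$ with $\mathbb{F}=\mathbb{E}^{-}(\vartheta^{\theta}(q))$ a strictly pseudo-ordered $j_{Q}$-dimensional subspace, Corollary~\ref{COR: GraphTransformHomeo} guarantees that $G^{0}_{\theta}$ is a homeomorphism using only $\textbf{(H3)}^{-}_{w}$ (so that \textbf{(PROJ)} and Lemma~\ref{LEM: ConePerturbation} are not needed here), and $v_{\theta}(\cdot)$ is well defined. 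By the definition of $G^{0}_{\theta}$ one has $\Pi_{q}v_{\theta}(0)=\zeta$, while $v_{\theta}(\theta)$ lies in $v^{*}(\theta)+\mathbb{E}^{-}(\vartheta^{\theta}(q))$, so $v_{\theta}(\theta)-v^{*}(\theta)\in\mathbb{E}^{-}(\vartheta^{\theta}(q))$ and therefore $V_{\vartheta^{\theta}(q)}(v_{\theta}(\theta)-v^{*}(\theta))\le0$.

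First I would derive two uniform-in-$\theta$ estimates. Applying the trajectory form of $\textbf{(H3)}^{-}_{w}$ to the pair $v_{\theta}(\cdot),v^{*}(\cdot)$ over $[\theta,0]$ and discarding the nonpositive boundary term at $\theta$ gives both that $V_{\vartheta^{t}(q)}(v_{\theta}(t)-v^{*}(t))\le0$ for $t\in[\theta+\tau_{Q},0]$ and that
\[
\delta_{Q}\int_{\theta}^{0}e^{2\alpha(s;q)}\,|v_{\theta}(s)-v^{*}(s)|^{2}_{\mathbb{H}}\,ds\ \le\ -V_{q}(v_{\theta}(0)-v^{*}(0))\ \le\ \|Q(q)\|\cdot\|v_{\theta}(0)-v^{*}(0)\|^{2}_{\mathbb{E}}.
\]
Next, Lemma~\ref{LEM: BackwardEstimate} with $q_{0}=q$, $t_{1}=\theta$, $t_{2}=0$, $t_{0}=s$, together with $\Pi_{q}v_{\theta}(0)=\zeta$, $\alpha(0;q)=0$ and $-\alpha(s;q)\le-\alpha^{+}s$ for $s\le0$, yields
\[
\|v_{\theta}(s)-v^{*}(s)\|^{2}_{\mathbb{E}}\ \le\ C_{0}\,e^{-2\alpha^{+}s}\,\|\zeta-\Pi_{q}v^{*}(0)\|^{2}_{\mathbb{E}}\qquad\text{for }\ \theta+\max\{\tau_{Q},\tau_{S}+1\}\le s\le0,
\]
with $C_{0}$ the constant of Lemma~\ref{LEM: BackwardEstimate}. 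Taking $s=0$ bounds $\|v_{\theta}(0)-v^{*}(0)\|_{\mathbb{E}}$ uniformly in $\theta$, so the first display becomes $\int_{\theta}^{0}e^{2\alpha(s;q)}|v_{\theta}(s)-v^{*}(s)|^{2}_{\mathbb{H}}\,ds\le E_{0}$ for a constant $E_{0}$ independent of $\theta$.

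Now fix an arbitrary sequence $\theta_{k}\to-\infty$, put $t_{l}:=-l$ and let $\mathcal{B}_{l}$ be the closed ball in $\mathbb{E}$ of radius $D_{1}e^{-\alpha^{+}t_{l}}$ centred at $v^{*}(t_{l})$, where $D_{1}:=C_{0}^{1/2}\|\zeta-\Pi_{q}v^{*}(0)\|_{\mathbb{E}}$. For $k$ large the second display gives $v_{\theta_{k}}(t_{l})\in\mathcal{B}_{l}$, and $\operatorname{diam}\mathcal{B}_{l}\le2D_{1}e^{-\gamma^{+}t_{l}}$ because $\gamma^{+}=\alpha^{+}$; this is exactly the hypothesis of Lemma~\ref{LEM: GeneralCompactnessLemma} (it is the matching of these two exponents that lets the $e^{-\gamma^{+}s_{m}}$ decay of the Kuratowski measure absorb the $e^{-\gamma^{+}t_{l+m}}$ growth of the diameters). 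The lemma furnishes a subsequence with $v_{\theta_{k_{m}}}(t)\to\widetilde{w}(t)$ in $\mathbb{E}$ for every $t\in\mathbb{R}$, where $\widetilde{w}(\cdot)$ is a complete trajectory over $q$; by continuity of $\Pi_{q}$ one gets $\Pi_{q}\widetilde{w}(0)=\zeta$.

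It remains to show $\widetilde{w}(\cdot)\in[v^{*}(\cdot)]$ and that such a trajectory is unique. The crude pointwise bound is useless for this (against the weight $e^{2\alpha(s;q)}$ it grows like $e^{2(\alpha^{+}-\alpha^{-})|s|}$), so instead I pass to the limit in the $\mathbb{H}$-weighted integral: for fixed $T<0$ and $m$ large, $\int_{T}^{0}e^{2\alpha(s;q)}|v_{\theta_{k_{m}}}(s)-v^{*}(s)|^{2}_{\mathbb{H}}\,ds\le E_{0}$, and since $v_{\theta_{k_{m}}}(s)\to\widetilde{w}(s)$ in $\mathbb{H}$ for every $s$, Fatou's lemma yields $\int_{T}^{0}e^{2\alpha(s;q)}|\widetilde{w}(s)-v^{*}(s)|^{2}_{\mathbb{H}}\,ds\le E_{0}$; letting $T\to-\infty$ shows $\int_{-\infty}^{0}e^{2\alpha(s;q)}|\widetilde{w}(s)-v^{*}(s)|^{2}_{\mathbb{H}}\,ds<+\infty$, whence $\widetilde{w}(\cdot)\in[v^{*}(\cdot)]$ by item 1) of Lemma~\ref{LEM: CompleteTrajectoriesOrdering}. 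If $w^{*}_{1},w^{*}_{2}\in[v^{*}(\cdot)]$ both have $\Pi_{q}$-value $\zeta$ at time $0$, then by transitivity (item 2) they are pseudo-ordered, and by item 3) of Lemma~\ref{LEM: CompleteTrajectoriesOrdering} they coincide; so the subsequential limit $\widetilde{w}(\cdot)$ equals one and the same $w^{*}(\cdot)$ for every choice of $\theta_{k}\to-\infty$ and subsequence, and consequently $v_{\theta}(t)\to w^{*}(t)$ in $\mathbb{E}$ for every $t$ as $\theta\to-\infty$. The step I expect to be the main obstacle is precisely this identification $\widetilde{w}\in[v^{*}(\cdot)]$: one must carry out the limit in the weighted $\mathbb{H}$-integral (via the uniform bound $E_{0}$ and Fatou) rather than in the cruder $\mathbb{E}$-pointwise estimate, which decays at the wrong rate; everything else is bookkeeping around Lemmas~\ref{LEM: BackwardEstimate}, \ref{LEM: GeneralCompactnessLemma} and~\ref{LEM: CompleteTrajectoriesOrdering}.
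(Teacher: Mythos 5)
Your proof is correct and follows the same skeleton as the paper's: well-definedness via Corollary~\ref{COR: GraphTransformHomeo}, the $\mathbb{H}$-weighted integral bound from $\textbf{(H3)}^{-}_{w}$, the pointwise $\mathbb{E}$-norm estimate (you via Lemma~\ref{LEM: BackwardEstimate}, the paper via the mean value theorem and \textbf{(ULIP)} directly -- the same thing packaged differently), Lemma~\ref{LEM: GeneralCompactnessLemma} for precompactness, and uniqueness via item 3) of Lemma~\ref{LEM: CompleteTrajectoriesOrdering}. The one local difference is how you identify the limit $\widetilde{w}$ as lying in $[v^*(\cdot)]$: you pass to the limit in the $\mathbb{H}$-weighted integral by Fatou and invoke item 1) of Lemma~\ref{LEM: CompleteTrajectoriesOrdering}, whereas the paper simply notes that $V_{\vartheta^{s}(q)}(v_{\theta}(s)-v^{*}(s))\le0$ for $s\ge\tau_{Q}+\theta$ and that this nonpositivity passes to the limit by continuity of $V_{\vartheta^{s}(q)}$, giving membership in $[v^*(\cdot)]$ by definition. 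Your remark that \emph{``the crude pointwise bound is useless''} is a slight misdiagnosis: it is true that the pointwise $\mathbb{E}$-\emph{norm} estimate decays at the wrong rate to be integrated against $e^{2\alpha(s;q)}$, but the relevant pointwise quantity here is the \emph{sign} of $V_{\vartheta^{s}(q)}(\cdot)$ along the difference, which is preserved under pointwise $\mathbb{E}$-convergence at each fixed $s$ and makes Fatou unnecessary. Both routes are valid; the paper's is a shade more economical.
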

\begin{proof}
	By Corollary \ref{COR: GraphTransformHomeo}, the trajectory $v_{\theta}(\cdot)$ is well-defined. Since, by definition, $v_{\theta}(\theta) - v^{*}(\theta) \in \mathbb{E}^{-}(\vartheta^{\theta}(q))$, we have $V_{\vartheta^{\theta}(q)}(v_{\theta}(\theta) - v^{*}(\theta)) \leq 0$ and, thanks to $\textbf{(H3)}^{-}_{w}$, $V_{\vartheta^{s}(q)}(v_{\theta}(s) - v^{*}(s)) \leq 0$ for all $s \geq \tau_{Q}+\theta$. Thus, if a subsequence of $v_{\theta}(\cdot)$ converges (pointwise on $\mathbb{R}$) to some complete trajectory $w^{*}(\cdot)$, we must have $V_{\vartheta^{s}(q)}(w^{*}(s) - v^{*}(s)) \leq 0$ for all $s \in \mathbb{R}$, that is $w^{*}(\cdot) \in [v^{*}(\cdot)]$. Since $\Pi_{q} v_{\theta}(0) = \zeta$ for all $\theta \leq 0$, we also must have $w^{*}(0)=\zeta$. Since a complete trajectory passing through a given point and belonging to the class $[v^{*}(\cdot)]$ is unique (due to item 3) of Lemma \ref{LEM: CompleteTrajectoriesOrdering}), the only possible limit point of $v_{\theta}(\cdot)$ is $w^{*}(\cdot)$. 
	
	Thus, for our purposes it is sufficient to show that for any sequence $\theta=\theta_{k}$ such that $\theta_{k} \to -\infty$ as $k \to +\infty$ the sequence $v_{k}(\cdot) := v_{\theta_{k}}(\cdot)$ has a limit point. From $\textbf{(H3)}^{-}_{w}$ we have
	\begin{equation}
		\delta^{-1}_{Q}\|Q(q)\| \cdot \| \zeta - \Pi_{q} v^{*}(0) \|^{2}_{\mathbb{E}} \geq \int_{\theta_{k}}^{0}e^{2\alpha(s;q)}|v_{k}(s)-v^{*}(s)|^{2}_{\mathbb{H}}ds.
	\end{equation}
	Applying the mean value theorem to the above integral considered on $[-\tau_{S}-l-1,-\tau_{S}-l]$ for $l=1,2,\ldots$ and using \textbf{(ULIP)} in the same way we did in \eqref{EQ: ConePerturbationLipIntegralEstimate}, we get that 
	\begin{equation}
		\|v_{k}(-l)-v^{*}(-l)\|^{2}_{\mathbb{E}} \leq D^{2} e^{-2\alpha(-l;q)} \leq D^{2} e^{2 \alpha^{+} l},
	\end{equation}
    where
    \begin{equation}
    	D^{2} = \delta^{-1}_{Q} \cdot \|Q(q)\| \cdot \| \zeta - \Pi_{q} v^{*}(0) \|^{2}_{\mathbb{E}} \cdot L^{2}_{\tau_{S}+1} \cdot e^{2\operatorname{max}\{ |\alpha^{+}|, |\alpha^{-}| \} (\tau_{S}+1)}.
    \end{equation}
	This means that	$v_{k}(-l)$ lies in the closed ball $\mathcal{B}_{l}$ of radius $De^{\alpha^{+} l}$ centered at $v^{*}(-l)$. Applying Lemma \ref{LEM: GeneralCompactnessLemma} to $t_{l}=-l$, $\mathcal{B}_{l}$ and the sequence $v_{k}(\cdot)$ with $T_{k} := \theta_{k}$, we get the existence of a limit complete trajectory $w^{*}(\cdot)$. The proof is finished.
\end{proof}

We finalize this section with the following theorem.
\begin{theorem}
	\label{TH: HorizontalFibresAlongCompleteOrbits}
	Suppose $\textbf{(H3)}^{-}_{w}$, \textbf{(ULIP)}, \textbf{(ACOM)} with $\gamma^{+} = \alpha^{+}$ and \textbf{(PROJ)} are satisfied. Let $v^{*}(\cdot)$ be a complete trajectory over some $q \in \mathcal{Q}$. Then $[v^{*}(\cdot)](s)$ is a $C_{Lip}(\vartheta^{s}(q))$-Lipschitz $\varkappa_{0}$-admissible set over $\vartheta^{s}(q)$ for all $s \in \mathbb{R}$. Here $C_{Lip}(q)$ is defined in \eqref{EQ: LipschitzConstantOverQ} and $\varkappa_{0}$ is defined in Lemma \ref{LEM: ConePerturbation}.
\end{theorem}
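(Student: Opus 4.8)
The plan is to reduce to the case $s=0$ and then glue together the lemmas already established in this subsection. Replacing $q$ by $\vartheta^{s}(q)$ and $v^{*}(\cdot)$ by the complete trajectory $v^{*}(\cdot+s)$ over $\vartheta^{s}(q)$, one checks directly from the definitions of the equivalence class and of \eqref{EQ: EquivalenceSectionDefinition} that the section at time $0$ of the equivalence class of $v^{*}(\cdot+s)$ over $\vartheta^{s}(q)$ equals $[v^{*}(\cdot)](s)$; thus it suffices to prove that $[v^{*}(\cdot)](0)$ is $C_{Lip}(q)$-Lipschitz $\varkappa_{0}$-admissible over $q$, with $C_{Lip}(q)$ as in \eqref{EQ: LipschitzConstantOverQ} and $\varkappa_{0}$ from Lemma \ref{LEM: ConePerturbation}.

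\emph{Admissibility.} By Lemma \ref{EQ: ProjectorHomeomorphismOntoImage} the map $\Pi_{q}\colon[v^{*}(\cdot)](0)\to\mathbb{E}^{-}(q)$ is a homeomorphism onto its image, so the only thing missing is surjectivity onto all of $\mathbb{E}^{-}(q)$. This is precisely what Lemma \ref{LEM: ManifoldConstruction} gives: for each $\zeta\in\mathbb{E}^{-}(q)$ it produces a complete trajectory $w^{*}(\cdot)\in[v^{*}(\cdot)]$ with $\Pi_{q}w^{*}(0)=\zeta$, whence $w^{*}(0)\in[v^{*}(\cdot)](0)$ and $\Pi_{q}w^{*}(0)=\zeta$. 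Therefore $\Pi_{q}$ maps $[v^{*}(\cdot)](0)$ homeomorphically onto $\mathbb{E}^{-}(q)$.

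\emph{Strong negative pseudo-ordering with $\varkappa_{0}$.} Let $v_{1}=w_{1}(0)$ and $v_{2}=w_{2}(0)$ for some $w_{1}(\cdot),w_{2}(\cdot)\in[v^{*}(\cdot)]$. By item 2) of Lemma \ref{LEM: CompleteTrajectoriesOrdering} the trajectories $w_{1}(\cdot)$ and $w_{2}(\cdot)$ are pseudo-ordered, so $V_{\vartheta^{l}(q)}(w_{1}(l)-w_{2}(l))\leq 0$ for every $l$. Fix $l$ with $-l\geq\max\{\tau_{Q},\tau_{S}+1\}$ and apply the perturbed cone inequality \eqref{EQ: PerturbedConeCondition} over the point $\vartheta^{l}(q)$ to the initial data $w_{1}(l),w_{2}(l)$ with time $r=-l$. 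Since $\psi^{-l}(\vartheta^{l}(q),w_{i}(l))=w_{i}(0)$, $\vartheta^{-l}(\vartheta^{l}(q))=q$ and $\alpha(-l;\vartheta^{l}(q))=-\alpha(l;q)$, this gives
\[
V^{(\varkappa_{0})}_{q}(v_{1}-v_{2}) \;\leq\; e^{2\alpha(l;q)}\,V^{(\varkappa_{0})}_{\vartheta^{l}(q)}\!\left(w_{1}(l)-w_{2}(l)\right) \;\leq\; C_{*}\, e^{2\alpha(l;q)}\,\|w_{1}(l)-w_{2}(l)\|^{2}_{\mathbb{E}},
\]
where $C_{*}$ is a constant bounding $|V^{(\varkappa_{0})}_{p}(\cdot)|$ on the unit ball of $\mathbb{E}$ uniformly in $p\in\mathcal{Q}$ (this uses the bound $M_{Q}$ from \textbf{(H1)} for $Q$ and the bound $M_{\Pi}$ from \textbf{(PROJ)}). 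On the other hand, by item 1) of Lemma \ref{LEM: CompleteTrajectoriesOrdering} the integral $\int_{-\infty}^{0}e^{2\alpha(s;q)}|w_{1}(s)-w_{2}(s)|^{2}_{\mathbb{H}}\,ds$ is finite, and \textbf{(S)} (a consequence of \textbf{(ULIP)}) upgrades this to finiteness of $\int_{-\infty}^{0}e^{2\alpha(s;q)}\|w_{1}(s)-w_{2}(s)\|^{2}_{\mathbb{E}}\,ds$; hence along a sequence $l=l_{k}\to-\infty$ we have $e^{2\alpha(l_{k};q)}\|w_{1}(l_{k})-w_{2}(l_{k})\|^{2}_{\mathbb{E}}\to 0$. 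Passing to this limit in the displayed inequality yields $V^{(\varkappa_{0})}_{q}(v_{1}-v_{2})\leq 0$, so $[v^{*}(\cdot)](0)$ is $\varkappa_{0}$-strongly (and hence strictly) negatively pseudo-ordered over $q$; together with the previous step this makes it a $\varkappa_{0}$-admissible set.

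\emph{Lipschitz constant and the main difficulty.} For the Lipschitz bound apply Lemma \ref{LEM: BackwardEstimate} to the complete trajectories $w_{1}(\cdot),w_{2}(\cdot)\in[v^{*}(\cdot)]$ with $q_{0}=q$, any $t_{1}\leq-\max\{\tau_{Q},\tau_{S}+1\}$ and $t_{0}=t_{2}=0$; since $\alpha(0;q)=0$, the estimate \eqref{EQ: BackwardEstimateLemma} collapses to $\|w_{1}(0)-w_{2}(0)\|_{\mathbb{E}}\leq C_{Lip}(q)\,\|\Pi_{q}w_{1}(0)-\Pi_{q}w_{2}(0)\|_{\mathbb{E}}$ with $C_{Lip}(q)$ exactly as in \eqref{EQ: LipschitzConstantOverQ}, so the inverse of $\Pi_{q}\colon[v^{*}(\cdot)](0)\to\mathbb{E}^{-}(q)$ is $C_{Lip}(q)$-Lipschitz; undoing the time shift yields the statement for arbitrary $s$. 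I expect the single genuinely delicate point to be the passage to the perturbed cone $V^{(\varkappa_{0})}$: one has to be able to ``send the inequality of Lemma \ref{LEM: ConePerturbation} to $-\infty$'', and this is exactly where the a priori summability from item 1) of Lemma \ref{LEM: CompleteTrajectoriesOrdering} and the uniform \textbf{(H1)}/\textbf{(PROJ)} bounds on the quadratic forms are needed; everything else is bookkeeping built on Lemmas \ref{LEM: ConePerturbation}, \ref{LEM: BackwardEstimate}, \ref{EQ: ProjectorHomeomorphismOntoImage} and \ref{LEM: ManifoldConstruction}.
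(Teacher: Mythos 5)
Your proof is correct and follows the same overall skeleton as the paper's: reduce to $s=0$ by a time shift, obtain the homeomorphism $\Pi_{q}\colon[v^{*}(\cdot)](0)\to\mathbb{E}^{-}(q)$ by combining Lemma \ref{EQ: ProjectorHomeomorphismOntoImage} (injectivity and continuity of the inverse) with Lemma \ref{LEM: ManifoldConstruction} (surjectivity), and extract the Lipschitz constant from Lemma \ref{LEM: BackwardEstimate} with $t_{0}=t_{2}=0$. The one place where you take a genuinely different route is the verification that $[v^{*}(\cdot)](0)$ lies in a translated $\varkappa_{0}$-perturbed cone. The paper's terse justification (``since any space $\mathbb{E}^{-}(q)$ is $\varkappa_{0}$-admissible'') implicitly invokes the Graph Transform: the construction in Lemma \ref{LEM: ManifoldConstruction} realizes $[v^{*}(\cdot)](0)$ as a pointwise limit of the sets $\psi^{-\theta}\bigl(\vartheta^{\theta}(q),\,v^{*}(\theta)+\mathbb{E}^{-}(\vartheta^{\theta}(q))\bigr)$, each of which is $\varkappa_{0}$-admissible by Lemma \ref{LEM: GraphTransform}, and the inequality $V^{(\varkappa_{0})}_{q}\leq 0$ is closed under pointwise limits. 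You instead bypass the Graph Transform entirely and apply the Cone Perturbation Lemma \eqref{EQ: PerturbedConeCondition} directly to the two complete trajectories $w_{1},w_{2}$ from time $l$ to time $0$, using item 2) of Lemma \ref{LEM: CompleteTrajectoriesOrdering} to guarantee the hypothesis $V_{\vartheta^{l}(q)}(w_{1}(l)-w_{2}(l))\leq 0$, and then let $l\to-\infty$ using the summability from item 1) together with \textbf{(S)} to kill the boundary term along a subsequence. This is a bit more self-contained (it doesn't require unpacking the limit structure of Lemma \ref{LEM: ManifoldConstruction}); both are ultimately the same estimate, since the relevant part of Lemma \ref{LEM: GraphTransform} is itself proved via \eqref{EQ: PerturbedConeCondition}.
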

\begin{proof}
	It is sufficient to consider only the case $s=0$. The $\varkappa_{0}$-admissibility follows from Lemma \ref{EQ: ProjectorHomeomorphismOntoImage} and Lemma \ref{LEM: ManifoldConstruction} since any space $\mathbb{E}^{-}(q)$ is $\varkappa_{0}$-admissible over $q$. To show that $\Pi_{q} \colon [v^{*}(\cdot)](0) \to \mathbb{E}^{-}(q)$ has a Lipschitz inverse with the prescribed Lipschitz constant one can use Lemma \ref{LEM: BackwardEstimate} analogously to the corresponding part of the proof in Lemma \ref{LEM: GraphTransform}.
\end{proof}

\subsection{Vertical foliation}
\label{SUBSEC: VerticalFoliation}

During this subsection we work with a \underline{semicocycle} $\psi$. Here we also relax \textbf{(H3)} by the following assumption.
\begin{description}
	\item[$\textbf{(H3)}^{+}_{w}$] There exists a family of operators $Q(q) \in \mathcal{L}(\mathbb{E};\mathbb{E}^{*})$, which satisfies \textbf{(H1)} with $P(q)$ changed to $Q(q)$, $\mathbb{E}^{-}(q)$ and $\mathbb{E}^{+}(q)$ changed to $\mathbb{E}^{-}_{Q}(q)$ and $\mathbb{E}^{+}_{Q}(q)$ respectively and \textbf{(H2)} with $j$ changed to $j_{Q}$. Moreover, for some constants $-\infty < \beta^{-} < \beta^{+} < +\infty$, $\delta_{Q}>0$, $\tau_{Q} \geq 0$ and a Borel measurable function $\beta_{0} \colon \mathcal{Q} \to \mathbb{R}$ such that $\beta^{-} \leq \beta_{0}(\cdot) \leq \beta^{+}$ and for $\beta(t;q) := \int_{0}^{t}\beta_{0}(\vartheta^{s}(q))ds$ and $V^{Q}_{q}(v):=\langle v, Q(q)v \rangle$ we have the inequality
	\begin{equation}
		\label{EQ: H3WeakPlus}
		\begin{split}
			e^{2\beta(r;q)}V^{Q}_{\vartheta^{r}(q)}(\psi^{r}(q,v_{1})-\psi^{r}(q,v_{2})) - V^{Q}_{q} (v_{1} - v_{2}) \leq \\ \leq -\delta_{Q} \int_{0}^{r} e^{2\beta(s;q)}|\psi^{s}(q,v_{1})-\psi^{s}(q,v_{2})|^{2}_{\mathbb{H}}ds 
		\end{split}
	\end{equation}
	satisfied for all $q \in \mathcal{Q}$, $r \geq \tau_{Q}$ and $v_{1},v_{2} \in \mathbb{E}$ such that $V^{Q}_{\vartheta^{r}(q)}(\psi^{r}(q,v_{1})-\psi^{r}(q,v_{2})) \geq 0$.
\end{description}
\begin{remark}
	It should be emphasized that when we consider $\textbf{(H3)}^{-}_{w}$ and $\textbf{(H3)}^{+}_{w}$ together (as in Theorem \ref{TH: ExpTrackingTheorem} below), we always assume that the operators $Q(q)$ and the spaces $\mathbb{E}^{-}_{Q}(q)$ and $\mathbb{E}^{-}_{Q}(q)$ in these assumptions coincide. So, both assumptions differ only in the pseudo-ordering and the functions $\alpha_{0}(\cdot)$ and $\beta_{0}(\cdot)$.
\end{remark}
\begin{remark}
	As in the previous subsection, we use the notation $V_{q}$, $\mathbb{E}^{-}(q)$ and $\mathbb{E}^{+}(q)$ instead of $V^{Q}_{q}$, $\mathbb{E}^{-}_{Q}(q)$ and $\mathbb{E}^{+}_{Q}(q)$ respectively and by $\Pi_{q}$ and $\Pi^{+}_{q}$ we denote the $V^{Q}_{q}$-orthogonal projector and its complementary one respectively. In particular, when referring below to assumptions \textbf{(PROJ)} and \textbf{(CD)} introduced in Section \ref{SUBSEC: DefinitionsAndHypotheses}, we always consider them under the notation introduced just now and the operators $P(q)$ changed to $Q(q)$.
\end{remark}

We say that two points $v_{1}, v_{2} \in \mathbb{E}$ are \textit{positively equivalent} over $q \in \mathcal{Q}$ if $V_{\vartheta^{t}(q)}(\psi^{t}(q,v_{1})-\psi^{t}(q,v_{2}))\geq 0$ for all $t \geq 0$. The following lemma is analogous to Lemma \ref{LEM: CompleteTrajectoriesOrdering}.
\begin{lemma}
	\label{LEM: PositiveEquivalence}
	Suppose $\textbf{(H3)}^{+}_{w}$ and \textbf{(S)} are satisfied. Let $v_{1},v_{2},v_{3} \in \mathbb{E}$ and $q \in \mathcal{Q}$ be fixed. Then we have
	\begin{enumerate}
		\item[1)] $v_{1}$ and $v_{2}$ are positively equivalent over $q$ if and only if for some $t \geq 0$ (and, consequently, all $t$) we have finiteness (and consequently, the estimate) of the integral
		\begin{equation}
			\label{EQ: VerticalFibreIntEstimate}
			\int_{t}^{+\infty}e^{2\beta(s;q)}| \psi^{s}(q,v_{1}) - \psi^{s}(q,v_{2}) |^{2}_{\mathbb{H}} \leq \delta^{-1}_{Q} e^{2\beta(t;q)} V(\psi^{t}(q,v_{1})-\psi^{t}(q,v_{2})) < +\infty.
		\end{equation}
	    \item[2)] If $v_{1}$ and $v_{2}$ are positively equivalent over $q$ and $v_{2}$ and $v_{3}$ are positively equivalent, then $v_{1}$ and $v_{3}$ are positively equivalent.
	    \item[3)] The positive equivalence of $v_{1}$ and $v_{2}$ over $q$ is continuous in $(v_{1},v_{2},q)$.
	\end{enumerate}
\end{lemma}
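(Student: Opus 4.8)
The plan is to mirror the structure of the proof of Lemma \ref{LEM: CompleteTrajectoriesOrdering}, adapting each argument from the backward (pseudo-ordered past) setting to the forward (positively equivalent future) setting. Since $\textbf{(H3)}^{+}_{w}$ applies precisely when $V_{\vartheta^{r}(q)}(\psi^{r}(q,v_{1})-\psi^{r}(q,v_{2})) \geq 0$, the monotonicity it provides runs in the ``correct'' direction for forward equivalence: if the difference is in the positive cone at time $r \geq \tau_Q$, then (by applying \eqref{EQ: H3WeakPlus} with the cocycle property to shift the base point) it stays there, and the integral $\int e^{2\beta(s;q)}|\cdot|^{2}_{\mathbb{H}}ds$ over future times is controlled. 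I would first record this forward monotonicity as a preliminary observation, exactly analogous to the use of Lemma \ref{LEM: MonotonicityPrinciple} in the earlier arguments.

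For item 1), I would argue as follows. If $v_1$ and $v_2$ are positively equivalent, then $V_{\vartheta^{t}(q)}(\psi^{t}(q,v_{1})-\psi^{t}(q,v_{2})) \geq 0$ for all $t \geq 0$; applying \eqref{EQ: H3WeakPlus} over the base point $\vartheta^{t}(q)$ and letting the upper limit $r \to +\infty$, the nonnegative boundary term at $r$ can be dropped, yielding the estimate in \eqref{EQ: VerticalFibreIntEstimate}. Conversely, if the integral is finite for some $t$, then (using \textbf{(S)} to pass to the $\mathbb{E}$-norm and the mean value theorem on intervals $[s_k - \tau_S - 1, s_k - \tau_S]$ with $s_k \to +\infty$) there is a sequence $s_k \to +\infty$ along which $e^{2\beta(s_k;q)}\|\psi^{s_k}(q,v_1)-\psi^{s_k}(q,v_2)\|^2_{\mathbb{E}} \to 0$, hence $e^{2\beta(s_k;q)}V_{\vartheta^{s_k}(q)}(\psi^{s_k}(q,v_1)-\psi^{s_k}(q,v_2)) \to 0$ by uniform boundedness of $Q(q)$; inserting $r = s_k$ into \eqref{EQ: H3WeakPlus} (shifted to base $\vartheta^{t}(q)$) and passing to the limit gives $V_{\vartheta^{t}(q)}(\psi^{t}(q,v_1)-\psi^{t}(q,v_2)) \geq 0$ for every $t$, i.e.\ positive equivalence. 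Item 2) then follows immediately from item 1) via the Minkowski inequality applied to the square roots of the three integrals $\int_{0}^{+\infty}e^{2\beta(s;q)}|\psi^{s}(q,v_i)-\psi^{s}(q,v_j)|^{2}_{\mathbb{H}}ds$, exactly as in Lemma \ref{LEM: CompleteTrajectoriesOrdering}.

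For item 3), I would use the integral characterization once more. Suppose $(v_1^{(n)}, v_2^{(n)}, q^{(n)}) \to (v_1, v_2, q)$ with each pair positively equivalent over $q^{(n)}$. Fix $t = \tau_Q$ (or any admissible value) and observe that by \eqref{EQ: VerticalFibreIntEstimate} the tail integrals $\int_{\tau_Q}^{+\infty} e^{2\beta(s;q^{(n)})}|\psi^{s}(q^{(n)},v_1^{(n)})-\psi^{s}(q^{(n)},v_2^{(n)})|^{2}_{\mathbb{H}}ds$ are uniformly bounded by $\delta^{-1}_Q e^{2\beta(\tau_Q;q^{(n)})} V(\psi^{\tau_Q}(q^{(n)},v_1^{(n)})-\psi^{\tau_Q}(q^{(n)},v_2^{(n)}))$, which converges by continuity of the cocycle and of $V$; on compact time intervals the integrands converge pointwise by continuity of $\psi$, and for large $s$ the exponential weight times uniform control keeps the tail small, so by a dominated/uniform-integrability argument the limiting integral $\int_{\tau_Q}^{+\infty} e^{2\beta(s;q)}|\psi^{s}(q,v_1)-\psi^{s}(q,v_2)|^{2}_{\mathbb{H}}ds$ is finite, whence $v_1, v_2$ are positively equivalent over $q$ by item 1). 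The main obstacle is precisely this last step: justifying the interchange of limit and improper integral uniformly in $n$. The cleanest route is to extract from $\textbf{(H3)}^{+}_{w}$ an a priori exponential decay estimate for $\|\psi^{s}(q^{(n)},v_1^{(n)})-\psi^{s}(q^{(n)},v_2^{(n)})\|_{\mathbb{E}}$ on the tail (using the argument in Lemma \ref{LEM: SqueezingPrinciple}, which gives decay like $e^{-2\beta^{-}s}$ up to constants, provided one also invokes \textbf{(ULIP)} — though here only \textbf{(S)} is assumed, so a weaker decay through the mean-value-theorem trick must suffice), providing an integrable majorant independent of $n$ and closing the argument.
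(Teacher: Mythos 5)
Your items 1) and 2) mirror the paper's intent precisely: item 1) is adapted from item 1) of Lemma~\ref{LEM: CompleteTrajectoriesOrdering} by shifting the base point with the cocycle property and extracting a ``good'' sequence $s_k\to+\infty$ via \textbf{(S)} and the mean value theorem, and item 2) is the Minkowski inequality applied to the integral characterization. That part is fine and matches the paper.

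For item 3) you have built a considerably more elaborate argument than the paper has in mind, and in doing so you introduce a difficulty (uniform integrability of the tail, possible need for \textbf{(ULIP)}-type decay) that is simply not there. Positive equivalence over $q$ is, by definition, the condition that for every $t\ge 0$ one has
\begin{equation*}
V_{\vartheta^{t}(q)}\bigl(\psi^{t}(q,v_{1})-\psi^{t}(q,v_{2})\bigr)\ge 0 .
\end{equation*}
For each fixed $t$ this is a closed condition on $(v_{1},v_{2},q)$, because $(t,q,v)\mapsto\psi^{t}(q,v)$ is continuous by the definition of a cocycle, and the quadratic form $V_{\vartheta^{t}(q)}$ is continuous in its argument. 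Since the relation is the intersection over all $t\ge 0$ of these closed conditions, it is itself closed, which is exactly what ``continuous in $(v_{1},v_{2},q)$'' means here. That is all the paper does: item 3) follows directly from continuity of the cocycle, with no passage through the integral characterization, no Fatou/dominated-convergence step, and no tail decay estimate. Your route is not wrong in spirit, but the dominated-convergence concern you flag is a self-inflicted problem: if one really wanted to argue via the integral, Fatou's lemma (the integrand is nonnegative) already yields finiteness of the limit integral from the uniform bound supplied by \eqref{EQ: VerticalFibreIntEstimate} together with the pointwise convergence of the integrands, so no integrable majorant and no \textbf{(ULIP)}-strength decay is needed. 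Note also that both your argument and the direct one implicitly use continuity of $q\mapsto Q(q)$ to pass to the limit in $V_{\vartheta^{t}(q)}$; this is not spelled out in $\textbf{(H3)}^{+}_{w}$, but it is a feature of the paper's statement, not a defect of your proposal, and in the places where item 3) is actually invoked (e.g.\ Theorem~\ref{TH: CentralProjectorContinuityGeneral}) the assumption \textbf{(CD)} supplies it.
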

\begin{proof}
	The statement in item 1) can be proved analogously to item 1) of Lemma \ref{LEM: CompleteTrajectoriesOrdering}. Moreover, the statement in item 2) is a consequence of item 1) and the Minkowski inequality used analogously to item 2) of Lemma \ref{LEM: CompleteTrajectoriesOrdering}. Item 3) directly follows from the continuity of the cocycle. The proof is finished.
\end{proof}

Thus, the above introduced positive equivalence (over $q$) is an equivalence relation in $\mathbb{E}$. For each $q \in \mathcal{Q}$ and $v_{0} \in \mathbb{E}$ let $[v_{0}]^{+}(q)$ denote the set of all $v \in \mathbb{E}$ such that $v_{0}$ and $v$ are positively equivalent over $q$. Similarly to Lemma \ref{EQ: ProjectorHomeomorphismOntoImage}, one can obtain the following lemma.
\begin{lemma}
	\label{LEM: PiPlusHomeoOntoImage}
	Let the semicocycle $\psi$ satisfy $\textbf{(H3)}^{+}_{w}$. Then the map $\Pi^{+}_{q} \colon [v_{0}]^{+}(q) \to \mathbb{E}^{+}(q)$ is a homeomorphism onto its image.
\end{lemma}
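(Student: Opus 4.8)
The plan is to follow the pattern of the proof of Lemma \ref{EQ: ProjectorHomeomorphismOntoImage}, with one simplification: the direction complementary to the range of $\Pi^{+}_{q}$, namely $\mathbb{E}^{-}(q) = \operatorname{Ker}\Pi^{+}_{q}$, is finite-dimensional, so continuity of the inverse becomes immediate and we avoid \textbf{(S)} and any compactness argument. I would first check injectivity. Since positive equivalence over $q$ is an equivalence relation (as already observed above), any two points $v_{1},v_{2} \in [v_{0}]^{+}(q)$ are positively equivalent over $q$; taking $t=0$ in the definition gives $V_{q}(v_{1}-v_{2}) \geq 0$. On the other hand, if $\Pi^{+}_{q}v_{1} = \Pi^{+}_{q}v_{2}$, then $v_{1}-v_{2} \in \mathbb{E}^{-}(q)$, hence $V_{q}(v_{1}-v_{2}) \leq 0$ because $Q(q)$ is negative on $\mathbb{E}^{-}(q)$. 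Thus $V_{q}(v_{1}-v_{2})=0$, and negative definiteness of $Q(q)$ on $\mathbb{E}^{-}(q)$ forces $v_{1}=v_{2}$. Continuity of $\Pi^{+}_{q}$ on $[v_{0}]^{+}(q)$ is clear since $\Pi^{+}_{q} \in \mathcal{L}(\mathbb{E})$.

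Next I would prove that the inverse of $\Pi^{+}_{q}\colon [v_{0}]^{+}(q) \to \Pi^{+}_{q}[v_{0}]^{+}(q)$ is continuous. Let $v_{k},v \in [v_{0}]^{+}(q)$ satisfy $\Pi^{+}_{q}v_{k} \to \Pi^{+}_{q}v$ in $\mathbb{E}$. As above, $v_{k}$ and $v$ are positively equivalent over $q$, so $V_{q}(v_{k}-v) \geq 0$. Using the $V_{q}$-orthogonality of the splitting $\mathbb{E}=\mathbb{E}^{+}(q) \oplus \mathbb{E}^{-}(q)$ (Remark \ref{REM: Orthogonality}) we have
\[
V_{q}(v_{k}-v) = V_{q}\left(\Pi^{+}_{q}(v_{k}-v)\right) + V_{q}\left(\Pi_{q}(v_{k}-v)\right),
\]
with $V_{q}(\Pi^{+}_{q}(v_{k}-v)) \geq 0$ and $-V_{q}(\Pi_{q}(v_{k}-v)) \geq 0$. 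Combining this with $V_{q}(v_{k}-v)\geq 0$ gives
\[
0 \leq -V_{q}\left(\Pi_{q}(v_{k}-v)\right) \leq V_{q}\left(\Pi^{+}_{q}(v_{k}-v)\right) \leq \|Q(q)\| \cdot \|\Pi^{+}_{q}v_{k}-\Pi^{+}_{q}v\|^{2}_{\mathbb{E}} \to 0.
\]
Since the norm $\sqrt{-V_{q}(\cdot)}$ is equivalent to $\|\cdot\|_{\mathbb{E}}$ on the finite-dimensional space $\mathbb{E}^{-}(q)$ (see Remark \ref{REM: Orthogonality} and \eqref{EQ: ConstantProjectorNorm}), it follows that $\|\Pi_{q}v_{k}-\Pi_{q}v\|_{\mathbb{E}} \to 0$, hence $\|v_{k}-v\|_{\mathbb{E}} \leq \|\Pi^{+}_{q}(v_{k}-v)\|_{\mathbb{E}} + \|\Pi_{q}(v_{k}-v)\|_{\mathbb{E}} \to 0$. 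Therefore $\Pi^{+}_{q}$ is a homeomorphism onto its image.

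There is no serious obstacle here; the only point requiring care is the sign bookkeeping in the squeeze above, that is, reconciling $V_{q}(v_{k}-v) \geq 0$ (coming from positive equivalence) with the $V_{q}$-orthogonal decomposition of $v_{k}-v$ into its (small, infinite-dimensional) $\mathbb{E}^{+}(q)$-part and its (sign-definite, finite-dimensional) $\mathbb{E}^{-}(q)$-part. Unlike in Lemma \ref{EQ: ProjectorHomeomorphismOntoImage}, where the missing direction $\mathbb{E}^{+}(q)$ is infinite-dimensional and one must invoke \textbf{(S)} together with continuity of the semicocycle along trajectories, here the finiteness of $\operatorname{dim}\mathbb{E}^{-}(q)$ lets the norm equivalence close the argument directly.
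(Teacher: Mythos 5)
Your argument is correct, and it is cleaner than what the paper's cross-reference ``Similarly to Lemma \ref{EQ: ProjectorHomeomorphismOntoImage}'' literally suggests. That lemma's proof must invoke \textbf{(S)}, the integral estimate, the mean value theorem, and continuity of the cocycle precisely because the direction complementary to the range of $\Pi_{q}$, namely $\mathbb{E}^{+}(q)$, is infinite-dimensional and $V_{q}$ is positive but not coercive there. You correctly observe that for $\Pi^{+}_{q}$ the situation is the mirror image: the complementary direction $\mathbb{E}^{-}(q)=\operatorname{Ker}\Pi^{+}_{q}$ is $j_{Q}$-dimensional and $V_{q}$ is negative definite on it, so the equivalence of the norm $\sqrt{-V_{q}(\cdot)}$ with $\|\cdot\|_{\mathbb{E}}$ on $\mathbb{E}^{-}(q)$ (Remark \ref{REM: Orthogonality}, constants $C_{q}$ from \eqref{EQ: ConstantProjectorNorm}) closes the argument with no dynamics beyond the $t=0$ evaluation. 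Your sign bookkeeping in the squeeze $0 \leq -V_{q}(\Pi_{q}(v_{k}-v)) \leq V_{q}(\Pi^{+}_{q}(v_{k}-v)) \leq \|Q(q)\|\,\|\Pi^{+}_{q}(v_{k}-v)\|^{2}_{\mathbb{E}}$ is correct, as is the use of $V_{q}$-orthogonality. The only thing worth flagging is that when you write ``any two points $v_{1},v_{2}\in[v_{0}]^{+}(q)$ are positively equivalent over $q$'' you are silently invoking transitivity of positive equivalence, which is item 2) of Lemma \ref{LEM: PositiveEquivalence} and therefore uses \textbf{(S)}; the stated hypothesis of Lemma \ref{LEM: PiPlusHomeoOntoImage} lists only $\textbf{(H3)}^{+}_{w}$, but the surrounding text has just established the equivalence-relation structure under \textbf{(S)} (and \textbf{(ULIP)}, which implies \textbf{(S)}, holds everywhere the lemma is applied), so this is consistent with the paper's setup rather than a gap in your reasoning.
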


Now our aim is to prove Theorem \ref{TH: ExpTrackingTheorem} below. In particular, it states that under $\textbf{(H3)}^{-}_{w}$ and $\textbf{(H3)}^{+}_{w}$ any admissible set provides a section of the positive equivalence, i.~e. any such set contains exactly one point from each equivalence class. This immediately leads to the following corollary.
\begin{corollary}
	\label{COR: VerticalLeafStructure}
	Under the hypotheses of Theorem \ref{TH: ExpTrackingTheorem}, for any $q \in \mathcal{Q}$ and $v_{0} \in \mathbb{E}$ the map $\Pi^{+}_{q} \colon [v_{0}]^{+}(q) \to \mathbb{E}^{+}(q)$ is a homeomorphism.
\end{corollary}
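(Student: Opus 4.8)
The plan is to deduce the corollary from Lemma \ref{LEM: PiPlusHomeoOntoImage} together with the section property asserted in Theorem \ref{TH: ExpTrackingTheorem}. By Lemma \ref{LEM: PiPlusHomeoOntoImage}, the restriction $\Pi^{+}_{q} \colon [v_{0}]^{+}(q) \to \mathbb{E}^{+}(q)$ is already a homeomorphism onto its image, so the only thing left to establish is surjectivity: every $\xi \in \mathbb{E}^{+}(q)$ must be of the form $\Pi^{+}_{q} v$ for some $v$ positively equivalent to $v_{0}$ over $q$.

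To obtain surjectivity I would test the section property of Theorem \ref{TH: ExpTrackingTheorem} against a conveniently chosen admissible set. For a fixed $\xi \in \mathbb{E}^{+}(q)$, consider the affine subspace $\mathcal{M}_{\xi} := \xi + \mathbb{E}^{-}(q)$. Since $\xi \in \mathbb{E}^{+}(q) = \operatorname{Ker}\Pi_{q}$, the $V_{q}$-orthogonal projector acts on $\mathcal{M}_{\xi}$ by $\xi + \zeta \mapsto \zeta$, so $\Pi_{q}$ maps $\mathcal{M}_{\xi}$ homeomorphically onto $\mathbb{E}^{-}(q)$. Moreover, for any two points $v_{1}, v_{2} \in \mathcal{M}_{\xi}$ we have $v_{1} - v_{2} \in \mathbb{E}^{-}(q)$, hence $V^{(\varkappa)}_{q}(v_{1}-v_{2}) = \varkappa^{2} V_{q}(v_{1}-v_{2}) \le 0$ for every $\varkappa \in (0,1)$; thus $\mathcal{M}_{\xi}$ is $\varkappa$-admissible over $q$ for every admissible parameter, in particular for $\varkappa \ge \varkappa_{0}$, so Theorem \ref{TH: ExpTrackingTheorem} applies to it verbatim.

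By Theorem \ref{TH: ExpTrackingTheorem}, the admissible set $\mathcal{M}_{\xi}$ meets the positive equivalence class $[v_{0}]^{+}(q)$ in exactly one point $v$. Writing $v = \xi + \zeta$ with $\zeta \in \mathbb{E}^{-}(q)$ and applying $\Pi^{+}_{q}$ yields $\Pi^{+}_{q} v = \xi$, so $\xi$ lies in the image of the restriction $\Pi^{+}_{q}|_{[v_{0}]^{+}(q)}$. As $\xi \in \mathbb{E}^{+}(q)$ was arbitrary, this restriction is onto $\mathbb{E}^{+}(q)$; combined with Lemma \ref{LEM: PiPlusHomeoOntoImage} this shows it is a homeomorphism onto $\mathbb{E}^{+}(q)$, finishing the proof.

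Since the genuine content — the section property — is isolated in Theorem \ref{TH: ExpTrackingTheorem}, the corollary itself has no serious obstacle. The only point requiring care is checking that the affine slice $\xi + \mathbb{E}^{-}(q)$ really qualifies as an admissible set with $\varkappa \ge \varkappa_{0}$ (so that Theorem \ref{TH: ExpTrackingTheorem} can be invoked as stated), and that the unique intersection point genuinely projects under $\Pi^{+}_{q}$ back to $\xi$ rather than merely having the prescribed $\mathbb{E}^{-}(q)$-component.
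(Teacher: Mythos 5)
Your proof is correct and follows essentially the same route the paper takes: apply Lemma \ref{LEM: PiPlusHomeoOntoImage} to get a homeomorphism onto the image, then invoke the section property from Theorem \ref{TH: ExpTrackingTheorem} with the admissible affine slice $\mathcal{M} = \mathbb{E}^{-}(q) + \xi$ for each $\xi \in \mathbb{E}^{+}(q)$ to get surjectivity. You simply spell out the admissibility check and the final projection computation that the paper leaves implicit.
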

\begin{proof}
	Indeed, by Lemma \ref{LEM: PiPlusHomeoOntoImage} the map $\Pi^{+}_{q}$ provides a homeomorphism between $[v_{0}]^{+}(q)$ and its image in $\mathbb{E}^{+}(q)$. But this image must coincide with $\mathbb{E}^{+}(q)$ due to the first part of Theorem \ref{TH: ExpTrackingTheorem}, which states that for $ \mathcal{M} = \mathbb{E}^{-}(q) + \zeta$ the intersection $[v_{0}]^{+}(q) \cap \mathcal{M}$ contains exactly one point for any $\zeta \in \mathbb{E}^{+}(q)$. 
\end{proof}

Now the theorem can be stated as follows.
\begin{theorem}
	\label{TH: ExpTrackingTheorem}
	Let the semicocycle $\psi$ satisfy $\textbf{(H3)}^{-}_{w}$, $\textbf{(H3)}^{+}_{w}$, \textbf{(ULIP)} and \textbf{(PROJ)} (for the $V_{q}$-orthogonal projectors $\Pi_{q}$). Let $\mathcal{M}$ be a $\varkappa$-admissible set over $q$ for some $\varkappa \in (0,1)$. Then for every $v_{0} \in \mathbb{E}$ there exists a unique $v^{*}_{0} \in \mathcal{M}$ such that $v^{*}_{0} \in [v_{0}]^{+}(q)$. Moreover, if $\mathcal{M}$ is $L(\mathcal{M})$-Lipschitz $\varkappa $-admissible, then there exists a function $R \colon \mathbb{R}^{3}_{+} \to \mathbb{R}_{+}$, which is non-decreasing in each variable, and such that
	\begin{equation}
		\label{EQ: ExponentialTrackingEstimate}
		\| \psi^{t}(q,v_{0})-\psi^{t}(q,v^{*}_{0}) \|_{\mathbb{E}} \leq R(C^{-1}_{q},(1-\varkappa)^{-1}, L(\mathcal{M})) \operatorname{dist}(v_{0},\mathcal{M}) e^{-\beta(t;q)} \text{ for all } t \geq 0.
	\end{equation}
\end{theorem}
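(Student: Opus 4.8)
### Proof proposal

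\textbf{Overall strategy.} The plan is to build the point $v_0^* \in \mathcal{M}$ by the same pullback/graph-transform scheme used in Lemma \ref{LEM: ManifoldConstruction}, but now run \emph{forward} against the positive equivalence class rather than backward against the negative one. More precisely, for each $\theta \geq 0$ I would look at the trajectory $\psi^{s}(q,v_0)$ for $s \in [0,\theta]$, project its endpoint $\psi^{\theta}(q,v_0)$ into $\mathbb{E}^{+}(\vartheta^{\theta}(q))$, and pull this ``$+$-data'' back to time $0$ through the graph transform associated with the admissible set $\mathcal{M}$. Using Corollary \ref{COR: GraphTransformHomeo} (the map $G_{0}^{\theta}$ restricted to $\mathcal{M}$ is a homeomorphism onto $\mathbb{E}^{-}(\vartheta^{\theta}(q))$, hence invertible) together with the fact that $\Pi^{+}_{\vartheta^\theta(q)}$ is injective on $[v_0]^{+}$ by Lemma \ref{LEM: PiPlusHomeoOntoImage}, this defines a candidate $v_\theta \in \mathcal{M}$, and I would show $v_\theta$ converges in $\mathbb{E}$ as $\theta \to +\infty$ to the desired $v_0^*$.

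\textbf{Key steps, in order.}
First I would set up the forward matching: for fixed $\theta$, find $v_\theta \in \mathcal{M}$ such that $\psi^{\theta}(q,v_\theta)$ and $\psi^{\theta}(q,v_0)$ have the same $\mathbb{E}^{+}(\vartheta^\theta(q))$-component, i.e. $\psi^{\theta}(q,v_\theta) - \psi^{\theta}(q,v_0) \in \mathbb{E}^{-}(\vartheta^\theta(q))$. This is exactly inverting $G_0^\theta$ at the point $\Pi_{\vartheta^\theta(q)}\psi^\theta(q,v_0)$, so $v_\theta$ exists and is unique in $\mathcal{M}$ for $\theta \geq \tau_Q$.
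Second, from $V_{\vartheta^\theta(q)}(\psi^\theta(q,v_\theta) - \psi^\theta(q,v_0)) \leq 0$ and $\textbf{(H3)}^{+}_{w}$ (whose hypothesis is precisely a nonnegativity-at-time-$r$ condition, so I would rather argue through $\textbf{(H3)}^{-}_{w}$ here) I get control of the $\mathbb{H}$-integral $\int_0^\theta e^{2\beta(s;q)}|\psi^s(q,v_\theta)-\psi^s(q,v_0)|^2_{\mathbb{H}}ds$; then, via the mean value theorem and \textbf{(ULIP)} applied on windows $[k-\tau_S-1,k-\tau_S]$ exactly as in \eqref{EQ: ConePerturbationLipIntegralEstimate}, I bound $\|\psi^k(q,v_\theta)-\psi^k(q,v_0)\|_{\mathbb{E}}$ by a constant times $e^{-\beta(k;q)} \leq e^{-\beta^- k}$ times $\mathrm{dist}(v_0,\mathcal{M})$ (using $L(\mathcal{M})$-Lipschitz admissibility and \textbf{(PROJ)} to pass from the $\mathbb{E}^{-}$-projection of $v_\theta - v_0$ back to $v_\theta - v_0$ itself, as in Lemma \ref{LEM: BackwardEstimate}).
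Third, for $\theta_1 < \theta_2$, the two trajectories $\psi^\cdot(q,v_{\theta_1})$ and $\psi^\cdot(q,v_{\theta_2})$ are both negatively pseudo-ordered against $\psi^\cdot(q,v_0)$ at time $\theta_1$ (cone invariance, Lemma \ref{LEM: MonotonicityPrinciple}, propagated from $\theta_1$), hence against each other; so $\|\psi^s(q,v_{\theta_1})-\psi^s(q,v_{\theta_2})\|_{\mathbb{E}} \to 0$ as $s\to$ the relevant window, which combined with continuity of $\psi$ and \textbf{(S)} forces $\{v_\theta\}$ to be Cauchy in $\mathbb{E}$. Let $v_0^* := \lim_{\theta\to+\infty} v_\theta$; closedness of $\mathcal{M}$ gives $v_0^* \in \mathcal{M}$, and passing to the limit in the estimates of step two gives the tracking inequality \eqref{EQ: ExponentialTrackingEstimate} with $R$ assembled from $\delta_Q^{-1}\|Q\|$, $L_{\tau_S+1}$, $e^{\max\{|\beta^\pm|\}(\tau_S+1)}$, $C_q^{-1}$, $(1-\varkappa)^{-1}$ and $L(\mathcal{M})$ --- monotone in each of the listed arguments.
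Fourth, $v_0^* \in [v_0]^{+}(q)$: by construction $V_{\vartheta^\theta(q)}(\psi^\theta(q,v_0^*)-\psi^\theta(q,v_0)) \geq 0$ for a sequence $\theta\to+\infty$ (the $+$-matching at time $\theta$ says the difference lies in $\mathbb{E}^{-}$ so its $V$ is $\leq 0$ --- wait, one must be careful about the sign; the correct reading is that after running to $+\infty$ the difference is \emph{positively} equivalent, which follows from the integral in \eqref{EQ: VerticalFibreIntEstimate} being finite, itself a consequence of the step-two decay estimate). Uniqueness of $v_0^*$ in $\mathcal{M} \cap [v_0]^+(q)$: if $v_0', v_0'' \in \mathcal{M}$ are both positively equivalent to $v_0$, they are positively equivalent to each other (Lemma \ref{LEM: PositiveEquivalence}(2)), so $V_{\vartheta^t(q)}(\psi^t(q,v_0')-\psi^t(q,v_0'')) \geq 0$ for all $t$; but they also lie in the $\varkappa$-admissible $\mathcal{M}$, so $V^{(\varkappa)}_q(v_0'-v_0'') \leq 0$, which together with cone invariance and the strict decay of $V$ from $\textbf{(H3)}^{-}_{w}$ (as in the squeezing/monotonicity lemmas) forces $v_0' = v_0''$.

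\textbf{Main obstacle.} The delicate point is the consistency/convergence of the family $v_\theta$ as $\theta\to+\infty$ --- i.e. showing the forward graph transforms are Cauchy in the \emph{strong} norm $\|\cdot\|_{\mathbb{E}}$ rather than merely in $|\cdot|_{\mathbb{H}}$. This is where \textbf{(S)} (or \textbf{(ULIP)}) must be used to upgrade $\mathbb{H}$-smallness on a time window to $\mathbb{E}$-smallness, and where one must be careful that the exponential weight $e^{2\beta(s;q)}$ with possibly $\beta^- \leq 0$ does not destroy summability --- this is exactly why the theorem's estimate carries $e^{-\beta(t;q)}$ and not a fixed exponential, and why $\textbf{(H3)}^{+}_{w}$ rather than $\textbf{(H3)}^{-}_{w}$ governs the forward decay. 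A secondary subtlety is making the constant $R$ genuinely monotone in the three displayed arguments; this requires tracking how $C_q^{-1}$, $(1-\varkappa)^{-1}$ and $L(\mathcal{M})$ enter each inequality and then replacing the resulting expression by an explicit non-decreasing envelope.
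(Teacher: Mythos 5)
Your overall scheme — match $v_\theta \in \mathcal{M}$ to $v_0$ at time $\theta$ via the graph transform and let $\theta \to +\infty$ — is the same as the paper's. But there is a sign error in the matching step that propagates, and a genuine gap where the real work is done.

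On the signs: you first describe $v_\theta$ as having the same $\mathbb{E}^+(\vartheta^\theta(q))$-component as $v_0$ at time $\theta$ (so the difference lies in $\mathbb{E}^-$, i.e. $V \leq 0$), but then say this comes from inverting $G_0^\theta$. Inverting $G_0^\theta$ matches the $\Pi$-projections, so the difference lies in $\operatorname{Ker}\Pi_{\vartheta^\theta(q)} = \mathbb{E}^+(\vartheta^\theta(q))$, and therefore $V_{\vartheta^\theta(q)}\bigl(\psi^\theta(q,v_\theta) - \psi^\theta(q,v_0)\bigr) \geq 0$, not $\leq 0$. This is not cosmetic: it is precisely the $V \geq 0$ at time $\theta$ that (via the contrapositive of $\textbf{(H3)}^-_w$) propagates backward to give $V \geq 0$ on $[0,\theta-\tau_Q]$, and which (via $\textbf{(H3)}^+_w$, through the estimate \eqref{EQ: VerticalFibreIntEstimate}) gives the exponential decay. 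With your sign neither hypothesis can be invoked where you need it, which is why your step two wavers about which one to use.

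More importantly, your step three — establishing that $\{v_\theta\}$ converges as $\theta \to +\infty$ — does not hold up. You sketch a Cauchy argument based on forward decay of $\|\psi^s(q,v_{\theta_1}) - \psi^s(q,v_{\theta_2})\|_\mathbb{E}$, but (a) the negative pseudo-ordering of $v_{\theta_1},v_{\theta_2} \in \mathcal{M}$ only gives genuine forward decay if $\alpha^- > 0$, which the theorem does not assume; and (b) even granting closeness at large forward times, pulling that back to time $0$ requires a backward estimate of the type in Lemma \ref{LEM: BackwardEstimate}, whose right-hand side involves the $\Pi$-projection at the later time and an exponential weight, and this is not small a priori. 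What the paper actually proves is only \emph{boundedness} of $\Pi_q v_\theta^*(0)$, and it does so by Lemma \ref{LEM: RomanovLemma}: taking $w_1 := v_0$, $w_2 := \Phi_q(\Pi_q v_0) \in \mathcal{M}$, $w_3 := v_\theta^*(0) \in \mathcal{M}$, the strict $\varkappa$-admissibility of $\mathcal{M}$ gives $V_q^{(\varkappa)}(w_2-w_3) \leq 0$, the positive ordering gives $V_q(w_1-w_3) \geq 0$, $\Pi_q w_1 = \Pi_q w_2$ by construction, and Romanov's lemma then yields the $\theta$-independent bound $V_q(w_1^+ - w_3^+) \leq (1-\varkappa)^{-2} V_q(w_1^+ - w_2^+)$. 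Boundedness plus finite dimension of $\mathbb{E}^-(q)$ gives a convergent subsequence, and uniqueness of the limit in $\mathcal{M}$ (from the admissibility of $\mathcal{M}$) gives convergence. This is also exactly where the factor $(1-\varkappa)^{-1}$ in $R$ arises; you list it in your final constant but your outline contains no argument that produces it, so the appearance of $(1-\varkappa)^{-1}$ is a tell that the Romanov-type comparison step is missing.
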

Before giving a proof, we need some preparations. Our approach for the proof is inspired by the work of A.V.~Romanov \cite{Romanov1994}, namely, Lemma 5 therein and its further use. To adapt his ideas, we recall the quadratic forms $V^{(\varkappa)}_{q}(v)  = V_{q}(v^{+}) + \varkappa^{2}V_{q}(v^{-})$ used in the Cone Perturbation Lemma (Lemma \ref{LEM: ConePerturbation}). From this we define the cones $\mathcal{C}^{-}_{(\varkappa)}(q) := \{ V^{(\varkappa)}_{q}(v) \leq 0 \}$ for $\varkappa \in (0,1)$, which are contained in the set $\mathcal{C}^{-}(q) = \{ V_{q}(v) \leq 0 \}$ by definition.

Note that any $\varkappa$-admissible over $q$ set $\mathcal{M}$ (for $\varkappa \in (0,1)$)  lies in the cone $\mathcal{C}^{-}_{(\varkappa)}(q)$ translated to any of its point by definition. The proof of Theorem \ref{TH: ExpTrackingTheorem} will be based on this observation and the following lemma (see Fig. \ref{FIG: RomanovLemma}).
\begin{figure}
	\centering
	\includegraphics[width=1\linewidth]{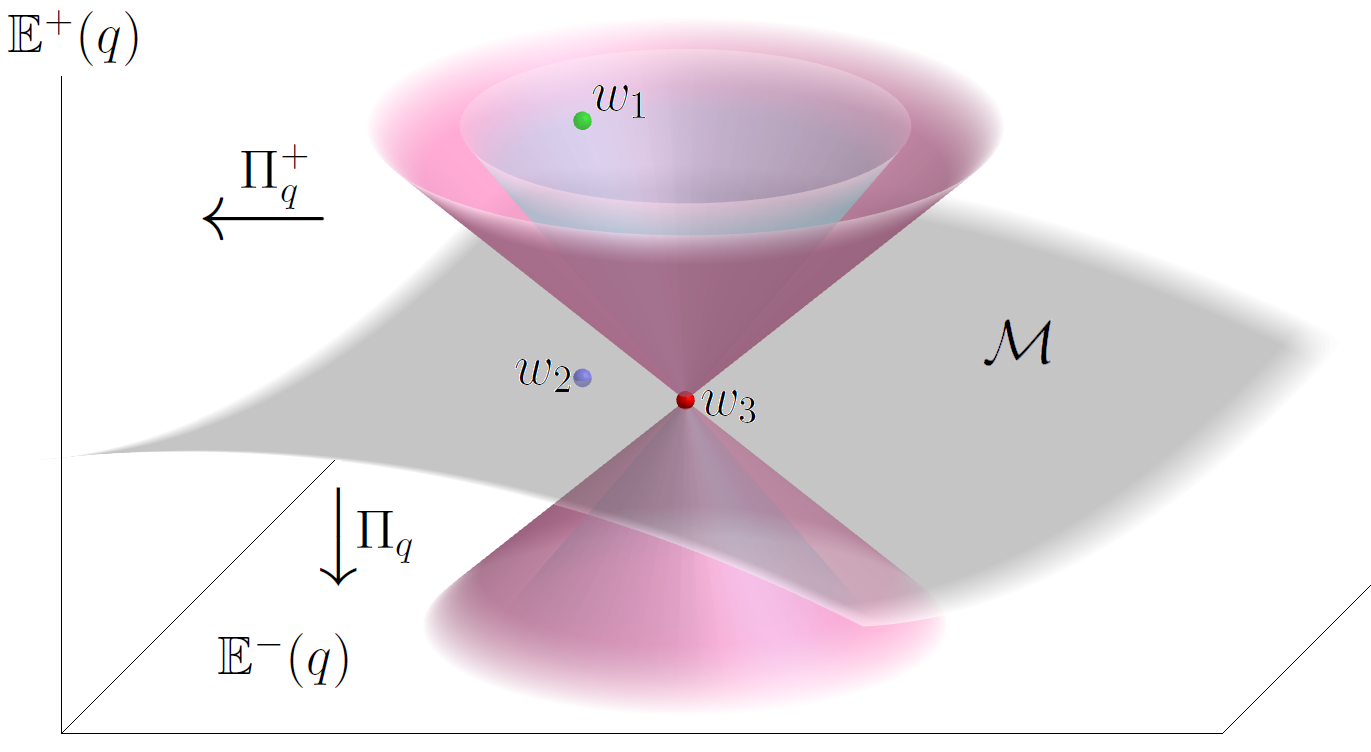}
	\caption{An illustation to Lemma \ref{LEM: RomanovLemma} and Theorem \ref{TH: ExpTrackingTheorem}. Here $\mathcal{M}$ (gray) lies in the cone $\mathcal{C}^{-}_{(\varkappa)}(q)$ translated to $w_{3}$, which is the exterior part of the pink cone. The unperturbed cone $\mathcal{C}^{-}(q)$ translated to $w_{3}$ is given by the exterior part of the cyan cone.}
	\label{FIG: RomanovLemma}
\end{figure}

\begin{lemma}[Romanov's lemma]
	\label{LEM: RomanovLemma}
	Let $q \in \mathcal{Q}$ and $\varkappa \in (0,1)$ be fixed. Suppose $w_{1},w_{2},w_{3} \in \mathbb{E}$ are given such that $V_{q}(w_{1}-w_{3}) \geq 0$, $V^{(\varkappa)}_{q}(w_{2}-w_{3}) \leq 0$ and $\Pi_{q}w_{1}=\Pi_{q}w_{2}$. Then
	\begin{equation}
		V_{q}(w^{+}_{1}-w^{+}_{3}) \leq C_{\varkappa} V_{q}(w^{+}_{1} - w^{+}_{2}),
	\end{equation}
	where $C_{\varkappa} = 1/(1-\varkappa)^{2}$ and $w_{i}=w^{+}_{i} + w^{-}_{i}$ is the unique decomposition with $w^{+}_{i} \in \mathbb{E}^{+}(q)$ and $w^{-}_{i} \in \mathbb{E}^{-}(q)$ for $i=1,2,3$.
\end{lemma}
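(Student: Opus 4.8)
The plan is to reduce everything to the two genuine inner-product norms that $V_{q}$ induces on the orthogonal summands and then to finish by a one-line triangle-inequality argument, following Romanov's original reasoning.

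First I would introduce, for $x \in \mathbb{E}^{+}(q)$ and $y \in \mathbb{E}^{-}(q)$, the quantities $\|x\|_{+} := \sqrt{V_{q}(x)}$ and $\|y\|_{-} := \sqrt{-V_{q}(y)}$; since $V_{q}$ is symmetric, positive on $\mathbb{E}^{+}(q)$ and negative on $\mathbb{E}^{-}(q)$, these are honest norms coming from inner products, so the triangle inequality is available for each of them. Because the decomposition $\mathbb{E} = \mathbb{E}^{+}(q) \oplus \mathbb{E}^{-}(q)$ is $V_{q}$-orthogonal (Remark~\ref{REM: Orthogonality}), for any $v = v^{+}+v^{-}$ one has $V_{q}(v) = \|v^{+}\|_{+}^{2} - \|v^{-}\|_{-}^{2}$ and $V^{(\varkappa)}_{q}(v) = \|v^{+}\|_{+}^{2} - \varkappa^{2}\|v^{-}\|_{-}^{2}$.

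Next I would set $a := w_{1}^{+} - w_{3}^{+}$ and $b := w_{1}^{+} - w_{2}^{+}$, both in $\mathbb{E}^{+}(q)$, and observe that the hypothesis $\Pi_{q}w_{1} = \Pi_{q}w_{2}$ means $w_{1}^{-} = w_{2}^{-}$, so that a single vector $c := w_{1}^{-} - w_{3}^{-} = w_{2}^{-} - w_{3}^{-} \in \mathbb{E}^{-}(q)$ records both negative differences. Translating the three hypotheses through the orthogonality identities above: the assumption $V_{q}(w_{1}-w_{3}) \geq 0$ becomes $\|a\|_{+} \geq \|c\|_{-}$; and, using $w_{2}^{+}-w_{3}^{+} = a - b$, the assumption $V^{(\varkappa)}_{q}(w_{2}-w_{3}) \leq 0$ becomes $\|a-b\|_{+} \leq \varkappa\|c\|_{-}$.

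Finally I would close the estimate: by the triangle inequality in $\|\cdot\|_{+}$ together with the two facts just obtained,
\[
\|a\|_{+} \leq \|b\|_{+} + \|a-b\|_{+} \leq \|b\|_{+} + \varkappa\|c\|_{-} \leq \|b\|_{+} + \varkappa\|a\|_{+},
\]
hence $(1-\varkappa)\|a\|_{+} \leq \|b\|_{+}$ and, squaring, $V_{q}(w_{1}^{+}-w_{3}^{+}) = \|a\|_{+}^{2} \leq (1-\varkappa)^{-2}\|b\|_{+}^{2} = C_{\varkappa}\, V_{q}(w_{1}^{+}-w_{2}^{+})$, which is the claim. There is no genuine obstacle beyond bookkeeping here; the only points that need care are the use of $\Pi_{q}w_{1}=\Pi_{q}w_{2}$ to collapse the two $\mathbb{E}^{-}(q)$-components into the single vector $c$, and the expansion $w_{2}^{+}-w_{3}^{+}=a-b$ when rewriting the $\varkappa$-cone hypothesis — mishandling either one would break the chain of inequalities.
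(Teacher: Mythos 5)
Your proof is correct and follows essentially the same route as the paper: the three inequalities $\|a\|_+ \geq \|c\|_-$, $\|a-b\|_+ \leq \varkappa\|c\|_-$, and the triangle inequality $\|a\|_+ \leq \|b\|_+ + \|a-b\|_+$ are exactly the three estimates the paper chains together in its displayed equation, just written with shorthand $a,b,c$ and the two norms $\|\cdot\|_\pm$ made explicit. The bookkeeping (collapsing $w_1^- = w_2^-$ into a single $c$, and expanding $w_2^+ - w_3^+ = a - b$) is handled correctly.
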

\begin{proof}
	From $V^{(\varkappa)}_{q}(w_{2}-w_{3}) \leq 0$ we have $V_{q}(w^{+}_{2}-w^{+}_{3}) \leq - \varkappa^{2}V_{q}(w^{-}_{2}-w^{-}_{3})$. Since $\Pi_{q}w^{-}_{1}=w^{-}_{2}=\Pi_{q}w_{2}$ and $V_{q}(w_{1}-w_{3}) \geq 0$, we have $-V_{q}(w^{-}_{2}-w^{-}_{3}) \leq V_{q}(w^{+}_{1} - w^{+}_{3})$. Now this and the triangle inequality gives
	\begin{equation}
		\begin{split}
			\left( V_{q}(w^{+}_{1}-w^{+}_{3}) \right)^{1/2} &\leq \left( V_{q}(w^{+}_{1}-w^{+}_{2}) \right)^{1/2} + \left( V_{q}(w^{+}_{2}-w^{+}_{3}) \right)^{1/2} \leq \\ &\leq \left( V_{q}(w^{+}_{1}-w^{+}_{2}) \right)^{1/2} + \varkappa \left(-V_{q}(w^{-}_{2}-w^{-}_{3}) \right)^{1/2} \leq \\ &\leq 
			\left( V_{q}(w^{+}_{1}-w^{+}_{2}) \right)^{1/2} + \varkappa \left( V_{q}(w^{+}_{1} - w^{+}_{3})\right)^{1/2}.
		\end{split}
	\end{equation}
	Now the conclusion of the lemma is obvious.
\end{proof}

\begin{proof}[Proof of Theorem \ref{TH: ExpTrackingTheorem}]
	Let $v_{0} \in \mathbb{E}$ and $q \in \mathcal{Q}$ be fixed. Put $v(s):=\psi^{s}(q,v_{0})$ for $s \geq 0$. By Lemma \ref{LEM: GraphTransform}, for every $\theta \geq \max\{\tau_{Q},\tau_{S}+1\}$ there exists a unique trajectory over $q$ defined for $t \geq 0$, say $v^{*}_{\theta}(\cdot)$, such that $v^{*}_{\theta}(0) \in \mathcal{M}$ and $\Pi_{\vartheta^{\theta}(q)}v^{*}_{\theta}(\theta) = \Pi_{\vartheta^{\theta}(q)} v(\theta)$. Suppose there exists a sequence $\theta=\theta_{k}$, where $k=1,2,\ldots$, such that $v^{*}_{\theta_{k}}(0)$ converges to some $v^{*}_{0} \in \mathcal{M}$.  Put $v^{*}(s)=\psi^{s}(q,v^{*}_{0})$ for $s \geq 0$. Clearly, $v^{*}_{\theta_{k}}(s)$ converges to $v^{*}(s)$ for all $s \geq 0$. Since $V_{\vartheta^{\theta_{k}}(q)}(v^{*}_{\theta_{k}}(\theta_{k}) - v(\theta_{k}) ) \geq 0$, we must have $V_{\vartheta^{s}(q)}(v_{\theta_{k}}(s) - v(s) ) \geq 0$ for all $s \in [0,\theta_{k} - \tau_{Q}]$ due to $\textbf{(H3)}^{-}_{w}$. Thus, $V_{\vartheta^{s}(q)}( v(s) - v^{*}(s) ) \geq 0$ for all $s \geq 0$. Then, from \eqref{EQ: VerticalFibreIntEstimate} and \textbf{(ULIP)} analogously to Lemma \ref{LEM: BackwardEstimate}, we get for all $t \geq \tau_{S} + 1$ that
	\begin{equation}
		\label{EQ: ExponentialTrackingDifferenceEstimate1}
		\| v(t) - v^{*}(t) \|_{\mathbb{E}} \leq  L(Q) \cdot e^{-\beta(t;q)} \cdot \| v(0) - v^{*}(0) \|_{\mathbb{E}},
	\end{equation}
    where $L(Q) := L_{\tau_{S}+1}\sqrt{\delta_{Q} M_{Q}} \cdot \operatorname{exp}\left(\operatorname{max}\{|\beta^{+}|,|\beta^{-}|\}| (\tau_{S}+1)\right)$ and $\|Q(q)\| \leq M_{Q}$ for all $q \in \mathcal{Q}$.
	It is also clear (due to \textbf{(ULIP)}) that the above inequality will be satisfied for all $t \geq 0$ with a possible change of the constant if necessary.
	
	Now our aim is to show the existence of $\theta = \theta_{k}$ as above. Let $\Phi_{q}$ be the inverse to $\Pi_{q} \colon \mathcal{M} \to \mathbb{E}^{-}(q)$ map. Put $w_{1} := v_{0}$, $w_{2} := \Phi_{q}( \Pi_{q}v_{0} )$ and $w_{3} := v^{*}_{\theta}(0)$. We suppose that $\theta \geq \tau_{Q}$. Then $V_{q}(w_{1}-w_{3}) \geq 0$ as it was shown above. Since $\mathcal{M}$ is admissible over $q$, there exists $\varkappa \in (0,1)$ such that $V^{(\varkappa)}_{q}(v_{1}-v_{2}) \leq 0$ for any $v_{1},v_{2} \in \mathcal{M}$. Thus, we have $V^{(\varkappa)}_{q}(w_{2}-w_{3}) \leq 0$ since $w_{2},w_{3} \in \mathcal{M}$. It is also clear that $\Pi_{q}w_{1}=\Pi_{q}w_{2}$ (see Fig. \ref{FIG: RomanovLemma}). Thus, Lemma \ref{LEM: RomanovLemma} gives us the inequality
	\begin{equation}
		\label{EQ: ExponentialTrackingAfterRomanovLemma}
		\begin{split}
			V_{q}(\Pi^{+}_{q}( v(0) - v^{*}_{\theta}(0) )) \leq \frac{1}{(1-\varkappa)^{2}} V_{q}(\Pi^{+}_{q}( v(0) - \Phi_{q}(\Pi_{q}v(0))) ).
		\end{split}
	\end{equation}
	Since $V_{q}(v(0) - v^{*}_{\theta}(0) ) \geq 0$, we have
	\begin{equation}
		\begin{split}
			-V_{q}\left(\Pi_{q} (v(0)-v^{*}_{\theta}(0)) \right) \leq V_{q}(\Pi^{+}_{q}( v(0) - v^{*}_{\theta}(0) ))
		\end{split}
	\end{equation}
	and using the constant $C_{q}$ from \eqref{EQ: ConstantProjectorNorm}, we get
	\begin{equation}
		\label{EQ: ExponentialTrackingProjectedEstimate}
		\| \Pi_{q}(v(0) - v^{*}_{\theta}(0)) \|^{2}_{\mathbb{E}} \leq  C^{-2}_{q} V_{q}(\Pi^{+}_{q}( v(0) - v^{*}_{\theta}(0) )).
	\end{equation}
	Thus, \eqref{EQ: ExponentialTrackingAfterRomanovLemma} shows that $\Pi_{q}v^{*}_{\theta}(0)$ is bounded for all $\theta \geq \tau_{Q}$ and, consequently, there exists a sequence $\theta=\theta_{k}$ such that $\Pi_{q}v^{*}_{\theta_{k}}(0)$ converges as $k \to +\infty$. From this we get that $v^{*}_{\theta}(0) = \Phi_{q}(\Pi_{q} v^{*}_{\theta}(0)) \in \mathcal{M}$ also converges to some $v^{*}_{0}$. Thus, there exists at least one $v^{*}_{0}$ as it is required. Since $\mathcal{M}$ is admissible, such $v^{*}_{0}$ is unique and, in fact, $v^{*}_{\theta}(0) \to v^{*}_{0}$ as $\theta \to +\infty$.
	
	To prove \eqref{EQ: ExponentialTrackingEstimate}, we have to estimate the norm $\|v_{0} - v^{*}_{0}\|_{\mathbb{E}}$ in \eqref{EQ: ExponentialTrackingDifferenceEstimate1}. For this we pass to the limit in the first inequality from \eqref{EQ: ExponentialTrackingAfterRomanovLemma} as $\theta \to +\infty$. Thus,
	\begin{equation}
		V_{q}(\Pi^{+}_{q}( v_{0} - v^{*}_{0} )) \leq \frac{1}{(1-\varkappa)^{2}} V_{q}(\Pi^{+}_{q}( v_{0} - \Phi_{q}(\Pi_{q}v_{0})) ).
	\end{equation}
	From this and since
	\begin{equation}
		C^{2}_{q}\| \Pi_{q} (v_{0} - v^{*}_{0}) \|^{2}_{\mathbb{E}} \leq -V_{q}(\Pi_{q} (v_{0} - v^{*}_{0})) \leq V_{q}( \Pi^{+}_{q}(v_{0} - v^{*}_{0}) ),
	\end{equation}
	we get
	\begin{equation}
		\| \Pi_{q} (v_{0} - v^{*}_{0}) \|^{2}_{\mathbb{E}} \leq \frac{C^{-2}_{q}}{(1-\varkappa)^{2}}V_{q}(\Pi^{+}_{q}( v_{0} - \Phi_{q}(\Pi_{q}v_{0})) ).
	\end{equation}
	Since $\mathcal{M}$ is Lipschitz admissible, the map $\Phi_{q}$ is Lipschitz with the Lipschitz constant $L(\mathcal{M})$. Let $\widetilde{v} \in \mathcal{M}$ be any point. We have
	\begin{equation}
		\| v^{*}_{0} - \widetilde{v} \|_{\mathbb{E}} = \| \Phi_{q}(\Pi_{q}v^{*}_{0}) - \Phi_{q}(\Pi_{q} \widetilde{v})  \|_{\mathbb{E}} \leq L(\mathcal{M})\| \Pi_{q} v^{*}_{0} - \Pi_{q} v_{0} \|_{\mathbb{E}} + L(\mathcal{M}) \| \Pi_{q} v_{0} - \Pi_{q} \widetilde{v} \|_{\mathbb{E}}.
	\end{equation}
	It is clear that $\| \Pi_{q} v_{0} - \Pi_{q} \widetilde{v} \|_{\mathbb{E}} \leq M_{\Pi} \| v_{0} - \widetilde{v} \|_{\mathbb{E}}$. Moreover,
	\begin{equation}
		\|  v_{0} - \Phi_{q}(\Pi_{q}v_{0}) \|_{\mathbb{E}} \leq \|v_{0} - \widetilde{v}\|_{\mathbb{E}} + \| \widetilde{v} - \Phi_{q}(\Pi_{q}v_{0})\|_{\mathbb{E}} \leq (L(\mathcal{M}) M_{\Pi} + 1) \cdot \|v_{0} - \widetilde{v}\|_{\mathbb{E}}.
	\end{equation}
	Summarizing the above estimates, we have
	\begin{equation}
		\label{EQ: ExpTrackPointEstimateFinal}
		\| v_{0} - v^{*}_{0} \|_{\mathbb{E}} \leq \| v_{0} - \widetilde{v} \|_{\mathbb{E}} + \|v^{*}_{0} - \widetilde{v}\|_{\mathbb{E}} \leq L' \| v_{0} - \widetilde{v} \|_{\mathbb{E}},
	\end{equation}
	where 
	\begin{equation}
		L' = \frac{L(\mathcal{M}) C^{-1}_{q} \sqrt{M_{Q}}}{1-\varkappa} (1+M_{\Pi}) (L(\mathcal{M}) M_{\Pi} + 1) + L(\mathcal{M}) M_{\Pi} + 1.
	\end{equation}
	Taking $\widetilde{v} \in \mathcal{M}$ such that $\| v_{0} - \widetilde{v} \|_{\mathbb{E}}$ becomes arbitrarily close to $\operatorname{dist}(v_{0},\mathcal{M})$, we get \eqref{EQ: ExponentialTrackingEstimate} with 
	\begin{equation}
		R(C^{-1}_{q},(1-\varkappa)^{-1},L(\mathcal{M})) := L(Q) \cdot L' \cdot L_{\tau_{S}+1} \cdot e^{\max\{ |\beta^{+}|, |\beta^{-}| \} \cdot (\tau_{S}+1)}.
	\end{equation}
    Thus, the proof is finished.
\end{proof}

Now we are going to study the continuity of the point $v^{*}_{0}$ from Theorem \ref{TH: ExpTrackingTheorem} on $v_{0}$ and $q \in \mathcal{Q}$. For each $q \in \mathcal{Q}$ let $\mathcal{M}(q)$ be a Lipschitz $\varkappa(q)$-admissible subset in $\mathbb{E}$. Let us define the map $\mathbb{E} \times \mathcal{Q} \to \mathbb{E}$ by $\Pi^{c}_{q}(v_{0}) := v^{*}_{0}$, where $v^{*}_{0}$ is the unique point from Theorem \ref{TH: ExpTrackingTheorem} applied to $v_{0}$ and $q$ and $\mathcal{M} = \mathcal{M}(q)$. We say that $\Pi^{c}_{q}$ is the \textit{central projector} onto $\mathcal{M}(q)$.

We recall that under \textbf{(CD)} (note that we consider $Q(q)$ instead of $P(q)$) the $V_{q}$-orthogonal projector $\Pi_{q}$ depend continuously on $q$ (see Lemma \ref{LEM: VOrtogonalProjectorsContinuity}). Moreover, it is clear that the set $\bigcup_{q \in \mathcal{K}} \left( \mathbb{E}^{-}(q) \cap \mathcal{B} \right)$ is compact for any bounded subset $\mathcal{B} \subset \mathbb{E}$ and a compact subset $\mathcal{K} \subset \mathcal{Q}$. Thus, $C^{-1}_{q}$ are bounded from above for $q$ from a fixed compact $\mathcal{K}$.

Let us put $\mathcal{D} := \bigcup_{q \in \mathcal{Q}}\{ q \} \times \mathbb{E}^{-}(q)$ that is a closed subset of $\mathcal{Q} \times \mathbb{E}$ under \textbf{(CD)}. We will say that the family $\{ \mathcal{M}(q) \}_{q \in \mathcal{Q}}$ is \textit{continuous} if the maps $\Phi_{q}$ given by the inverse to $\Pi_{q} \colon \mathcal{M}(q) \to \mathbb{E}^{-}(q)$ are such that the map $\mathcal{D} \ni (q,\zeta) \mapsto \Phi_{q}(\zeta) \in \mathbb{E}$ is continuous.

\begin{theorem}
	\label{TH: CentralProjectorContinuityGeneral}
	Under the standing hypotheses of Theorem \ref{TH: ExpTrackingTheorem} and \textbf{(CD)}, let $\{ \mathcal{M}(q) \}$ by a continuous family of $L(\mathcal{M}(q))$-Lipschitz $\varkappa(q)$-admissible subsets, where $\varkappa(q) \in (0,1)$, and let $\Pi^{c}_{q}$ be the central projector onto $\mathcal{M}(q)$. Suppose that the values $(1-\varkappa(q))^{-1}$ and $L(\mathcal{M}(q))$ are locally bounded from above. Then the map $\mathcal{Q} \times \mathbb{E} \ni (q,v) \to \Pi^{c}_{q}(v)$ is continuous. Moreover, if $q_{k} \to q$, then $\Pi^{c}_{q_{k}}(v) \to \Pi^{c}_{q}(v)$ uniformly in $v$ from bounded sets.
\end{theorem}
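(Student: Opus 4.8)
The plan is to revisit the proof of Theorem \ref{TH: ExpTrackingTheorem} and extract from it a uniform-in-$q$ version of the construction of $v^{*}_{0} = \Pi^{c}_{q}(v_{0})$, then pass to the limit along a sequence $q_{k} \to q$, $v_{k} \to v$. Recall that $v^{*}_{0}$ was obtained as the limit of the points $v^{*}_{\theta}(0) \in \mathcal{M}(q)$, where $v^{*}_{\theta}(\cdot)$ is the unique trajectory over $q$ with $v^{*}_{\theta}(0) \in \mathcal{M}(q)$ and $\Pi_{\vartheta^{\theta}(q)}v^{*}_{\theta}(\theta) = \Pi_{\vartheta^{\theta}(q)}\psi^{\theta}(q,v_{0})$. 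The first step is to fix a compact neighbourhood $\mathcal{K}$ of $q$ containing all $q_{k}$; by Corollary \ref{COR: OrtProjectosBounded} (applied with $Q$ in place of $P$) together with the local boundedness hypotheses, the quantities $C^{-1}_{q'}$, $(1-\varkappa(q'))^{-1}$, $L(\mathcal{M}(q'))$, $M_{\Pi}$ are all bounded uniformly for $q' \in \mathcal{K}$, so the Romanov-type estimate \eqref{EQ: ExponentialTrackingAfterRomanovLemma}–\eqref{EQ: ExpTrackPointEstimateFinal} gives a bound $\|\Pi_{q'}(v' - v^{*}_{\theta}(0;q',v'))\|_{\mathbb{E}} \leq \mathrm{const}\cdot\mathrm{dist}(v',\mathcal{M}(q'))$ which is uniform in $q' \in \mathcal{K}$, in $v'$ from bounded sets, and in $\theta$ large. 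Since $\mathrm{dist}(v',\mathcal{M}(q'))$ is itself locally bounded (using continuity of the family $\{\mathcal{M}(q)\}$), this yields a fixed bounded set in $\mathbb{E}^{-}(\cdot)$ containing all the relevant $\Pi_{q'}v^{*}_{\theta}(0)$.

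The second step is the limit passage. Suppose $q_{k} \to q$, $v_{k} \to v$, and set $v^{*}_{k} := \Pi^{c}_{q_{k}}(v_{k})$, $v^{*} := \Pi^{c}_{q}(v)$. By the uniform bound just obtained and since $\mathcal{D}$ is closed and $\bigcup_{q' \in \mathcal{K}}(\mathbb{E}^{-}(q')\cap\mathcal{B})$ is compact for bounded $\mathcal{B}$, the points $(q_{k},\Pi_{q_{k}}v^{*}_{k}) \in \mathcal{D}$ lie in a compact set; passing to a subsequence, $\Pi_{q_{k}}v^{*}_{k} \to \bar\zeta$ with $(q,\bar\zeta)\in\mathcal{D}$, and by continuity of the family $\{\mathcal{M}(q)\}$ we get $v^{*}_{k} = \Phi_{q_{k}}(\Pi_{q_{k}}v^{*}_{k}) \to \Phi_{q}(\bar\zeta) =: \bar v \in \mathcal{M}(q)$. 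It remains to identify $\bar v = v^{*}$. For this, use that $v^{*}_{k} \in [v_{k}]^{+}(q_{k})$, i.e. $V^{Q}_{\vartheta^{t}(q_{k})}(\psi^{t}(q_{k},v_{k}) - \psi^{t}(q_{k},v^{*}_{k})) \geq 0$ for all $t \geq 0$; letting $k\to\infty$ and using continuity of the cocycle, continuity of $q'\mapsto Q(q')$ from \textbf{(CD)}, and continuity of $\vartheta$, we obtain $V^{Q}_{\vartheta^{t}(q)}(\psi^{t}(q,v) - \psi^{t}(q,\bar v)) \geq 0$ for all $t\geq 0$, i.e. $\bar v \in [v]^{+}(q)$. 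Since $\bar v \in \mathcal{M}(q)$ and, by Theorem \ref{TH: ExpTrackingTheorem}, $\mathcal{M}(q)\cap[v]^{+}(q)$ is a single point, $\bar v = v^{*}$. As every subsequence has a further subsequence converging to the same limit $v^{*}$, the whole sequence converges: $\Pi^{c}_{q_{k}}(v_{k}) \to \Pi^{c}_{q}(v)$, which gives the asserted continuity.

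For the ``moreover'' part, the argument is the same but we must make the subsequence extraction uniform in $v$ from a fixed bounded set $\mathcal{B}_{0}$. Suppose for contradiction that $\sup_{v\in\mathcal{B}_{0}}\|\Pi^{c}_{q_{k}}(v) - \Pi^{c}_{q}(v)\|_{\mathbb{E}} \not\to 0$; then there are $\varepsilon_{0}>0$ and $v_{k}\in\mathcal{B}_{0}$ with $\|\Pi^{c}_{q_{k}}(v_{k}) - \Pi^{c}_{q}(v_{k})\|_{\mathbb{E}} \geq \varepsilon_{0}$. Passing to a subsequence, $v_{k}\to v$ (using that $\mathcal{B}_{0}$ need not be precompact — instead one extracts weak-$*$ or simply works with the bound above without needing $v_{k}$ to converge: in fact the uniform estimate $\|\Pi^{c}_{q_{k}}(v_{k}) - \Pi^{c}_{q}(v_{k})\| \leq$ something tending to $0$ follows directly once one checks all constants depend only on $\mathcal{K}$ and $\mathcal{B}_{0}$ and the estimate \eqref{EQ: ExpTrackPointEstimateFinal} compares $v^{*}_{0}$ to an arbitrary point of $\mathcal{M}(q)$). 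The clean route is: run the whole first-and-second-step argument tracking that $\mathrm{dist}(\Pi^{c}_{q_{k}}(v),\mathcal{M}(q))$ and the defining positive-equivalence relation degenerate uniformly, so the single-point conclusion of Theorem \ref{TH: ExpTrackingTheorem} forces uniform convergence. I expect the main obstacle to be precisely this uniformity bookkeeping — ensuring that the constant $L'$ in \eqref{EQ: ExpTrackPointEstimateFinal} and the rate at which $v^{*}_{\theta}(0)\to v^{*}_{0}$ are controlled uniformly over $q'\in\mathcal{K}$ and $v\in\mathcal{B}_{0}$, rather than the qualitative limit passage, which is routine given continuity of the cocycle and \textbf{(CD)}.
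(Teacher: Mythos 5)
Your proof takes essentially the same approach as the paper's: bound $\Pi^{c}_{q_{k}}(v_{k})$ using the exponential tracking estimate \eqref{EQ: ExponentialTrackingEstimate} with constants controlled uniformly over the compact set $\{q_{k}\}\cup\{q\}$, deduce precompactness from the continuity of the family $\{\mathcal{M}(q)\}$ and the finite-dimensionality of $\mathbb{E}^{-}(q)$, identify the limit point via continuity of the positive equivalence relation (item 3 of Lemma \ref{LEM: PositiveEquivalence}) together with the uniqueness clause of Theorem \ref{TH: ExpTrackingTheorem}, and close with a sub-subsequence argument. The paper phrases this as a direct contradiction rather than the sub-subsequence principle, but the mechanism is identical. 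For the ``moreover'' part, the paper simply asserts that the same argument works for $v_{k}$ from a bounded set; your remark that this is where the bookkeeping lives is fair, and the clean fix (which you gesture at but do not fully pin down) is that both $\Pi^{c}_{q_{k}}(v_{k})$ and $\Pi^{c}_{q}(v_{k})$ remain precompact even though $v_{k}$ itself need not converge, so one can again pass to convergent subsequences and compare the two limit points using positive equivalence over $q$ and uniqueness.
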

\begin{proof}
	Let a sequence $q_{k}$ converge to $q$ and $v_{k} \in \mathbb{E}$ converge to $v \in \mathbb{E}$. Put $v^{*}_{k} := \Pi^{c}_{q_{k}}(v_{k})$ and $v^{*} := \Pi^{c}_{q}(v)$. We have to show that $v^{*}_{k} \to v^{*}$. Supposing the opposite, we obtain a sebsequence (we keep the same index) separated from $v^{*}$. From $\|v^{*}_{k} - v \|_{\mathbb{E}} \leq \| v^{*}_{k} - v_{k} \| + \| v_{k} - v \|$ and \eqref{EQ: ExponentialTrackingEstimate} applied for $\mathcal{M}=\mathcal{M}(q)$ and $q=q_{k}$ we get that the sequence $\|v^{*}_{k}\|$ is bounded. In particular, it is precompact since $\mathcal{M}(q)$ continuously depend on $q$ and, moreover, any of its limit points $\overline{v}$ lies in $\mathcal{M}(q)$. Since $v^{*}_{k} \in [v_{k}]^{+}(q_{k})$ and the positive equivalence is continuous (see item 3) of Lemma \ref{LEM: PositiveEquivalence}), we must have $\overline{v} \in [v]^{+}(q)$. But such a point in $\mathcal{M}(q)$ is unique and, consequently, $\overline{v} = v^{*}$ that leads to a contradiction. Note that the same arguments work in the case $v_{k}$ belong to a bounded subset $\mathcal{B}$. Thus, the proof is finished.
\end{proof}

In what follows, our main nontrivial example of a continuous family $\mathcal{M}(q)$ will be given by the principle horizontal leaves $\mathfrak{A}(q)$, which are also invariant and forms an inertial manifold for the cocycle when $\beta^{-} > 0$.

\subsection{$C^{1}$-differentiability of leaves}
\label{SUBSEC: DifferentiabilityLeaves}
In this section we are going to prove that the horizontal leaves constructed in Section \ref{SUBSEC: HorizontalLeaves} (and known to be finite-dimensional Lipschitz submanifolds in $\mathbb{E}$) are $C^{1}$-differentiable (in the Fr\'{e}chet sense) submanifolds in $\mathbb{E}$ under reasonable differentiability assumptions for the semicocycle. The key observation is that linearization of the semicocycle provides a linear semicocycle which inherits monotonicity conditions like \textbf{(H3)}. Thus, we can apply previous results to this linear cocycle and construct horizontal leaves (now being linear subspaces) around the zero trajectory. These leaves are tangent spaces at proper points and the only work to be done is to justify this.

Analogously, we show that the vertical leaves constructed in Section \ref{SUBSEC: VerticalFoliation} are $C^{1}$-differentiable (in the Fr\'{e}chet sense) Banach $\mathbb{E}^{+}(q)$-submanifolds of $\mathbb{E}$. From this we will deduce the normal hyperbolicity property and state a conjecture on differentiability of higher orders.

\subsubsection{Fr\'{e}chet differentiability of horizontal leaves}
\label{SUBSUBSEC: FrechetHorizontal}

Throughout this section we are given with a semicocycle $\psi$ in $\mathbb{E}$ over a semiflow $\vartheta$ in $\mathcal{Q}$, which satisfies $\textbf{(H3)}^{-}_{w}$, \textbf{(ULIP)} and \textbf{(PROJ)}. Let $\pi^{t}(q,v):=(\vartheta^{t}(q),\psi^{t}(q,v))$ be the skew-product semiflow on $\mathcal{Q} \times \mathbb{E}$ associated with the semicocycle. We consider a complete trajectory $v^{*}(\cdot)$ of the semicocycle over some $q \in \mathcal{Q}$. Recall that this means that we also fix a complete trajectory of the semiflow $\vartheta$ passing through $q$, which we denote (for convenience) by $\vartheta^{t}(q)$. Thus, the map $t \mapsto (\vartheta^{t}(q),v^{*}(t))$ is a complete trajectory of the semiflow $\pi$ passing through $(q,v^{*}(0))$.

Recall that we omit mentioning dependence on operators $Q(q)$ for spaces, projectors and quadratic forms according to Remark \ref{REM: NotationQomited}.

From Theorem \ref{TH: HorizontalFibresAlongCompleteOrbits} we get that the set $\mathfrak{H} := [v^{*}(\cdot)](0)$ (defined in \eqref{EQ: EquivalenceSectionDefinition}) is a $j$-dimensional Lipschitz admissible submanifold in $\mathbb{E}$. Recall that this means $V_{q}(v_{1}-v_{2}) < 0$ for any distinct $v_{1},v_{2} \in \mathfrak{H}$, the map $\Pi_{q} \colon \mathfrak{H} \to \mathbb{E}^{-}(q)$ is a homeomorphism and the inverse map $\Phi_{\mathfrak{H}} \colon \mathbb{E}^{-}(q) \to \mathfrak{H}$ is Lipschitz (with the Lipschitz constant given by \eqref{EQ: LipschitzConstantOverQ}).

Our aim is to show that the map $\Phi_{\mathfrak{H}}$ is $C^{1}$-differentiable in the sense of Fr\'{e}chet as a $\mathbb{E}$-valued map and to describe its differential and tangent spaces for $\mathfrak{H}$. This will be done under the following differentiability assumption for the semicocycle. 

\begin{description}
	\item[(DIFF)] There exists a linear semicocycle in $\mathbb{E}$ over the skew-product semiflow $( \mathcal{Q} \times \mathbb{E}, \pi )$ given by the family of linear maps $\Xi^{t}(q,v,\cdot) \colon \mathbb{E} \to \mathbb{E}$, where $q \in \mathcal{Q}$, $v \in \mathbb{E}$ and $t \geq 0$, which satisfies \textbf{(ULIP)} and \textbf{(ACOM)} with $\gamma^{+}=\alpha^{+}$, where $\alpha^{+}$ is from $\textbf{(H3)}^{-}_{w}$, and such that for any $q \in \mathcal{Q}$, $v_{0} \in \mathbb{E}$ and $\xi \in \mathbb{E}$ the limit
	\begin{equation}
		\lim_{h \to 0} \frac{\psi^{t}(q,v_{0} + h \xi) - \psi^{t}(q,v_{0}) }{h} = \Xi^{t}(q,v_{0},\xi).
	\end{equation}
    is understood in the following two senses:
    \begin{enumerate}
    	\item For $t \geq 0$ the limit exists in the norm of $\mathbb{H}$ and holds uniformly in $t$ from compact subsets of $[0,+\infty)$ and $\xi$ from bounded subsets of $\mathbb{E}$;
    	\item For $t \geq \tau_{Q}$ the limit exists in the norm of $\mathbb{E}$ and holds uniformly in $t$ from compact subsets of $[\tau_{Q},+\infty)$ and $\xi$ from bounded subsets of $\mathbb{E}$.
    \end{enumerate}
    Moreover, for any $t \geq 0$, $q \in \mathcal{Q}$ the map $\mathbb{E} \ni v \mapsto \Xi^{t}(q,v,\cdot) \in \mathcal{L}(\mathbb{E};\mathbb{H})$ is continuous.
\end{description}

\begin{remark}
	This hypothesis is the main reason why had to consider the value $\tau_{Q} \geq 0$ in previous developments of the theory. Such a case is typical for delay equations, where differentiability in the main space $\mathbb{E}$ is delayed as in item (2) of \textbf{(DIFF)}.
\end{remark}

To emphasize that the map $\Xi^{t}(q,v,\cdot)$ is linear we will usually write $L(t;q,v)\xi$ instead of $\Xi^{t}(q,v,\xi)$.

\begin{lemma}
	\label{LEM: H3WeakMinusLinearized}
	For every $q \in \mathcal{Q}$ and $v \in \mathbb{E}$ we have
	\begin{equation}
		\label{EQ: H3WeakMinusLinearized}
		\begin{split}
			e^{2\alpha(r;q)}V_{\vartheta^{r}(q)}(L(r;q,v)\xi) - V_{q}(\xi) \leq -\delta_{Q} \int_{0}^{r} e^{2\alpha(s;q)}|L(s;q,v)\xi|^{2}_{\mathbb{H}}ds 
		\end{split}
	\end{equation}
    satisfied for all $\xi \in \mathbb{E}$ such that $V_{q}(\xi) \leq 0$ and all $r \geq \tau_{Q}$.
\end{lemma}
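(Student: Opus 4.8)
The plan is to obtain \eqref{EQ: H3WeakMinusLinearized} by passing to the limit in $\textbf{(H3)}^{-}_{w}$ along the finite-difference quotients that converge to $L(\cdot;q,v)$. Fix $q \in \mathcal{Q}$, $v \in \mathbb{E}$ and a vector $\xi \in \mathbb{E}$ with $V_{q}(\xi) \leq 0$, and set $v_{1} := v + h\xi$, $v_{2} := v$ for small $h \neq 0$. Divide \eqref{EQ: H3WeakMinus} by $h^{2}$; since $V^{Q}_{q}$ is a quadratic form, $h^{-2} V^{Q}_{\vartheta^{r}(q)}(\psi^{r}(q,v+h\xi)-\psi^{r}(q,v)) = V_{\vartheta^{r}(q)}\!\left( h^{-1}(\psi^{r}(q,v+h\xi)-\psi^{r}(q,v)) \right)$, and likewise for the $r=0$ term, which equals $V_{q}(\xi)$ identically, and for the integrand, which becomes $\bigl| h^{-1}(\psi^{s}(q,v+h\xi)-\psi^{s}(q,v))\bigr|^{2}_{\mathbb{H}}$. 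The one subtlety is that \eqref{EQ: H3WeakMinus} requires the pseudo-ordering $V^{Q}_{q}(v_{1}-v_{2}) \leq 0$, i.e. $h^{2} V_{q}(\xi) \leq 0$, which holds for every $h$ precisely because $V_{q}(\xi) \leq 0$; so the scaled inequality is available for all small $h \neq 0$ and all $r \geq \tau_{Q}$.

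Next I would take $h \to 0$. For the boundary term at time $r$, item (1) (or item (2)) of \textbf{(DIFF)} gives $h^{-1}(\psi^{r}(q,v+h\xi)-\psi^{r}(q,v)) \to L(r;q,v)\xi$ in $\mathbb{H}$ (this already suffices, since $V_{\vartheta^{r}(q)}$ is continuous on $\mathbb{H}$ — it is the quadratic form of the bounded operator $Q(\vartheta^{r}(q)) \in \mathcal{L}(\mathbb{E};\mathbb{E}^{*})$, hence continuous in the $\mathbb{H}$-topology as long as the difference quotients stay $\mathbb{E}$-bounded, which they do by \textbf{(ULIP)}), so the left-hand boundary term converges to $e^{2\alpha(r;q)}V_{\vartheta^{r}(q)}(L(r;q,v)\xi)$. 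For the integral term I would use the uniform-in-compact-$t$ convergence in $\mathbb{H}$ from item (1) of \textbf{(DIFF)}: on the fixed interval $[0,r]$ the difference quotients $h^{-1}(\psi^{s}(q,v+h\xi)-\psi^{s}(q,v))$ converge to $L(s;q,v)\xi$ uniformly in $s \in [0,r]$, hence so do their squared $\mathbb{H}$-norms, and since $e^{2\alpha(s;q)}$ is bounded on $[0,r]$ (because $|\alpha_{0}| \leq \max\{|\alpha^{-}|,|\alpha^{+}|\}$), the integral converges to $\int_{0}^{r} e^{2\alpha(s;q)}|L(s;q,v)\xi|^{2}_{\mathbb{H}}\,ds$ by dominated (indeed uniform) convergence. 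Taking the limit in the scaled inequality yields exactly \eqref{EQ: H3WeakMinusLinearized}.

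The only point requiring a little care — and the place I would flag as the main (mild) obstacle — is justifying that $V_{\vartheta^{r}(q)}(\cdot)$ and the integrand $|\cdot|^{2}_{\mathbb{H}}$ are controlled by $\mathbb{H}$-convergence alone when the natural estimates live partly in $\mathbb{E}$. For the integrand this is immediate since it is literally an $\mathbb{H}$-norm. For the boundary quadratic form $V_{\vartheta^{r}(q)}(w) = \langle w, Q(\vartheta^{r}(q)) w\rangle$ one must note that if $w_{h} \to w$ in $\mathbb{H}$ and $\sup_{h}\|w_{h}\|_{\mathbb{E}} < \infty$ (guaranteed by \textbf{(ULIP)} together with $\|h^{-1}(v_{1}-v_{2})\|_{\mathbb{E}} = \|\xi\|_{\mathbb{E}}$ on the interval $[0,T]$ where the $\mathbb{E}$-to-$\mathbb{E}$ Lipschitz bound applies), then $V_{\vartheta^{r}(q)}(w_{h}) \to V_{\vartheta^{r}(q)}(w)$; alternatively, for $r \geq \tau_{Q}$ one may directly invoke item (2) of \textbf{(DIFF)} to get convergence in $\mathbb{E}$ and use continuity of $Q(\vartheta^{r}(q)) \in \mathcal{L}(\mathbb{E};\mathbb{E}^{*})$. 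Either route closes the argument, and since $V_{q}(\xi) \leq 0$ and $r \geq \tau_{Q}$ were arbitrary, the lemma follows.
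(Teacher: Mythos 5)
Your argument matches the paper's (one-line) proof exactly: substitute $v_1 = v + h\xi$, $v_2 = v$ in $\textbf{(H3)}^-_w$, divide by $h^2$, and pass to the limit via \textbf{(DIFF)}, correctly noting that $V^Q_q(v_1-v_2)=h^2 V_q(\xi)\leq 0$ makes the pseudo-ordering hypothesis available for every $h$. The one caveat is that your first route for the boundary term (that $V_{\vartheta^r(q)}$ is $\mathbb{H}$-continuous on $\mathbb{E}$-bounded sets) is not justified for a general $Q(\vartheta^r(q)) \in \mathcal{L}(\mathbb{E};\mathbb{E}^*)$, but your alternative using item (2) of \textbf{(DIFF)} — which gives convergence in $\mathbb{E}$ precisely for $r \geq \tau_Q$, the range the lemma needs — is the right one and closes the argument.
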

\begin{proof}
	To prove the lemma one just have to consider \eqref{EQ: H3WeakMinus} with $v_{1}=v+h\xi$ and $v_{2} = v$, divide both sides by $h^{2}$ and pass to the limit as $h \to 0$ using \textbf{(DIFF)}.
\end{proof}

Thus, the cocycle $\Xi$ satisfies the main condition $\textbf{(H3)}^{-}_{w}$ of Section \ref{SUBSEC: HorizontalLeaves} w.~r.~t. the same operators $Q(q)$, spaces and projectors, which are independent of $v$.

Let $0^{*}(\cdot)$ be the zero complete trajectory of $\Xi$ over a given point $(q,v_{0})$, where $v_{0} \in \mathfrak{H}$. The corresponding complete trajectory of the skew-product semiflow is given by $(\vartheta^{t}(q),w^{*}(t))$, where $w^{*}(\cdot) \in [v^{*}(\cdot)]$ is the unique complete trajectory such that $w^{*}(0)=v_{0}$ (see Theorem \ref{TH: HorizontalFibresAlongCompleteOrbits}). Then the set $\mathfrak{H}'(v_{0}) := [0^{*}(\cdot)](0)$ is a $j$-dimensional admissible subspace of $\mathbb{E}$. In particular, this means that $\Pi_{q} \colon \mathfrak{H}'(v_{0}) \to \mathbb{E}^{-}(q)$ is a homeomorphism and the inverse map $\Phi'_{\mathfrak{H}}(v_{0}) \colon \mathbb{E}^{-}(q) \to \mathfrak{H}'(v_{0})$ is Lipschitz.

In the lemmas below we consider $\Phi_{\mathfrak{H}}$ and $\Phi'_{\mathfrak{H}}(v_{0})$ as $\mathbb{E}$-valued maps.

\begin{lemma}
	\label{LEM: DifferentialPhiContinuous}
	The map $\Phi'_{\mathfrak{H}}(v_{0})$ depend continuously on $v_{0} \in \mathfrak{H}$ in the norm of $\mathcal{L}(\mathbb{E}^{-}(q);\mathbb{E})$.
\end{lemma}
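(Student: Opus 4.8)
The plan is to reduce the statement to pointwise convergence of the linear maps $\Phi'_{\mathfrak{H}}(v_{0})$ and then to run the compactness-and-uniqueness argument used in the proof of Theorem~\ref{TH: CentralProjectorContinuityGeneral}. Since $\mathbb{E}^{-}(q)$ is the \emph{fixed} $j$-dimensional space, convergence in the norm of $\mathcal{L}(\mathbb{E}^{-}(q);\mathbb{E})$ is equivalent to pointwise convergence on $\mathbb{E}^{-}(q)$ (evaluate on a fixed basis). So it suffices to fix $\zeta \in \mathbb{E}^{-}(q)$ and a sequence $v_{0}^{k} \to v_{0}$ in $\mathfrak{H}$ and to show $\eta_{k} := \Phi'_{\mathfrak{H}}(v_{0}^{k})\zeta \to \eta := \Phi'_{\mathfrak{H}}(v_{0})\zeta$ in $\mathbb{E}$, where $\Pi_{q}\eta_{k} = \Pi_{q}\eta = \zeta$.

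First I would collect the uniform data. By Lemma~\ref{LEM: H3WeakMinusLinearized} the linearized semicocycle $\Xi$ satisfies $\textbf{(H3)}^{-}_{w}$ with the \emph{same} operators $Q(q)$, sign spaces and $V_{q}$-orthogonal projectors (all independent of the second coordinate of the base point), and it satisfies \textbf{(ULIP)}, \textbf{(ACOM)} with $\gamma^{+}=\alpha^{+}$ by \textbf{(DIFF)}, and \textbf{(PROJ)}. Hence Theorem~\ref{TH: HorizontalFibresAlongCompleteOrbits} applied to $\Xi$ over the base point $(q,v_{0}^{k})$ shows that $\mathfrak{H}'(v_{0}^{k})=[0^{*}(\cdot)](0)$ is a $j$-dimensional subspace on which $\Pi_{q}$ is invertible with inverse of Lipschitz constant at most $C_{Lip}(q)$ from \eqref{EQ: LipschitzConstantOverQ}, the \emph{same} constant for every $k$ (it depends only on $Q(q),\delta_{Q},\alpha^{\pm},\tau_{S},L_{\tau_{S}+1}$). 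Since $\Phi'_{\mathfrak{H}}(v_{0}^{k})$ is linear, $\|\eta_{k}\|_{\mathbb{E}} \leq C_{Lip}(q)\|\zeta\|_{\mathbb{E}}$, so the $\eta_{k}$ are bounded. I would also record that the complete trajectories $w^{*}_{k}(\cdot)$ of the skew-product through $v_{0}^{k}$ (these exist since $v_{0}^{k}\in\mathfrak{H}$) converge pointwise on $\mathbb{R}$ to $w^{*}(\cdot)$: this follows from $v_{0}^{k}\to v_{0}$, the pseudo-ordering estimate \eqref{EQ: OrderedCompleteTrajEstimate} for the pair $w^{*}_{k},w^{*}$ (both in $[v^{*}(\cdot)]$, hence pseudo-ordered), and \textbf{(ULIP)}, exactly as in the proof of Lemma~\ref{EQ: ProjectorHomeomorphismOntoImage}; in particular the base points $(q,v_{0}^{k})$ and their backward extensions $s\mapsto(\vartheta^{s}(q),w^{*}_{k}(s))$ converge pointwise to those of $(q,v_{0})$.

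Now fix, for each $k$, the complete trajectory $\xi^{k}(\cdot)$ of $\Xi$ over $(q,v_{0}^{k})$ lying in the class $[0^{*}(\cdot)]$ with $\xi^{k}(0)=\eta_{k}$, so that $V_{\vartheta^{s}(q)}(\xi^{k}(s))\leq 0$ for all $s\in\mathbb{R}$. Substituting this into \eqref{EQ: OrderedCompleteTrajEstimate} (applied to $\xi^{k}$ versus the zero trajectory) and using $\|\eta_{k}\|_{\mathbb{E}}\leq C_{Lip}(q)\|\zeta\|_{\mathbb{E}}$ bounds $\int_{-\infty}^{0}e^{2\alpha(s;q)}|\xi^{k}(s)|^{2}_{\mathbb{H}}ds$ uniformly in $k$; applying the mean value theorem on the intervals $[-\tau_{S}-l-1,-\tau_{S}-l]$ and then \textbf{(ULIP)}, exactly as in the proof of Lemma~\ref{LEM: ManifoldConstruction}, gives $\|\xi^{k}(-l)\|^{2}_{\mathbb{E}}\leq D^{2}e^{2\alpha^{+}l}$ with $D$ depending only on $\|\zeta\|_{\mathbb{E}}$ and the uniform constants. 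Since $(q,v_{0}^{k})\to(q,v_{0})$, Lemma~\ref{LEM: GeneralCompactnessLemma} applied to $\Xi$ with $t_{l}=-l$, $\mathcal{B}_{l}$ the closed $\mathbb{E}$-ball of radius $De^{\alpha^{+}l}$ about the origin, and $\gamma^{+}=\alpha^{+}$, yields a subsequence $\xi^{k_{m}}(\cdot)$ converging pointwise on $\mathbb{R}$ to a complete trajectory $\xi^{*}(\cdot)$ of $\Xi$ over $(q,v_{0})$. Passing to the limit in $V_{\vartheta^{s}(q)}(\xi^{k_{m}}(s))\leq0$ gives $\xi^{*}(\cdot)\in[0^{*}(\cdot)]$ over $(q,v_{0})$, while $\Pi_{q}\xi^{*}(0)=\lim_{m}\Pi_{q}\eta_{k_{m}}=\zeta$; by the uniqueness in item~3) of Lemma~\ref{LEM: CompleteTrajectoriesOrdering} this forces $\xi^{*}(0)=\eta$, hence $\eta_{k_{m}}=\xi^{k_{m}}(0)\to\eta$. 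As every subsequence of $(\eta_{k})$ has a sub-subsequence converging to $\eta$, the whole sequence converges to $\eta$, which proves the pointwise statement, and hence the lemma.

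The main obstacle I anticipate is purely bookkeeping: one must verify that \emph{all} structural hypotheses, when transported to the linearized semicocycle $\Xi$, hold with constants independent of the base point $(q,v_{0}^{k})$, so that the backward decay estimate $\|\xi^{k}(-l)\|_{\mathbb{E}}\leq De^{\alpha^{+}l}$ has $D$ uniform in $k$ and Lemma~\ref{LEM: GeneralCompactnessLemma} applies. The simplification that makes this routine is that $\Xi$ is \emph{linear} with the zero trajectory as reference, so the ``distance to the reference leaf'' factor $\|\zeta-\Pi_{q}v^{*}(0)\|$ appearing in the proof of Lemma~\ref{LEM: ManifoldConstruction} collapses to $\|\zeta\|_{\mathbb{E}}$, and, by Lemma~\ref{LEM: H3WeakMinusLinearized}, the operators $Q(q)$, the sign spaces and the projectors are independent of the point $v_{0}^{k}$ altogether.
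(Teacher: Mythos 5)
Your proposal is correct and follows essentially the same route as the paper's proof: bound the weighted backward integral of the trajectories $\xi^{k}(\cdot)\in[0^{*}(\cdot)]$ using the negative pseudo-ordering together with $\Pi_{q}\xi^{k}(0)=\zeta$, pass to a limiting complete trajectory of $\Xi$ over $(q,v_{0})$ via \textbf{(ACOM)} and Lemma~\ref{LEM: GeneralCompactnessLemma}, and identify the limit inside $\mathfrak{H}'(v_{0})$ by uniqueness of the pseudo-ordering class element with prescribed $\Pi_{q}$-projection. The only (cosmetic) differences are that the paper argues by contradiction with a sequence of unit directions $\eta_{k}\to\overline{\eta}$ rather than fixing $\zeta$ and invoking the subsequence principle, and that it reaches the uniform integral bound directly from $-V_{q}(\xi^{*}_{k}(0))\leq M_{Q}\|\eta_{k}\|^{2}$ instead of detouring through the $C_{Lip}(q)$-Lipschitz bound of Theorem~\ref{TH: HorizontalFibresAlongCompleteOrbits}; both devices exploit the finite-dimensionality of $\mathbb{E}^{-}(q)$ to upgrade pointwise convergence to operator-norm convergence.
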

\begin{proof}
	Supposing the opposite, we obtain a sequence $v_{k} \in \mathfrak{H}$ converging to some $v_{0} \in \mathfrak{H}$ and a sequence $\eta_{k} \in \mathbb{E}^{-}(q)$ such that $\| \eta_{k} \| = 1$, for which the sequence
	\begin{equation}
		\Phi'_{\mathfrak{H}}(v_{k})\eta_{k} - \Phi'_{\mathfrak{H}}(v_{0}) \eta_{k}
	\end{equation}
    is uniformly separated from zero. Without loss of generality, we may assume that $\eta_{k}$ converges to some $\overline{\eta}$. Let $\xi^{*}_{k}(\cdot)$ and $\xi^{*}(\cdot)$ be complete trajectories of the linearization semicocycle $\Xi$ over $(q,v_{k})$ and $(q,v_{0})$ respectively from the equivalence class of $0^{*}(\cdot)$ and such that $\xi^{*}_{k}(0) = \Phi'_{\mathfrak{H}}(v_{k})\eta_{k}$ and $\xi^{*}(0) = \Phi'_{\mathfrak{H}}(v_{0}) \overline{\eta}$. We have
    \begin{equation}
    	\int_{-\infty}^{0}e^{2 \alpha(s;q)} |\xi^{*}_{k}(s)|^{2}_{\mathbb{H}} ds \leq -\delta^{-1}_{Q} V_{q}(\xi^{*}_{k}(s)) \leq \delta^{-1}_{Q} M_{Q} \| \eta_{k} \| = \delta^{-1}_{Q} M_{Q}
    \end{equation}
    From this, as in the proof of Lemma \ref{LEM: ManifoldConstruction}, we may obtain a subsequence (we keep the same index) such that $\xi^{*}_{k}(s) \to \overline{\xi}(s)$ in $\mathbb{E}$ for all $s \in \mathbb{R}$ for some complete trajectory $\overline{\xi}(\cdot)$ over $(q,v_{0})$. In particular,
    \begin{equation}
    	\int_{-\infty}^{0}e^{2 \alpha(s;q)} |\overline{\xi}(s)|^{2}_{\mathbb{H}} ds \leq \delta^{-1}_{Q} M_{Q} < +\infty
    \end{equation}
    and, consequently, $\overline{\xi}(0) \in \mathfrak{H}'(v_{0})$. Since $\Pi_{q}\left(\Phi'_{\mathfrak{H}}(v_{k})\eta_{k} - \Phi'_{\mathfrak{H}}(v_{0}) \eta_{k} \right) = 0$, we must have $\Pi_{q} \overline{\xi}(0) = \Pi_{q}\Phi'_{\mathfrak{H}}(v_{0}) \overline{\eta} = \overline{\eta}$ that leads to a contradiction. The lemma is proved.
\end{proof}

\begin{lemma}
	\label{LEM: DifferencePrecompactPhi}
	Consider two sequences $h_{k} \in \mathbb{R}$ and $\eta_{k} \in \mathbb{E}^{-}(q)$ such that $h_{k} \to 0$ as $k \to +\infty$ and $\eta_{k}$ is bounded. Then for any $\zeta \in \mathbb{E}^{-}(q)$ the sequence
	\begin{equation}
		\label{EQ: LemDifferencePrecompact}
		\frac{\Phi_{\mathfrak{H}}(\zeta+h_{k}\eta_{k})-\Phi_{\mathfrak{H}}(\zeta)}{h_{k}}
	\end{equation}
    is precompact in $\mathbb{E}$ and any of its limit points lies in $\mathfrak{H}'(\Phi_{\mathfrak{H}}(\zeta))$.
\end{lemma}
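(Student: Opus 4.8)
The plan is to reduce everything, via the differentiability hypothesis \textbf{(DIFF)}, to the linearized semicocycle $\Xi$, for which the results of Section \ref{SUBSEC: HorizontalLeaves} (Lemmas \ref{LEM: ManifoldConstruction}, \ref{LEM: CompleteTrajectoriesOrdering}, \ref{LEM: GeneralCompactnessLemma} and Lemma \ref{LEM: H3WeakMinusLinearized}) are available. First I would fix notation: put $v_{0}:=\Phi_{\mathfrak{H}}(\zeta)$ and $v_{k}:=\Phi_{\mathfrak{H}}(\zeta+h_{k}\eta_{k})$, so $v_{k}\to v_{0}$ since $\Phi_{\mathfrak{H}}$ is Lipschitz. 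Let $w^{*}(\cdot)$ and $w^{*}_{k}(\cdot)$ be the unique complete trajectories over $q$ from $[v^{*}(\cdot)]$ with $w^{*}(0)=v_{0}$ and $w^{*}_{k}(0)=v_{k}$ (Theorem \ref{TH: HorizontalFibresAlongCompleteOrbits}). Then the quotient in \eqref{EQ: LemDifferencePrecompact} is exactly $\xi_{k}(0)$, where $\xi_{k}(t):=h_{k}^{-1}(w^{*}_{k}(t)-w^{*}(t))$ for $t\in\mathbb{R}$; note that $\Pi_{q}\xi_{k}(0)=\eta_{k}$ and, since $w^{*}_{k}(\cdot)$ and $w^{*}(\cdot)$ are pseudo-ordered with $v^{*}(\cdot)$, one has $V_{\vartheta^{t}(q)}(\xi_{k}(t))\le 0$ for all $t$.

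Next I would establish two uniform bounds on $\xi_{k}$. By transitivity of pseudo-ordering (item 2 of Lemma \ref{LEM: CompleteTrajectoriesOrdering}) the trajectories $w^{*}_{k}(\cdot)$ and $w^{*}(\cdot)$ are pseudo-ordered, so item 1 of that lemma gives $\int_{-\infty}^{0}e^{2\alpha(s;q)}|w^{*}_{k}(s)-w^{*}(s)|^{2}_{\mathbb{H}}\,ds\le -\delta_{Q}^{-1}V_{q}(v_{k}-v_{0})$. Using $V_{q}$-orthogonality, $\Pi_{q}(v_{k}-v_{0})=h_{k}\eta_{k}$ and positivity of $V_{q}$ on $\mathbb{E}^{+}(q)$, one gets $-V_{q}(v_{k}-v_{0})\le -h_{k}^{2}V_{q}(\eta_{k})\le M_{Q}h_{k}^{2}\|\eta_{k}\|^{2}$, whence $\int_{-\infty}^{0}e^{2\alpha(s;q)}|\xi_{k}(s)|^{2}_{\mathbb{H}}\,ds\le \delta_{Q}^{-1}M_{Q}\|\eta_{k}\|^{2}=:C_{1}$ uniformly in $k$. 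Applying the mean value theorem on $[-\tau_{S}-l-1,-\tau_{S}-l]$ together with \textbf{(ULIP)} exactly as in the proof of Lemma \ref{LEM: ManifoldConstruction} (here $\xi_{k}(\sigma+\cdot)$ is $h_{k}^{-1}$ times the difference of two trajectories over $\vartheta^{\sigma}(q)$) yields a constant $D_{1}$, independent of $k$ and $l$, with $\|\xi_{k}(-l)\|_{\mathbb{E}}\le D_{1}e^{\alpha^{+}l}$ for all $l=0,1,2,\dots$; in particular $\{\xi_{k}(0)\}$ is bounded.

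Then comes the passage to $\Xi$. Fix $l\ge\tau_{Q}$. Since $w^{*}_{k}(-l)=w^{*}(-l)+h_{k}\xi_{k}(-l)$ with $\{\xi_{k}(-l)\}_{k}$ contained in the fixed ball $\overline{B}:=\{\,\|\cdot\|_{\mathbb{E}}\le D_{1}e^{\alpha^{+}l}\,\}$, hypothesis \textbf{(DIFF)} (item 2) shows that the trajectory $\widetilde{\xi}^{[l]}_{k}(\cdot)$ of $\Xi$ over $\pi^{-l}(q,v_{0})=(\vartheta^{-l}(q),w^{*}(-l))$ with $\widetilde{\xi}^{[l]}_{k}(-l)=\xi_{k}(-l)$ satisfies $\sup_{t\in[-l+\tau_{Q},0]}\|\xi_{k}(t)-\widetilde{\xi}^{[l]}_{k}(t)\|_{\mathbb{E}}\to 0$ as $k\to\infty$. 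By Lemma \ref{LEM: H3WeakMinusLinearized} the cocycle $\Xi$ satisfies $\textbf{(H3)}^{-}_{w}$ (for the same $Q(q)$ and projectors), and by \textbf{(DIFF)} it also satisfies \textbf{(ULIP)} and \textbf{(ACOM)} with $\gamma^{+}=\alpha^{+}$, while $\xi_{k}(-l)$ lies in a set of diameter $\le 2D_{1}e^{-\gamma^{+}(-l)}$. Combining this approximation with the compactness argument of Lemma \ref{LEM: GeneralCompactnessLemma} applied to $\Xi$ — first extracting, for each fixed $l$, a subsequence along which $\xi_{k}(-l)$ converges (possible since $\{\widetilde{\xi}^{[l']}_{k}(-l)\}_{k}$ is precompact by \textbf{(ACOM)} for $\Xi$ for large $l'>l$), then a Cantor diagonal over $l$, and finally propagating forward by \textbf{(DIFF)} — one obtains a subsequence (same index) and a complete trajectory $\overline{\xi}(\cdot)$ of $\Xi$ over $(q,v_{0})$ with $\xi_{k}(s)\to\overline{\xi}(s)$ in $\mathbb{E}$ for every $s\in\mathbb{R}$; in particular $\xi_{k}(0)\to\overline{\xi}(0)$.

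Finally, I would identify the limit. Passing to the limit in the integral bound along the subsequence gives $\int_{-\infty}^{0}e^{2\alpha(s;q)}|\overline{\xi}(s)|^{2}_{\mathbb{H}}\,ds\le C_{1}<+\infty$, so by item 1 of Lemma \ref{LEM: CompleteTrajectoriesOrdering} applied to $\Xi$ and its zero trajectory $0^{*}(\cdot)$ we get $\overline{\xi}(\cdot)\in[0^{*}(\cdot)]$, i.e. $\overline{\xi}(0)\in\mathfrak{H}'(\Phi_{\mathfrak{H}}(\zeta))$. Since every subsequence of \eqref{EQ: LemDifferencePrecompact} thus contains a further subsequence converging in $\mathbb{E}$ to a point of $\mathfrak{H}'(\Phi_{\mathfrak{H}}(\zeta))$, the sequence is precompact and all its limit points lie in $\mathfrak{H}'(\Phi_{\mathfrak{H}}(\zeta))$. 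I expect the hard part to be precisely the third step: making the approximation of the difference quotient $\xi_{k}$ by genuine $\Xi$-trajectories uniform enough (over $t$ from compacts and over the $k$-dependent perturbations $\xi_{k}(-l)$) to feed into the compactness machinery of Lemma \ref{LEM: GeneralCompactnessLemma}, which is stated for honest trajectories; the remaining steps only reassemble the patterns of Lemmas \ref{LEM: ManifoldConstruction} and \ref{LEM: DifferentialPhiContinuous}.
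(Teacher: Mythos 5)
Your proposal is essentially correct and follows the paper's approach: reduce to the linearized cocycle $\Xi$ via \textbf{(DIFF)}, derive the backward exponential bound $\|\xi_{k}(-l)\|_{\mathbb{E}}\leq D_{1}e^{\alpha^{+}l}$ from the pseudo-ordering integral estimate, and exploit \textbf{(ACOM)} for $\Xi$. The paper establishes precompactness of $\{\xi_{k}(0)\}_{k}$ by bounding its Kuratowski measure directly — for each $l\geq\tau_{Q}$ it shows that, for $k$ large, $\xi_{k}(0)$ lies in an $\varepsilon$-neighborhood of $L(l;\vartheta^{-l}(q),w^{*}(-l))\mathcal{B}_{l}$ with $\mathcal{B}_{l}$ a ball of radius $De^{\alpha^{+}l}$, then invokes \textbf{(ACOM)} for $\Xi$ and lets $\varepsilon\to 0$, $l\to\infty$ — and only afterwards does the Cantor diagonal to identify limit points; you fold precompactness into the diagonal via subsequence extraction, which is an equivalent reorganization. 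One point to tighten: for fixed $l'>l$, the set $\{\widetilde{\xi}^{[l']}_{k}(-l)\}_{k}$ is in general \emph{not} precompact under \textbf{(ACOM)} alone — that assumption only yields $\alpha_{K}\bigl(\{\widetilde{\xi}^{[l']}_{k}(-l)\}_{k}\bigr)\leq 2D_{1}\gamma_{0}(l'-l)e^{\alpha^{+}l}$, a small but nonzero quantity for fixed $l'$ (precompactness would follow from \textbf{(UCOM)}, which you did not assume). The correct conclusion is that $\{\xi_{k}(-l)\}_{k}$ has zero Kuratowski measure: combine the above bound with the uniform-in-$k$ approximation $\xi_{k}(-l)=\widetilde{\xi}^{[l']}_{k}(-l)+o(1)$ (dropping finitely many $k$'s does not change $\alpha_{K}$), then let $l'\to+\infty$; this is precisely what the paper's direct computation encodes.
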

\begin{proof}
	Let $w^{*}_{k}(\cdot)$ and $w^{*}(\cdot)$ be two complete trajectories of the semicocycle $\psi$ over $q$ such that $w^{*}_{k}(0) = \Phi_{\mathfrak{H}}(\zeta+h_{k}\eta_{k})$ and $w^{*}(0)= \Phi_{\mathfrak{H}}(\zeta)$. From item (2) of \textbf{(DIFF)} for a fixed number $l=1,2,\ldots$ such that $l \geq \tau_{Q}$ we have
	\begin{equation}
		\begin{split}
			\label{EQ: PhiLimitPointLemma1}
			w^{*}_{k}(0)-w^{*}(0) = \psi^{l}(\vartheta^{-l}(q),w^{*}_{k}(-l))-\psi^{l}(\vartheta^{-l}(q),w^{*}(-l)) = \\ = L(l;\vartheta^{-l}(q),w^{*}(-l)) (w^{*}_{k}(-l)-w^{*}(-l)) + o(w^{*}_{k}(-l)-w^{*}(-l)).
		\end{split}
	\end{equation}
    Applying Lemma \ref{LEM: BackwardEstimate} for $q_{0} = \vartheta^{-l}(q)$, $t_{1}=-l-\operatorname{max}\{ \tau_{Q},\tau_{S}+1 \}$, $t_{0} = -l$, $t_{2} = 0$ and trajectories $w^{*}_{k}(\cdot)$ and $w^{*}(\cdot)$, for some constant $D>0$ (depending only on exterior parameters) we get
    \begin{equation}
    	\label{EQ: PhiLimitPointLemma2}
    	\| w^{*}_{k}(-l)-w^{*}(-l) \|_{\mathbb{E}} \leq D e^{\alpha^{+} l} \| h_{k}\|_{\mathbb{E}}.
    \end{equation}
    Firstly, this implies that $o(w^{*}_{k}(-l)-w^{*}_{k}(-l)) = o(h_{k})$ as $k \to +\infty$. From this and \eqref{EQ: PhiLimitPointLemma1} we get as $k \to +\infty$ uniformly in $s \in [-l,0]$
    \begin{equation}
    	\label{EQ: PhiLimitPointLemma3}
    	\frac{w^{*}_{k}(s)-w^{*}(s)}{h_{k}} = L(l+s;\vartheta^{-l}(q),w^{*}(-l)) \left(\frac{w^{*}_{k}(-l)-w^{*}(-l)}{h_{k}}\right) + o(1).
    \end{equation}
    Let $\mathcal{B}$ be the ball centered at $0$ with radius $De^{\alpha^{+}l}$. Consider also $\varepsilon>0$ and the ball $\mathcal{B}_{\varepsilon}$ centered at $0$ with radius $\varepsilon$. From \eqref{EQ: PhiLimitPointLemma3} with $s=0$ it follows that $(w^{*}_{k}(0)-w^{*}(0))/h_{k} \in L(l;\vartheta^{-l}(q);w^{*}(-l))\mathcal{B} + B_{\varepsilon}$ for all sufficiently large $k$. From this and since $\Xi$ satisfies \textbf{(ACOM)} with $\gamma^{+} = \alpha^{+}$, for some function $\gamma_{0}(t) \to 0$ as $t \to +\infty$ we have that
    \begin{equation}
    	\alpha_{K}\left( \bigcup_{k=1}^{\infty} \left\{ \frac{w^{*}_{k}(0)-w^{*}(0)}{h_{k}} \right\} \right) \leq D \cdot \gamma_{0}(l) + \varepsilon. 
    \end{equation}
    Passing to the limit as $\varepsilon \to 0+$ and then as $l \to +\infty$, we obtain the desired precompactness.
    
    Now let $\overline{\xi}$ be any limit point of \eqref{EQ: LemDifferencePrecompact}. By similar as above reasonings applying for $s < 0$ and the Cantor diagonal procedure, we may assume that for any $l=1,2,\ldots$ there is $\overline{\xi}_{l} \in \mathbb{E}$ such that
    \begin{equation}
    	\frac{w^{*}_{k}(-l)-w^{*}(-l)}{h_{k}} \to \overline{\xi}_{l} \text{ as } k \to +\infty.
    \end{equation}
    In virtue of \eqref{EQ: PhiLimitPointLemma3} we have that the relation $\xi^{*}(t) := L(t+l;\vartheta^{-l}(q),w^{*}(-l)) \overline{\xi}_{l}$ for any $t \geq -l$ and any $l=1,2,\ldots$ correctly defines a complete trajectory of the linearization cocycle $\Xi$ over $(q,w^{*}(0))$ such that for any $s$ we have the convergence $\left(w^{*}_{k}(s)-w^{*}(s) \right) / h_{k} \to \xi^{*}(s)$ as $k \to + \infty$. Passing to the limit as $k \to +\infty$ in
    \begin{equation}
    	\int_{-\infty}^{0}e^{2\alpha(s;q)} \left| \frac{w^{*}_{k}(s)-w^{*}(s)}{h_{k}} \right|^{2}_{\mathbb{H}} ds \leq -\frac{\delta^{-1}_{Q}}{|h_{k}|^{2}}V(w^{*}_{k}(0)-w^{*}(0)) \leq \delta^{-1}_{Q} \|Q(q)\| \| \eta_{k} \|_{\mathbb{E}},
    \end{equation}
    we get that $\xi^{*}(\cdot) \in [0^{*}(\cdot)]$ and, consequently, $ \overline{\xi} = \xi^{*}(0) \in \mathfrak{H}'(\Phi_{\mathfrak{H}}(\zeta))$ as it is required. The lemma is proved.
\end{proof}

\begin{lemma}
	\label{LEM: FrechetDiffPhi}
	For any $\zeta \in \mathbb{E}^{-}(q)$ the map $\Phi'_{\mathfrak{H}}(\Phi_{\mathfrak{H}}(\zeta))$ is the Fr\'{e}chet differential of $\Phi_{\mathfrak{H}}$ at $\zeta$, i.~e. the limit
	\begin{equation}
		\lim_{h \to 0}\frac{\Phi_{\mathfrak{H}}(\zeta+h\eta)-\Phi_{\mathfrak{H}}(\zeta)}{h} = \Phi'_{\mathfrak{H}}(\Phi_{\mathfrak{H}}(\zeta))\eta
	\end{equation}
    holds uniformly in $\eta$ from bounded subsets of $\mathbb{E}^{-}(q)$.
\end{lemma}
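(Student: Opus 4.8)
The plan is to argue by contradiction, feeding the precompactness furnished by Lemma \ref{LEM: DifferencePrecompactPhi} into the fact that $\Pi_{q}$ is a homeomorphism on the $j$-dimensional subspace $\mathfrak{H}'(\Phi_{\mathfrak{H}}(\zeta))$, which leaves only one candidate for the limit of the difference quotients. Write $v_{0} := \Phi_{\mathfrak{H}}(\zeta) \in \mathfrak{H}$ for brevity and recall that $\Phi'_{\mathfrak{H}}(v_{0}) \colon \mathbb{E}^{-}(q) \to \mathfrak{H}'(v_{0})$ is a bounded \emph{linear} operator, namely the inverse of $\Pi_{q}$ restricted to the subspace $\mathfrak{H}'(v_{0})$. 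We also use the elementary identity $\Pi_{q}\Phi_{\mathfrak{H}} = \operatorname{id}$ on $\mathbb{E}^{-}(q)$.

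First I would suppose the asserted uniform limit fails on some bounded subset of $\mathbb{E}^{-}(q)$. Then there are a constant $\varepsilon_{0}>0$ and sequences $h_{k} \to 0$ in $\mathbb{R}$ and $\eta_{k} \in \mathbb{E}^{-}(q)$ with $\|\eta_{k}\|_{\mathbb{E}}$ bounded such that
\begin{equation}
	\left\| \frac{\Phi_{\mathfrak{H}}(\zeta + h_{k}\eta_{k}) - \Phi_{\mathfrak{H}}(\zeta)}{h_{k}} - \Phi'_{\mathfrak{H}}(v_{0})\eta_{k} \right\|_{\mathbb{E}} \geq \varepsilon_{0}
\end{equation}
for all $k$. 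Since $\mathbb{E}^{-}(q)$ is finite-dimensional, after passing to a subsequence we may assume $\eta_{k} \to \overline{\eta}$, whence $\Phi'_{\mathfrak{H}}(v_{0})\eta_{k} \to \Phi'_{\mathfrak{H}}(v_{0})\overline{\eta}$ by boundedness of $\Phi'_{\mathfrak{H}}(v_{0})$. By Lemma \ref{LEM: DifferencePrecompactPhi} the difference quotients appearing above are precompact in $\mathbb{E}$, so along a further subsequence they converge to some $\overline{\xi} \in \mathfrak{H}'(v_{0})$; passing to the limit in the displayed inequality gives $\|\overline{\xi} - \Phi'_{\mathfrak{H}}(v_{0})\overline{\eta}\|_{\mathbb{E}} \geq \varepsilon_{0}$, in particular $\overline{\xi} \neq \Phi'_{\mathfrak{H}}(v_{0})\overline{\eta}$.

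It then remains to identify $\overline{\xi}$. Applying $\Pi_{q} \in \mathcal{L}(\mathbb{E})$ to the difference quotient and using $\Pi_{q}\Phi_{\mathfrak{H}} = \operatorname{id}$,
\begin{equation}
	\Pi_{q}\left( \frac{\Phi_{\mathfrak{H}}(\zeta + h_{k}\eta_{k}) - \Phi_{\mathfrak{H}}(\zeta)}{h_{k}} \right) = \frac{(\zeta + h_{k}\eta_{k}) - \zeta}{h_{k}} = \eta_{k},
\end{equation}
and letting $k \to +\infty$ yields $\Pi_{q}\overline{\xi} = \overline{\eta}$. Since $\overline{\xi} \in \mathfrak{H}'(v_{0})$ and $\Phi'_{\mathfrak{H}}(v_{0})$ is exactly the inverse of $\Pi_{q}|_{\mathfrak{H}'(v_{0})}$, this forces $\overline{\xi} = \Phi'_{\mathfrak{H}}(v_{0})\overline{\eta}$, contradicting the previous paragraph; hence the limit holds uniformly in $\eta$ from bounded subsets. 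I do not expect any genuine obstacle here: all the analytic content — the extraction of a convergent subsequence of difference quotients and the control of the remainder coming from \textbf{(DIFF)} — is already packaged in Lemma \ref{LEM: DifferencePrecompactPhi}, and the only points needing (routine) care are the passage to the limit under $\Pi_{q}$ and the uniqueness of the point of $\mathfrak{H}'(v_{0})$ with a prescribed $\Pi_{q}$-image, which is immediate from admissibility.
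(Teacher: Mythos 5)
Your proof is correct and follows essentially the same path as the paper's own argument: contradiction, extraction of a convergent subsequence of difference quotients via Lemma~\ref{LEM: DifferencePrecompactPhi} and finite-dimensionality of $\mathbb{E}^{-}(q)$, then identification of the limit $\overline{\xi}$ by applying $\Pi_{q}$ and using that $\Phi'_{\mathfrak{H}}(v_{0})$ inverts $\Pi_{q}$ on $\mathfrak{H}'(v_{0})$. The only difference is that you spell out explicitly the computation $\Pi_{q}$ of the difference quotient equals $\eta_{k}$, where the paper simply notes that the vector in its display ``vanishes after taking $\Pi_{q}$.''
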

\begin{proof}
	Supposing the opposite, we obtain a bounded sequence $\eta_{k} \in \mathbb{E}^{-}(q)$ and a sequence $h_{k} \to 0$ as $k \to +\infty$ such that the sequence
	\begin{equation}
    \label{EQ: FrechetDiffLemma1}
	\frac{\Phi_{\mathfrak{H}}(\zeta+h_{k}\eta_{k})-\Phi_{\mathfrak{H}}(\zeta)}{h_{k}} - \Phi'_{\mathfrak{H}}(\Phi_{\mathfrak{H}}(\zeta))\eta_{k}
    \end{equation}
    is uniformly separated from zero. Taking a subsequence if necessary, we may assume that $\eta_{k} \to \overline{\eta}$ and, due to Lemma \ref{LEM: DifferencePrecompactPhi}, that the fraction in \eqref{EQ: FrechetDiffLemma1} tends to some $\overline{\xi} \in \mathfrak{H}'(\Phi_{\mathfrak{H}}(\zeta))$. By our assumptions, we have $\Phi'_{\mathfrak{H}}(\Phi_{\mathfrak{H}}(\zeta))\overline{\eta} \not= \overline{\xi}$. However, the vector in \eqref{EQ: FrechetDiffLemma1} vanishes after taking $\Pi_{q}$. This implies that $\Pi_{q} \overline{\xi} = \overline{\eta}$ that leads to a contradiction. The lemma is proved.
\end{proof}

\begin{theorem}
	\label{TH: HorizontalDifferentiability}
	Let the standing hypotheses along with \textbf{(DIFF)} be satisfied. Then the set $\mathfrak{H}$ is a $C^{1}$-differentiable submanifold in $\mathbb{E}$. Moreover, 
	\begin{enumerate}
		\item The map $\Phi_{\mathfrak{H}} \colon \mathbb{E}^{-}(q) \to \mathbb{E}$ is $C^{1}$-differentiable and its differential at a point $\zeta \in \mathbb{E}^{-}(q)$ is given by $\Phi'_{\mathfrak{H}}(\Phi_{\mathfrak{H}}(\zeta))$;
		\item The tangent space to $\mathfrak{H}$ at the point $v_{0}$ is given by $\mathfrak{H}'(v_{0})$;
		\item The map 
		\begin{equation}
			\label{EQ: HorzontalFiberDiffThNonLinearMap}
			\psi^{t}(\vartheta^{s}(q),\cdot) \colon [v^{*}(\cdot)][s] \to [v^{*}(\cdot)](s+t)
		\end{equation}
	    is a $C^{1}$-diffeomorphism with its differential at the point $w^{*}(s)$, where $w^{*}(\cdot) \in [v^{*}(\cdot)]$, is given by 
		\begin{equation}
			\label{EQ: HorzontalFiberDiffThLinearMap}
			L(t;\vartheta^{s}(q);w^{*}(s)) \colon [0^{*}(\cdot)](s) \to [0^{*}(\cdot)](s+t)
		\end{equation}
	    for any $s \in \mathbb{R}$ and $t \geq 0$. Here $[0^{*}(\cdot)]$ is the equivalence class of the zero complete trajectory of $\Xi$ over $(q,w^{*}(0))$.
	\end{enumerate}
\end{theorem}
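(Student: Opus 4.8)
The plan is to harvest Lemmas \ref{LEM: H3WeakMinusLinearized}--\ref{LEM: FrechetDiffPhi}, which already contain all the serious analysis, so that the proof becomes mostly an assembly; the only genuinely new work is item (3). First I would record that the linearization semicocycle $\Xi$ satisfies $\textbf{(H3)}^{-}_{w}$ (Lemma \ref{LEM: H3WeakMinusLinearized}), \textbf{(ULIP)} and \textbf{(ACOM)} with $\gamma^{+}=\alpha^{+}$ (both from \textbf{(DIFF)}), and \textbf{(PROJ)} (it shares the operators $Q(q)$, the sign spaces and the $V_{q}$-orthogonal projectors with $\psi$); hence Theorem \ref{TH: HorizontalFibresAlongCompleteOrbits} applies to $\Xi$ and gives that each $[0^{*}(\cdot)](s)$ is a $j$-dimensional Lipschitz admissible subspace over $\vartheta^{s}(q)$ on which $\Pi_{\vartheta^{s}(q)}$ restricts to a linear homeomorphism with inverse $\Phi'_{\mathfrak{H}}(\cdot)$. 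By Lemma \ref{LEM: FrechetDiffPhi} the map $\Phi_{\mathfrak{H}}$ is Fr\'{e}chet differentiable at every $\zeta$ with $D\Phi_{\mathfrak{H}}(\zeta)=\Phi'_{\mathfrak{H}}(\Phi_{\mathfrak{H}}(\zeta))$; since $\Phi_{\mathfrak{H}}$ is Lipschitz (Theorem \ref{TH: HorizontalFibresAlongCompleteOrbits}) and $v_{0}\mapsto\Phi'_{\mathfrak{H}}(v_{0})$ is norm-continuous (Lemma \ref{LEM: DifferentialPhiContinuous}), the composition $\zeta\mapsto D\Phi_{\mathfrak{H}}(\zeta)$ is continuous, i.e. $\Phi_{\mathfrak{H}}\in C^{1}$, which is item (1). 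As $\Pi_{q}\circ D\Phi_{\mathfrak{H}}(\zeta)=\mathrm{id}_{\mathbb{E}^{-}(q)}$, the differential is injective with bounded left inverse and closed (finite-rank) image, so $\Phi_{\mathfrak{H}}$ is a $C^{1}$ embedding and $\mathfrak{H}$ is a $C^{1}$ submanifold. For item (2), the tangent space to $\mathfrak{H}$ at $v_{0}=\Phi_{\mathfrak{H}}(\zeta)$ is the image of $D\Phi_{\mathfrak{H}}(\zeta)=\Phi'_{\mathfrak{H}}(v_{0})$, and the latter maps $\mathbb{E}^{-}(q)$ homeomorphically onto $\mathfrak{H}'(v_{0})=[0^{*}(\cdot)](0)$ by construction; applying the same argument to the complete trajectory $v^{*}(\cdot+s)$ over $\vartheta^{s}(q)$ gives $T_{w^{*}(s)}[v^{*}(\cdot)](s)=[0^{*}(\cdot)](s)$, where the identification with the section obtained from the shifted picture over $(\vartheta^{s}(q),w^{*}(s))$ uses shift-invariance of the class of the zero trajectory of $\Xi$.

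For item (3), I would first show that the map in \eqref{EQ: HorzontalFiberDiffThNonLinearMap} is a homeomorphism: surjectivity from invariance of equivalence-class sections, injectivity from item (3) of Lemma \ref{LEM: CompleteTrajectoriesOrdering}, and continuity of the inverse from the Brouwer invariance of domain applied to the continuous bijection $\Pi_{\vartheta^{s+t}(q)}\circ\psi^{t}(\vartheta^{s}(q),\cdot)\circ\Phi_{[v^{*}(\cdot)](s)}$ between $j$-dimensional normed spaces. For the differential, given $\xi\in[0^{*}(\cdot)](s)=T_{w^{*}(s)}[v^{*}(\cdot)](s)$ I would pick a complete trajectory $\xi^{*}(\cdot)\in[0^{*}(\cdot)]$ of $\Xi$ over $(q,w^{*}(0))$ with $\xi^{*}(s)=\xi$; realising $\xi$ as the velocity at $w^{*}(s)$ of a curve inside the leaf and using \textbf{(DIFF)}, the $\psi^{t}$-image of that curve has velocity $L(t;\vartheta^{s}(q),w^{*}(s))\xi=\xi^{*}(s+t)\in[0^{*}(\cdot)](s+t)$ at $w^{*}(s+t)$, and $L(t;\vartheta^{s}(q),w^{*}(s))$ restricts to a linear isomorphism $[0^{*}(\cdot)](s)\to[0^{*}(\cdot)](s+t)$ since it coincides with the graph-transform map $G^{s+t}_{s}$ for the linear cocycle $\Xi$, a linear homeomorphism by Corollary \ref{COR: GraphTransformHomeo}. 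Continuity of $w^{*}(s)\mapsto L(t;\vartheta^{s}(q),w^{*}(s))|_{[0^{*}(\cdot)](s)}$ along the leaf is obtained by the compactness argument of Lemma \ref{LEM: DifferentialPhiContinuous}, now carried out for $\Xi$ using \textbf{(ACOM)} for $\Xi$; hence \eqref{EQ: HorzontalFiberDiffThNonLinearMap} is $C^{1}$ with everywhere-invertible differential, i.e. a $C^{1}$-diffeomorphism, for $t\geq\tau_{Q}$. The range $t\in[0,\tau_{Q})$ reduces to this by factoring the map over $[v^{*}(\cdot)](s)$ as $\bigl(\psi^{T_{0}}(\vartheta^{s+t}(q),\cdot)|_{[v^{*}(\cdot)](s+t)}\bigr)^{-1}\circ\psi^{t+T_{0}}(\vartheta^{s}(q),\cdot)|_{[v^{*}(\cdot)](s)}$ with $T_{0}:=\max\{\tau_{Q},\tau_{S}+1\}$, both factors being $C^{1}$-diffeomorphisms with invertible differentials.

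I expect item (3) to be the delicate point: verifying that the differential of the restricted map is \emph{continuous} in the base point (not merely pointwise existent) and bijective between the moving tangent spaces. This is exactly where the discrepancy between the $\mathbb{E}$-valued limit of part (2) of \textbf{(DIFF)} and the merely $\mathbb{H}$-valued continuity of $v\mapsto\Xi^{t}(q,v,\cdot)$ has to be absorbed, which I would handle by invoking finite-dimensionality of the leaves and tangent spaces together with Corollary \ref{COR: GraphTransformHomeo} applied to $\Xi$, in the spirit of the compactness arguments already used for $\psi$.
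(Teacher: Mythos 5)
Your proposal is correct and follows essentially the same route as the paper: items (1) and (2) are assembled directly from Lemmas \ref{LEM: FrechetDiffPhi} and \ref{LEM: DifferentialPhiContinuous}, item (3) is derived from item (2) of \textbf{(DIFF)} together with the linear Graph Transform for $\Xi$, and the range $t\in[0,\tau_{Q})$ is handled by factoring through a time $\geq\tau_{Q}$. The only cosmetic differences are that you spell out the hypotheses check for $\Xi$ and the small-$t$ composition more explicitly, and you phrase the computation of the differential as a velocity-of-curves argument rather than the paper's direct expansion of $\Pi_{\vartheta^{s+t}(q)}\psi^{t}(\vartheta^{s}(q),\Phi_{\mathfrak{H}(s)}(\zeta+h\eta))$, but the substance is identical.
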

\begin{proof}
	Items (1) and (2) follow directly from Lemmas \ref{LEM: FrechetDiffPhi} and \ref{LEM: DifferentialPhiContinuous}.
	
	Now put $\mathfrak{H}(s):=[v^{*}(\cdot)](s)$ and $\mathfrak{H}'(s):=[0^{*}(\cdot)](s)$, where the equivalence classes are understood in the sense of theorem's statement. Applying items (1) and (2) to these spaces, we get that $\mathfrak{H}'(s)$ is the tangent space to $\mathfrak{H}(s)$ at the point $w^{*}(s)$ and $\Phi'_{\mathfrak{H}(s)}(w^{*}(s)) \colon \mathbb{E}^{-}(q) \to \mathfrak{H}'(s)$ is the differential of $\Phi_{\mathfrak{H}(s)} \colon \mathbb{E}^{-}(q) \to \mathfrak{H}(s)$.
	
	Let $\zeta : = \Pi_{\vartheta^{s}(q)}w^{*}(s)$. From item (2) of \textbf{(DIFF)} applied for $t \geq \tau_{Q}$, $s \in \mathbb{R}$, $q \leftrightarrow \vartheta^{s}(q)$, $v_{0} = \Phi_{\mathfrak{H}(s)}(\zeta)=w^{*}(s)$ and $h\xi \leftrightarrow \Phi_{\mathfrak{H}(s)}(\zeta+h \eta) - \Phi_{\mathfrak{H}(s)}(\zeta) = h\Phi'_{\mathfrak{H}(s)}(w^{*}(s))\eta + o(\eta)$ we have
	\begin{equation}
		\begin{split}
			\Pi_{\vartheta^{s+t}(q)} \psi^{t}(\vartheta^{s}(q), \Phi_{\mathfrak{H}(s)}(\zeta + h\eta) ) = \Pi_{\vartheta^{s+t}(q)} \psi^{t}(\vartheta^{s}(q), w^{*}(s) )  + \\ + h\Pi_{\vartheta^{t+s}(q)} L(t;\vartheta^{s}(q);w^{*}(s)) \Phi'_{\mathfrak{H}(s)}(w^{*}(s))\eta+ o(h).
		\end{split}
	\end{equation}
    This shows the differentiability for $t \geq \tau_{Q}$. Now since the map in \eqref{EQ: HorzontalFiberDiffThLinearMap} is an isomorphism, the map in \eqref{EQ: HorzontalFiberDiffThNonLinearMap} is a diffeomorphism for $t \geq \tau_{Q}$. But the map corresponding to times $t \in [0,\tau_{Q}]$ can be represented as a composition of such a large time backward map and a large time forward map. The theorem is proved.
\end{proof}

\subsubsection{Fr\'{e}chet differentiability of vertical leaves}
Throughout this section along with the standing hypotheses of Section \ref{SUBSUBSEC: FrechetHorizontal} we suppose that the semicocycle $\psi$ also satisfies $\textbf{(H3)}^{+}_{w}$ (see Section \ref{SUBSEC: VerticalFoliation}) in a bit stronger form as follows.
\begin{description}
	\item[$\textbf{(H3)}^{+}_{s}$] At least one of the following is satisfied:
	\begin{enumerate}
		\item For the semicocycle $\psi$ assumptions $\textbf{(H3)}^{-}_{w}$, $\textbf{(H3)}^{+}_{w}$ and \textbf{(DIFF)} hold with $\tau_{Q} = 0$;
		\item For the semicocycle $\psi$ assumption $\textbf{(H3)}^{+}_{w}$ holds without the restriction $V^{Q}_{\vartheta^{r}(q)}(\psi^{r}(q,v_{1})-\psi^{r}(q,v_{2})) \geq 0$;
		\item The linearization semicocycle $\Xi$ satisfies $\textbf{(H3)}^{+}_{w}$ in the form of Lemma \ref{LEM: H3WeakPlusLinear} below.
	\end{enumerate} 
\end{description}
Such strengthening as in items (1) and (2) is necessary to obtain property $\textbf{(H3)}^{+}_{w}$ for the linearization cocycle in Lemma \ref{LEM: H3WeakPlusLinear}. This property is stated in item (3) and it is the only property required for further studies.

Let us fix $q \in \mathcal{Q}$ and $v_{0} \in \mathbb{E}$ and consider the positive equivalence class $\mathfrak{V}:=[v_{0}]^{+}(q)$ (see Section \ref{SUBSEC: VerticalFoliation}). By Corollary \ref{COR: VerticalLeafStructure}, the map $\Pi^{+}_{q} \colon \mathfrak{V} \to \mathbb{E}^{+}(q)$ is a homeomorphism. Let $\Phi_{\mathfrak{V}} \colon \mathbb{E}^{+}(q) \to \mathfrak{V}$ be the inverse map.

\begin{lemma}
	\label{LEM: H3WeakPlusLinear}
	Let at least one of items (1) and (2) of $\textbf{(H3)}^{+}_{s}$ be satisfied. Then item (3) of $\textbf{(H3)}^{+}_{s}$ is also satisfied, i.~e. for every $q \in \mathcal{Q}$ and $v \in \mathbb{E}$ we have
	\begin{equation}
		\label{EQ: H3WeakPlusLinearized}
		\begin{split}
			e^{2\alpha(r;q)}V_{\vartheta^{r}(q)}(L(r;q,v)\xi) - V_{q}(\xi) \leq -\delta_{Q} \int_{0}^{r} e^{2\alpha(s;q)}|L(s;q,v)\xi|^{2}_{\mathbb{H}}ds 
		\end{split}
	\end{equation}
	satisfied for all $\xi \in \mathbb{E}$ such that $V_{\vartheta^{r}(q)}(L(r;q;v)\xi) \geq 0$ and all $r \geq \tau_{Q}$.
\end{lemma}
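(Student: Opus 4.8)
The plan is to obtain both implications by linearizing the relevant monotonicity inequality for $\psi$ along the trajectory $s \mapsto \psi^{s}(q,v)$, exactly in the spirit of Lemma~\ref{LEM: H3WeakMinusLinearized}. Fix $q,v,\xi \in \mathbb{E}$ and put $w_{h}(s) := \psi^{s}(q,v+h\xi)-\psi^{s}(q,v)$; then $w_{h}(0)=h\xi$, and by \textbf{(DIFF)} the ratio $h^{-1}w_{h}(s)$ tends to $L(s;q,v)\xi$ as $h \to 0$ --- in $\mathbb{H}$ uniformly for $s$ in compact subsets of $[0,+\infty)$ and in $\mathbb{E}$ uniformly for $s$ in compact subsets of $[\tau_{Q},+\infty)$. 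Since $Q(q) \in \mathcal{L}(\mathbb{E};\mathbb{E}^{*})$ is uniformly bounded and $e^{2\beta(s;q)}$ is bounded on $[0,r]$, it follows that $h^{-2}V^{Q}_{\vartheta^{s}(q)}(w_{h}(s)) = V_{\vartheta^{s}(q)}(h^{-1}w_{h}(s))$ converges to $V_{\vartheta^{s}(q)}(L(s;q,v)\xi)$ in the corresponding ranges, while $h^{-2}V^{Q}_{q}(w_{h}(0))=V_{q}(\xi)$ identically; hence, whenever \eqref{EQ: H3WeakPlus} may be legitimately applied to $v_{1}=v+h\xi$, $v_{2}=v$ along a sequence $h\to 0$, dividing it by $h^{2}$ and passing to the limit produces \eqref{EQ: H3WeakPlusLinearized}. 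Under item~(2) of $\textbf{(H3)}^{+}_{s}$ there is no pseudo-ordering restriction in \eqref{EQ: H3WeakPlus}, so this passage is unconditional and gives \eqref{EQ: H3WeakPlusLinearized} for every $\xi$, in particular whenever $V_{\vartheta^{r}(q)}(L(r;q,v)\xi) \geq 0$.

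It remains to treat item~(1), where $\tau_{Q}=0$ and $\psi$ satisfies both $\textbf{(H3)}^{-}_{w}$ and $\textbf{(H3)}^{+}_{w}$; note that Lemma~\ref{LEM: H3WeakMinusLinearized} then also applies to the linearization semicocycle $\Xi$. Assume $V_{\vartheta^{r}(q)}(L(r;q,v)\xi) \geq 0$. If this quantity is \emph{strictly} positive, then by the convergence above $V^{Q}_{\vartheta^{r}(q)}(w_{h}(r)) > 0$ for all sufficiently small $|h|$, so \eqref{EQ: H3WeakPlus} applies to $v_{1}=v+h\xi$, $v_{2}=v$ (recall $r \geq 0 = \tau_{Q}$), and the limit passage of the previous paragraph yields \eqref{EQ: H3WeakPlusLinearized}. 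This is the routine case; everything hinges on the boundary case $V_{\vartheta^{r}(q)}(L(r;q,v)\xi)=0$, where $V^{Q}_{\vartheta^{r}(q)}(w_{h}(r))$ is only $o(h^{2})$ and may be negative for all small $h\neq 0$, so the naive linearization of \eqref{EQ: H3WeakPlus} breaks down. This is the main obstacle, and it is precisely here that $\textbf{(H3)}^{-}_{w}$ for $\psi$ and the condition $\tau_{Q}=0$ are genuinely used.

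For this boundary case I would proceed as follows, assuming $\xi\neq 0$ (otherwise \eqref{EQ: H3WeakPlusLinearized} is trivial). Set $g(s):=V_{\vartheta^{s}(q)}(L(s;q,v)\xi)$, a continuous function with $g(r)=0$, and let $r_{1}:=\inf\{\, s\in(0,r] : g(s)\leq 0 \,\}$; this set contains $r$, so $r_{1}\in[0,r]$ and $g>0$ on $(0,r_{1})$. First one checks $r_{1}>0$: if $r_{1}=0$ then $g(0)=V_{q}(\xi)\leq 0$ by continuity, and Lemma~\ref{LEM: H3WeakMinusLinearized} for $\Xi$ with base point $(q,v)$ and time $r$ gives, using $g(r)=0$, the estimate $-V_{q}(\xi) \leq -\delta_{Q}\int_{0}^{r}e^{2\alpha(u;q)}|L(u;q,v)\xi|^{2}_{\mathbb{H}}\,du$; since the left-hand side is $\geq 0$, the integral vanishes, so $L(u;q,v)\xi=0$ for a.e. $u$ and hence, by continuity of $\Xi$, $\xi=L(0;q,v)\xi=0$ --- a contradiction. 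Thus $r_{1}\in(0,r]$ and $g(r_{1})=0$. Next, apply Lemma~\ref{LEM: H3WeakMinusLinearized} for $\Xi$ with base point $(\vartheta^{r_{1}}(q),\psi^{r_{1}}(q,v))$, vector $L(r_{1};q,v)\xi$ and time $r-r_{1}\geq 0$ (legitimate since $g(r_{1})\leq 0$); using the cocycle identity $L(r;q,v)\xi = L(r-r_{1};\vartheta^{r_{1}}(q),\psi^{r_{1}}(q,v))L(r_{1};q,v)\xi$ together with $g(r)=0$ one obtains $\delta_{Q}\int_{r_{1}}^{r}e^{2(\alpha(u;q)-\alpha(r_{1};q))}|L(u;q,v)\xi|^{2}_{\mathbb{H}}\,du \leq g(r_{1}) = 0$, whence $L(u;q,v)\xi = 0$ for all $u\in[r_{1},r]$.

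To conclude, for each $s\in(0,r_{1})$ one has $g(s)>0$, so the strict case already proved gives \eqref{EQ: H3WeakPlusLinearized} at time $s$; letting $s\uparrow r_{1}$ --- so that $g(s)\to g(r_{1})=0$ and $\int_{0}^{s}\to\int_{0}^{r_{1}}=\int_{0}^{r}$, the last equality because $L(u;q,v)\xi$ vanishes on $[r_{1},r]$ --- yields \eqref{EQ: H3WeakPlusLinearized} at time $r$, again using $g(r)=0$. Thus the whole proof decomposes into the straightforward linearization (which settles item~(2) and the strict part of item~(1)) and this boundary-case device: retreating from $r$ to times $s<r_{1}$ at which $g$ is strictly positive, while controlling the sign of $g$ on $[r_{1},r]$ via the monotonicity of $\Xi$ furnished by $\textbf{(H3)}^{-}_{w}$ --- which is exactly why item~(1) must assume $\textbf{(H3)}^{-}_{w}$ (and $\tau_{Q}=0$) in addition to $\textbf{(H3)}^{+}_{w}$.
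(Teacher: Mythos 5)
Your treatment of item~(2) and of the strict case of item~(1) is the same as the paper's. Where you genuinely diverge is in the boundary case $V_{\vartheta^{r}(q)}(L(r;q,v)\xi)=0$. The paper splits this into two sub-cases: if $L(r;q,v)\xi\neq 0$ it constructs \emph{spatial} perturbations $\xi_{k}\to\xi$ inside the affine set $\mathbb{E}^{-}(q)+\xi$, using the $V_{\vartheta^{r}(q)}$-orthogonal decomposition of the admissible image $\mathcal{N}=L(r;q,v)(\mathbb{E}^{-}(q)+\xi)$, so that the strict inequality holds for each $\xi_{k}$, then passes to $k\to\infty$; if $L(r;q,v)\xi=0$ it retreats in time to $\overline r$. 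Your single device $r_{1}:=\inf\{s\in(0,r]:g(s)\leq 0\}$ subsumes both sub-cases into one time-retreat, eliminating the geometric perturbation construction entirely, which is cleaner and also makes the role of $\textbf{(H3)}^{-}_{w}$ with $\tau_{Q}=0$ more transparent.

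There is, however, a gap: you assert that $g(s)=V_{\vartheta^{s}(q)}(L(s;q,v)\xi)$ is continuous in $s$, and you use this twice --- to deduce $g(0)\leq 0$ from $r_{1}=0$, and to deduce $g(r_{1})\leq 0$ (hence $=0$). But the standing hypotheses of this subsection are $\textbf{(H3)}^{-}_{w}$, \textbf{(ULIP)}, \textbf{(PROJ)}, and \textbf{(DIFF)}; \textbf{(CD)} is \emph{not} assumed, so $q\mapsto Q(q)$ need not be continuous, hence $s\mapsto Q(\vartheta^{s}(q))$ and therefore $g$ need not be continuous even though $s\mapsto L(s;q,v)\xi$ is. The paper's $\overline r$ argument is deliberately arranged to avoid this: the only limits it takes are of the form $g(r_{k})\to 0$ in the presence of the extra information $L(r_{k};q,v)\xi\to 0$ in $\mathbb{E}$, which needs only the uniform bound $\sup_{q}\|Q(q)\|<\infty$, not continuity. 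Your argument is fixable without changing its shape: instead of invoking continuity of $g$, take a sequence $s_{k}\downarrow r_{1}$ (respectively $s_{k}\downarrow 0$) with $g(s_{k})\leq 0$ furnished by the definition of the infimum, apply Lemma~\ref{LEM: H3WeakMinusLinearized} at each $s_{k}$ to obtain $L(u;q,v)\xi=0$ a.e.\ on $[s_{k},r]$, and pass to $k\to\infty$ using only the continuity of $u\mapsto L(u;q,v)\xi$ (which $\Xi$ enjoys, being a semicocycle). Note also that your final limit $s\uparrow r_{1}$ does not actually require $g(s)\to 0$: since $g(s)>0$ on $(0,r_{1})$, the strict-case inequality already gives $-V_{q}(\xi)\leq e^{2\alpha(s;q)}g(s)-V_{q}(\xi)\leq -\delta_{Q}\int_{0}^{s}e^{2\alpha(u;q)}|L(u;q,v)\xi|^{2}_{\mathbb{H}}\,du$, and one may pass $s\uparrow r_{1}$ in the right-hand side alone, then use $\int_{0}^{r_{1}}=\int_{0}^{r}$.
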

\begin{proof}
	Let us try to consider \eqref{EQ: H3WeakPlus} with $v_{1}=v+h\xi$ and $v_{2} = v$ and divide both sides by $h^{2}$. We want pass to the limit as $h \to 0$ using \textbf{(DIFF)}. If we have item (2) of $\textbf{(H3)}^{+}_{s}$ satisfied, then there are no obstacles for this.
	
	Now let item (1) of $\textbf{(H3)}^{+}_{s}$ be satisfied. Note that for \eqref{EQ: H3WeakPlus} to hold we need to satisfy the inequality $V(\psi^{r}(q,v_{1})-\psi^{r}(q,v_{2})) \geq 0$. According to item (2) of \textbf{(DIFF)} this inequality is satisfied for all sufficiently small $h$ provided that the strict inequality $V_{\vartheta^{r}(q)}(L(r;q;v)\xi) > 0$ holds.
	
	Let us consider the case $V_{\vartheta^{r}(q)}(L(r;q;v)\xi) = 0$ and $L(r;q,v)\xi \not= 0$. We have to construct a sequence $\xi_{k} \to \xi$ such that $V_{\vartheta^{r}(q)}(L(r;q;v)\xi_{k}) > 0$. From this we can apply previous arguments to $\xi_{k}$ instead of $\xi$ and then pass to the limit as $k \to +\infty$.
	
	To construct a sequence $\xi_{k}$ as above let us consider the affine subspace $\mathcal{M}:=\mathbb{E}^{-}(q)+\xi$ and its image $\mathcal{N} := L(r;q,v) \mathcal{M}$. From Lemma \ref{LEM: H3WeakMinusLinearized} it can be deduced that the set $\mathcal{N}$ is a $j_{Q}$-dimensional admissible affine subspace and $L(r;q,v) \colon \mathcal{M} \to \mathcal{N}$ is an affine isomorphism\footnote{This statement is a particular case encountered in Lemma \ref{LEM: GraphTransform}.}. In particular, the admissibility property means that the subspace $\mathbb{E}^{-}_{\mathcal{N}}:=\mathcal{N} - L(r;q,v)\xi$ lies in the negative cone $\{V_{\vartheta^{r}(q)}(v) < 0 \}$. Let $\mathbb{E}^{+}_{\mathcal{N}}$ denote the $V_{\vartheta^{r}(q)}$-orthogonal complement (see Remark \ref{REM: Orthogonality}) to $\mathbb{E}^{-}_{\mathcal{N}}$. Let $L(r;q,v)\xi = \eta^{+}_{\mathcal{N}} + \eta^{-}_{\mathcal{N}}$ be the unique decomposition with $\eta^{+}_{\mathcal{N}} \in \mathbb{E}^{+}_{\mathcal{N}}$ and $\eta^{-}_{\mathcal{N}} \in \mathbb{E}^{-}_{\mathcal{N}}$. Then there exists $\xi_{k} \in \mathcal{M}$ such that $L(r;q,v)\xi_{k} = \eta^{+}_{\mathcal{N}} + (1-1/k) \eta^{-}_{\mathcal{N}}$. In particular, $x_{k} \to \xi$. Since $V_{\vartheta^{r}(q)}(L(r;q,v)\xi) = V_{\vartheta^{r}(q)}(\eta^{+}_{\mathcal{N}}) + V_{\vartheta^{r}(q)}(\eta^{-}_{\mathcal{N}}) = 0$ and $L(r;q,v)\xi \not= 0$, for all $k=1,2,\ldots$ we have
	\begin{equation}
		V_{\vartheta^{r}(q)}(L(r;q,v)\xi_{k}) = V_{\vartheta^{r}(q)}(\eta^{+}_{\mathcal{N}}) + \left(1-\frac{1}{k}\right)V_{\vartheta^{r}(q)}(\eta^{-}_{\mathcal{N}}) > 0
	\end{equation}
    that is required.
    
    It remains to deal only with the case $L(r;q,v) \xi = 0$ and $\xi \not= 0$. Here we highly rely on the additional restriction $\tau_{Q}=0$. Let us consider the number $\overline{r}$ given by the maximum of all numbers $0 \leq r_{0} \leq r$ for which there exists a sequence $r_{k}$ such that $r_{k} \to r_{0}$ and $L(r_{k};q,v)\xi \not= 0$. Note that for any such sequence we must have $V_{\vartheta^{r_{k}(q)}}(L(r_{k};q,v)\xi) > 0$. Indeed, if $V_{\vartheta^{r_{k}(q)}}(L(r_{k};q,v)\xi) \leq 0$ then due to $\textbf{(H3)}^{-}_{w}$ with $\tau_{Q} = 0$ satisfied for $\Xi$ we must also have 
    \begin{equation}
    	V_{\vartheta^{r}(q)}(L(r,q,v)\xi)  \leq V_{\vartheta^{r_{k}(q)}}(L(r_{k};q,v)\xi) -\delta_{Q} \int_{r_{k}}^{r}e^{2\alpha(s;q)}|L(s;q,v)\xi|^{2}_{\mathbb{H}}ds < 0
    \end{equation}
    that contradicts to $V_{\vartheta^{r}(q)}(L(r,q,v)\xi) \geq 0$. Now since $L(r_{0};q,v)\xi = 0$ for all $r_{0} \in [\overline{r},r]$, it is sufficient to show \eqref{EQ: H3WeakPlusLinearized} for $r=\overline{r}$. Previous arguments show that \eqref{EQ: H3WeakPlusLinearized} is satisfied for $r=r_{k}$ for a proper sequence $r_{k} \to \overline{r}$ such that $L(r_{k};q,v)\xi \not= 0$. Then the desired inequality follows after passing to the limit as $k \to +\infty$. The lemma is proved.
\end{proof}

Thus, under $\textbf{(H3)}^{+}_{s}$ we can apply results of Section \ref{SUBSEC: VerticalFoliation} to the linearization semicocycle $\Xi$. In particular, for any $v \in \mathfrak{V}$ there exists the positive equivalence class $\mathfrak{V}'(v):=[0]^{+}(q,v)$ of zero over $(q,v)$. By Corollary \ref{LEM: PiPlusHomeoOntoImage}, we have that $\Pi^{+}_{q} \colon \mathfrak{V}'(v) \to \mathbb{E}^{+}(q)$ is a linear homeomorphism. Let $\Phi'_{\mathfrak{V}}(v) \colon \mathbb{E}^{+}(q) \to \mathfrak{V}'(v)$ be the inverse map.

Under the standing hypotheses, from Theorem \ref{TH: ExpTrackingTheorem} applied to the linearization semicocycle $\Xi$ and $\mathcal{M} := \mathbb{E}^{-}(q)$ we have a family of linear projectors $\Pi^{c}_{L}(v) \colon \mathbb{E} \to \mathbb{E}$ given by the decomposition $\mathbb{E} = \mathbb{E}^{-}(q) \oplus \mathfrak{V}'(v)$.

\begin{lemma}
	\label{LEM: LinearFoliationProjectorCont}
	Under the standing hypotheses and \textbf{(DIFF)}, the linear projector $\Pi^{c}_{L}(v)$ depend continuously on $v \in \mathbb{E}$ in the norm of $\mathcal{L}(\mathbb{E})$.
\end{lemma}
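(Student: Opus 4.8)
The plan is to deduce the statement from Theorem \ref{TH: ExpTrackingTheorem} together with the continuity argument used in the proof of Theorem \ref{TH: CentralProjectorContinuityGeneral}, but applied to the \emph{linearization} semicocycle $\Xi$ over the skew-product semiflow $(\mathcal{Q}\times\mathbb{E},\pi)$ instead of to $\psi$, and with the constant-in-$v$ admissible sets $\mathcal{M}(q,v):=\mathbb{E}^{-}(q)$. First I would record that $\Xi$ indeed satisfies over $(\mathcal{Q}\times\mathbb{E},\pi)$ all the hypotheses involved: $\textbf{(H3)}^{-}_{w}$ holds by Lemma \ref{LEM: H3WeakMinusLinearized}; $\textbf{(H3)}^{+}_{w}$, in the form in which Theorem \ref{TH: ExpTrackingTheorem} uses it, holds by Lemma \ref{LEM: H3WeakPlusLinear} (this is what $\textbf{(H3)}^{+}_{s}$ was introduced for); \textbf{(ULIP)} and \textbf{(ACOM)} for $\Xi$ are part of \textbf{(DIFF)}; and since the operators attached to $\Xi$ are the very same $Q(q)$, assumption \textbf{(PROJ)} is inherited with the same $V_{q}$-orthogonal projector $\Pi_{q}$.

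Next I would observe that, for a fixed $q$, the subspace $\mathcal{M}:=\mathbb{E}^{-}(q)$ is $\varkappa$-admissible over $(q,v)$ for every $\varkappa\in(0,1)$ — indeed $V^{(\varkappa)}_{q}(\xi_{1}-\xi_{2})=\varkappa^{2}V_{q}(\xi_{1}-\xi_{2})\leq 0$ for $\xi_{1},\xi_{2}\in\mathbb{E}^{-}(q)$ — and it is $1$-Lipschitz admissible since the inverse of $\Pi_{q}\colon\mathbb{E}^{-}(q)\to\mathbb{E}^{-}(q)$ is the identity; crucially, this set does not depend on $v$. Hence all quantities entering the exponential tracking estimate \eqref{EQ: ExponentialTrackingEstimate} for $\Xi$ (namely $C^{-1}_{q}$, $(1-\varkappa)^{-1}$ and $L(\mathcal{M})=1$) are fixed once $q$ is fixed, and by its definition $\Pi^{c}_{L}(v)$ is exactly the central projector $\Pi^{c}_{(q,v)}$ of $\Xi$ onto $\mathcal{M}(q,v)$, associated with the decomposition $\mathbb{E}=\mathbb{E}^{-}(q)\oplus\mathfrak{V}'(v)$.

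Now I would run the argument of Theorem \ref{TH: CentralProjectorContinuityGeneral} for $\Xi$. The only hypothesis of that theorem not available here is \textbf{(CD)}; however, since in $\Pi^{c}_{L}(v)$ the base-point coordinate $q$ is frozen, it suffices to check continuity along sequences $(q,v_{k})\to(q,v_{0})$ of points of the driving system with unchanged first coordinate, and for those the role of \textbf{(CD)} is vacuous: $\mathbb{E}^{-}(q)$ is a fixed finite-dimensional (hence closed, with precompact bounded subsets) subspace, $\Pi_{q}$ is a fixed operator, and $C_{q}$, $M_{Q}$ are fixed constants. Thus the proof of Theorem \ref{TH: CentralProjectorContinuityGeneral} applies with only notational changes: fixing $\xi\in\mathbb{E}$ and putting $\xi^{*}_{k}:=\Pi^{c}_{L}(v_{k})\xi\in\mathbb{E}^{-}(q)$, the estimate \eqref{EQ: ExponentialTrackingEstimate} (at time $t=0$) shows $\|\xi-\xi^{*}_{k}\|_{\mathbb{E}}\leq R\cdot\operatorname{dist}(\xi,\mathbb{E}^{-}(q))$ with a fixed $R$, so $(\xi^{*}_{k})$ is bounded, hence precompact, and each of its limit points $\overline{\xi}$ lies in $\mathbb{E}^{-}(q)$; since $\xi-\xi^{*}_{k}$ is positively equivalent to $0$ over $(q,v_{k})$ for the cocycle $\Xi$ and the positive equivalence is continuous in its arguments (item 3) of Lemma \ref{LEM: PositiveEquivalence} applied to $\Xi$), the vector $\xi-\overline{\xi}$ is positively equivalent to $0$ over $(q,v_{0})$; by uniqueness of the representative of $[\xi]^{+}(q,v_{0})$ in $\mathbb{E}^{-}(q)$ (Theorem \ref{TH: ExpTrackingTheorem}) we get $\overline{\xi}=\Pi^{c}_{L}(v_{0})\xi$, so the whole sequence converges. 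Letting $\xi$ range over a bounded subset of $\mathbb{E}$ gives the same conclusion uniformly in $\xi$ from bounded sets.

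Finally, since each $\Pi^{c}_{L}(v)$ is linear, uniform convergence on the unit ball of $\mathbb{E}$ is precisely convergence in the operator norm, i.e. $\|\Pi^{c}_{L}(v_{k})-\Pi^{c}_{L}(v_{0})\|_{\mathcal{L}(\mathbb{E})}=\sup_{\|\xi\|_{\mathbb{E}}\leq 1}\|\Pi^{c}_{L}(v_{k})\xi-\Pi^{c}_{L}(v_{0})\xi\|_{\mathbb{E}}\to 0$, which yields the asserted continuity of $v\mapsto\Pi^{c}_{L}(v)$ in $\mathcal{L}(\mathbb{E})$. I expect the main obstacle to be purely the bookkeeping of the third paragraph: verifying carefully that $\Xi$ inherits $\textbf{(H3)}^{-}_{w}$, $\textbf{(H3)}^{+}_{w}$, \textbf{(ULIP)}, \textbf{(ACOM)} and \textbf{(PROJ)} over the enlarged driving system $(\mathcal{Q}\times\mathbb{E},\pi)$, and that the single missing assumption \textbf{(CD)} plays no role once the coordinate $q$ is held fixed; everything after that, including the passage from pointwise (uniform on bounded sets) convergence to operator-norm convergence, is routine.
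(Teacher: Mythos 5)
The first two paragraphs of your plan are sound: $\Xi$ does inherit $\textbf{(H3)}^{-}_{w}$ (Lemma~\ref{LEM: H3WeakMinusLinearized}), $\textbf{(H3)}^{+}_{w}$ (Lemma~\ref{LEM: H3WeakPlusLinear}, under $\textbf{(H3)}^{+}_{s}$), \textbf{(ULIP)}, \textbf{(ACOM)} and \textbf{(PROJ)} over $(\mathcal{Q}\times\mathbb{E},\pi)$, the fixed subspace $\mathbb{E}^{-}(q)$ is $1$-Lipschitz $\varkappa$-admissible for every $\varkappa\in(0,1)$, and \textbf{(CD)} is indeed vacuous once $q$ is frozen. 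Your third paragraph then correctly proves the \emph{pointwise} convergence $\Pi^{c}_{L}(v_{k})\xi\to\Pi^{c}_{L}(v_{0})\xi$ for each fixed $\xi$, by closedness of positive equivalence together with uniqueness of the representative in $\mathbb{E}^{-}(q)$.

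The gap is the final step, where you pass from pointwise convergence to operator-norm convergence by ``letting $\xi$ range over a bounded subset''. Pointwise convergence of uniformly bounded linear operators does \emph{not} imply norm convergence when the domain $\mathbb{E}$ is infinite-dimensional, even when the range $\mathbb{E}^{-}(q)$ is finite-dimensional (take $T_{k}\xi=\xi_{k}e_{1}$ on $\ell^{2}$: $T_{k}\to 0$ pointwise but $\|T_{k}\|=1$). Concretely, if you instead take a sequence $\xi_{k}$ of norm $1$ such that $\Pi^{c}_{L}(v_{k})\xi_{k}-\Pi^{c}_{L}(v_{0})\xi_{k}$ stays away from zero, then $\xi_{k}$ need not converge in $\mathbb{E}$, and without a limit point for $\xi_{k}$ the closedness of positive equivalence (item 3) of Lemma~\ref{LEM: PositiveEquivalence}) cannot be invoked to identify the limit of $\Pi^{c}_{L}(v_{k})\xi_{k}$ with anything. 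Appealing to the uniformity clause at the end of Theorem~\ref{TH: CentralProjectorContinuityGeneral} does not repair this: its justification in the paper is the same one-line ``the same arguments work'', and for the argument of that theorem the hypothesis that the sequence $v_{k}$ converges to a fixed $v$ is exactly what is lost when $v_{k}$ is only bounded. The paper's own proof of Lemma~\ref{LEM: LinearFoliationProjectorCont} closes this by a different route: it sets $\xi^{-}_{k}:=\Pi^{c}_{L}(w_{k})\xi_{k}$, $\widetilde{\xi}^{-}_{k}:=\Pi^{c}_{L}(w_{0})\xi_{k}$, and controls $\int_{0}^{T}e^{2\beta(s;q)}|L(s;q,w_{0})(\xi^{-}_{k}-\widetilde{\xi}^{-}_{k})|^{2}_{\mathbb{H}}\,ds$ via the Minkowski inequality, with an error term $|(L(s;q,w_{k})-L(s;q,w_{0}))\xi^{+}_{k}|_{\mathbb{H}}$ that is killed using the Dominated Convergence Theorem together with the last clause of \textbf{(DIFF)} (continuity of $v\mapsto\Xi^{t}(q,v,\cdot)$ in $\mathcal{L}(\mathbb{E};\mathbb{H})$). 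Your proposal never touches that clause of \textbf{(DIFF)}, which is the essential quantitative ingredient for norm convergence; to close the gap you would have to reproduce that Minkowski/DCT argument rather than rely on the topological closedness of the positive equivalence.
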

\begin{proof}
	Supposing the opposite, we obtain a sequence $w_{k} \in \mathfrak{V}$ converging to some $w_{0} \in \mathfrak{V}$ as $k \to +\infty$ and a sequence $\xi_{k} \in \mathbb{E}$ such that $\| \xi_{k} \|_{\mathbb{E}} = 1$ and the sequence
	\begin{equation}
		\Pi^{c}_{L}(w_{k})\xi_{k} - \Pi^{c}_{L}(w_{0})\xi_{k}
	\end{equation}
    is uniformly separated from zero. Note that from \eqref{EQ: ExponentialTrackingEstimate} we get that the operators $\Pi^{c}_{L}(v)$ are bounded uniformly in $v \in \mathfrak{V}$. Let us put $\xi^{-}_{k} := \Pi^{c}_{L}(w_{k})\xi_{k}$ and $\widetilde{\xi}^{-}_{k}:=\Pi^{c}_{L}(w_{0})\xi_{k}$. Without loss of generality, we may assume that there exist $\overline{\xi}_{1}, \overline{\xi}_{2} \in \mathbb{E}^{-}(q)$ such that $\xi^{-}_{k} \to \overline{\xi}_{1}$ and $\widetilde{\xi}^{-}_{k} \to \overline{\xi}_{2}$ as $k \to +\infty$. By our assumptions, we have $\overline{\xi}_{1} \not= \overline{\xi}_{2}$. Putting $\xi^{+}_{k} := \xi_{k} - \xi^{-}_{k}$ and $\widetilde{\xi}^{+}_{k}:=\xi_{k} - \widetilde{\xi}^{-}_{k}$, for $T>0$ we have as $k \to +\infty$
    \begin{equation}
    	\label{EQ: LinearFoliationProjectorCont1}
    	\begin{split}
    	\int_{0}^{T}e^{2\beta(s;q)}|L(s;q,w_{0}) (\xi^{+}_{k} - \widetilde{\xi}^{+}_{k})|^{2}_{\mathbb{H}}ds = \\ =     	\int_{0}^{T}e^{2\beta(s;q)}|L(s;q,w_{0}) (\xi^{-}_{k} - \widetilde{\xi}^{-}_{k})|^{2}_{\mathbb{H}}ds \to	\int_{0}^{T}e^{2\beta(s;q)}|L(s;q,w_{0}) (\overline{\xi}_{1} - \overline{\xi}_{2} ) |^{2}_{\mathbb{H}}ds.
    	\end{split}
    \end{equation}
    Since the sequences $\xi^{+}_{k} \in \mathfrak{V}'(w_{k})$ and $\widetilde{\xi}^{+}_{k} \in \mathfrak{V}'(w_{0})$ are uniformly bounded, from \eqref{EQ: VerticalFibreIntEstimate} we obtain a constant $M>0$ such that for all $k=1,2,\ldots$
    \begin{equation}
    	\left(\int_{0}^{+\infty}e^{2\beta(s;q)}|L(s;q,w_{k})\xi^{+}_{k}|^{2}ds\right)^{1/2} + \left(\int_{0}^{+\infty}e^{2\beta(s;q)}|L(s;q,w_{0})\widetilde{\xi}^{+}_{k}|^{2}ds\right)^{1/2} \leq M.
    \end{equation}
    By the Minkowski inequality, we have
    \begin{equation}
    	\label{EQ: LinearFoliationProjectorCont2}
    	\begin{split}
    		\left(\int_{0}^{T}e^{2\beta(s;q)}|L(s;q,w_{0}) (\xi^{+}_{k} - \widetilde{\xi}^{+}_{k})|^{2}_{\mathbb{H}}ds \right)^{1/2} \leq \\ \leq 	\left(\int_{0}^{T}e^{2\beta(s;q)}|L(s;q,w_{0})\widetilde{\xi}^{+}_{k})|^{2}_{\mathbb{H}}ds \right)^{1/2} + 	\left(\int_{0}^{T}e^{2\beta(s;q)}|L(s;q,w_{k})\xi^{+}_{k} |^{2}_{\mathbb{H}}ds \right)^{1/2} + \\ +
    			\left(\int_{0}^{T}e^{2\beta(s;q)}|(L(s;q,w_{k}) - L(s;q,w_{0})) \xi^{+}_{k}|^{2}_{\mathbb{H}}ds \right)^{1/2} \leq \\ \leq M + \left(\int_{0}^{T}e^{2\beta(s;q)}|(L(s;q,w_{k}) - L(s;q,w_{0})) \xi^{+}_{k}|^{2}_{\mathbb{H}}ds \right)^{1/2}.
    	\end{split}
    \end{equation}
    By the last property in \textbf{(DIFF)} and the Dominated Convergence Theorem, the last integral in \eqref{EQ: LinearFoliationProjectorCont2} tends to zero as $k \to +\infty$. Thus, from this and \eqref{EQ: LinearFoliationProjectorCont1} we have for all $T > 0$ the inequality
    \begin{equation}
    \int_{0}^{T}e^{2\beta(s;q)}|L(s;q,w_{0}) (\overline{\xi}_{1} - \overline{\xi}_{2} ) |^{2}_{\mathbb{H}}ds \leq M^{2}.
    \end{equation}
    Putting $T=+\infty$, we obtain $\overline{\xi}_{1}-\overline{\xi}_{2} \in \mathfrak{V}(w_{0})$. But, since $\overline{\xi}_{1}-\overline{\xi}_{2} \in \mathbb{E}^{-}(q)$, we must have $\overline{\xi}_{1} = \overline{\xi}_{2}$. The lemma is proved.
\end{proof}

We have the following two lemmas showing that $\Phi'_{\mathfrak{V}}(\cdot)$ are continuous differentials of $\Phi_{\mathfrak{V}}$ in the sense of Fr\'{e}chet.
\begin{lemma}
	\label{LEM: VerticalDiffContDep}
	Under the standing hypotheses and \textbf{(DIFF)}, the linear map $\Phi'_{\mathfrak{V}}(v)$ depend continuously on $v \in \mathfrak{V}$ in the norm of $\mathcal{L}(\mathbb{E}^{+}(q);\mathbb{E})$.
\end{lemma}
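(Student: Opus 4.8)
The plan is to reduce the claim to Lemma~\ref{LEM: LinearFoliationProjectorCont} by recognizing $\Phi'_{\mathfrak{V}}(v)$ as the restriction to $\mathbb{E}^{+}(q)$ of the complementary linear central projector. Recall that $\Pi^{c}_{L}(v)$ is the projector associated with the splitting $\mathbb{E} = \mathbb{E}^{-}(q) \oplus \mathfrak{V}'(v)$, so $\operatorname{Id}_{\mathbb{E}} - \Pi^{c}_{L}(v)$ is the projector onto $\mathfrak{V}'(v)$ along $\mathbb{E}^{-}(q)$. First I would verify the identity
\[
	\Phi'_{\mathfrak{V}}(v) = \restr{\left( \operatorname{Id}_{\mathbb{E}} - \Pi^{c}_{L}(v) \right)}{\mathbb{E}^{+}(q)}, \qquad v \in \mathfrak{V}.
\]
Indeed, for $\xi \in \mathbb{E}^{+}(q)$ the vector $(\operatorname{Id}_{\mathbb{E}} - \Pi^{c}_{L}(v))\xi$ lies in the range $\mathfrak{V}'(v)$; applying $\Pi^{+}_{q}$ and using $\Pi^{c}_{L}(v)\xi \in \mathbb{E}^{-}(q) = \operatorname{Ker}\Pi^{+}_{q}$ together with $\Pi^{+}_{q}\xi = \xi$, one gets $\Pi^{+}_{q}\left( (\operatorname{Id}_{\mathbb{E}} - \Pi^{c}_{L}(v))\xi \right) = \xi$. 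Since $\Pi^{+}_{q} \colon \mathfrak{V}'(v) \to \mathbb{E}^{+}(q)$ is a linear homeomorphism whose inverse is, by definition, $\Phi'_{\mathfrak{V}}(v)$, uniqueness of the preimage yields the identity.

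Granting the identity, continuity is immediate: Lemma~\ref{LEM: LinearFoliationProjectorCont} gives that $v \mapsto \Pi^{c}_{L}(v)$ is continuous in the norm of $\mathcal{L}(\mathbb{E})$, hence so is $v \mapsto \operatorname{Id}_{\mathbb{E}} - \Pi^{c}_{L}(v)$, and restricting operators to the fixed subspace $\mathbb{E}^{+}(q)$ does not increase the operator norm, so $v \mapsto \Phi'_{\mathfrak{V}}(v)$ is continuous in $\mathcal{L}(\mathbb{E}^{+}(q);\mathbb{E})$, which is precisely the assertion.

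I do not expect a genuine obstacle here, since all the analytic content — the uniform bounds coming from the exponential tracking estimate \eqref{EQ: ExponentialTrackingEstimate} of Theorem~\ref{TH: ExpTrackingTheorem} applied to the linearization semicocycle $\Xi$ with $\mathcal{M} = \mathbb{E}^{-}(q)$, and the Minkowski and dominated-convergence arguments — is already absorbed into Lemma~\ref{LEM: LinearFoliationProjectorCont}. Should one prefer to avoid appealing to that identity, the alternative is to mimic the proof of Lemma~\ref{LEM: DifferentialPhiContinuous} almost verbatim with time run forward instead of backward: assuming for contradiction a sequence $v_{k} \to v$ in $\mathfrak{V}$ and unit vectors $\eta_{k} \in \mathbb{E}^{+}(q)$ with $\eta_{k} \to \overline{\eta}$ and $\Phi'_{\mathfrak{V}}(v_{k})\eta_{k} - \Phi'_{\mathfrak{V}}(v)\eta_{k}$ bounded away from zero, the uniform boundedness of $\Phi'_{\mathfrak{V}}(\cdot)$ together with \eqref{EQ: VerticalFibreIntEstimate} would force a limiting trajectory lying in $\mathfrak{V}'(v)$, while the $\mathbb{E}^{-}(q)$-component of the above difference vanishes in the limit, contradicting the injectivity of $\Pi^{+}_{q}$ on $\mathfrak{V}'(v)$.
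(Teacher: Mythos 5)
Your proof is correct and takes essentially the same route as the paper: both arguments reduce continuity of $v \mapsto \Phi'_{\mathfrak{V}}(v)$ to Lemma~\ref{LEM: LinearFoliationProjectorCont} on the central projector $\Pi^{c}_{L}(\cdot)$. Your explicit identity $\Phi'_{\mathfrak{V}}(v) = \restr{\left( \operatorname{Id}_{\mathbb{E}} - \Pi^{c}_{L}(v) \right)}{\mathbb{E}^{+}(q)}$ is a clean reformulation of the paper's telescoping step $\Phi'_{\mathfrak{V}}(w_{k}) - \Phi'_{\mathfrak{V}}(w_{0}) = [\Pi^{c}_{L}(w_{0}) - \Pi^{c}_{L}(w_{k})]\Phi'_{\mathfrak{V}}(w_{k})$, with the side benefit that the separate uniform-boundedness estimate \eqref{EQ: ContinuityVerticalDifferentialBound} for $\Phi'_{\mathfrak{V}}(\cdot)$ is no longer needed as an independent ingredient.
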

\begin{proof} 
	For any $v \in \mathfrak{V}$ and $\eta \in \mathbb{E}^{+}(q)$, by Lemma \ref{LEM: PositiveEquivalence}, we have $V_{q}( \Phi'_{\mathfrak{V}}(v) \eta ) \geq 0$. Thus from \eqref{EQ: ConstantProjectorNorm} we get the inequality
	\begin{equation}
		\label{EQ: ContinuityVerticalDifferentialBound}
		C^{2}_{q} \| \Pi_{q} \Phi'_{\mathfrak{V}}(v) \eta \|^{2}_{\mathbb{E}} \leq -V(\Pi_{q} \Phi'_{\mathfrak{V}}(v) \eta) \leq V(\Pi^{+}_{q} \Phi'_{\mathfrak{V}}(v) \eta) \leq \|Q(q)\| \cdot \| \eta \|^{2}_{\mathbb{E}}.
	\end{equation}
	It shows that the linear maps $\Phi'_{\mathfrak{V}}(v)$ are bounded uniformly in $v \in \mathfrak{V}$. 
	
    Now let $w_{k} \in \mathfrak{V}$ converge to some $w_{0} \in \mathfrak{V}$ as $k \to +\infty$.	Using the projector $\Pi^{c}_{L}(\cdot)$ and $\Pi^{+}_{q}(\Phi'_{\mathfrak{V}}(w_{k}) - \Phi'_{\mathfrak{V}}(w_{0})) = 0$, we have
	\begin{equation}
		\label{EQ: LinearFoliationProjectorVert1}
		\begin{split}
			\Phi'_{\mathfrak{V}}(w_{k}) - \Phi'_{\mathfrak{V}}(w_{0}) =\\= \Pi^{c}_{L}(w_{0}) \left[\Phi'_{\mathfrak{V}}(w_{k}) - \Phi'_{\mathfrak{V}}(w_{0}) \right].
		\end{split}
	\end{equation}
    Since $\Pi^{c}_{L}(w_{0})\Phi'_{\mathfrak{V}}(w_{0}) = 0$, $\Pi^{c}_{L}(w_{k})\Phi'_{\mathfrak{V}}(w_{k})=0$ and $\Pi^{c}_{L}(w_{k}) \to \Pi^{c}_{L}(w_{0})$, using Lemma \ref{LEM: LinearFoliationProjectorCont} and the uniform bound from \eqref{EQ: ContinuityVerticalDifferentialBound}, we get that the right-hand side in \eqref{EQ: LinearFoliationProjectorVert1} tends to zero as $k \to +\infty$. The lemma is proved.
\end{proof}

\begin{lemma}
	\label{LEM: FrechetDifferentiabilityVertical}
	Under the standing hypotheses and \textbf{(DIFF)}, the map $\Phi_{\mathfrak{V}} \colon \mathbb{E}^{+}(q) \to \mathbb{E}$ is $C^{1}$-differentiable and its differential at a point $\zeta_{0} \in \mathbb{E}^{+}(q)$ is given by $\Phi'_{\mathfrak{V}}(\Phi_{\mathfrak{V}}(\zeta_{0}))$, i.~e. the limit
	\begin{equation}
		\lim_{h \to 0}\frac{\Phi_{\mathfrak{V}}(\zeta_{0} + h\eta) -\Phi_{\mathfrak{V}}(\zeta_{0})}{h} = \Phi'_{\mathfrak{V}}(\Phi_{\mathfrak{V}}(\zeta_{0}))\eta 
	\end{equation}
    is uniform in $\eta \in \mathbb{E}^{+}(q)$ from bounded subsets.
\end{lemma}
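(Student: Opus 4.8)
The plan is to run, for the vertical leaves, exactly the contradiction scheme used for the horizontal leaves in the proof of Lemma~\ref{LEM: FrechetDiffPhi}, the simplification being that here the infinite-dimensional ($V_{q}$-positive) part of the difference quotient is pinned down exactly by $\Pi^{+}_{q}\circ\Phi_{\mathfrak{V}}=\operatorname{id}$, while its $V_{q}$-negative part lies in the $j_{Q}$-dimensional space $\mathbb{E}^{-}(q)$ and is therefore automatically precompact once bounded. Suppose the assertion fails; then there are a bounded sequence $\eta_{k}\in\mathbb{E}^{+}(q)$ and a sequence $h_{k}\to 0$ for which, putting $v_{0}:=\Phi_{\mathfrak{V}}(\zeta_{0})$, $v_{k}:=\Phi_{\mathfrak{V}}(\zeta_{0}+h_{k}\eta_{k})$ and $D_{k}:=h_{k}^{-1}(v_{k}-v_{0})$, the vectors $r_{k}:=D_{k}-\Phi'_{\mathfrak{V}}(v_{0})\eta_{k}$ stay uniformly separated from zero. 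First I would note that $D_{k}$ is bounded: since $v_{k}$ and $v_{0}$ are positively equivalent over $q$ we have $V_{q}(v_{k}-v_{0})\geq 0$, and with the $V_{q}$-orthogonal decomposition $v_{k}-v_{0}=h_{k}\eta_{k}+\Pi_{q}(v_{k}-v_{0})$ this gives $C_{q}^{2}\|\Pi_{q}D_{k}\|_{\mathbb{E}}^{2}\leq -V_{q}(\Pi_{q}D_{k})\leq V_{q}(\eta_{k})\leq\|Q(q)\|\,\|\eta_{k}\|_{\mathbb{E}}^{2}$ (see \eqref{EQ: ConstantProjectorNorm}), while $\Pi^{+}_{q}D_{k}=\eta_{k}$. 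Passing to a subsequence I may therefore assume $\eta_{k}\to\bar\eta$ in $\mathbb{E}^{+}(q)$ and $\Pi_{q}D_{k}\to\bar\zeta$ in the finite-dimensional $\mathbb{E}^{-}(q)$, hence $D_{k}\to\bar\xi:=\bar\eta+\bar\zeta$ in $\mathbb{E}$.

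The heart of the argument is to show $\bar\xi\in\mathfrak{V}'(v_{0})=[0]^{+}(q,v_{0})$. Set $w_{k}(s):=\psi^{s}(q,v_{k})$ and $w(s):=\psi^{s}(q,v_{0})$ for $s\geq 0$. Writing $v_{k}=v_{0}+h_{k}D_{k}$ with $D_{k}$ bounded and $D_{k}\to\bar\xi$, item~(1) of \textbf{(DIFF)} (the $\mathbb{H}$-norm differentiability, uniform over bounded perturbations) gives, for each $s\geq 0$,
\[
\frac{w_{k}(s)-w(s)}{h_{k}}=\frac{\psi^{s}(q,v_{0}+h_{k}D_{k})-\psi^{s}(q,v_{0})}{h_{k}}\longrightarrow L(s;q,v_{0})\bar\xi=:\xi^{*}(s)\quad\text{in }\mathbb{H}.
\]
On the other hand, positive equivalence of $v_{k}$ and $v_{0}$ together with \eqref{EQ: VerticalFibreIntEstimate} yields
\[
\int_{0}^{+\infty}e^{2\beta(s;q)}\left|\frac{w_{k}(s)-w(s)}{h_{k}}\right|_{\mathbb{H}}^{2}ds\leq\delta_{Q}^{-1}\|Q(q)\|\,\|D_{k}\|_{\mathbb{E}}^{2},
\]
whose right-hand side is bounded; Fatou's lemma then gives $\int_{0}^{+\infty}e^{2\beta(s;q)}|\xi^{*}(s)|_{\mathbb{H}}^{2}ds<+\infty$. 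Since the linearization semicocycle $\Xi$ satisfies $\textbf{(H3)}^{+}_{w}$ by item~(3) of $\textbf{(H3)}^{+}_{s}$ (Lemma~\ref{LEM: H3WeakPlusLinear}) and \textbf{(S)} (a consequence of the \textbf{(ULIP)} part of \textbf{(DIFF)}), item~1) of Lemma~\ref{LEM: PositiveEquivalence} applied to $\Xi$ over the point $(q,v_{0})$ of the skew-product space, with the forward trajectory $\xi^{*}(s)=L(s;q,v_{0})\bar\xi$, shows that $\bar\xi=\xi^{*}(0)$ is positively equivalent to $0$ over $(q,v_{0})$, i.e. $\bar\xi\in\mathfrak{V}'(v_{0})$.

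To close the proof, note that $\Pi^{+}_{q}D_{k}=\eta_{k}\to\bar\eta$ forces $\Pi^{+}_{q}\bar\xi=\bar\eta$; since also $\Pi^{+}_{q}\Phi'_{\mathfrak{V}}(v_{0})\bar\eta=\bar\eta$ while $\Pi^{+}_{q}$ is injective on $\mathfrak{V}'(v_{0})$ (Lemma~\ref{LEM: PiPlusHomeoOntoImage} applied to $\Xi$), we get $\bar\xi=\Phi'_{\mathfrak{V}}(v_{0})\bar\eta$. As $\Phi'_{\mathfrak{V}}(v_{0})\in\mathcal{L}(\mathbb{E}^{+}(q);\mathbb{E})$ is a fixed bounded operator (Lemma~\ref{LEM: VerticalDiffContDep}), we have $\Phi'_{\mathfrak{V}}(v_{0})\eta_{k}\to\Phi'_{\mathfrak{V}}(v_{0})\bar\eta=\bar\xi$, so $r_{k}=D_{k}-\Phi'_{\mathfrak{V}}(v_{0})\eta_{k}\to 0$, contradicting the choice of $\eta_{k},h_{k}$; running the same argument with $\eta_{k}$ confined to an arbitrary fixed bounded subset of $\mathbb{E}^{+}(q)$ gives the asserted uniformity. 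The $C^{1}$-property (continuity of the differential $\zeta\mapsto\Phi'_{\mathfrak{V}}(\Phi_{\mathfrak{V}}(\zeta))$) then follows from Lemma~\ref{LEM: VerticalDiffContDep} composed with the continuity of $\Phi_{\mathfrak{V}}$ (Corollary~\ref{COR: VerticalLeafStructure}).

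The one step that genuinely requires care is the use of \textbf{(DIFF)} in the second paragraph: the perturbations $h_{k}D_{k}$ are not of the fixed form $h\xi$, so one must combine the uniformity of the differentiability limit over bounded $\xi$ with the convergence $D_{k}\to\bar\xi$ (and the boundedness of $L(s;q,v_{0})$ as a map $\mathbb{E}\to\mathbb{H}$) to legitimately replace $h_{k}^{-1}\big(\psi^{s}(q,v_{0}+h_{k}D_{k})-\psi^{s}(q,v_{0})\big)$ by $L(s;q,v_{0})\bar\xi+o(1)$ uniformly on compact $s$-intervals; once this is granted, the rest — the bound on $D_{k}$, the Fatou passage, and the final injectivity argument — is routine.
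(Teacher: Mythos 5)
There is a genuine gap at the very beginning of your argument: after establishing that $D_{k}$ is bounded, you ``pass to a subsequence'' along which $\eta_{k}\to\bar\eta$ in $\mathbb{E}^{+}(q)$. But $\mathbb{E}^{+}(q)$ is the co-finite-dimensional factor in the decomposition $\mathbb{E}=\mathbb{E}^{-}(q)\oplus\mathbb{E}^{+}(q)$ and is typically infinite-dimensional, so a bounded sequence in it has no norm-convergent subsequence in general (Riesz's lemma). Everything downstream — the convergence $D_{k}\to\bar\xi$, the application of \textbf{(DIFF)} to get $L(s;q,v_{0})D_{k}\to L(s;q,v_{0})\bar\xi$, the Fatou passage, and the final step $\Phi'_{\mathfrak{V}}(v_{0})\eta_{k}\to\Phi'_{\mathfrak{V}}(v_{0})\bar\eta$ — leans on this unavailable compactness.

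The correct place to look for compactness is the residual itself. Since $\Pi^{+}_{q}D_{k}=\eta_{k}$ and $\Pi^{+}_{q}\Phi'_{\mathfrak{V}}(v_{0})\eta_{k}=\eta_{k}$, the error $r_{k}:=D_{k}-\Phi'_{\mathfrak{V}}(v_{0})\eta_{k}$ is annihilated by $\Pi^{+}_{q}$ and so lies in the $j_{Q}$-dimensional space $\mathbb{E}^{-}(q)$; it is bounded by the estimate you already derived for $D_{k}$ plus the uniform boundedness of $\Phi'_{\mathfrak{V}}(v_{0})$ (Lemma~\ref{LEM: VerticalDiffContDep}). One can therefore pass to a subsequence with $r_{k}\to\bar r$ in $\mathbb{E}^{-}(q)$, $\bar r\neq 0$, \emph{without} assuming anything about $\eta_{k}$. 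To show $\bar r\in\mathfrak{V}'(v_{0})$ one then writes $L(s;q,v_{0})r_{k}=L(s;q,v_{0})D_{k}-L(s;q,v_{0})\Phi'_{\mathfrak{V}}(v_{0})\eta_{k}$ and bounds each piece separately: the first via \textbf{(DIFF)} and the positive equivalence of $v_{k},v_{0}$ (as you did), the second because $\Phi'_{\mathfrak{V}}(v_{0})\eta_{k}\in\mathfrak{V}'(v_{0})$, which makes the $\int e^{2\beta}|L\cdot|^{2}_{\mathbb{H}}$-integral finite and uniformly bounded via \eqref{EQ: VerticalFibreIntEstimate}. By Minkowski the truncated integrals $\int_{0}^{T}e^{2\beta}|L(s;q,v_{0})r_{k}|^{2}_{\mathbb{H}}\,ds$ are uniformly bounded, and since $r_{k}\to\bar r$ in a finite-dimensional space one may pass $k\to\infty$ on $[0,T]$ and then $T\to\infty$, getting $\bar r\in\mathfrak{V}'(v_{0})\cap\mathbb{E}^{-}(q)=\{0\}$ --- a contradiction. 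This is precisely what the paper does: its $\xi^{-}_{k}=\Pi^{c}_{L}(v_{0})\xi_{k}$ is exactly your $r_{k}$, and the proof is organised so that only this finite-dimensional quantity ever needs to converge.
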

\begin{proof}
	Supposing the opposite, we obtain sequences $\eta_{k} \in \mathbb{E}^{+}(q)$ and $h_{k} \in \left\{\mathbb{R} \setminus \{0\} \right\}$, where $\eta_{k}$ is bounded and $h_{k} \to 0$ as $k \to +\infty$, such that
	\begin{equation}
		\label{EQ: FrechetVerticalDiffLemma1}
		\frac{\Phi_{\mathfrak{V}}(\zeta_{0} + h_{k}\eta_{k}) -\Phi_{\mathfrak{V}}(\zeta_{0})}{h_{k}} - \Phi'_{\mathfrak{V}}(\Phi_{\mathfrak{V}}(\zeta_{0}))\eta_{k} 
	\end{equation}
    is uniformly separated from zero. Let $\xi_{k}$ denote the fractional term in \eqref{EQ: FrechetVerticalDiffLemma1}. Put $\xi^{-}_{k}:= \Pi^{c}_{L}(\Phi_{\mathfrak{V}}(\zeta_{0})) \xi_{k}$ and $\xi^{+}_{k} := \xi_{k} - \xi^{-}_{k}$. Without loss of generality, we may assume that $\xi^{-}_{k}$ converges to some $\overline{\xi} \in \mathbb{E}^{-}(q)$. By our assumptions and since \eqref{EQ: FrechetVerticalDiffLemma1} vanishes after taking $\Pi^{+}_{q}$, we have $\overline{\xi} \not= 0$. On the other hand, from \textbf{(DIFF)} it follows that for $s \in [0,T]$ and $k=1,2,\ldots$ that\footnote{Here we used the relation $o(\Phi_{\mathfrak{V}}(\zeta_{0}+h_{k} \eta_{k}) - \Phi_{\mathfrak{V}}(\zeta_{0})) = o(h_{k})$. This follows from the fact that $\Phi_{\mathfrak{V}}(\cdot)$ is locally Lipschitz at $\zeta_{0}$. The latter easily follows from the boundedness of the fractional term in \eqref{EQ: FrechetVerticalDiffLemma1}.}
    \begin{equation}
    \psi^{s}(q,\Phi_{\mathfrak{V}}(\zeta_{0}+h_{k} \eta_{k})) - \psi^{s}(q,\Phi_{\mathfrak{V}}(\zeta_{0})) = h_{k}L(s;q,\Phi_{\mathfrak{V}}(\zeta_{0}))\xi_{k} + o(h_{k})
    \end{equation}
    and, consequently,
    \begin{equation}
    	\begin{split}
    		L(s;q,\Phi_{\mathfrak{V}}(\zeta_{0}))\xi^{-}_{k} = \frac{\psi^{s}(q,\Phi_{\mathfrak{V}}(\zeta_{0}+h_{k} \eta_{k})) - \psi^{s}(q,\Phi_{\mathfrak{V}}(\zeta_{0}))}{h_{k}} + \\ + L(s;q,\Phi_{\mathfrak{V}}(\zeta_{0}))\xi^{+}_{k} + o(1).
    	\end{split}
    \end{equation}
    From this we get that for some uniform constant $M > 0$ we have for all $T>0$ that
    \begin{equation}
    	\int_{0}^{T}e^{2\beta(s;q)}|L(s;q,\Phi_{\mathfrak{V}}(\zeta_{0}))\overline{\xi}|^{2}_{\mathbb{H}} \leq M.
    \end{equation}
    In particular, $\overline{\xi} \in \mathfrak{V}'(\Phi_{\mathfrak{V}}(\zeta_{0}))$ and, consequently, $\overline{\xi} = 0$ that leads to a contradiction. The lemma is proved.
\end{proof}

We collect results of the present section in the following theorem.
\begin{theorem}
	\label{TH: VertialLeavesDifferentiability}
	Let the standing hypotheses and \textbf{(DIFF)} be satisfied. Then the set $\mathfrak{V}$ is a $C^{1}$-differentiable submanifold in $\mathbb{E}$. Moreover,
    \begin{enumerate}
    	\item The map $\Phi_{\mathfrak{V}} \colon \mathbb{E}^{+}(q) \to \mathbb{E}$ is $C^{1}$-differentiable and its differential at a point $\zeta \in \mathbb{E}^{+}(q)$ is given by $\Phi'_{\mathfrak{V}}(\Phi_{\mathfrak{V}}(\zeta))$.
    	\item The tangent space at $v \in \mathfrak{V}$ is given by $\mathfrak{V}'(v)$.
    	\item The map
    	\begin{equation}
    		\psi^{t}(q,\cdot) \colon [v_{0}]^{+}(q) \to [\psi^{t}(q,v_{0})]^{+}(\vartheta^{t}(q))
    	\end{equation}
        is $C^{1}$-differentiable for any $t \geq \tau_{Q}$ and its differential at a point $v \in [v_{0}]^{+}(q)$ is given by
        \begin{equation}
        	L(t;q,v) \colon [0]^{+}(q,v) \to [0]^{+}(\vartheta^{t}(q),\psi^{t}(q,v)).
        \end{equation}
    \end{enumerate}
\end{theorem}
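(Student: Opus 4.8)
The plan is to follow the proof of Theorem \ref{TH: HorizontalDifferentiability}, with the chart $\Pi^{+}_{q}$ replacing $\Pi_{q}$, the positive equivalence replacing the negative pseudo-order, and $\textbf{(H3)}^{+}_{s}$ (through Lemma \ref{LEM: H3WeakPlusLinear}) playing for the linearization semicocycle $\Xi$ the role $\textbf{(H3)}^{-}_{w}$ played in the horizontal case. Items (1) and (2) are already in hand: the $C^{1}$-differentiability of $\Phi_{\mathfrak{V}}\colon\mathbb{E}^{+}(q)\to\mathbb{E}$ with differential $\Phi'_{\mathfrak{V}}(\Phi_{\mathfrak{V}}(\zeta))$ at $\zeta$ is Lemma \ref{LEM: FrechetDifferentiabilityVertical}, the norm-continuity of $v\mapsto\Phi'_{\mathfrak{V}}(v)$ in $\mathcal{L}(\mathbb{E}^{+}(q);\mathbb{E})$ is Lemma \ref{LEM: VerticalDiffContDep}, and these two facts together say precisely that $\mathfrak{V}$ is a $C^{1}$-submanifold of $\mathbb{E}$ (with $\Pi^{+}_{q}$ as a global chart, by Corollary \ref{COR: VerticalLeafStructure}) whose tangent space at $v$ is the range $\mathfrak{V}'(v)=[0]^{+}(q,v)$ of $\Phi'_{\mathfrak{V}}(v)$.

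For item (3), fix $t\geq\tau_{Q}$ and set $\mathfrak{V}_{t}:=[\psi^{t}(q,v_{0})]^{+}(\vartheta^{t}(q))$. First I would check that $\psi^{t}(q,\cdot)$ maps $\mathfrak{V}$ into $\mathfrak{V}_{t}$: if $v_{1},v_{2}\in\mathfrak{V}$ then $V_{\vartheta^{s}(q)}(\psi^{s}(q,v_{1})-\psi^{s}(q,v_{2}))\geq 0$ for all $s\geq 0$, so the cocycle property applied at $\vartheta^{t}(q)$ gives the same with $s$ replaced by $t+s$, i.e. $\psi^{t}(q,v_{1})$ and $\psi^{t}(q,v_{2})$ are positively equivalent over $\vartheta^{t}(q)$. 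In the charts $\Pi^{+}_{q}$ on $\mathfrak{V}$ and $\Pi^{+}_{\vartheta^{t}(q)}$ on $\mathfrak{V}_{t}$ the map $\psi^{t}(q,\cdot)$ becomes $G_{t}:=\Pi^{+}_{\vartheta^{t}(q)}\circ\psi^{t}(q,\cdot)\circ\Phi_{\mathfrak{V}}\colon\mathbb{E}^{+}(q)\to\mathbb{E}^{+}(\vartheta^{t}(q))$, and it suffices to show that $G_{t}$ is $C^{1}$ with $DG_{t}(\zeta)=\Pi^{+}_{\vartheta^{t}(q)}L(t;q,\Phi_{\mathfrak{V}}(\zeta))\Phi'_{\mathfrak{V}}(\Phi_{\mathfrak{V}}(\zeta))$. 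Writing $v=\Phi_{\mathfrak{V}}(\zeta)$ and using that $\Phi_{\mathfrak{V}}$ is locally Lipschitz (a byproduct of item (1)), the difference quotients $(\Phi_{\mathfrak{V}}(\zeta+h\eta)-\Phi_{\mathfrak{V}}(\zeta))/h$ stay in a bounded set; hence item (2) of \textbf{(DIFF)}, which is uniform in bounded directions, gives $\psi^{t}(q,\Phi_{\mathfrak{V}}(\zeta+h\eta))-\psi^{t}(q,v)=L(t;q,v)(\Phi_{\mathfrak{V}}(\zeta+h\eta)-\Phi_{\mathfrak{V}}(\zeta))+o(h)$ in $\mathbb{E}$, uniformly in $\eta$ from bounded subsets of $\mathbb{E}^{+}(q)$. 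Substituting $\Phi_{\mathfrak{V}}(\zeta+h\eta)-\Phi_{\mathfrak{V}}(\zeta)=h\,\Phi'_{\mathfrak{V}}(v)\eta+o(h)$ from Lemma \ref{LEM: FrechetDifferentiabilityVertical} and applying the bounded operator $\Pi^{+}_{\vartheta^{t}(q)}$ yields $DG_{t}(\zeta)$; its continuity in $\zeta$ follows from the continuity of $\Phi_{\mathfrak{V}}$, of $v\mapsto\Phi'_{\mathfrak{V}}(v)$ (Lemma \ref{LEM: VerticalDiffContDep}) and of $v\mapsto L(t;q,v)$ supplied by \textbf{(DIFF)}, exactly as in Theorem \ref{TH: HorizontalDifferentiability}.

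Finally, I would identify $DG_{t}$ with the map of the statement. The cocycle identity $L(t+s;q,v)=L(s;\vartheta^{t}(q),\psi^{t}(q,v))\,L(t;q,v)$ for $\Xi$ shows that if $\xi\in\mathfrak{V}'(v)=[0]^{+}(q,v)$, i.e. $V_{\vartheta^{s}(q)}(L(s;q,v)\xi)\geq 0$ for all $s\geq 0$, then $V_{\vartheta^{t+s}(q)}(L(s;\vartheta^{t}(q),\psi^{t}(q,v))L(t;q,v)\xi)\geq 0$ for all $s\geq 0$, i.e. $L(t;q,v)\xi\in\mathfrak{V}'(\psi^{t}(q,v))=[0]^{+}(\vartheta^{t}(q),\psi^{t}(q,v))$; thus $L(t;q,v)$ restricts to a linear map between the two tangent spaces. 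Composing $DG_{t}(\zeta)$ with the chart differentials $\Phi'_{\mathfrak{V}}(v)=(\Pi^{+}_{q}|_{\mathfrak{V}'(v)})^{-1}$ and $(\Pi^{+}_{\vartheta^{t}(q)}|_{\mathfrak{V}'(\psi^{t}(q,v))})^{-1}$ collapses it to $L(t;q,v)|_{\mathfrak{V}'(v)}$, the asserted differential. The point that needs care is the uniformity bookkeeping in the previous paragraph --- combining the $o(h)$ coming from \textbf{(DIFF)} with the $o(h)$ coming from Lemma \ref{LEM: FrechetDifferentiabilityVertical} into one $o(h)$ that is uniform in $\eta$ from bounded sets --- which rests entirely on the local Lipschitz continuity of $\Phi_{\mathfrak{V}}$; the remainder is a verbatim transcription of the horizontal argument with $\mathbb{E}^{-}(q)$ replaced by $\mathbb{E}^{+}(q)$.
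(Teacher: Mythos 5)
Your proof is correct and matches the paper's approach: items (1) and (2) are read off from Lemmas \ref{LEM: VerticalDiffContDep} and \ref{LEM: FrechetDifferentiabilityVertical} exactly as the paper does, and for item (3) the paper only remarks that the argument of Theorem \ref{TH: HorizontalDifferentiability} carries over, which is precisely the transcription you supply (with $\Pi^{+}_{q}$ and the positive equivalence replacing $\Pi_{q}$ and the negative pseudo-ordering, and the local Lipschitz continuity of $\Phi_{\mathfrak{V}}$ handling the $o(h)$ bookkeeping, just as in the horizontal proof). You also correctly observe that since only $t\geq\tau_{Q}$ is claimed here, the backward--forward composition step used in Theorem \ref{TH: HorizontalDifferentiability} to cover $t\in[0,\tau_{Q}]$ is not needed.
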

\begin{proof}
	Items (1) and (2) of the theorem follows from Lemma \ref{LEM: VerticalDiffContDep} and \ref{LEM: FrechetDifferentiabilityVertical}. The statement of item (3) can be proven analogously to the corresponding statement in Theorem \ref{TH: HorizontalDifferentiability}.
\end{proof}

\subsection{Admissible projectors}
\label{SUBSEC: AdmissibleProjectors}

From previous results we see that the $V_{q}$-orthogonal projectors $\Pi_{q}$ are convenient for reconstructing of foliations and studying of their properties. This is since the decomposition
\begin{equation}
	\label{EQ: VqOrthogonalProjectorFormDecomposition}
	V_{q}(v) = V_{q}(\Pi_{q} v) + V_{q}(\Pi^{+}_{q} v)
\end{equation}
simplifies various estimates and allows useful constructions such as perturbed quadratic forms $V^{(\varkappa)}_{q}$. But for some purposes (for example, to study the so-called \textit{inertial forms} that are coordinate representations of the vector field restricted to the submanifold) it may be convenient to use other types of projectors which provide different coordinate charts on constructed submanifolds.

Under \textbf{(H1)} (with $P(q)$ changed with $Q(q)$) and \textbf{(H2)} (with $j$ changed with $j_{Q}$) we say that a projector $\Pi \colon \mathbb{E} \to \mathbb{E}$ is \textit{admissible} over $q \in \mathcal{Q}$ if 
\begin{description}
	\item[\textbf{(AP1)}] For all non-zero $v \in \mathbb{E}^{-}_{\Pi}:=\operatorname{Ran}\Pi$ we have $V_{q}(v) < 0$ and $\operatorname{dim}\operatorname{Ran}\Pi = j_{Q}$;
	\item[\textbf{(AP2)}] For all non-zero $v \in \mathbb{E}^{+}_{\Pi}:=\operatorname{Ker}\Pi$ we have $V_{q}(v)>0$.
\end{description}
Note that \textbf{(AP1)} is equivalent for $\operatorname{Ran}\Pi$ to be a Lipschitz admissible set over $q$. In particular, the dichotomy projector $\Pi^{d}_{q}$ is admissible. Note that for an admissible (over $q$) projector $\Pi$ we only have the property
\begin{equation}
	\label{EQ: AdmissibleProjectorFormDecomposition}
	V_{q}(v) = V_{q}(\Pi v) + V_{q}(\Pi^{+} v) +2\langle \Pi^{+}v, Q(q) \Pi v \rangle.
\end{equation}
Here $\Pi^{+}:= I - \Pi$ is the complementary to $\Pi$ projector.

\begin{theorem}
	\label{TH: DichotomyProjectorChartHorizontal}
	Under the hypotheses and terms of Theorem \ref{TH: HorizontalFibresAlongCompleteOrbits}, any admissible (over $q$) projector $\Pi$ provides a homeomorphism of $[v^{*}(\cdot)](0)$ onto $\mathbb{E}^{-}_{\Pi}$. Moreover, the inverse map $\Phi_{\Pi} \colon \mathbb{E}^{-}_{\Pi} \to [v^{*}(\cdot)](0)$ is Lipschitz and it is $C^{1}$-differentiable under the hypotheses of Theorem \ref{TH: HorizontalDifferentiability}.
\end{theorem}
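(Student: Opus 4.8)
The plan is to transport everything to one conveniently chosen projector and then exploit the strong pseudo-ordering of $\mathfrak{H}:=[v^{*}(\cdot)](0)$ measured in the cone-adapted norm $\|\cdot\|_{-V_{q}}:=\sqrt{-V_{q}(\cdot)}$ on $\mathbb{E}^{-}(q)$ — a genuine inner-product norm there (since $Q(q)$ is negative on $\mathbb{E}^{-}(q)$) and equivalent to $\|\cdot\|_{\mathbb{E}}$ because $\mathbb{E}^{-}(q)$ is finite-dimensional. First I recall from Theorem \ref{TH: HorizontalFibresAlongCompleteOrbits} that $\mathfrak{H}$ is a $C_{Lip}(q)$-Lipschitz $\varkappa_{0}$-admissible set over $q$: the $V_{q}$-orthogonal projector $\Pi_{q}\colon\mathfrak{H}\to\mathbb{E}^{-}(q)$ is a homeomorphism with Lipschitz inverse $\Phi_{\mathfrak{H}}$, and $V_{q}(\Pi^{+}_{q}(v_{1}-v_{2}))\le\varkappa_{0}^{2}\bigl(-V_{q}(\Pi_{q}(v_{1}-v_{2}))\bigr)$ for all $v_{1},v_{2}\in\mathfrak{H}$.

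Reduction. By \textbf{(AP2)} we have $V_{q}>0$ on $\operatorname{Ker}\Pi\setminus\{0\}$, whereas $V_{q}<0$ on $\mathbb{E}^{-}(q)\setminus\{0\}$; hence $\mathbb{E}^{-}(q)\cap\operatorname{Ker}\Pi=\{0\}$, and since $\operatorname{Ker}\Pi$ is closed of codimension $\dim\operatorname{Ran}\Pi=j_{Q}=\dim\mathbb{E}^{-}(q)$, a dimension count gives $\mathbb{E}=\mathbb{E}^{-}(q)\oplus\operatorname{Ker}\Pi$ with bounded associated projector $R\colon\mathbb{E}\to\mathbb{E}^{-}(q)$ (range $\mathbb{E}^{-}(q)$, kernel $\operatorname{Ker}\Pi$). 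Then $R|_{\mathbb{E}^{-}_{\Pi}}\colon\mathbb{E}^{-}_{\Pi}\to\mathbb{E}^{-}(q)$ is a linear isomorphism, and for $v\in\mathbb{E}$, $\eta\in\mathbb{E}^{-}_{\Pi}$ we have $\Pi v=\eta$ iff $Rv=R\eta$ (because $R(\Pi v)=Rv$, as $v-\Pi v\in\operatorname{Ker}\Pi$). Hence $\Pi|_{\mathfrak{H}}=(R|_{\mathbb{E}^{-}_{\Pi}})^{-1}\circ R|_{\mathfrak{H}}$ and $\Phi_{\Pi}=(R|_{\mathfrak{H}})^{-1}\circ R|_{\mathbb{E}^{-}_{\Pi}}$, so it suffices to show that $R|_{\mathfrak{H}}\colon\mathfrak{H}\to\mathbb{E}^{-}(q)$ is a homeomorphism whose inverse is Lipschitz (and $C^{1}$ under the extra hypotheses).

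Core estimate. Set $\Lambda:=R\circ\Phi_{\mathfrak{H}}\colon\mathbb{E}^{-}(q)\to\mathbb{E}^{-}(q)$; since $R$ is the identity on $\mathbb{E}^{-}(q)$, $\Lambda(\zeta)=\zeta+R\,\Pi^{+}_{q}\Phi_{\mathfrak{H}}(\zeta)$. For $\zeta_{1},\zeta_{2}\in\mathbb{E}^{-}(q)$ put $v_{i}:=\Phi_{\mathfrak{H}}(\zeta_{i})\in\mathfrak{H}$, $\zeta:=\zeta_{1}-\zeta_{2}$, $\xi:=\Pi^{+}_{q}(v_{1}-v_{2})\in\mathbb{E}^{+}(q)$, so $\Lambda\zeta_{1}-\Lambda\zeta_{2}=\zeta+R\xi$ with $R\xi\in\mathbb{E}^{-}(q)$ and $\xi-R\xi\in\operatorname{Ker}R=\operatorname{Ker}\Pi$. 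Then $V_{q}(\xi-R\xi)\ge0$ by \textbf{(AP2)}, and $V_{q}$-orthogonality gives $V_{q}(\xi-R\xi)=V_{q}(\xi)+V_{q}(R\xi)$, so $-V_{q}(R\xi)\le V_{q}(\xi)\le\varkappa_{0}^{2}(-V_{q}(\zeta))$; the reverse triangle inequality in $\|\cdot\|_{-V_{q}}$ then yields
\[
\|\Lambda\zeta_{1}-\Lambda\zeta_{2}\|_{-V_{q}}\ \ge\ \|\zeta\|_{-V_{q}}-\|R\xi\|_{-V_{q}}\ \ge\ (1-\varkappa_{0})\,\|\zeta_{1}-\zeta_{2}\|_{-V_{q}},
\]
with $1-\varkappa_{0}>0$. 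Thus $\Lambda$ is continuous, injective and proper on the finite-dimensional $\mathbb{E}^{-}(q)$; the Brouwer invariance-of-domain theorem makes its image open, properness makes it closed, so $\Lambda$ is a homeomorphism of $\mathbb{E}^{-}(q)$ onto itself whose inverse is $(1-\varkappa_{0})^{-1}$-Lipschitz in $\|\cdot\|_{-V_{q}}$, hence Lipschitz in $\|\cdot\|_{\mathbb{E}}$. Therefore $R|_{\mathfrak{H}}=\Lambda\circ\Pi_{q}|_{\mathfrak{H}}$ is a homeomorphism with Lipschitz inverse $\Phi_{\mathfrak{H}}\circ\Lambda^{-1}$, and by the reduction $\Pi|_{\mathfrak{H}}$ is a homeomorphism onto $\mathbb{E}^{-}_{\Pi}$ with Lipschitz inverse $\Phi_{\Pi}=\Phi_{\mathfrak{H}}\circ\Lambda^{-1}\circ R|_{\mathbb{E}^{-}_{\Pi}}$.

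For the $C^{1}$-statement, under the hypotheses of Theorem \ref{TH: HorizontalDifferentiability} the map $\Phi_{\mathfrak{H}}$ is $C^{1}$, hence so is $\Lambda=R\circ\Phi_{\mathfrak{H}}$; differentiating the core estimate yields $\|D\Lambda(\zeta)\eta\|_{-V_{q}}\ge(1-\varkappa_{0})\|\eta\|_{-V_{q}}$, so $D\Lambda(\zeta)$ is an injective, hence invertible, endomorphism of the finite-dimensional $\mathbb{E}^{-}(q)$, and the inverse function theorem makes $\Lambda$ a $C^{1}$-diffeomorphism; consequently $\Phi_{\Pi}=\Phi_{\mathfrak{H}}\circ\Lambda^{-1}\circ R|_{\mathbb{E}^{-}_{\Pi}}$ is $C^{1}$. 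I expect the main difficulty to be exactly this core estimate: crude operator-norm bounds do not close the argument, since $R\,\Pi^{+}_{q}\Phi_{\mathfrak{H}}$ need not be a small Lipschitz perturbation of the identity in $\|\cdot\|_{\mathbb{E}}$; one really must measure on $\mathbb{E}^{-}(q)$ in the quadratic norm $\sqrt{-V_{q}(\cdot)}$ and use the $V_{q}$-orthogonality of $\Pi_{q}$ to obtain both injectivity/properness and the gain factor $\varkappa_{0}$. A secondary point needing care is the dimension count giving $\mathbb{E}=\mathbb{E}^{-}(q)\oplus\operatorname{Ker}\Pi$ and the boundedness of $R$.
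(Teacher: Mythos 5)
Your proposal is correct and takes a genuinely different route from the paper. The paper argues directly on $\Pi|_{\mathfrak{H}}$: injectivity from admissibility, openness of the image from invariance of domain, and for closedness (and implicitly the Lipschitz bound) it says to redo the estimates of Lemma \ref{EQ: ProjectorHomeomorphismOntoImage} using the non-orthogonal decomposition formula \eqref{EQ: AdmissibleProjectorFormDecomposition}, which carries the cross term $2\langle \Pi^{+}v, Q(q)\Pi v\rangle$ — and it leaves that computation as an exercise. You instead introduce the auxiliary bounded projector $R$ with $\operatorname{Ker}R=\operatorname{Ker}\Pi$ but $\operatorname{Ran}R=\mathbb{E}^{-}(q)$ (which exists by the dimension count once \textbf{(AP1)}–\textbf{(AP2)} force $\mathbb{E}^{-}(q)\cap\operatorname{Ker}\Pi=\{0\}$), conjugate $\Pi|_{\mathfrak{H}}$ by the linear isomorphism $R|_{\mathbb{E}^{-}_{\Pi}}$, and reduce everything to the self-map $\Lambda=R\circ\Phi_{\mathfrak{H}}$ of $\mathbb{E}^{-}(q)$. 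The point is that for $\Lambda$ the cross term never appears: $\xi=\Pi^{+}_{q}(v_{1}-v_{2})\in\mathbb{E}^{+}(q)$ and $R\xi\in\mathbb{E}^{-}(q)$ are $V_{q}$-orthogonal, so $V_{q}(\xi-R\xi)=V_{q}(\xi)+V_{q}(R\xi)$, and combined with \textbf{(AP2)} (applied to $\xi-R\xi\in\operatorname{Ker}\Pi$) and the $\varkappa_{0}$-admissibility of $\mathfrak{H}$ this gives the clean lower bound $\|\Lambda\zeta_{1}-\Lambda\zeta_{2}\|_{-V_{q}}\ge(1-\varkappa_{0})\|\zeta_{1}-\zeta_{2}\|_{-V_{q}}$. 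That buys you, in one stroke, injectivity, properness (hence closedness of the image after Brouwer), and an explicit Lipschitz constant $(1-\varkappa_{0})^{-1}$ in the $\sqrt{-V_{q}}$-norm — so, unlike the paper's sketch, nothing is deferred. For the $C^{1}$ part both arguments are essentially the same appeal to the Inverse Mapping Theorem; you get invertibility of the differential by differentiating the core estimate, while the paper gets it from the isomorphism $\Pi\colon\mathfrak{H}'(v_{0})\to\mathbb{E}^{-}_{\Pi}$, which is the same statement under the conjugation.
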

\begin{proof}
	Since $[v^{*}(\cdot)](0)$ is admissible, we have that $\Pi \colon [v^{*}(\cdot)](0) \to \operatorname{Ran}\Pi$ is an injective continuous map. Since we already know that $[v^{*}(\cdot)](0)$ is a $j_{Q}$-dimensional topological submanifold, we can apply the Brouwer theorem on invariance of domain to get that the image $\Pi([v^{*}(\cdot)](0))$ is open in $\operatorname{Ran}\Pi$. It remains to show that the image is also closed. This follows from similar estimates as in Lemma \ref{EQ: ProjectorHomeomorphismOntoImage}, but now \eqref{EQ: AdmissibleProjectorFormDecomposition} should be used instead of \eqref{EQ: VqOrthogonalProjectorFormDecomposition}. We left this as an exercise for the reader.
	
	To prove the $C^{1}$-differentiability, note that from Theorem \ref{TH: HorizontalDifferentiability} we know that $\mathfrak{H} := [v^{*}(\cdot)](0)$ is a $C^{1}$-differentiable submanifold in $\mathbb{E}$ and, consequently, the restriction of $\Pi$ to $\mathfrak{H}$ is $C^{1}$-differentiable. Moreover, arguments as above shows that its differential (that is also $\Pi$) provides an isomorphism between any tangent space $\mathfrak{H}'(v_{0})$ at $v_{0} \in \mathfrak{H}$ and $\mathbb{E}^{-}_{\Pi}$. Thus, by the Inverse Mapping Theorem, $\Pi$ is a $C^{1}$-diffeomorphism and, consequently, $\Phi_{\Pi}$ is $C^{1}$-differentiable. The proof is finished.
\end{proof}

\begin{theorem}
	\label{TH: DichotomyProjectorChartVertical}
	Under the hypotheses and terms of Theorem \ref{COR: VerticalLeafStructure}, for any admissible (over $q$) projector $\Pi$ the complementary projector $\Pi^{+}$ provides a homeomorphism of $[v_{0}]^{+}(q)$ onto $\mathbb{E}^{+}_{\Pi}$. Moreover, the inverse map $\Phi^{+}_{\Pi} \colon \mathbb{E}^{+}_{\Pi} \to [v_{0}]^{+}(q)$ is $C^{1}$-differentiable under the hypotheses of Theorem \ref{TH: VertialLeavesDifferentiability}.
\end{theorem}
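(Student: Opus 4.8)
The plan is to mirror the proof of Theorem \ref{TH: DichotomyProjectorChartHorizontal}, exploiting the fact that the correction term of a vertical leaf lives in the finite-dimensional space $\mathbb{E}^{-}(q)$, so that every topological step reduces to an estimate. Write $\mathbb{E}^{-}_{\Pi}=\operatorname{Ran}\Pi$, $\mathbb{E}^{+}_{\Pi}=\operatorname{Ker}\Pi=\operatorname{Ran}\Pi^{+}$, $\mathfrak{V}=[v_{0}]^{+}(q)$, and recall from Corollary \ref{COR: VerticalLeafStructure} that $\Pi^{+}_{q}\colon\mathfrak{V}\to\mathbb{E}^{+}(q)$ is a homeomorphism (with inverse $\Phi_{\mathfrak{V}}$) and that $\mathfrak{V}$ is closed in $\mathbb{E}$ by item 3) of Lemma \ref{LEM: PositiveEquivalence}. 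First I would show that $\Pi^{+}$ is injective on $\mathfrak{V}$ with a Lipschitz inverse onto its image. Injectivity: for $v_{1},v_{2}\in\mathfrak{V}$ and $\eta:=v_{1}-v_{2}$ one has $V_{q}(\eta)\geq 0$ by positive equivalence, so $\Pi^{+}\eta=0$ forces $\eta\in\mathbb{E}^{-}_{\Pi}$ and then $\eta=0$ by \textbf{(AP1)}. For the quantitative bound I would insert $\eta$ into \eqref{EQ: AdmissibleProjectorFormDecomposition}: using $V_{q}(\Pi\eta)\leq 0$, $V_{q}(\eta)\geq 0$, $V_{q}(\Pi^{+}\eta)\leq M_{Q}\|\Pi^{+}\eta\|_{\mathbb{E}}^{2}$ and $|\langle\Pi^{+}\eta,Q(q)\Pi\eta\rangle|\leq M_{Q}\|\Pi^{+}\eta\|_{\mathbb{E}}\|\Pi\eta\|_{\mathbb{E}}$, together with the equivalence of $-V_{q}(\cdot)$ and $\|\cdot\|_{\mathbb{E}}^{2}$ on the finite-dimensional $\mathbb{E}^{-}_{\Pi}$, one solves a scalar quadratic inequality to get $\|\Pi\eta\|_{\mathbb{E}}\leq C\|\Pi^{+}\eta\|_{\mathbb{E}}$ with $C=C(\Pi,q)$, hence $\|v_{1}-v_{2}\|_{\mathbb{E}}\leq(1+C)\|\Pi^{+}v_{1}-\Pi^{+}v_{2}\|_{\mathbb{E}}$.

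For surjectivity of $\Pi^{+}\colon\mathfrak{V}\to\mathbb{E}^{+}_{\Pi}$, fix $w\in\mathbb{E}^{+}_{\Pi}$ and consider the $j_{Q}$-dimensional affine subspace $\mathcal{M}_{w}:=w+\mathbb{E}^{-}_{\Pi}$. By \textbf{(AP1)} it is a translate of a strictly $V_{q}$-negative subspace, and a compactness argument on the unit sphere of $\mathbb{E}^{-}_{\Pi}$ produces $\varkappa=\varkappa(\Pi,q)\in(0,1)$ with $V^{(\varkappa)}_{q}\leq 0$ on $\mathbb{E}^{-}_{\Pi}$; since moreover $\Pi_{q}\colon\mathcal{M}_{w}\to\mathbb{E}^{-}(q)$ is an affine homeomorphism, $\mathcal{M}_{w}$ is Lipschitz $\varkappa$-admissible over $q$. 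Applying Theorem \ref{TH: ExpTrackingTheorem} to $\mathcal{M}_{w}$ and $v_{0}$ yields a point $v^{*}_{0}\in\mathcal{M}_{w}\cap[v_{0}]^{+}(q)=\mathcal{M}_{w}\cap\mathfrak{V}$; since $v^{*}_{0}-w\in\mathbb{E}^{-}_{\Pi}=\operatorname{Ker}\Pi^{+}$ and $\Pi^{+}w=w$, we get $\Pi^{+}v^{*}_{0}=w$. Combined with the previous paragraph this shows $\Pi^{+}|_{\mathfrak{V}}$ is a homeomorphism onto $\mathbb{E}^{+}_{\Pi}$ with Lipschitz inverse $\Phi^{+}_{\Pi}$.

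For the $C^{1}$-statement, under the hypotheses of Theorem \ref{TH: VertialLeavesDifferentiability} the set $\mathfrak{V}$ is a $C^{1}$-submanifold whose tangent space at $v\in\mathfrak{V}$ is $\mathfrak{V}'(v)=[0]^{+}(q,v)$ (for the linearization $\Xi$), so the restriction $\Pi^{+}|_{\mathfrak{V}}$ is $C^{1}$ with differential $\Pi^{+}|_{\mathfrak{V}'(v)}\colon\mathfrak{V}'(v)\to\mathbb{E}^{+}_{\Pi}$. I would check this differential is a linear isomorphism by the same mechanism at the linear level: it is injective because $V_{q}$ is nonnegative on $\mathfrak{V}'(v)$ while $\mathbb{E}^{-}_{\Pi}$ is strictly $V_{q}$-negative away from $0$, and surjective because $\mathfrak{V}'(v)$ is a closed subspace of codimension $j_{Q}$ with $\mathfrak{V}'(v)\cap\mathbb{E}^{-}_{\Pi}=\{0\}$, so $\mathfrak{V}'(v)+\mathbb{E}^{-}_{\Pi}=\mathbb{E}$; the open mapping theorem then supplies a bounded inverse. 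Since $\Pi^{+}|_{\mathfrak{V}}$ is already a bijection and its differential is everywhere an isomorphism, the inverse mapping theorem makes it a $C^{1}$-diffeomorphism, so $\Phi^{+}_{\Pi}$ is $C^{1}$.

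I expect the estimates above to be routine; the one point that needs care is producing the admissibility parameter $\varkappa<1$ for $\mathcal{M}_{w}$ — equivalently, that the ``horizontal aperture'' of the cone stays bounded away from the cone's boundary on $\mathbb{E}^{-}_{\Pi}$ — since otherwise Theorem \ref{TH: ExpTrackingTheorem} cannot be invoked; this is exactly the step where finite-dimensionality of $\mathbb{E}^{-}_{\Pi}$ is used. A more literal analogue of the horizontal proof would instead write $\Pi^{+}\circ\Phi_{\mathfrak{V}}$ as a finite-rank perturbation of the linear isomorphism $\Pi^{+}|_{\mathbb{E}^{+}(q)}$ and appeal to invariance of domain for compact perturbations of the identity, but the route above keeps everything finite-dimensional where topology is actually needed.
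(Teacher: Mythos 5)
Your proposal is correct and follows essentially the same route as the paper: injectivity with a quantitative bound via \eqref{EQ: AdmissibleProjectorFormDecomposition}, surjectivity by applying Theorem \ref{TH: ExpTrackingTheorem} to the translated affine subspaces $w + \mathbb{E}^{-}_{\Pi}$, and $C^1$-smoothness of the inverse via the Inverse Mapping Theorem after checking that the differential $\Pi^{+}|_{\mathfrak{V}'(v)}$ is an isomorphism. You supply the details the paper leaves implicit — notably the scalar quadratic inequality yielding $\|\Pi\eta\|_{\mathbb{E}}\leq C\|\Pi^{+}\eta\|_{\mathbb{E}}$ and the compactness argument producing the admissibility parameter $\varkappa<1$ for $\mathcal{M}_{w}$ — which are exactly the points the paper signals must be handled.
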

\begin{proof}
	Analogously to Lemma \ref{LEM: PiPlusHomeoOntoImage} by utilizing \eqref{EQ: AdmissibleProjectorFormDecomposition}, one can show that $\Pi^{+} \colon [v_{0}]^{+}(q) \to \mathbb{E}^{+}_{\Pi}$ is a homeomorphism onto its image. Moreover, Theorem \ref{TH: ExpTrackingTheorem} applied to $\mathcal{M} := \mathbb{E}^{-}_{\Pi} + \zeta$ for each $\zeta \in \mathbb{E}^{+}_{\Pi}$ shows that the image is entire $\mathbb{E}^{+}_{\Pi}$.
	
	To show the $C^{1}$-differentiability, we again note that $\mathfrak{V}:=[v_{0}]^{+}(q)$ is a $C^{1}$-differentiable Banach $\mathbb{E}^{+}(q)$-submanifold in $\mathbb{E}$ and, consequently, the restriction of $\Pi^{+}$ to $\mathfrak{V}$ is a $C^{1}$-differentiable mapping. Moreover, arguments as above shows that its differential at any point $v_{0}$ of $\mathfrak{V}$ provides a homeomorphism between the tangent space $\mathfrak{V}'(v_{0})$ and $\mathbb{E}^{+}_{\Pi}$. This along with the Inverse Mapping Theorem shows the required statement. The proof is finished.
\end{proof}

Note that the admissibility of a projector $\Pi$ is stable under small perturbations of $\Pi$. Since the dichotomy projectors $\Pi^{d}_{q}$ in practice cannot be computed exactly (formulas usually contain certain integrals, which are computed numerically), results of the present subsection may be important for numerical simulations.
\section{Inertial and Slow manifolds}
\label{SEC: InertialManifolds}

\subsection{Principal leaves (Smith's amenable sets)}
\label{SUBSEC: AmenableSets}

During this subsection we study a cocycle $\psi$ which satisfies $\textbf{(H3)}^{-}_{w}$ defined in Subsection \ref{SUBSEC: HorizontalLeaves} and use the same notation.

We will say that a complete trajectory $v(\cdot)$ over $q$ is \textit{amenable} if 
\begin{equation}
	\label{EQ: AmenabilityH}
	\int_{-\infty}^{0}e^{2\alpha(s,q)}|v(s)|^{2}_{\mathbb{H}}ds < +\infty.
\end{equation}
From Lemma \ref{LEM: CompleteTrajectoriesOrdering} we immediately have the following lemma.
\begin{lemma}
	\label{LEM: AmenableTrajectories}
	Suppose $\textbf{(H3)}^{-}_{w}$, and \textbf{(S)} are satisfied. Then any two amenable over $q$ trajectories $v^{*}_{1}(\cdot)$ and $v^{*}_{2}(\cdot)$ belong to the same class of pseudo-ordering, i.e. $[v^{*}_{1}(\cdot)] = [v^{*}_{2}(\cdot)](\cdot)$. Moreover, if $v^{*}_{1}(t) = v^{*}_{2}(t)$ for some $t \in \mathbb{R}$, then $v^{*}_{1}(\cdot) \equiv v^{*}_{2}(\cdot)$.
\end{lemma}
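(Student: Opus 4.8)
The plan is to derive Lemma \ref{LEM: AmenableTrajectories} from Lemma \ref{LEM: CompleteTrajectoriesOrdering} by observing that amenability is precisely the condition that makes two amenable trajectories pseudo-ordered via the integral criterion in item 1) of that lemma. First I would note that if $v^{*}_{1}(\cdot)$ and $v^{*}_{2}(\cdot)$ are both amenable over $q$, then by the Minkowski (triangle) inequality applied to the weighted $L^{2}$-norm with density $e^{2\alpha(s;q)}$ on $(-\infty,0]$ we get
\begin{equation}
	\left(\int_{-\infty}^{0}e^{2\alpha(s;q)}|v^{*}_{1}(s)-v^{*}_{2}(s)|^{2}_{\mathbb{H}}ds\right)^{1/2} \leq \left(\int_{-\infty}^{0}e^{2\alpha(s;q)}|v^{*}_{1}(s)|^{2}_{\mathbb{H}}ds\right)^{1/2} + \left(\int_{-\infty}^{0}e^{2\alpha(s;q)}|v^{*}_{2}(s)|^{2}_{\mathbb{H}}ds\right)^{1/2} < +\infty,
\end{equation}
so the difference trajectory satisfies the finiteness condition appearing in \eqref{EQ: OrderedCompleteTrajEstimate}. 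There is a slight subtlety: item 1) of Lemma \ref{LEM: CompleteTrajectoriesOrdering} is stated with the $\mathbb{H}$-norm integral, but to invoke the equivalence there one needs the argument in the proof of that lemma (via \textbf{(S)}), which controls the $\mathbb{H}$-integral; since \textbf{(S)} is among our hypotheses, this is exactly the setting of Lemma \ref{LEM: CompleteTrajectoriesOrdering}, and its item 1) then tells us that $v^{*}_{1}(\cdot)$ and $v^{*}_{2}(\cdot)$ are pseudo-ordered, i.e. $[v^{*}_{1}(\cdot)] = [v^{*}_{2}(\cdot)]$.

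Next, for the sections, I would observe that once the two trajectories are pseudo-ordered, their equivalence classes as sets of trajectories coincide, hence the time-$t$ sections $[v^{*}_{1}(\cdot)](t) = [v^{*}_{2}(\cdot)](t)$ coincide for every $t \in \mathbb{R}$, which gives the displayed equality in the statement (interpreting the slight notational glitch ``$[v^{*}_{1}(\cdot)] = [v^{*}_{2}(\cdot)](\cdot)$'' as the equality of classes / of all sections).

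For the final assertion — if $v^{*}_{1}(t_{0}) = v^{*}_{2}(t_{0})$ for some $t_{0} \in \mathbb{R}$ then $v^{*}_{1}(\cdot) \equiv v^{*}_{2}(\cdot)$ — I would apply item 3) of Lemma \ref{LEM: CompleteTrajectoriesOrdering}: since the two trajectories are pseudo-ordered and $v^{*}_{1}(t_{0}) = v^{*}_{2}(t_{0})$, in particular $\Pi_{\vartheta^{t_{0}}(q)}v^{*}_{1}(t_{0}) = \Pi_{\vartheta^{t_{0}}(q)}v^{*}_{2}(t_{0})$, so item 3) forces the trajectories to coincide. I do not anticipate a genuine obstacle here; the only thing to be careful about is making sure the hypotheses needed for Lemma \ref{LEM: CompleteTrajectoriesOrdering} (namely $\textbf{(H3)}^{-}_{w}$ and \textbf{(S)}) are exactly the ones assumed, which they are, and flagging that the statement's notation for the equality of sections is to be read as equality of the full equivalence classes.
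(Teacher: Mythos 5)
Your proposal is correct and takes essentially the same route as the paper: Minkowski's inequality on the weighted $L^{2}$ integrals plus item 1) of Lemma~\ref{LEM: CompleteTrajectoriesOrdering}. The only cosmetic difference is in the final assertion, where the paper cites the estimate \eqref{EQ: OrderedCompleteTrajEstimate} directly while you route through item 3) of Lemma~\ref{LEM: CompleteTrajectoriesOrdering}; since item 3) is itself proved from that same estimate, the two arguments are interchangeable.
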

\begin{proof}
	The first part follows from the Minkowsky inequality:
	\begin{equation}
		\begin{split}
		\left(\int_{-\infty}^{0}e^{2\alpha(s;q)}|v^{*}_{1}(s)-v^{*}_{2}(s)|^{2}_{\mathbb{H}}ds \right)^{1/2} \leq \\ \leq 	\left(\int_{-\infty}^{0}e^{2\alpha(s;q)}|v^{*}_{1}(s)|^{2}_{\mathbb{H}}ds \right)^{1/2} + 	\left(\int_{-\infty}^{0}e^{2\alpha(s;q)}|v^{*}_{2}(s)|^{2}_{\mathbb{H}}ds \right)^{1/2}
		\end{split}
	\end{equation}
    and item 1) of Lemma \ref{LEM: CompleteTrajectoriesOrdering}.
    
    The last part follows from the estimate \eqref{EQ: OrderedCompleteTrajEstimate}.
\end{proof}

It is not a priori known that amenable trajectories exist, so we will postulate it as in the following assumption.
\begin{description}
	\item[\textbf{(PL)}] For each $q \in \mathcal{Q}$ there exists at least one amenable trajectory over $q$.
\end{description}
It is clear that in the case $\alpha^{-}>0$ any bounded complete trajectory is amenable.

Under \textbf{(PL)}, for each $q \in \mathcal{Q}$ and an amenable over $q$ trajectory $v^{*}(\cdot)$ we define $\mathfrak{A}(q) := [v^{*}(\cdot)](0)$, i.~e. be the set of all $v_{0} \in \mathbb{E}$ such that there exists an amenable over $q$ trajectory passing through $v_{0}$. We call $\mathfrak{A}(q)$ the \textit{principal leaf} over $q$

From Theorem \ref{TH: HorizontalFibresAlongCompleteOrbits} we immediately have the following statement.
\begin{theorem}
	\label{TH: PrincipalLeaveConstruction}
	Suppose $\textbf{(H3)}^{-}_{w}$, \textbf{(ACOM)} with $\gamma^{+} = \alpha^{+}$, \textbf{(ULIP)}, \textbf{(PROJ)} and \textbf{(PL)} are satisfied. Then
	\begin{enumerate}
		\item $\mathfrak{A}(q)$ is $C_{Lip}(q)$-Lipschitz $\varkappa_{0}$-admissible, where $C_{Lip}(q)$ is defined in \eqref{EQ: LipschitzConstantOverQ} and $\varkappa_{0}$ is defined in Lemma \ref{LEM: ConePerturbation}.
		\item $\psi^{t}(q,\cdot)$ maps $\mathfrak{A}(q)$ onto $\mathfrak{A}(\vartheta^{t}(q))$ homeomorphically
		\item Under $C^{1}$ the set $\mathfrak{A}(q)$ is a $C^{1}$-differentiable submanifold 
	\end{enumerate}
\end{theorem}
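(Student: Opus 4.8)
The plan is to obtain all three items by transcribing into the language of principal leaves the results of Subsections~\ref{SUBSEC: HorizontalLeaves} and \ref{SUBSEC: DifferentiabilityLeaves}, once the family $\{\mathfrak{A}(q)\}_{q \in \mathcal{Q}}$ has been shown to be well defined and invariant. First I would fix $q \in \mathcal{Q}$ and, using \textbf{(PL)}, pick an amenable trajectory $v^{*}(\cdot)$ over $q$. By Lemma~\ref{LEM: AmenableTrajectories} every amenable trajectory over $q$ belongs to the class $[v^{*}(\cdot)]$, hence $\mathfrak{A}(q) = [v^{*}(\cdot)](0)$ is independent of the chosen $v^{*}(\cdot)$; in particular it is a genuine equivalence-class section and Theorem~\ref{TH: HorizontalFibresAlongCompleteOrbits} applies to it verbatim. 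This gives item~(1): $\mathfrak{A}(q)$ is $C_{Lip}(q)$-Lipschitz $\varkappa_{0}$-admissible over $q$, the hypotheses $\textbf{(H3)}^{-}_{w}$, \textbf{(ULIP)}, \textbf{(ACOM)} with $\gamma^{+} = \alpha^{+}$ and \textbf{(PROJ)} being exactly those assumed here.

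The one genuinely new point, needed for item~(2), is that amenability is transported along the base flow, so that $\mathfrak{A}(\vartheta^{t}(q)) = [v^{*}(\cdot)](t)$ for every $t \geq 0$. I would verify this from the cocycle identity $\alpha(\tau;\vartheta^{t}(q)) = \alpha(\tau + t;q) - \alpha(t;q)$, which after the substitution $\sigma = \tau + t$ yields
\begin{equation}
	\int_{-\infty}^{0} e^{2\alpha(\tau;\vartheta^{t}(q))} |v^{*}(\tau + t)|^{2}_{\mathbb{H}}\, d\tau = e^{-2\alpha(t;q)} \int_{-\infty}^{t} e^{2\alpha(\sigma;q)} |v^{*}(\sigma)|^{2}_{\mathbb{H}}\, d\sigma,
\end{equation}
whose right-hand side is finite because $v^{*}(\cdot)$ is amenable over $q$, $\alpha_{0}(\cdot)$ is bounded and $v^{*}(\cdot)$ is bounded on the compact interval $[0,t]$. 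Thus $v^{*}(\cdot + t)$ is an amenable trajectory over $\vartheta^{t}(q)$, and Lemma~\ref{LEM: AmenableTrajectories} gives $\mathfrak{A}(\vartheta^{t}(q)) = [v^{*}(\cdot)](t)$. Combining this with the invariance identity $\psi^{t}(\vartheta^{s}(q),[v^{*}(\cdot)](s)) = [v^{*}(\cdot)](s+t)$ from the remark following Lemma~\ref{LEM: CompleteTrajectoriesOrdering}, the set-level statement $\psi^{t}(q,\mathfrak{A}(q)) = \mathfrak{A}(\vartheta^{t}(q))$ is immediate for all $t \geq 0$. To upgrade it to a homeomorphism I would invoke Lemma~\ref{LEM: GraphTransform}: for $t \geq \max\{\tau_{Q},\tau_{S}+1\}$ it applies directly since $\mathfrak{A}(q)$ is $\varkappa_{0}$-admissible, and for arbitrary $t \geq 0$ I would factor, for $T \geq \max\{\tau_{Q},\tau_{S}+1\}$, the map $\psi^{t}(q,\cdot)$ on $\mathfrak{A}(q)$ as the composition of the inverse of the homeomorphism $\psi^{T}(\vartheta^{-T}(q),\cdot) \colon \mathfrak{A}(\vartheta^{-T}(q)) \to \mathfrak{A}(q)$ with the homeomorphism $\psi^{t+T}(\vartheta^{-T}(q),\cdot) \colon \mathfrak{A}(\vartheta^{-T}(q)) \to \mathfrak{A}(\vartheta^{t}(q))$, both furnished by Lemma~\ref{LEM: GraphTransform}, exactly as in the proof of Theorem~\ref{TH: HorizontalDifferentiability}(3); here it is used that $(\mathcal{Q},\vartheta)$ is a flow, so that $\vartheta^{-T}(q)$ is available.

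Item~(3) then follows at once from Theorem~\ref{TH: HorizontalDifferentiability}: under the differentiability hypothesis \textbf{(DIFF)} the set $[v^{*}(\cdot)](0) = \mathfrak{A}(q)$ is a $C^{1}$-differentiable $j_{Q}$-dimensional submanifold of $\mathbb{E}$, the chart $\Phi_{\mathfrak{A}(q)} \colon \mathbb{E}^{-}(q) \to \mathfrak{A}(q)$ is $C^{1}$ with the differential and tangent spaces described there, and the maps $\psi^{t}(q,\cdot) \colon \mathfrak{A}(q) \to \mathfrak{A}(\vartheta^{t}(q))$ are $C^{1}$-diffeomorphisms. I expect the main (indeed the only non-bookkeeping) obstacle to be the invariance step of item~(2): once $\mathfrak{A}(\vartheta^{t}(q)) = [v^{*}(\cdot)](t)$ is established, every assertion of the theorem is a direct rephrasing of a statement already proved for the equivalence-class sections $[v^{*}(\cdot)](\cdot)$ in Subsections~\ref{SUBSEC: HorizontalLeaves} and \ref{SUBSEC: DifferentiabilityLeaves}.
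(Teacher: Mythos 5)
Your proposal is correct and fills in, carefully, the details the paper leaves implicit: the paper's proof is just the one-line remark that the theorem follows immediately from Theorem~\ref{TH: HorizontalFibresAlongCompleteOrbits}, and your reduction — well-definedness of $\mathfrak{A}(q)$ via Lemma~\ref{LEM: AmenableTrajectories}, transport of amenability along the base flow via the cocycle identity for $\alpha(\cdot;\cdot)$, the large-time homeomorphism from Lemma~\ref{LEM: GraphTransform} with the small-time case recovered by factoring through $\vartheta^{-T}(q)$, and differentiability from Theorem~\ref{TH: HorizontalDifferentiability} — is exactly what that remark tacitly invokes. The one genuinely new calculation you supply (showing that $v^{*}(\cdot+t)$ is amenable over $\vartheta^{t}(q)$) is correct and is the right step to single out.
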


To study the continuity of $\mathfrak{A}(q)$ in $q$ let us consider the map $\Phi_{q} \colon \mathbb{E}^{-}(q) \to \mathfrak{A}(q)$ given by the inverse of $\Pi_{q} \colon \mathfrak{A}(q) \to \mathbb{E}^{-}(q)$. Put $\mathcal{D}(\Phi) := \bigcup_{q \in \mathcal{Q}} \{ q \} \times \mathbb{E}^{-}(q)$ and define the map $\Phi \colon \mathcal{D}(\Phi) \to \mathbb{E}$ by $\mathcal{D}(\Phi) \ni (q,\zeta) \mapsto \Phi(q,\zeta):=\Phi_{q}(\zeta)$. Note that $\mathcal{D}(\Phi)$ is closed under \textbf{(CD)}. Recall also \textbf{(BA)} from Subsection \ref{SUBSEC: DefinitionsAndHypotheses}, which gives us for each $q \in \mathcal{Q}$ a complete trajectory $w_{q}(\cdot)$ over $q$ with the bound $\| w_{q}(t) \|_{\mathbb{E}} \leq M_{b}$, which is uniform in $q \in \mathcal{Q}$ and $t \leq 0$ (as in \eqref{EQ: BAUniformBound}).

\begin{theorem}
	\label{TH: ContinuousDependenceFibres}
	Suppose $\textbf{(H3)}^{-}_{w}$ with $\alpha^{-} > 0$ and continuous $\alpha_{0}(\cdot)$, \textbf{(ACOM)} with $\gamma^{+} = \alpha^{+}$, \textbf{(ULIP)} and \textbf{(BA)} are satisfied. Then the map $\Phi \colon \mathcal{D}(\Phi) \to \mathbb{E}$ is continuous.
\end{theorem}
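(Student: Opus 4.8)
The plan is to prove continuity at an arbitrary point $(q_{0},\zeta_{0})\in\mathcal{D}(\Phi)$ by a subsequence argument, modelled on the proofs of Lemma \ref{LEM: ManifoldConstruction} and Theorem \ref{TH: CentralProjectorContinuityGeneral}. Fix a sequence $(q_{k},\zeta_{k})\to(q_{0},\zeta_{0})$ in $\mathcal{D}(\Phi)$ and set $\mathcal{K}:=\{q_{1},q_{2},\ldots\}\cup\{q_{0}\}$, which is compact; on $\mathcal{K}$, Lemma \ref{LEM: VOrtogonalProjectorsContinuity} (under \textbf{(CD)}) together with compactness gives $\sup_{q\in\mathcal{K}}\|\Pi_{q}\|<+\infty$ and $\inf_{q\in\mathcal{K}}C_{q}>0$, as in Corollary \ref{COR: OrtProjectosBounded}. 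Since $\alpha^{-}>0$, the trajectory $w_{q}(\cdot)$ from \textbf{(BA)} satisfies $e^{2\alpha(s;q)}\le e^{2\alpha^{-}s}$ for $s\le 0$, hence is amenable over every $q$, so \textbf{(PL)} holds and $\mathfrak{A}(q)=[w_{q}(\cdot)](0)$. Let $v^{*}_{k}(\cdot)$ (resp.\ $v^{*}_{0}(\cdot)$) denote the amenable trajectory over $q_{k}$ (resp.\ $q_{0}$) with $\Pi_{q_{k}}v^{*}_{k}(0)=\zeta_{k}$ (resp.\ $\Pi_{q_{0}}v^{*}_{0}(0)=\zeta_{0}$); then $\Phi(q_{k},\zeta_{k})=v^{*}_{k}(0)$. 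It suffices to show that every subsequence of $(v^{*}_{k}(\cdot))$ has a further subsequence converging pointwise on $\mathbb{R}$ to $v^{*}_{0}(\cdot)$.

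The key step is a uniform-in-$k$ a priori bound. By Lemma \ref{LEM: AmenableTrajectories} the trajectories $v^{*}_{k}(\cdot)$ and $w_{q_{k}}(\cdot)$ are pseudo-ordered over $q_{k}$, so \eqref{EQ: CompleteTrajPiEstimate} gives
\begin{equation*}
\int_{-\infty}^{0}e^{2\alpha(s;q_{k})}|v^{*}_{k}(s)-w_{q_{k}}(s)|^{2}_{\mathbb{H}}\,ds\;\le\;\delta_{Q}^{-1}M_{Q}\,\|\zeta_{k}-\Pi_{q_{k}}w_{q_{k}}(0)\|^{2}_{\mathbb{E}}\;\le\;D_{0}^{2},
\end{equation*}
with $D_{0}$ independent of $k$ (using boundedness of $(\zeta_{k})$, $\|w_{q_{k}}(0)\|_{\mathbb{E}}\le M_{b}$ and the uniform bound on $\|\Pi_{q_{k}}\|$). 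Restricting this integral to $[-\tau_{S}-l-1,-\tau_{S}-l]$ and applying the mean value theorem followed by \textbf{(ULIP)} exactly as in the proof of Lemma \ref{LEM: ManifoldConstruction}, I would obtain $\|v^{*}_{k}(-l)-w_{q_{k}}(-l)\|_{\mathbb{E}}\le D e^{\alpha^{+}l}$, hence $\|v^{*}_{k}(-l)\|_{\mathbb{E}}\le D' e^{\alpha^{+}l}$, for all $l\ge 0$ and all $k$, with $D,D'$ independent of $k$ and $l$. Taking $t_{l}:=-l$ and $\mathcal{B}_{l}$ the closed ball of radius $D'e^{\alpha^{+}l}$ about the origin, one has $\operatorname{diam}\mathcal{B}_{l}\le 2D'e^{-\alpha^{+}t_{l}}$ and $v^{*}_{k}(t_{l})\in\mathcal{B}_{l}$, so Lemma \ref{LEM: GeneralCompactnessLemma} with $\gamma^{+}=\alpha^{+}$ applies: along a subsequence $v^{*}_{k_{m}}(s)\to v^{*}(s)$ in $\mathbb{E}$ for every $s$, where $v^{*}(\cdot)$ is a complete trajectory over $q_{0}$.

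Next I would identify $v^{*}(\cdot)$ with $v^{*}_{0}(\cdot)$. Amenability of $v^{*}(\cdot)$: from $|v^{*}_{k_{m}}(s)|_{\mathbb{H}}\le|v^{*}_{k_{m}}(s)-w_{q_{k_{m}}}(s)|_{\mathbb{H}}+C_{\mathbb{H}}M_{b}$ (with $C_{\mathbb{H}}$ the embedding constant of $\mathbb{E}\hookrightarrow\mathbb{H}$) and $\int_{-\infty}^{0}e^{2\alpha(s;q_{k_{m}})}ds\le(2\alpha^{-})^{-1}$, the integrals $\int_{-\infty}^{0}e^{2\alpha(s;q_{k_{m}})}|v^{*}_{k_{m}}(s)|^{2}_{\mathbb{H}}ds$ are bounded uniformly in $m$; since $\alpha_{0}$ is continuous, $\alpha(s;q_{k_{m}})\to\alpha(s;q_{0})$ pointwise and $v^{*}_{k_{m}}(s)\to v^{*}(s)$ in $\mathbb{H}$, so Fatou's lemma yields $\int_{-\infty}^{0}e^{2\alpha(s;q_{0})}|v^{*}(s)|^{2}_{\mathbb{H}}ds<+\infty$. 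The projection identity $\Pi_{q_{0}}v^{*}(0)=\zeta_{0}$ follows from the splitting $\Pi_{q_{0}}v^{*}(0)-\zeta_{0}=\Pi_{q_{0}}(v^{*}(0)-v^{*}_{k_{m}}(0))+(\Pi_{q_{0}}-\Pi_{q_{k_{m}}})v^{*}_{k_{m}}(0)+(\zeta_{k_{m}}-\zeta_{0})$, where all three terms tend to $0$ thanks to $v^{*}_{k_{m}}(0)\to v^{*}(0)$, the uniform bound on $\|v^{*}_{k_{m}}(0)\|$, Lemma \ref{LEM: VOrtogonalProjectorsContinuity}, and $\zeta_{k_{m}}\to\zeta_{0}$. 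Since $v^{*}(\cdot)$ and $v^{*}_{0}(\cdot)$ are both amenable over $q_{0}$ and agree under $\Pi_{q_{0}}$ at time $0$, Lemma \ref{LEM: AmenableTrajectories} forces $v^{*}(\cdot)\equiv v^{*}_{0}(\cdot)$, so $v^{*}_{k_{m}}(0)\to v^{*}_{0}(0)=\Phi(q_{0},\zeta_{0})$. As every subsequence of $(\Phi(q_{k},\zeta_{k}))$ then has a further subsequence converging to $\Phi(q_{0},\zeta_{0})$, the full sequence converges, giving continuity of $\Phi$ at $(q_{0},\zeta_{0})$ and hence everywhere.

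The main obstacle is the uniform-in-$k$ control of the trajectories $v^{*}_{k}(\cdot)$: both the exponential growth bound $\|v^{*}_{k}(-l)\|_{\mathbb{E}}=O(e^{\alpha^{+}l})$, which is precisely at the rate $\gamma^{+}=\alpha^{+}$ needed to feed Lemma \ref{LEM: GeneralCompactnessLemma}, and the uniform amenability bound needed for the Fatou step. This is exactly the role of the hypotheses $\alpha^{-}>0$ and \textbf{(BA)}: together they supply one reference trajectory $w_{q}(\cdot)$ that is amenable over \emph{every} base point, against which each $v^{*}_{k}(\cdot)$ can be estimated through the pseudo-ordering inequality \eqref{EQ: CompleteTrajPiEstimate}. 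A secondary difficulty, the bookkeeping with the base-point-dependent subspaces $\mathbb{E}^{-}(q)$ and projectors $\Pi_{q}$, is absorbed into the norm continuity of $\Pi_{q}$ provided by Lemma \ref{LEM: VOrtogonalProjectorsContinuity} under \textbf{(CD)}.
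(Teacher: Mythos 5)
Your argument follows the same route as the paper's own proof of Theorem~\ref{TH: ContinuousDependenceFibres}: pseudo\mbox{-}order the trajectories $v^{*}_{k}(\cdot)$ against the reference trajectories $w_{q_{k}}(\cdot)$ from \textbf{(BA)}, obtain a $k$\mbox{-}uniform a priori bound on $\int_{-\infty}^{0}e^{2\alpha(s;q_{k})}|v^{*}_{k}(s)-w_{q_{k}}(s)|^{2}_{\mathbb{H}}\,ds$, convert it via the mean value theorem and \textbf{(ULIP)} into a pointwise growth estimate $\|v^{*}_{k}(-l)\|_{\mathbb{E}}=O(e^{\alpha^{+}l})$, feed this into Lemma~\ref{LEM: GeneralCompactnessLemma} to extract a limit complete trajectory, verify amenability and the projection identity, and close with uniqueness (items~1 and~3 of Lemma~\ref{LEM: CompleteTrajectoriesOrdering}). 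Your subsequence formulation replaces the paper's ``suppose not'' contradiction, but these are equivalent, and your added care (Fatou for amenability, explicit three-term splitting for $\Pi_{q_{0}}v^{*}(0)=\zeta_{0}$) makes compressed steps of the paper's proof precise.

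The one substantive point worth flagging is your invocation of \textbf{(CD)} (via Lemma~\ref{LEM: VOrtogonalProjectorsContinuity} and a localized Corollary~\ref{COR: OrtProjectosBounded}). As stated, Theorem~\ref{TH: ContinuousDependenceFibres} does not list \textbf{(CD)} or \textbf{(PROJ)} among its hypotheses, so strictly speaking you are proving a slightly stronger-hypothesis version. But this is a gap in the paper, not in your argument: the paper's own proof needs $\|\Pi_{q_{k}}\|$ uniformly bounded to get a uniform constant $D$ in \eqref{EQ: ContinuityTheoremH3Using}, and, more decisively, the step ``$\zeta_{k}=\Pi_{q_{k}}v^{*}_{k}(0)\to\zeta$, consequently $\Pi_{q}\widetilde{v}(0)=\zeta$'' requires $\|\Pi_{q_{k}}-\Pi_{q}\|\to 0$, which is exactly what \textbf{(CD)} supplies and is not implied by the stated assumptions. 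The paper does include \textbf{(CD)} in closely related statements (the paragraph preceding the theorem, Theorem~\ref{TH: CentralProjectorContinuityGeneral}, item~3 of Theorem~\ref{TH: ExponentialTrackingIM}, and \textbf{(RF1)}), so the omission here reads as an oversight. Your making the dependence explicit is a correct repair, not an error; it would be worth recording that Theorem~\ref{TH: ContinuousDependenceFibres} should carry \textbf{(CD)} (or at minimum the local continuity and uniform boundedness of $\Pi_{q}$ along convergent sequences) as an assumption.
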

\begin{proof}
	Let us consider converging sequences $q_{k} \to q$ and $\mathbb{E}^{-}(q_{k}) \ni \zeta_{k} \to \zeta \in \mathbb{E}^{-}(q)$. Let $v^{*}_{k}(\cdot)$ be the amenable trajectory over $q_{k}$ such that $v^{*}_{k}(0) = \Phi(q_{k},\zeta_{k})$ and let $v^{*}(\cdot)$ be the amenable trajectory over $q$ such that $v^{*}(0)=\Phi(q,\zeta)$. We have to show that $v^{*}_{k}(0) \to v^{*}(0)$. Let us suppose the contrary, i.e. that $v^{*}_{k}(0)$ does not converge to $v^{*}(0)$. Then for some subsequence (we keep the same index) $v^{*}_{k}(0)$ is separated from $v^{*}(0)$. Since $v^{*}_{k}(\cdot)$ and $w^{*}_{q_{k}}(\cdot)$ from \textbf{(BA)} are both amenable trajectories over $q_{k}$, from Lemma \ref{LEM: AmenableTrajectories} and item 1) of Lemma \ref{LEM: CompleteTrajectoriesOrdering} we have
	\begin{equation}
	\label{EQ: ContinuityTheoremH3Using}
	 \delta_{Q} \cdot \int_{-\infty}^{0}e^{2\alpha(s;q_{k})}|v^{*}_{k}(s)-w^{*}_{q_{k}}(s)|^{2}_{\mathbb{H}}ds \leq 	-V_{q_{k}}(v^{*}_{k}(0)-w^{*}_{q_{k}}(0)) \leq M_{Q} \| \zeta_{k} - \Pi_{q_{k}} w^{*}_{q_{k}}(0) \|^{2}_{\mathbb{E}}.
	\end{equation}
    Considering the integral from \eqref{EQ: ContinuityTheoremH3Using} on $[-l-1,-l]$ for $l=1,2,\ldots$ and applying the mean value theorem with \textbf{(ULIP)}, we get a sequence $t_{l} \to -\infty$ such that $v_{k}(t_{l}) \in \mathcal{B}_{l}$ for a proper bounded set $\mathcal{B}_{l}$ with $\operatorname{diam}\mathcal{B}_{l} \leq De^{-\alpha^{+}t_{l}}$, where $D>0$ is a uniform constant. Then Lemma \ref{LEM: GeneralCompactnessLemma} gives us a limit complete trajectory $\widetilde{v}(\cdot)$ over $q$ for $v^{*}_{k}(\cdot)$. Note that $\zeta_{k} = \Pi_{q_{k}}v^{*}(0) \to \zeta$ and, consequently, $\Pi_{q} \widetilde{v}(0) = \zeta$. Due to \textbf{(BA)} and \eqref{EQ: ContinuityTheoremH3Using} there exists a uniform constant $M'>0$ such that
    \begin{equation}
    	\int_{-\infty}^{0} e^{2 \alpha(s;q_{k})}|v^{*}_{k}(s)|^{2}_{\mathbb{H}} \leq M'
    \end{equation}
    Since $\alpha_{0}(\cdot)$ is continuous, the same estimate holds in the limit and, consequently, $\widetilde{v}(\cdot)$ is amenable over $q$. That leads to a contradiction due to the uniqueness given by Lemma \ref{LEM: AmenableTrajectories}. The proof is finished.
\end{proof}
\begin{remark}
	\label{REM: ContinuousDependenceExponentRestriction}
	The restriction $\alpha^{-} > 0$ can be omitted if we require \textbf{(BA)} to hold with all $w_{q}(\cdot)$ being identically zero. This is appropriate for the study of unstable manifolds, where $\alpha^{-} < 0$. In such applications the cocycle $\psi$ describes small deviations from a given invariant subset and, consequently, the zero is always a trajectory.
\end{remark}

\subsection{Exponential tracking}
\label{SUBSEC: ExponentialTracking}
This subsection is devoted to the \textit{exponential tracking} property (also known as the \textit{asymptotic completeness} \cite{Robinson1996AsympComl} or the \textit{existence of asymptotic phase}) of inertial manifolds. In its strongest form (as in our case) this property requires a decomposition of the cocycle trajectories into the sum of a ``fast'' part, which decays exponentially fast, and a ``slow'' part, which is given by a trajectory on the inertial manifold\footnote{Note that the ``fast'' part is not a trajectory in general due to a nonlinear nature of the cocycle.}. Most of known conditions for the existence of inertial manifolds (to be discussed in Section \ref{SEC: Applications}), in fact, imply the existence of what we called vertical leaves (see Subsection \ref{SUBSEC: VerticalFoliation}) which foliate $\mathbb{E}$. These leaves contain the decaying parts and squeeze under the dynamics towards the principal leaves $\mathfrak{A}(q)$ considered in Subsection \ref{SUBSEC: AmenableSets}. This is a nonlocal analog of the foliations constructed in a neighborhood of normally hyperbolic manifolds \cite{BatesLuZeng2000}. Note that most works have lack of this observation and the work of R.~Rosa and R.~Temam \cite{RosaTemam1996} is a rare exception.

We already have all the necessary statements, which are collected in the following theorem. See Fig. \ref{FIG: ExpTracking} for an illustration.
\begin{theorem}
	\label{TH: ExponentialTrackingIM}
	Let $\textbf{(H3)}^{-}_{w}$, $\textbf{(H3)}^{+}_{w}$, \textbf{(ULIP)}, \textbf{(ACOM)} with $\gamma^{+} = \alpha^{+}$, \textbf{(PROJ)} and \textbf{(PL)} be satisfied. Then
	\begin{enumerate}
		\item[1)] For any $q \in \mathcal{Q}$ and $v_{0} \in \mathbb{E}$ there exists a unique point $v^{*}_{0} \in \mathfrak{A}(q)$ such that $v^{*}_{0} \in [v_{0}]^{+}(q)$. Moreover, \eqref{EQ: ExponentialTrackingEstimate} is satisfied.
		\item[2)] $\Pi_{q} \colon [v_{0}]^{+}(q) \to \mathbb{E}^{+}(q)$ is a homeomorphism and it is a $C^{1}$-diffeomorphism provided that \textbf{(DIFF)} is satisfied (see Theorem \ref{TH: VertialLeavesDifferentiability} for details).
		\item[3)] The central projector $\Pi^{c}_{q}$ onto $\mathfrak{A}(q)$, i.e. the map $\Pi^{c}_{q}(v_{0}):=v^{*}_{0}$ defined in terms of item 1), is continuous in $(q,v_{0})$ provided that \textbf{(CD)}, \textbf{BA} are satisfied, $\alpha^{-}>0$ and $\alpha_{0}(\cdot)$ is continuous.
	\end{enumerate}
\end{theorem}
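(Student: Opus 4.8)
The plan is to read all three items off the general machinery of Subsections \ref{SUBSEC: HorizontalLeaves} and \ref{SUBSEC: VerticalFoliation}; essentially no new estimate is needed. The starting point is that for a fixed $q$ and any amenable trajectory $v^{*}(\cdot)$ over $q$ (which exists by \textbf{(PL)}) the principal leaf $\mathfrak{A}(q) = [v^{*}(\cdot)](0)$ is, by Theorem \ref{TH: PrincipalLeaveConstruction}, a $C_{Lip}(q)$-Lipschitz $\varkappa_{0}$-admissible subset of $\mathbb{E}$, with $\varkappa_{0}$ the constant from Lemma \ref{LEM: ConePerturbation} and $C_{Lip}(q)$ given by \eqref{EQ: LipschitzConstantOverQ}. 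Since $\|Q(q)\| \leq M_{Q}$ by \textbf{(H1)} and $\varkappa_{0}$ is independent of $q$, the parameters $L(\mathfrak{A}(q)) = C_{Lip}(q)$ and $(1-\varkappa_{0})^{-1}$ are globally, hence locally, bounded; this observation will feed the continuity statement.

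For item 1) I would apply Theorem \ref{TH: ExpTrackingTheorem} to the $\varkappa_{0}$-admissible set $\mathcal{M} = \mathfrak{A}(q)$: its hypotheses $\textbf{(H3)}^{-}_{w}$, $\textbf{(H3)}^{+}_{w}$, \textbf{(ULIP)} and \textbf{(PROJ)} are all among the standing assumptions, so it produces the unique $v^{*}_{0} \in \mathfrak{A}(q)$ with $v^{*}_{0} \in [v_{0}]^{+}(q)$, and, as $\mathfrak{A}(q)$ is $L(\mathfrak{A}(q))$-Lipschitz admissible, the tracking bound \eqref{EQ: ExponentialTrackingEstimate} with the function $R$ evaluated at $(C^{-1}_{q},(1-\varkappa_{0})^{-1},C_{Lip}(q))$. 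For item 2), the fact that $\Pi^{+}_{q} \colon [v_{0}]^{+}(q) \to \mathbb{E}^{+}(q)$ is a homeomorphism is exactly Corollary \ref{COR: VerticalLeafStructure}, whose hypotheses coincide with those of Theorem \ref{TH: ExpTrackingTheorem}; and the $C^{1}$-diffeomorphism statement, with the description of the differential, is Theorem \ref{TH: VertialLeavesDifferentiability}, which is invoked once \textbf{(DIFF)} is added (and, with it, property $\textbf{(H3)}^{+}_{s}$ via Lemma \ref{LEM: H3WeakPlusLinear}).

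Item 3) is obtained from Theorem \ref{TH: CentralProjectorContinuityGeneral} with $\mathcal{M}(q) := \mathfrak{A}(q)$ and $\varkappa(q) \equiv \varkappa_{0}$. Its standing hypotheses are again those of Theorem \ref{TH: ExpTrackingTheorem}, \textbf{(CD)} is assumed here, and the local boundedness of $(1-\varkappa(q))^{-1}$ and $L(\mathcal{M}(q))$ was already recorded. The only remaining input is that the family $\{\mathfrak{A}(q)\}_{q \in \mathcal{Q}}$ is \emph{continuous} in the sense of Subsection \ref{SUBSEC: VerticalFoliation}, i.e. that $(q,\zeta) \mapsto \Phi_{q}(\zeta)$ is continuous on $\mathcal{D}(\Phi)$; this is precisely Theorem \ref{TH: ContinuousDependenceFibres}, which is why item 3) must carry the extra assumptions $\alpha^{-} > 0$, continuity of $\alpha_{0}(\cdot)$ and \textbf{(BA)}. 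Plugging this into Theorem \ref{TH: CentralProjectorContinuityGeneral} yields continuity of $(q,v) \mapsto \Pi^{c}_{q}(v)$ and uniformity of $\Pi^{c}_{q_{k}}(v) \to \Pi^{c}_{q}(v)$ in $v$ from bounded sets.

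The proof is thus largely bookkeeping, and I expect the only real obstacle to be matching the hypothesis lists rather than any estimate: one must notice that the continuity of principal fibres (Theorem \ref{TH: ContinuousDependenceFibres}) genuinely needs $\alpha^{-} > 0$ and a continuous exponent, which forces these into the statement of item 3) but not items 1)--2), and that it is the uniform bound $\|Q(q)\| \leq M_{Q}$ that makes the constants in Theorem \ref{TH: CentralProjectorContinuityGeneral} locally bounded. A minor secondary point is the notational slip in item 2), where $\Pi_{q}$ should be read as the complementary projector $\Pi^{+}_{q}$ onto $\mathbb{E}^{+}(q)$.
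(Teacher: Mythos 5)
Your proposal is correct and follows precisely the same route as the paper's own (very terse) proof: Theorem \ref{TH: ExpTrackingTheorem} applied to $\mathcal{M}=\mathfrak{A}(q)$ for item 1), Corollary \ref{COR: VerticalLeafStructure} and Theorem \ref{TH: VertialLeavesDifferentiability} for item 2), and Theorems \ref{TH: ContinuousDependenceFibres} and \ref{TH: CentralProjectorContinuityGeneral} for item 3). You also correctly spot that $\Pi_{q}$ in item 2) is a typo for the complementary projector $\Pi^{+}_{q}$, as in Corollary \ref{COR: VerticalLeafStructure}.
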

\begin{proof}
	Item 1) of the theorem is a direct corollary of Theorem \ref{TH: ExpTrackingTheorem}. Item 2) follows from Corollary \ref{COR: VerticalLeafStructure} and Theorem \ref{TH: VertialLeavesDifferentiability}. Item 3) is a consequence of Theorem \ref{TH: ContinuousDependenceFibres} and Theorem \ref{TH: CentralProjectorContinuityGeneral}. The proof is finished.
\end{proof}

\begin{figure}
	\centering
	\includegraphics[width=1\linewidth]{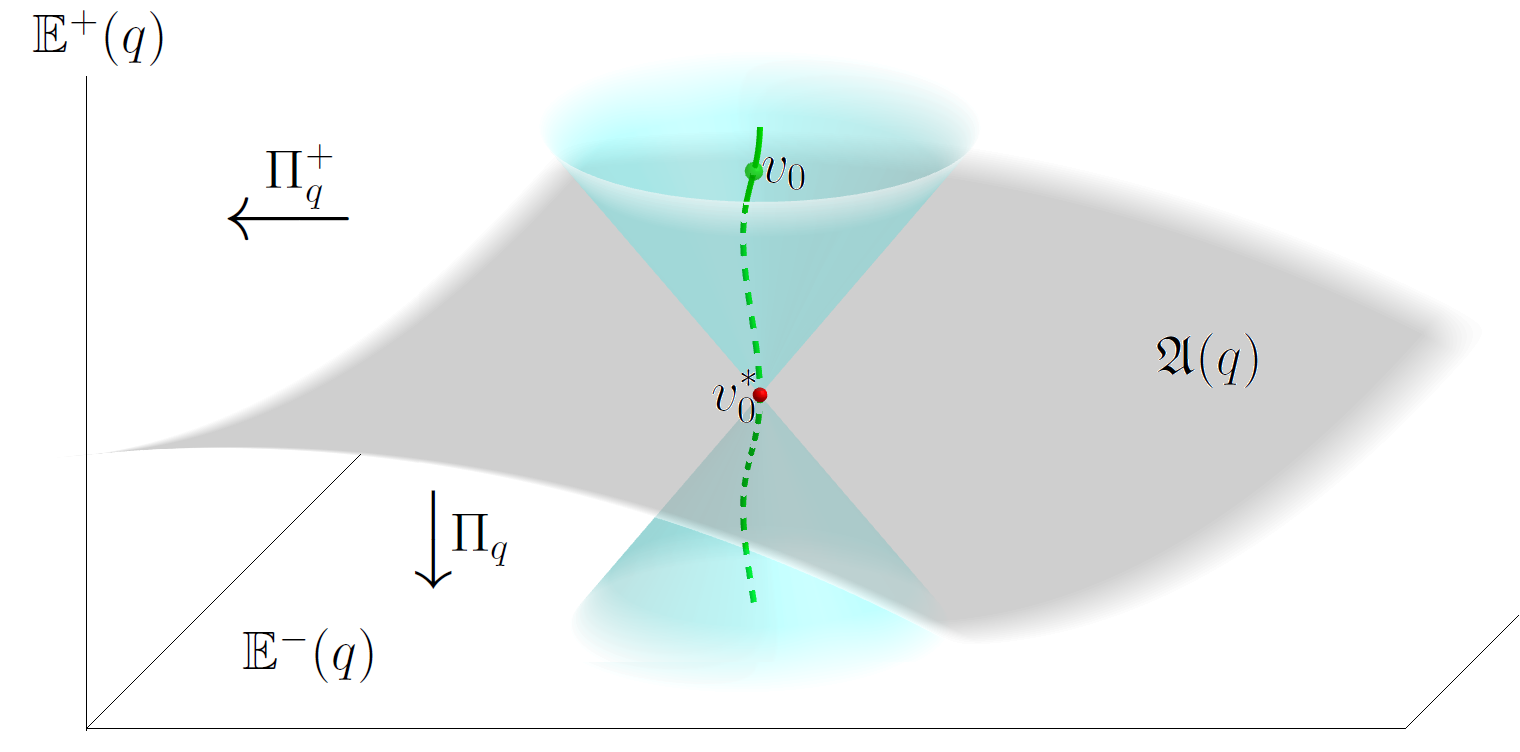}
	\caption{An illustation to Theorem \ref{TH: ExponentialTrackingIM}. Here over each point $v^{*}_{0}$ from the principal leaf $\mathfrak{A}(q)$ (gray) we have the positive equivalence class $[v^{*}_{0}]^{+}(q)$ (green), which lies in the cone $\{ V_{q}(v) \geq 0 \}$ translated to $v^{*}_{0}$ (the ``interior'' part of the cyan cone). Any point $v_{0} \in [v^{*}_{0}]^{+}(q)$ is mapped to $v^{*}_{0}$ by the central projector $\Pi^{c}_{q}$.}
	\label{FIG: ExpTracking}
\end{figure}

\subsection{Normal hyperbolicity and higher differentiability}
\label{SUBSEC: NormalHyperbolicity}

In this subsection, we discuss the normal hyperbolicity of inertial manifolds. There are abstract theories dealing with compact submanifolds and uniform normal hyperbolicity, where it is proved that such manifolds are stable under small perturbations \cite{BatesLuZeng1998}. However, in our case the manifolds are not compact and, moreover, it is much easier to prove their robustness by using the same methods as in the paper as we will do in Subsection \ref{SUBSEC: RobustnessManifolds} without appealing to the normal hyperbolicity.

We already have a family of tangent spaces $\mathfrak{A}(v) = \mathcal{T}_{v}\mathfrak{A}$ at any point $v \in \mathfrak{A}$, which forms the inertial manifold for the linearization cocycle.

Let $\mathcal{T}_{v_{0}}\mathfrak{A}(q)$ be the tangent space to $\mathfrak{A}(q)$ at $v_{0} \in \mathfrak{A}(q)$. We will understood the \textit{normal hyperbolicity} of the family $\mathfrak{A}(q)$ in the following sense.
\begin{description}
	\item[\textbf{(NH1)}] For any $q \in \mathcal{Q}$ and $v_{0} \in \mathfrak{A}(q)$ there exists a family of closed subspaces $\mathfrak{A}^{v}_{L}(q,v)$ such that $\mathbb{E} = \mathfrak{A}^{v}_{L}(q,v_{0}) \oplus \mathcal{T}_{v_{0}}\mathfrak{A}(q)$ and $L(t;q,v_{0}) \mathfrak{A}^{v}_{L}(q,v_{0}) \subset \mathfrak{A}^{v}_{L}(\vartheta^{t}(q),\psi^{t}(q,v_{0}))$ for all $t \geq 0$.
	\item[\textbf{(NH2)}] The corresponding to the decomposition $\mathbb{E} = \mathfrak{A}^{v}_{L}(q,v_{0}) \oplus \mathcal{T}_{v_{0}}\mathfrak{A}(q)$ projector $\Pi^{c}_{L}(\cdot;q,v_{0}) \colon \mathbb{E} \to \mathcal{T}_{v_{0}}\mathfrak{A}(q)$ depend continuously on $(q,v_{0})$ in the norm of $\mathcal{L}(\mathbb{E})$ and the norms are bounded uniformly in $(q,v_{0})$.
	\item[\textbf{(NH3)}] There are a constant $M_{h}>0$ and two positive bounded continuous functions $\varkappa^{h}_{0},\varkappa^{v}_{0} \colon \mathcal{Q} \times \mathbb{E} \to \mathbb{R}_{+}$ such that $\varkappa^{h}_{0}(\cdot) < \varkappa^{v}_{0}(\cdot)$ and for all $t \geq 0, q \in \mathcal{Q}, v_{0} \in \mathfrak{A}(q)$ and $\xi \in \mathbb{E}$ we have
	\begin{equation}
		\| L(t;q,v_{0}) (\xi-\Pi^{c}_{L}(\xi;q,v_{0})) \|_{\mathbb{E}} \leq M_{h}e^{-\varkappa^{v}(t;q,v_{0})} \| \xi \|_{\mathbb{E}}
	\end{equation}
	and
	\begin{equation}
		\| L(-t;q,v_{0}) \Pi^{c}_{L}(\xi;q,v_{0}) \|_{\mathbb{E}} \leq M_{h} e^{\varkappa^{h}(t; \pi^{-t}(q,v_{0}))} \| \xi \|_{\mathbb{E}},
	\end{equation}
    where $\varkappa^{h}(t;q,v_{0}) := \int_{0}^{t}\varkappa^{h}_{0}(\pi^{s}(q,v_{0}))ds$ and $\varkappa^{v}(t;q,v_{0}) := \int_{0}^{t}\varkappa^{v}_{0}(\pi^{s}(q,v_{0}))ds$.
\end{description}

To prove that the family $\mathfrak{A}(q)$ is normally hyperbolic we will assume that the linearization cocycle over $\mathfrak{A}$ satisfies $\textbf{(H3)}^{-}_{w}$ and $\textbf{(H3)}^{+}_{w}$ in two ways with distinct, but related, pairs of exponents.
\begin{description}
	\item[\textbf{(HYP)}] The linearization cocycle $\Xi$ satisfies $\textbf{(H3)}^{-}_{w}$ and $\textbf{(H3)}^{+}_{w}$ for two families of operators $Q_{i}(q,v_{0})$, $i=1,2$; with the same as $\operatorname{dim}\mathfrak{A}(q)$ dimension of the negative spaces; with positive functions $\alpha_{0}(\cdot) = \sigma^{h}_{0,i}(\cdot)$ and $\beta_{0}(\cdot) = \sigma^{v}_{0,i}(\cdot)$ such that $\sigma^{h}_{0,1}(\cdot) < \sigma^{v}_{0,2}(\cdot)$, $\sigma^{v}_{0,1}(\cdot) \leq \sigma^{v}_{0,2}(\cdot)$ (or $\sigma^{v}_{0,2}(\cdot) \leq \sigma^{v}_{0,1}(\cdot)$) and $\sigma^{h}_{0,i}(\cdot)$ being continuous. Moreover, let $C_{(q,v_{0})}=C_{(q,v_{0})}(Q_{i}(q,v_{0}))$ (defined in \eqref{EQ: ConstantProjectorNorm}) be such that $C^{-1}_{(q,v_{0})}$ are bounded from above uniformly in $(q,v_{0})$, \textbf{(ULIP)}, \textbf{(ACOM)} with $\gamma^{+} = \sup_{q \in \mathcal{Q},v_{0} \in \mathfrak{A}(q)}\sigma^{v}_{0,i}(q,v_{0})$ be satisfied and suppose
	\begin{equation}
		\label{EQ: HYPtangetSpaceRelation}
		\mathcal{T}_{v_{0}}\mathfrak{A}(q) = [0^{*}(\cdot)](0),
	\end{equation}
    where $[\cdot]$ is the class of negative pseudo-ordering (see Subsection \ref{SUBSUBSECTION: NegativePseudoOrderedOrbits}) for the zero complete trajectory $0^{*}(\cdot)$ over $(q,v_{0})$ of $\Xi$ w.~r.~t. any family of operators $Q_{i}(q)$.
\end{description}

Firstly, let us note that under \textbf{(HYP)}, the positive equivalence class $[0]^{+}(q,v_{0})$ of $0$ over $(q,v_{0})$ and, consequently, the central projector $\Pi^{c}_{L}(\cdot;q,v_{0})$ onto $\mathcal{T}_{v_{0}}\mathfrak{A}(q)$ does not depend on the family of operators (i.e. on $i$). Indeed, since $\sigma^{v}_{0,1}$ and $\sigma^{v}_{0,2}$ are related, Theorem \ref{LEM: PositiveEquivalence} gives us a relation for the positive equivalence classes. However, any such class together with the admissible (for both families due to \eqref{EQ: HYPtangetSpaceRelation}) set $\mathcal{T}_{v_{0}}\mathfrak{A}(q)$ gives a direct sum decomposition of $\mathbb{E}$ as Theorem \ref{TH: ExpTrackingTheorem} states. Thus, they must coincide.
\begin{theorem}
	\label{TH: NormalHyperbolicity}
	Let $\Xi$ satisfy \textbf{(HYP)} and \textbf{(CD)} (for at least one $i=1,2$). Then the family $\mathfrak{A}(q)$ is normally hyperbolic. Namely, in the above notation it satisfies \textbf{(NH1)} with $\mathfrak{A}^{v}_{L}(q,v_{0}) := [0]^{+}(q,v_{0})$, \textbf{(NH2)} with $\Pi^{c}_{L}(\cdot;q,v_{0})$ being the projector along $\mathfrak{A}^{v}_{L}(q,v_{0})$ onto $\mathcal{T}_{v_{0}}\mathfrak{A}(q)$ and \textbf{(NH3)} with some $M_{h}>0$, $\varkappa^{v}_{0}(\cdot) := \sigma^{v}_{0,2}(\cdot)$ and $\varkappa^{h}_{0}(\cdot) := \sigma^{h}_{0,1}(\cdot)$.
\end{theorem}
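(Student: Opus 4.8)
The plan is to apply the foliation theory of Sections~\ref{SUBSEC: HorizontalLeaves}--\ref{SUBSEC: DifferentiabilityLeaves} to the linearization cocycle $\Xi$ over the extended base $(\mathcal{Q}\times\mathbb{E},\pi)$, restricted to the invariant set of pairs $(q,v_{0})$ with $v_{0}\in\mathfrak{A}(q)$. By \textbf{(HYP)} and \eqref{EQ: HYPtangetSpaceRelation} the linear subspace $\mathcal{T}_{v_{0}}\mathfrak{A}(q)=[0^{*}(\cdot)](0)$ is $\varkappa_{0}$-admissible for $\Xi$ over $(q,v_{0})$ with respect to each family $Q_{i}$, it has dimension $\operatorname{dim}\mathfrak{A}(q)$, and the Lipschitz constant of its chart $\Pi_{(q,v_{0})}^{-1}$ is dominated by $C_{Lip}(q,v_{0})$ from \eqref{EQ: LipschitzConstantOverQ}, hence bounded uniformly in $(q,v_{0})$ thanks to \textbf{(HYP)}. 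As explained just before the statement, Lemma~\ref{LEM: PositiveEquivalence} applied to the related exponents $\sigma^{v}_{0,1},\sigma^{v}_{0,2}$ together with Theorem~\ref{TH: ExpTrackingTheorem} shows that the positive equivalence class $[0]^{+}(q,v_{0})$ and its central projector are independent of $i$; I set $\mathfrak{A}^{v}_{L}(q,v_{0}):=[0]^{+}(q,v_{0})$. Applying Theorem~\ref{TH: ExpTrackingTheorem} (equivalently Corollary~\ref{COR: VerticalLeafStructure}) to $\Xi$ over $(q,v_{0})$ with $\mathcal{M}=\mathcal{T}_{v_{0}}\mathfrak{A}(q)$ yields the splitting $\mathbb{E}=\mathfrak{A}^{v}_{L}(q,v_{0})\oplus\mathcal{T}_{v_{0}}\mathfrak{A}(q)$, while the invariance $L(t;q,v_{0})\mathfrak{A}^{v}_{L}(q,v_{0})\subset\mathfrak{A}^{v}_{L}(\vartheta^{t}(q),\psi^{t}(q,v_{0}))$ is immediate from the definition of positive equivalence and the cocycle identity for $\Xi$, exactly as in item~(3) of Theorem~\ref{TH: VertialLeavesDifferentiability}; this gives \textbf{(NH1)}, and $\Pi^{c}_{L}(\cdot;q,v_{0})$ is the corresponding projector.

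For \textbf{(NH2)} I would invoke Theorem~\ref{TH: CentralProjectorContinuityGeneral} for the linear cocycle $\Xi$ over $(\mathcal{Q}\times\mathbb{E},\pi)$ with the family $\mathcal{M}(q,v_{0})=\mathcal{T}_{v_{0}}\mathfrak{A}(q)$. Its hypotheses require that this family be continuous, i.e. that the tangent spaces of $\mathfrak{A}$ depend continuously on the base point; this follows from the differentiability results of Subsection~\ref{SUBSEC: DifferentiabilityLeaves}, in particular Lemma~\ref{LEM: DifferentialPhiContinuous}, combined with \textbf{(CD)} to control the $q$-dependence. The uniform bound on $\|\Pi^{c}_{L}(\cdot;q,v_{0})\|$ then comes from \eqref{EQ: ExponentialTrackingEstimate} together with the uniform bounds (ensured by \textbf{(HYP)}) on $(1-\varkappa_{0})^{-1}$, on $C^{-1}_{(q,v_{0})}$ from \eqref{EQ: ConstantProjectorNorm}, and on the chart constant $C_{Lip}(q,v_{0})$.

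For \textbf{(NH3)}, decompose $\xi=\xi^{h}+\xi^{v}$ with $\xi^{h}=\Pi^{c}_{L}(\xi;q,v_{0})\in\mathcal{T}_{v_{0}}\mathfrak{A}(q)$ and $\xi^{v}=\xi-\xi^{h}\in\mathfrak{A}^{v}_{L}(q,v_{0})$; by \textbf{(NH2)} one has $\|\xi^{h}\|_{\mathbb{E}},\|\xi^{v}\|_{\mathbb{E}}\le M\|\xi\|_{\mathbb{E}}$ for a uniform $M$. Since $\xi^{v}\in[0]^{+}(q,v_{0})$, $\textbf{(H3)}^{+}_{w}$ for $\Xi$ with exponent $\sigma^{v}_{0,2}$ --- that is \eqref{EQ: VerticalFibreIntEstimate} --- followed by the mean-value/\textbf{(ULIP)} argument used to derive \eqref{EQ: ExponentialTrackingDifferenceEstimate1} gives $\|L(t;q,v_{0})\xi^{v}\|_{\mathbb{E}}\le M_{h}e^{-\varkappa^{v}(t;q,v_{0})}\|\xi\|_{\mathbb{E}}$, with $\varkappa^{v}_{0}:=\sigma^{v}_{0,2}$. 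Since $\xi^{h}$ lies on a complete trajectory of $\Xi$ in the pseudo-ordering class of $0^{*}$ --- so that $\pi^{-t}(q,v_{0})$ and $L(-t;q,v_{0})\xi^{h}$ are well defined, uniqueness by item~3 of Lemma~\ref{LEM: CompleteTrajectoriesOrdering} --- Lemma~\ref{LEM: BackwardEstimate} for $\Xi$ with exponent $\sigma^{h}_{0,1}$, applied over the base point $\pi^{-t}(q,v_{0})$ with $t_{2}=t$ and $t_{0}=0$, together with $\|\Pi_{(q,v_{0})}\xi^{h}\|_{\mathbb{E}}\le M_{\Pi}\|\xi^{h}\|_{\mathbb{E}}$, yields $\|L(-t;q,v_{0})\xi^{h}\|_{\mathbb{E}}\le M_{h}e^{\varkappa^{h}(t;\pi^{-t}(q,v_{0}))}\|\xi\|_{\mathbb{E}}$ with $\varkappa^{h}_{0}:=\sigma^{h}_{0,1}$ (for small $t$ the bounded prefactor is absorbed into $M_{h}$). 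The inequality $\varkappa^{h}_{0}<\varkappa^{v}_{0}$ is part of \textbf{(HYP)}, so \textbf{(NH3)} holds.

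The step I expect to be the main obstacle is \textbf{(NH2)}: Subsection~\ref{SUBSEC: DifferentiabilityLeaves} and Theorem~\ref{TH: CentralProjectorContinuityGeneral} are written with a fixed base point $q$, so one must first upgrade those continuity statements (above all Lemma~\ref{LEM: LinearFoliationProjectorCont} and the continuous dependence of the horizontal tangent spaces) to the skew-product base $(\mathcal{Q}\times\mathbb{E},\pi)$, carefully using \textbf{(CD)} for the operators $Q_{i}$; together with the verification that $\mathfrak{A}^{v}_{L}$ and $\Pi^{c}_{L}$ are genuinely independent of $i$, this is where the real work lies, the remaining parts being essentially an application of the already established theorems with bookkeeping of the two exponent pairs.
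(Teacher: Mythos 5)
Your proposal is correct and follows essentially the same strategy as the paper's proof, with two small variations in book-keeping. For \textbf{(NH2)} the paper obtains the $(q,v_{0})$-continuity of the tangent bundle directly by citing Theorem~\ref{TH: ContinuousDependenceFibres} applied to the linearization cocycle $\Xi$ over the skew-product base $(\mathcal{Q}\times\mathbb{E},\pi)$ (with \textbf{(BA)} furnished by the zero trajectory and $\alpha^{-}>0$ from the positivity of $\sigma^{h}_{0,i}$), and then invokes Theorem~\ref{TH: CentralProjectorContinuityGeneral} exactly as you do; so the ``upgrade to the skew-product base'' you flag as the main obstacle is already handled by those two theorems and there is no extra gap to fill beyond checking their hypotheses for $\Xi$. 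For the second estimate in \textbf{(NH3)}, the paper does not route through Lemma~\ref{LEM: BackwardEstimate}; instead it first records the two-sided bound $M_{1}\|\xi\|^{2}_{\mathbb{E}}\le -V_{(q,v_{0})}(\xi)\le M_{2}\|\xi\|^{2}_{\mathbb{E}}$ for $\xi\in\mathcal{T}_{v_{0}}\mathfrak{A}(q)$ (the lower bound coming from \eqref{EQ: OrderedCompleteTrajEstimate} and the mean-value/\textbf{(ULIP)} argument, the upper bound from \textbf{(H1)}), and then applies the $\textbf{(H3)}^{-}_{w}$-inequality for $\Xi$ with $\xi_{0}=L(-t;q,v_{0})\Pi^{c}_{L}(\xi;q,v_{0})$ over the base point $\pi^{-t}(q,v_{0})$, dropping the integral term; your application of Lemma~\ref{LEM: BackwardEstimate} encapsulates the same mean-value/\textbf{(ULIP)} mechanism and yields the same uniform constant, so both derivations are valid and essentially equivalent.
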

\begin{proof}
	Since $\Pi^{c}_{L}(\cdot;q,v_{0})$ is a linear projector onto $\mathcal{T}_{v_{0}}\mathfrak{A}(q) = [0^{*}(\cdot)](0)$ with $\operatorname{Ker}\Pi^{c}_{L}(\cdot;q,v_{0}) = [0]^{+}(q,v_{0})$ (see Theorem \ref{TH: ExpTrackingTheorem}), \textbf{(NH1)} is satisfied. From Theorem \ref{TH: ContinuousDependenceFibres} and Theorem \ref{TH: CentralProjectorContinuityGeneral} applied for a proper $i$, where \textbf{(CD)} is satisfied, we get that $\Pi^{c}_{L}(\cdot;q,v_{0})$ depends continuously on $q \in \mathcal{Q}$ and $v_{0} \in \mathfrak{A}(q)$. Moreover, \eqref{EQ: ExponentialTrackingEstimate} with proper terms and $t = 0$ shows that the norms $\Pi^{c}_{L}(\cdot;q,v_{0})$ are bounded uniformly in $(q,v_{0})$ (since $C^{-1}_{(q,v_{0})}(Q_{i}(q,v_{0}))$ are uniformly bounded from above) and, consequently, \textbf{(NH2)} holds.
	
	For the second family of operators ($i=2$), we use \eqref{EQ: ExponentialTrackingEstimate} and the uniform boundedness from above of $C^{-1}_{(q,v_{0})}(Q_{2}(q,v_{0}))$ to obtain the first estimate from \textbf{(NH3)} with $\varkappa^{v}_{0}(\cdot) := \sigma^{v}_{0,2}(\cdot)$ and some constant $M_{h}>0$.
	
	Now we consider the first family of operators ($i=1$). Let us show that the second estimate from \textbf{(NH3)} holds with $\varkappa^{h}_{0}(\cdot) := \sigma^{h}_{0,1}(\cdot)$. Note that there are constants $M_{1}>0$ and $M_{2}>0$ such that
	\begin{equation}
		\label{EQ: NormalHypFormNormEstimate}
		M_{1} \| \xi \|^{2}_{\mathbb{E}} \leq -V_{(q,v_{0})}(\xi) \leq M_{2} \| \xi \|^{2}_{\mathbb{E}}.
	\end{equation} 
    is satisfied for all $q \in \mathcal{Q}$, $v_{0} \in \mathfrak{A}(q)$ and $\xi \in \mathcal{T}_{v_{0}}\mathfrak{A}(q)$. The second inequality is obvious and the first one can be derived from an analog of \eqref{EQ: OrderedCompleteTrajEstimate} using the mean value theorem and \textbf{(ULIP)} as we did many times. Using $\textbf{(H3)}^{-}_{w}$ with the terms for $i=1$, we get that for all $\xi_{0} \in \mathbb{E}$
    \begin{equation}
    	-V_{(q,v_{0})}(L(t;\pi^{-t}(q,v_{0}))\xi_{0}) \geq - e^{-2\varkappa^{h}(t;\pi^{-t}(q,v_{0}))} V_{\pi^{-t}(q,v_{0})}(\xi_{0}).
    \end{equation}
    Combining this with \eqref{EQ: NormalHypFormNormEstimate} and putting $\xi_{0} := L(-t;q,v_{0})\Pi^{c}_{L}(\xi; q,v_{0})$, we get the second inequality from \textbf{(NH3)}. The proof is finished.
\end{proof}

\begin{problem}
	Suppose $\psi$ is $C^{k}$-differentiable and let $\mathfrak{A}(q)$ be a normally hyperbolic family with the exponents $\varkappa^{h}(q,v_{0})$ and $\varkappa^{v}(q,v_{0})$ as in \textbf{(NH3)}. Suppose that
	\begin{equation}
		\sup_{q \in \mathcal{Q}, v_{0} \in \mathfrak{A}(q)}\frac{\varkappa^{v}(q,v_{0})}{\varkappa^{h}(q,v_{0})} > k.
	\end{equation}
    Show that $\mathfrak{A}(q)$ is $C^{k}$-differentiable submanifold.
\end{problem}

% !TeX spellcheck = russian_english
\subsection{Stability of invariant sets and invertibility}
\label{SUBSEC: StabilityReducedDynamics}
In this subsection we starting working under the hypotheses of Theorem \ref{TH: ContinuousDependenceFibres} with $\alpha^{-} > 0$.

Let $\mathcal{K} = \{ \mathcal{K}(q) \}$, where $q \in \mathcal{K}$, be a family of bounded subsets of $\mathbb{E}$, which is invariant w.~r.~t. the cocycle, i.~e. $\psi^{t}(q, \mathcal{K}(q)) = \mathcal{K}(\vartheta^{t}(q))$ for all $q \in \mathcal{Q}$ and $t \geq 0$. We say that $\mathcal{K}$ is \textit{Lyapunov stable} (respectively \textit{amenably Lyapunov stable}) if for every $\varepsilon>0$ and any $q \in \mathcal{Q}$ there exists $\delta>0$ such that if $v_{0} \in \mathcal{O}_{\delta}(\mathcal{K}(q))$ (respectively $v_{0} \in \mathcal{O}_{\delta}(\mathcal{K}(q)) \cap \mathfrak{A}(q)$) then $\psi^{t}(q,v_{0}) \in \mathcal{O}_{\varepsilon}(\mathcal{K}(\vartheta^{t}(q)))$ for all $t \geq 0$. In the case when $\delta$ from the previous definition can be chosen independently of $q$, we say that $\mathcal{K}$ is \textit{uniformly Lyapunov stable} (respectively \textit{amenably uniformly Lyapunov stable}). A (uniformly) Lyapunov stable family $\mathcal{K}$ is called \textit{(uniformly) asymptotically Lyapunov stable} (respectively \textit{amenably (uniformly) asymptotically Lyapunov stable}) if there exists $\delta_{as}=\delta_{as}(q)>0$ (independent of $q$ in the uniform case) such that $\operatorname{dist}(\psi^{t}(q,v_{0}),\mathcal{K}(q)) \to 0$ as $t \to +\infty$ provided that $v_{0} \in \mathcal{O}_{\delta_{as}}(\mathcal{K}(q))$ (respectively $v_{0} \in \mathcal{O}_{\delta_{as}}(\mathcal{K}(q)) \cap \mathfrak{A}(q)$).

\begin{theorem}
	\label{TH: StabilityOfInvariantSets}
	Under the hypotheses of Theorem \ref{TH: ExponentialTrackingIM} with $\alpha^{-}>0$ and $\beta^{-} > 0$, let $\mathcal{K} = \{ \mathcal{K}(q) \}$ be an invariant family of uniformly bounded in $q \in \mathcal{Q}$ subsets. Then $\mathcal{K}$ is (asymptotically) Lyapunov stable if and only if $\mathcal{K}$ is amenably (asymptotically) Lyapunov stable. Moreover, if $C^{-1}_{q}$'s are uniformly in $q \in \mathcal{Q}$ bounded from above, then $\mathcal{K}$ is uniformly (asymptotically) Lyapunov stable if and only if $\mathcal{K}$ is amenably uniformly (asymptotically) Lyapunov stable.
\end{theorem}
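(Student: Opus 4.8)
The plan is to establish the two implications (``amenable stability'' $\Leftrightarrow$ ``full stability'') by exploiting the exponential tracking property (Theorem \ref{TH: ExponentialTrackingIM}) together with the homeomorphism $\Pi^{c}_{q}\colon \mathbb{E} \to \mathfrak{A}(q)$ and the continuity statement in item 3) of that theorem. The direction ``full $\Rightarrow$ amenable'' is trivial, since $\mathfrak{A}(q)$-neighborhoods of $\mathcal{K}(q)$ are contained in full neighborhoods; so the whole content is the converse. The key geometric fact is: since $\beta^{-}>0$, any trajectory $\psi^{t}(q,v_{0})$ shadows the trajectory $\psi^{t}(q,\Pi^{c}_{q}v_{0})$ lying on $\mathfrak{A}$, with the distance between them controlled by \eqref{EQ: ExponentialTrackingEstimate}: namely $\|\psi^{t}(q,v_{0}) - \psi^{t}(q,\Pi^{c}_{q}v_{0})\|_{\mathbb{E}} \leq R(\cdots)\operatorname{dist}(v_{0},\mathfrak{A}(q)) e^{-\beta(t;q)} \to 0$ monotonically (in the rough sense, bounded by the initial value). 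So the ``fast'' deviation from the manifold is controlled uniformly in $t$ by the initial deviation from the manifold.

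First I would set up the converse. Assume $\mathcal{K}$ is amenably Lyapunov stable and fix $\varepsilon>0$ and $q\in\mathcal{Q}$. Choose $\delta'>0$ (from amenable stability) so that $v_{0}^{*}\in \mathcal{O}_{\delta'}(\mathcal{K}(q))\cap\mathfrak{A}(q)$ implies $\psi^{t}(q,v_{0}^{*})\in\mathcal{O}_{\varepsilon/2}(\mathcal{K}(\vartheta^{t}(q)))$ for all $t\geq 0$. Now take an arbitrary $v_{0}\in\mathcal{O}_{\delta}(\mathcal{K}(q))$ and write $v_{0}^{*}:=\Pi^{c}_{q}v_{0}\in\mathfrak{A}(q)$. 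I must verify two things: (i) if $\delta$ is small enough, then $v_{0}^{*}\in\mathcal{O}_{\delta'}(\mathcal{K}(q))\cap\mathfrak{A}(q)$; and (ii) $\operatorname{dist}(v_{0},\mathfrak{A}(q))$ is small, so that by \eqref{EQ: ExponentialTrackingEstimate} the term $R(\cdots)\operatorname{dist}(v_{0},\mathfrak{A}(q))e^{-\beta(t;q)}$ stays below $\varepsilon/2$ for all $t\geq0$. For (ii): since $\mathcal{K}(q)\subset\mathfrak{A}(q)$ is invariant and amenable (here one uses $\alpha^{-}>0$ so bounded complete trajectories are amenable, hence $\mathcal{K}(q)\subset\mathfrak{A}(q)$ — this should be recorded as a preliminary observation, possibly in a separate lemma), we have $\operatorname{dist}(v_{0},\mathfrak{A}(q))\leq\operatorname{dist}(v_{0},\mathcal{K}(q))<\delta$. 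For (i): $\|v_{0}^{*}-w\|_{\mathbb{E}}=\|\Pi^{c}_{q}v_{0}-\Pi^{c}_{q}w\|_{\mathbb{E}}$ for any $w\in\mathcal{K}(q)$ (using $\Pi^{c}_{q}w=w$ since $w\in\mathfrak{A}(q)$ and $w\in[w]^{+}(q)$), so $\operatorname{dist}(v_{0}^{*},\mathcal{K}(q))\leq$ (modulus of continuity of $\Pi^{c}_{q}$ at points of $\mathcal{K}(q)$) applied to $\delta$. By item 3) of Theorem \ref{TH: ExponentialTrackingIM}, $\Pi^{c}_{q}$ is continuous, and since $\mathcal{K}(q)$ is bounded, a uniform modulus of continuity of $\Pi^{c}_{q}$ on a bounded neighborhood of $\mathcal{K}(q)$ exists; taking $\delta$ small makes $\operatorname{dist}(v_{0}^{*},\mathcal{K}(q))<\delta'$. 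Then $\psi^{t}(q,v_{0})$ is within $\varepsilon/2$ of $\psi^{t}(q,v_{0}^{*})$ (by \eqref{EQ: ExponentialTrackingEstimate}) which is within $\varepsilon/2$ of $\mathcal{K}(\vartheta^{t}(q))$ (by amenable stability), giving Lyapunov stability. For the asymptotic variant, one additionally notes $\psi^{t}(q,v_{0})-\psi^{t}(q,v_{0}^{*})\to 0$ (since $\beta^{-}>0$) and $\operatorname{dist}(\psi^{t}(q,v_{0}^{*}),\mathcal{K}(\vartheta^{t}(q)))\to0$ (amenable asymptotic stability), so the triangle inequality closes the argument.

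The uniform version is handled by the same scheme, but replacing the pointwise modulus of continuity of $\Pi^{c}_{q}$ with a $q$-uniform one, and replacing the pointwise constant $R(C^{-1}_{q},(1-\varkappa_{0})^{-1},L(\mathfrak{A}(q)))$ with a $q$-uniform bound. This is where the extra hypothesis ``$C^{-1}_{q}$ uniformly bounded from above'' enters: under it, $\varkappa_{0}$ is fixed (Lemma \ref{LEM: ConePerturbation}), $L(\mathfrak{A}(q))=C_{Lip}(q)$ is uniformly bounded via \eqref{EQ: LipschitzConstantOverQ} and \eqref{EQ: OperatorsBound}, so $R$ is uniformly bounded; and \textbf{(PROJ)} (already assumed in Theorem \ref{TH: ExponentialTrackingIM}) plus the uniform bound on $C^{-1}_{q}$ give a $q$-uniform modulus of continuity for the family $\Pi^{c}_{q}$ via the estimates \eqref{EQ: ExpTrackPointEstimateFinal}–\eqref{EQ: ExponentialTrackingEstimate} (the central projector is in fact uniformly Lipschitz on bounded sets under these hypotheses). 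I expect the main obstacle to be the bookkeeping of this uniformity claim for $\Pi^{c}_{q}$ — one must check that the constant $L'$ appearing in the proof of Theorem \ref{TH: ExpTrackingTheorem} is genuinely $q$-independent under the stated hypotheses, i.e. that $M_{\Pi}$, $\sqrt{M_{Q}}$, $C^{-1}_{q}$ and $(1-\varkappa_{0})^{-1}$ are all uniformly controlled; and that the value $\operatorname{dist}(v_0,\mathfrak{A}(q))\le \operatorname{dist}(v_0,\mathcal{K}(q))$ plays correctly with taking $\widetilde v\in\mathfrak{A}(q)$ near $v_0$ rather than in $\mathcal{K}(q)$. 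None of this is deep, but it is the part requiring care; everything else is a two-$\varepsilon/2$ triangle inequality argument.
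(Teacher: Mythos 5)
The overall strategy you take is the same as the paper's: use exponential tracking together with the triangle inequality, and the observation that $\alpha^{-}>0$ forces $\mathcal{K}(q)\subset\mathfrak{A}(q)$ (since bounded complete trajectories are amenable). The two-$\varepsilon/2$ split is exactly right, and your identification of where the uniform-bound hypothesis on $C^{-1}_{q}$ enters is also correct.

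However, the step where you bound $\operatorname{dist}(v^{*}_{0},\mathcal{K}(q))$ is a genuine gap. You appeal to ``the modulus of continuity of $\Pi^{c}_{q}$ at points of $\mathcal{K}(q)$,'' citing item 3) of Theorem~\ref{TH: ExponentialTrackingIM}. Two problems: (a)~item~3) requires the additional hypotheses \textbf{(CD)}, \textbf{(BA)} and continuity of $\alpha_{0}(\cdot)$, none of which is assumed in Theorem~\ref{TH: StabilityOfInvariantSets}; (b)~even granting continuity of $\Pi^{c}_{q}$, a uniform modulus of continuity on a bounded neighborhood of $\mathcal{K}(q)$ does \emph{not} follow, because bounded subsets of $\mathbb{E}$ need not be precompact. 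Your closing remark that ``the central projector is in fact uniformly Lipschitz on bounded sets under these hypotheses'' is not something you actually establish, and it does not follow immediately from the exponential tracking estimate (which controls $\|v_{0}-\Pi^{c}_{q}v_{0}\|$ in terms of $\operatorname{dist}(v_{0},\mathfrak{A}(q))$, not $\|\Pi^{c}_{q}v_{1}-\Pi^{c}_{q}v_{2}\|$ in terms of $\|v_{1}-v_{2}\|$).

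The correct (and simpler) route, which the paper follows, avoids $\Pi^{c}_{q}$-continuity entirely: apply \eqref{EQ: ExponentialTrackingEstimate} at $t=0$, combined with $\operatorname{dist}(v_{0},\mathfrak{A}(q))\leq\operatorname{dist}(v_{0},\mathcal{K}(q))$, to get $\|v_{0}-v^{*}_{0}\|_{\mathbb{E}}\leq R(q)\operatorname{dist}(v_{0},\mathcal{K}(q))$; then by the triangle inequality
\begin{equation*}
\operatorname{dist}(v^{*}_{0},\mathcal{K}(q))\leq\|v_{0}-v^{*}_{0}\|_{\mathbb{E}}+\operatorname{dist}(v_{0},\mathcal{K}(q))\leq(R(q)+1)\operatorname{dist}(v_{0},\mathcal{K}(q)).
\end{equation*}
This makes the needed smallness of $\operatorname{dist}(v^{*}_{0},\mathcal{K}(q))$ quantitative and manifestly $q$-uniform once $R(q)$ is uniformly bounded, closing both the pointwise and uniform versions without any appeal to continuity of $\Pi^{c}_{q}$.
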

\begin{proof}
	The only non-trivial part is ``if''. Let $q$ be fixed. For $v_{0} \in \mathbb{E}$ and $t \geq 0$ the triangle inequality gives
	\begin{equation}
		\label{EQ: StabilityTheorem1}
		\operatorname{dist}( \psi^{t}(q,v_{0}),\mathcal{K}(\vartheta^{t}(q))) \leq \| \psi^{t}(q,v_{0}) - \psi^{t}(q,v^{*}_{0}) \|_{\mathbb{E}} + \operatorname{dist}( \psi^{t}(q,v^{*}_{0}),\mathcal{K}(\vartheta^{t}(q))),
	\end{equation}
    where $v^{*}_{0}$ is given by Theorem \ref{TH: ExponentialTrackingIM}. Since $\mathcal{K}(q) \subset \mathfrak{A}(q)$ and $\beta(t;q) \geq \beta^{-}t$, from \eqref{EQ: ExponentialTrackingEstimate} we have for all $t \geq 0$ and $R(q):=R(C^{-1}_{q},(1-\varkappa_{0})^{-1},C_{Lip}(q))$
    \begin{equation}
    	\label{EQ: StabilityTheorem2}
    	\| \psi^{t}(q,v_{0}) - \psi^{t}(q,v^{*}_{0}) \|_{\mathbb{E}} \leq R(q) \operatorname{dist}(v_{0},\mathcal{K}(q)) e^{-\beta^{-} t}
    \end{equation}
    and using it with $t = 0$, we get
    \begin{equation}
    	\label{EQ: StabilityTheorem3}
    	\operatorname{dist}(v^{*}_{0},\mathcal{K}(q)) \leq \|v_{0}-v^{*}_{0}\|_{\mathbb{E}} + \operatorname{dist}(v_{0},\mathcal{K}(q)) \leq (R(q) + 1)\operatorname{dist}(v_{0},\mathcal{K}(q)).
    \end{equation}
    Let $\varepsilon>0$ be fixed and let $\delta'>0$ be such that $\psi^{t}(q,v_{0}) \in \mathcal{O}_{\varepsilon/2}$ for all $t \geq 0$ provided that $v_{0} \in \mathcal{O}_{\delta'}(\mathcal{K}(q))$. Now let $\delta>0$ be such that $\delta \cdot (R(q) + 1) < \delta'$ and $R(q) \delta < \varepsilon/2$. Then for $v_{0} \in \mathcal{O}_{\delta}(\mathcal{K}(q))$ from \eqref{EQ: StabilityTheorem1}, \eqref{EQ: StabilityTheorem2} and \eqref{EQ: StabilityTheorem3} we have
    \begin{equation}
    	\operatorname{dist}( \psi^{t}(q,v_{0}),\mathcal{K}(\vartheta^{t}(q))) < \frac{\varepsilon}{2} + \frac{\varepsilon}{2} = \varepsilon.
    \end{equation}
    Note that $\delta$ can be chosen uniformly in $q \in \mathcal{Q}$ if $C^{-1}_{q}$ are uniformly in $q \in \mathcal{Q}$ bounded from above and $\delta_{0}$ can be chosen uniformly in $q$. Moreover, the equivalence of (uniform) asymptotic Lyapunov stabilities follows from the exponential tracking given by \eqref{EQ: StabilityTheorem2}.
\end{proof}
A part of Theorem \ref{TH: StabilityOfInvariantSets} was implicitly (he was unfamiliar with the exponential tracking property) used by R.A.~Smith to show the existence of orbitally stable periodic orbits for semiflows \cite{Smith1994PB2, Smith1992, Smith1987OrbStab}.

Now let us consider the set $\mathfrak{A} = \bigcup_{q \in \mathcal{Q}} \{ q \} \times \mathfrak{A}(q)$ and the restriction of the skew-product semiflow $\pi$ to $\mathfrak{A}$. Recall that $\pi^{t}(q,v) = (\vartheta^{t}(q),\psi^{t}(q,v))$ for $(q,v) \in \mathcal{Q} \times \mathbb{E}$ and $t \geq 0$. Since the cocycle map is invertible due to item 2 of Theorem \ref{TH: PrincipalLeaveConstruction}, the map $\pi^{t} \colon \mathfrak{A} \to \mathfrak{A}$ for $t \geq 0$ is invertible. We will denote its inverse by $\pi^{-t}$. Thus the maps $\pi^{t} \colon \mathfrak{A} \to \mathfrak{A}$, where $t \in \mathbb{R}$, form a group of transformations.
\begin{theorem}
	\label{TH: InvertibilityTheoremBA}
	Under the hypotheses of Theorem \ref{TH: ContinuousDependenceFibres}, the maps $\pi^{t} \colon \mathfrak{A} \to \mathfrak{A}$, where $t \in \mathbb{R}$, form a flow.
\end{theorem}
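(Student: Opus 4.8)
The plan is to upgrade the restriction of the skew-product semiflow to $\mathfrak{A}$ into a genuine flow, which splits into (i) defining the backward maps and checking the group law, and (ii) establishing joint continuity of $(t,(q,v))\mapsto\pi^{t}(q,v)$ on $\mathbb{R}\times\mathfrak{A}$. For (i), recall that $(\mathcal{Q},\vartheta)$ is a flow, so every $\vartheta^{t}$, $t\in\mathbb{R}$, is a homeomorphism of $\mathcal{Q}$. Given $(q,v)\in\mathfrak{A}$, since $v\in\mathfrak{A}(q)$ there is, by \textbf{(PL)} (which follows from \textbf{(BA)} and $\alpha^{-}>0$) and Lemma \ref{LEM: AmenableTrajectories}, a \emph{unique} amenable complete trajectory $\gamma_{q,v}(\cdot)$ over $q$ with $\gamma_{q,v}(0)=v$; I set $\pi^{t}(q,v):=(\vartheta^{t}(q),\gamma_{q,v}(t))$ for all $t\in\mathbb{R}$. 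For $t\geq 0$ this coincides with the given $\pi^{t}$ because $\gamma_{q,v}(t)=\psi^{t}(q,v)$, and each such map is a bijection of $\mathfrak{A}$ by item 2 of Theorem \ref{TH: PrincipalLeaveConstruction}. Using $\alpha(\tau;\vartheta^{s}(q))=\alpha(\tau+s;q)-\alpha(s;q)$ one checks that $t\mapsto\gamma_{q,v}(s+t)$ is again an amenable complete trajectory, now over $\vartheta^{s}(q)$, with value $\gamma_{q,v}(s)$ at time $0$; by uniqueness it equals $\gamma_{\vartheta^{s}(q),\gamma_{q,v}(s)}(\cdot)$, whence $\pi^{t+s}=\pi^{t}\circ\pi^{s}$ for all $t,s\in\mathbb{R}$ and $\pi^{0}=\operatorname{Id}$. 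Thus $\{\pi^{t}\}_{t\in\mathbb{R}}$ is a one-parameter group of bijections of $\mathfrak{A}$ and $\pi^{-t}=(\pi^{t})^{-1}$, consistent with the statement.

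For (ii), since $(t,q)\mapsto\vartheta^{t}(q)$ is continuous, it suffices to prove: if $(q_{k},v_{k})\to(q_{0},v_{0})$ in $\mathfrak{A}$ and $t_{k}\to t_{0}$ in $\mathbb{R}$, then $\gamma_{q_{k},v_{k}}(t_{k})\to\gamma_{q_{0},v_{0}}(t_{0})$. The crux is the time-independent assertion: $(q_{k},v_{k})\to(q_{0},v_{0})$ in $\mathfrak{A}$ implies $\gamma_{q_{k},v_{k}}(t)\to\gamma_{q_{0},v_{0}}(t)$ for \emph{every} fixed $t\in\mathbb{R}$. I would prove this exactly as Theorem \ref{TH: ContinuousDependenceFibres} (whose proof in fact already produces a limiting trajectory with pointwise convergence on all of $\mathbb{R}$): along an arbitrary subsequence, pseudo-order $\gamma_{q_{k},v_{k}}$ with the amenable (as $\alpha^{-}>0$) and uniformly bounded trajectory $w^{*}_{q_{k}}(\cdot)$ from \textbf{(BA)}; item 1) of Lemma \ref{LEM: CompleteTrajectoriesOrdering} together with $-V_{q_{k}}(\cdot)\leq\|Q(q_{k})\|\|\cdot\|^{2}_{\mathbb{E}}$ and the uniform bound $M_{b}$ gives $\int_{-\infty}^{0}e^{2\alpha(s;q_{k})}|\gamma_{q_{k},v_{k}}(s)|^{2}_{\mathbb{H}}\,ds\leq M''$ with $M''$ independent of $k$; the mean value theorem with \textbf{(ULIP)} places each $\gamma_{q_{k},v_{k}}(-l)$ in a ball of radius $O(e^{\alpha^{+}l})$ about $w^{*}_{q_{k}}(-l)$; and Lemma \ref{LEM: GeneralCompactnessLemma}, applied with $\gamma^{+}=\alpha^{+}$ (the order in \textbf{(ACOM)}), extracts a further subsequence converging pointwise on $\mathbb{R}$ to a complete trajectory over $q_{0}$. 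Continuity of $\alpha_{0}(\cdot)$ (so $\alpha(s;q_{k})\to\alpha(s;q_{0})$ by dominated convergence) and Fatou's lemma show this limit is amenable over $q_{0}$ with value $v_{0}$ at $t=0$, hence it equals $\gamma_{q_{0},v_{0}}(\cdot)$ by Lemma \ref{LEM: AmenableTrajectories}; as every subsequence admits such a sub-subsequence, the whole sequence converges pointwise. (Incidentally, this also shows $\mathfrak{A}$ is closed, hence complete, in $\mathcal{Q}\times\mathbb{E}$.)

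Granting the claim, assembling joint continuity is short: fix an integer $l_{0}$ with $t_{k}\geq-l_{0}$ for all large $k$ (possible since $t_{k}$ converges). Then $\gamma_{q_{k},v_{k}}(t_{k})=\psi^{t_{k}+l_{0}}\bigl(\vartheta^{-l_{0}}(q_{k}),\gamma_{q_{k},v_{k}}(-l_{0})\bigr)$ with $t_{k}+l_{0}\to t_{0}+l_{0}\geq0$, $\vartheta^{-l_{0}}(q_{k})\to\vartheta^{-l_{0}}(q_{0})$, and $\gamma_{q_{k},v_{k}}(-l_{0})\to\gamma_{q_{0},v_{0}}(-l_{0})$ by the claim, so joint continuity of $\psi$ gives $\gamma_{q_{k},v_{k}}(t_{k})\to\psi^{t_{0}+l_{0}}\bigl(\vartheta^{-l_{0}}(q_{0}),\gamma_{q_{0},v_{0}}(-l_{0})\bigr)=\gamma_{q_{0},v_{0}}(t_{0})$; combined with $\vartheta^{t_{k}}(q_{k})\to\vartheta^{t_{0}}(q_{0})$ this is exactly $\pi^{t_{k}}(q_{k},v_{k})\to\pi^{t_{0}}(q_{0},v_{0})$. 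The main obstacle is the time-independent claim of the second paragraph, and within it the passage to the limit in the compactness lemma: one must make sure it is \emph{amenability}, not mere boundedness, of the limiting complete trajectory that is recovered — which is precisely where continuity of the exponent $\alpha_{0}(\cdot)$ is essential — everything else being the group-law bookkeeping and the by-now-standard combination of \textbf{(ULIP)} with the mean value theorem used repeatedly in the paper.
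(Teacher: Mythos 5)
Your proof is correct and follows essentially the same route as the paper: both reduce the matter to showing continuity of $\pi^{t}$ on $\mathfrak{A}$ at a fixed negative time by extracting, via \textbf{(BA)}, \textbf{(ACOM)}, and the estimates of Theorem \ref{TH: ContinuousDependenceFibres}, a pointwise-convergent subsequence of amenable trajectories whose limit must coincide with $\gamma_{q_{0},v_{0}}(\cdot)$ by the uniqueness in Lemma \ref{LEM: AmenableTrajectories}. The only difference is one of explicitness: the paper simply invokes ``the group property'' and then shows separate continuity of each $\pi^{t}$ with $t<0$, whereas you spell out the cocycle identity $\alpha(\tau;\vartheta^{s}(q))=\alpha(\tau+s;q)-\alpha(s;q)$ used to verify the group law and, more substantively, you verify the \emph{joint} continuity in $(t,(q,v))$ that the definition of a flow requires (by writing $\gamma_{q_{k},v_{k}}(t_{k})=\psi^{t_{k}+l_{0}}(\vartheta^{-l_{0}}(q_{k}),\gamma_{q_{k},v_{k}}(-l_{0}))$ and exploiting forward joint continuity of $\psi$). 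The paper leaves this last reduction implicit; your version closes that small gap but is otherwise the same argument, and your emphasis on why continuity of $\alpha_{0}(\cdot)$ is what guarantees the limit trajectory is amenable (not merely bounded) is the right point to flag.
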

\begin{proof}
	Due to the group property it is sufficient to prove that the map $\pi^{t} \colon \mathfrak{A} \to \mathfrak{A}$ for all $t < 0$ is continuous. Let a sequence $(q_{k},v_{k}) \in \mathfrak{A}$, where $k=1,2,\ldots$, converge to some $(q,v) \in \mathfrak{A}$ as $k \to +\infty$. Let $v_{k}(\cdot)$ be the amenable trajectory passing through $v^{*}_{k}$ over $q_{k}$ and let $v^{*}(\cdot)$ be the amenable trajectory passing through $v$ over $q$. We have to show that $v^{*}_{k}(t) \to v^{*}(t)$ as $k \to +\infty$. Supposing the contrary, we obtain a subsequence (we keep the same index) of $v^{*}_{k}(t)$ which is separated from $v^{*}_{k}(t)$ for all $k=1,2,\ldots$. Now using \textbf{(BA)} and \textbf{(ACOM)} we can obtain (in the same way as in the proof of Theorem \ref{TH: ContinuousDependenceFibres} or using its statement and similar estimates) a subsequence of $v_{k}(\cdot)$ converging to some amenable trajectory over $q$, which must coincide with $v^{*}(\cdot)$ since $v^{*}_{k}(0)=(v_{k},q_{k}) \to (v,q)=v^{*}(0)$ as $k \to +\infty$. Thus, we get a contradiction.
\end{proof}
In applications, the invertibility given by Theorem \ref{TH: InvertibilityTheoremBA} is connected to that the dynamics on $\mathfrak{A}$ can be described by an ordinary differential equation with a globally Lipschitz vector field.

\subsection{Robustness of foliations}
\label{SUBSEC: RobustnessManifolds}
During this section we work with a cocycle $\psi$ in $\mathbb{E}$ over a flow $(\mathcal{Q},\vartheta)$. Let $\psi_{\varepsilon}$, $\varepsilon \in [0,1]$, be a family of cocycles in $\mathbb{E}$ over the same driving system which to be considered as a perturbation of $\psi$ and such as $\psi_{\varepsilon} \to \psi$ as $\varepsilon \to 0$. For convenience, we put $\psi_{0} := \psi$. Our aim is to provide conditions under which $\psi_{\varepsilon}$ has an inertial manifold, which depend in some sense continuously as $\varepsilon \to 0$.

As our first assumption we will require that the conditions for the existence of inertial manifolds are satisfied for each cocycle $\psi_{\varepsilon}$ with in a sense uniform terms.
\begin{description}
	\item[(RF1)] Suppose that for every $\varepsilon \in [0,1]$ the cocycle $\psi_{\varepsilon}$ satisfies $\textbf{(H3)}^{-}_{w}$ with the same for all $\varepsilon$ terms, $\alpha_{-}>0$ and $\alpha_{0}(\cdot)$ being continuous. Moreover, suppose that \textbf{({PROJ})}, \textbf{(CD)} hold and let \textbf{(ACOM)} with $\gamma^{+} = \alpha^{+}$ be satisfied uniformly in $\varepsilon$, i.~e. \eqref{EQ: AsymptoticCompactnessEstimate} holds as
	\begin{equation}
		\alpha_{K}\left( \bigcup_{\varepsilon \in [0,1]} \psi^{t}(\mathcal{C},\mathcal{B}) \right) \leq \gamma_{0}(t)e^{-\gamma^{+}t}\alpha_{K}(\mathcal{B}).
	\end{equation}
	Finally, let \textbf{(ULIP)} also hold for each cocycle with uniform bounds for the constant $L_{T}$,  and \textbf{(BA)} with possibly different trajectories $w^{(\varepsilon)}_{q}(\cdot)$ of $\psi_{\varepsilon}$, but with a uniform in $\varepsilon$ bound $M_{b}>0$.
\end{description}

\begin{description}
	\item[(RF2)] For any $T>0$ and any compact $\mathcal{K} \in \mathcal{Q}$ and bounded subset $\mathcal{B} \subset \mathbb{E}$ we have that $\psi^{t}_{\varepsilon}(q,v) \to \psi^{t}(q,v)$ as $\varepsilon \to 0$ uniformly in $t \in [0,T]$, $q \in \mathcal{K}$ and $v \in \mathcal{B}$.
\end{description}

\begin{theorem}
	\label{TH: RobustnessIM}
	Let \textbf{(RF1)} and \textbf{(RF2)} be satisfied. Then for any compact $\mathcal{K} \subset \mathcal{Q}$ and bounded subset $\mathcal{B} \subset \mathbb{E}$ we have $\Phi^{(\varepsilon)}_{q}(\zeta) \to \Phi_{q}(\zeta)$ as $\varepsilon \to 0$ uniformly in $q \in \mathcal{K}$ and $\zeta \in \mathbb{E}^{-}(q) \cap \mathcal{B}$.
\end{theorem}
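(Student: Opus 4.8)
The plan is to prove this exactly as the earlier continuity-in-$q$ results (Theorem \ref{TH: ContinuousDependenceFibres}, Theorem \ref{TH: CentralProjectorContinuityGeneral}) were proved: by a contradiction-and-compactness argument, extracting a limiting complete trajectory via Lemma \ref{LEM: GeneralCompactnessLemma} and using uniqueness of amenable trajectories (Lemma \ref{LEM: AmenableTrajectories}) to reach a contradiction. Suppose the conclusion fails: then there exist $\varepsilon_{k} \to 0$, points $q_{k} \in \mathcal{K}$ and $\zeta_{k} \in \mathbb{E}^{-}(q_{k}) \cap \mathcal{B}$ such that $\|\Phi^{(\varepsilon_{k})}_{q_{k}}(\zeta_{k}) - \Phi_{q_{k}}(\zeta_{k})\|_{\mathbb{E}}$ stays bounded away from $0$. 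Passing to a subsequence, by compactness of $\mathcal{K}$ and \textbf{(CD)} (so that $\mathcal{D}(\Phi)$ is closed and $\bigcup_{q\in\mathcal{K}}(\mathbb{E}^{-}(q)\cap\mathcal{B})$ is compact), we may assume $q_{k} \to q \in \mathcal{K}$ and $\zeta_{k} \to \zeta \in \mathbb{E}^{-}(q)\cap\overline{\mathcal{B}}$. By the already-established Theorem \ref{TH: ContinuousDependenceFibres} applied to the unperturbed cocycle $\psi$, $\Phi_{q_{k}}(\zeta_{k}) \to \Phi_{q}(\zeta)$, so it suffices to show $\Phi^{(\varepsilon_{k})}_{q_{k}}(\zeta_{k}) \to \Phi_{q}(\zeta)$ as well, which will be the contradiction.

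**Extracting the limiting trajectory.** Let $v^{(k)}(\cdot)$ be the amenable trajectory of $\psi_{\varepsilon_{k}}$ over $q_{k}$ with $v^{(k)}(0) = \Phi^{(\varepsilon_{k})}_{q_{k}}(\zeta_{k})$. Using \textbf{(RF1)} (uniform \textbf{(BA)} with bound $M_{b}$, uniform \textbf{(ULIP)}, uniform \textbf{(ACOM)}) and the inequality $\textbf{(H3)}^{-}_{w}$ with $\alpha^{-}>0$ satisfied for each $\psi_{\varepsilon_{k}}$ with the same terms, I would run exactly the estimate from the proof of Theorem \ref{TH: ContinuousDependenceFibres}: comparing $v^{(k)}(\cdot)$ with the bounded-in-the-past trajectory $w^{(\varepsilon_{k})}_{q_{k}}(\cdot)$ from \textbf{(BA)} via \eqref{EQ: OrderedCompleteTrajEstimate} yields a uniform bound
\begin{equation}
\delta_{Q}\int_{-\infty}^{0} e^{2\alpha(s;q_{k})} |v^{(k)}(s) - w^{(\varepsilon_{k})}_{q_{k}}(s)|^{2}_{\mathbb{H}}\, ds \leq M_{Q}\|\zeta_{k} - \Pi_{q_{k}} w^{(\varepsilon_{k})}_{q_{k}}(0)\|^{2}_{\mathbb{E}},
\end{equation}
whose right-hand side is bounded uniformly in $k$ (since $\zeta_{k} \in \mathcal{B}$, $\|w^{(\varepsilon_{k})}_{q_{k}}(0)\| \le M_{b}$, and $\Pi_{q_{k}}$ have uniformly bounded norms by \textbf{(PROJ)}). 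Slicing this integral over $[-l-1,-l]$ and applying the mean value theorem together with uniform \textbf{(ULIP)} gives, as in that earlier proof, a sequence $t_{l} \to -\infty$ and bounded sets $\mathcal{B}_{l}$ with $\operatorname{diam}\mathcal{B}_{l} \le D e^{-\alpha^{+} t_{l}}$ (uniform $D$) such that $v^{(k)}(t_{l}) \in \mathcal{B}_{l}$ for all large $k$.

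**Passing to the limit and closing the argument.** Here is the one genuinely new point: Lemma \ref{LEM: GeneralCompactnessLemma} is stated for a single cocycle, but \textbf{(RF1)} supplies the uniform-in-$\varepsilon$ form of \textbf{(ACOM)}, which is precisely what the estimate \eqref{EQ: CompactnessLemmaMainEstimate} in the proof of that lemma needs; so I would invoke the obvious variant of Lemma \ref{LEM: GeneralCompactnessLemma} for a sequence of trajectories belonging to a family of cocycles satisfying \textbf{(ACOM)} uniformly. Combined with \textbf{(RF2)} (which guarantees that pointwise limits of $v^{(k)}(\cdot)$ are trajectories of the limiting cocycle $\psi_{0}=\psi$, since $\psi^{t}_{\varepsilon_{k}}(q_{k},\cdot) \to \psi^{t}(q,\cdot)$ on compacts), this produces a subsequence and a complete trajectory $\widetilde{v}(\cdot)$ of $\psi$ over $q$ with $v^{(k)}(t) \to \widetilde{v}(t)$ for every $t$. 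Continuity of $\Pi_{q}$ under \textbf{(CD)} gives $\Pi_{q}\widetilde{v}(0) = \lim \Pi_{q_{k}} v^{(k)}(0) = \lim \zeta_{k} = \zeta$. Finally, since $\alpha_{0}(\cdot)$ is continuous and the amenability integrals $\int_{-\infty}^{0} e^{2\alpha(s;q_{k})}|v^{(k)}(s)|^{2}_{\mathbb{H}}\,ds$ are uniformly bounded (again from the displayed estimate plus \textbf{(BA)}), Fatou's lemma shows $\widetilde{v}(\cdot)$ is amenable over $q$ for $\psi$. By uniqueness (Lemma \ref{LEM: AmenableTrajectories}) $\widetilde{v}(0) = \Phi_{q}(\zeta)$, so $\Phi^{(\varepsilon_{k})}_{q_{k}}(\zeta_{k}) = v^{(k)}(0) \to \Phi_{q}(\zeta)$, contradicting the separation assumption. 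The main obstacle, such as it is, is bookkeeping: making sure every ``uniform in $\varepsilon$'' hypothesis in \textbf{(RF1)}--\textbf{(RF2)} is used at the right spot so that the compactness extraction and the identification of the limit as an amenable trajectory of $\psi$ both go through; no new estimate beyond those already in the proofs of Theorems \ref{TH: ContinuousDependenceFibres} and \ref{TH: CentralProjectorContinuityGeneral} is needed.
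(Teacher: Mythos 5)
Your proof is correct and follows exactly the approach the paper intends: it argues by contradiction and reruns the compactness-and-uniqueness argument from Theorem \ref{TH: ContinuousDependenceFibres}, correctly flagging the two places where the argument genuinely differs from that earlier proof --- the need for a uniform-in-$\varepsilon$ version of Lemma \ref{LEM: GeneralCompactnessLemma} (which the uniform \textbf{(ACOM)} in \textbf{(RF1)} supplies) and the role of \textbf{(RF2)} in guaranteeing that the limiting complete trajectory is a trajectory of the unperturbed cocycle $\psi$ rather than of $\psi_{\varepsilon_k}$. The paper's own proof is a one-line deferral to Theorem \ref{TH: ContinuousDependenceFibres}, so your write-up is essentially the intended argument spelled out in full.
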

\begin{proof}
	Supposing the contrary, one can show the statement analogously to the proof of Theorem \ref{TH: ContinuousDependenceFibres}.
\end{proof}
\begin{remark}
	As in Remark \ref{REM: ContinuousDependenceExponentRestriction}, one can relax the assumption $\alpha^{-}>0$ from \textbf{(RF1)} by requiring \textbf{(BA)} to be satisfied with $w^{(\varepsilon)}_{q}(\cdot) \equiv 0$ for all $q \in \mathcal{Q}$ and $\varepsilon \in [0,1]$. This gives the robustness of unstable manifolds.
\end{remark}

Analogously, one can study continuity of the tangent spaces to $\mathfrak{A}(q)$ by assuming some sort of uniform convergence of the linearization cocycles $\Xi_{\varepsilon}$ for the perturbed cocycle $\psi_{\varepsilon}$. Since techniques are the same, we left this to the reader as an exercise.

\subsection{Slow and center manifolds}
\label{SUBSEC: SlowManifolds}
In this subsection we discuss how to further foliate the principal leaf $\mathfrak{A}(q)$ at each point under additional squeezing with lower rate w.r.t. another family of operators. In applications, this allows to construct perturbations of any spectral subspaces at least in a small neighborhood of a stationary state. In particular, this gives a center manifold theorem.

\begin{description}
	\item[\textbf{(SM)}] For $i=1,2$ the cocycle $\psi$ satisfies \textbf{(ULIP)}, $\textbf{(H3)}^{-}_{w}$ and $\textbf{(H3)}^{+}_{w}$ for two families of operators $Q(q) = Q_{i}(q)$ with $\alpha_{0}(q) = \alpha_{0,i}(q)$, $\alpha^{+} = \alpha^{+}_{i}$ $\beta_{0}(q)=\beta_{0,i}(q)$ and the dimensions of the negative spaces $\mathbb{E}(q) = \mathbb{E}_{-}(q)$ equal to $j_{Q} = j_{i}$. Moreover, $\alpha_{0,1}(\cdot) \geq \alpha_{0,2}(\cdot)$, $\beta_{0,1}(\cdot) \geq \beta_{0,2}(\cdot)$, $j_{1} > j_{2}$, \textbf{(PROJ)}  holds for each family and \textbf{(PL)} holds for the first ($i=1$) family and \textbf{(ACOM)} with $\gamma^{+} = \alpha^{+}_{i}$ is satisfied.
\end{description}

Thus, under \textbf{(SM)} we can apply Theorem \ref{TH: PrincipalLeaveConstruction} to construct the principal leaf $\mathfrak{A}(q)=\mathfrak{A}_{1}(q)$ (for the family $Q_{1}(q)$) and over each $v_{0} \in \mathfrak{A}_{1}(q)$ we get the horizontal leaf $\mathfrak{A}^{h}_{2}(q,v_{0})$ constructed from the unique complete trajectory $v(\cdot)$ passing through $v_{0}$ and such that $v(t) \in \mathfrak{A}_{1}(\vartheta^{t}(q))$ for all $t \in \mathbb{R}$. Moreover, Theorem \ref{TH: ExpTrackingTheorem} gives as over each point $v_{0} \in \mathbb{E}$ a vertical leaf $\mathfrak{A}^{v}_{i}(q,v_{0}) := [v_{0}]^{+}$, where the positive equivalence class depends on $i$. Since $\alpha_{0,1}(\cdot) \geq \alpha_{0,2}(\cdot)$, we have $\mathfrak{A}_{1}(q) \supset \mathfrak{A}_{2}(q)$ and since $\beta_{0,1}(\cdot) \geq \beta_{0,2}(\cdot)$, we have $\mathfrak{A}^{v}_{1}(q) \subset \mathfrak{A}^{v}_{2}(q)$ for all $q \in \mathcal{Q}$. Moreover, under \textbf{(DIFF)} Theorems \ref{TH: HorizontalDifferentiability} and Theorem \ref{TH: VertialLeavesDifferentiability} give us the same inclusion for the tangent spaces. Consequently, since $\mathfrak{A}_{1}(q)$ and $\mathfrak{A}^{v}_{1}(q,v_{0})$ intersect transversally, the submanifolds $\mathfrak{A}_{1}(q)$ and $\mathfrak{A}^{v}_{2}(q,v_{0})$ also intersect transversally for every $v_{0} \in \mathfrak{A}_{1}(q)$ and $q \in \mathcal{Q}$. Consequently, the intersection is a $C^{1}$-sumanifold (see, for example, Corollary 73.50 in \cite{Zeidler1988}). We summarize this in the following theorem.
\begin{theorem}
	\label{TH: SlowManifoldTheorem}
	Suppose that \textbf{(SM)} and \textbf{(DIFF)} are satisfied. Then for any $q \in \mathcal{Q}$ and $v_{0} \in \mathfrak{A}_{1}(q)$, the intersection $\mathfrak{A}_{1}(q) \cap \mathfrak{A}^{v}_{2}(q,v_{0})$ is a $(j_{1}-j_{2})$-dimensional $C^{1}$-differentiable submanifold in $\mathbb{E}$.
\end{theorem}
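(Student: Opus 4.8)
The plan is to exhibit $\mathfrak{A}_{1}(q) \cap \mathfrak{A}^{v}_{2}(q,v_{0})$ as a \emph{transversal} intersection of two $C^{1}$-submanifolds of $\mathbb{E}$ and then to invoke the standard Banach-manifold fact that such an intersection is again a $C^{1}$-submanifold of the expected codimension. First I would assemble the ingredients. Applying Theorem \ref{TH: PrincipalLeaveConstruction} to the family $Q_{1}(q)$ (for which \textbf{(PL)} and \textbf{(ACOM)} with $\gamma^{+}=\alpha^{+}_{1}$ hold by \textbf{(SM)}) yields the principal leaf $\mathfrak{A}_{1}(q)=\mathfrak{A}(q)$, and Theorem \ref{TH: HorizontalDifferentiability} (available under \textbf{(DIFF)}) shows that it is a $j_{1}$-dimensional $C^{1}$-submanifold of $\mathbb{E}$ whose tangent space $\mathcal{T}_{v_{0}}\mathfrak{A}_{1}(q)$ at each $v_{0}$ is a $j_{1}$-dimensional admissible, hence strictly negative for $Q_{1}(q)$, subspace. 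Applying Theorems \ref{TH: ExpTrackingTheorem} and \ref{TH: VertialLeavesDifferentiability} to the family $Q_{2}(q)$ yields, for each $v_{0} \in \mathbb{E}$, the vertical leaf $\mathfrak{A}^{v}_{2}(q,v_{0})=[v_{0}]^{+}(q)$ (positive equivalence with respect to $Q_{2}(q)$), a $C^{1}$-differentiable Banach $\mathbb{E}^{+}_{Q_{2}}(q)$-submanifold whose tangent space $\mathcal{T}_{v_{0}}\mathfrak{A}^{v}_{2}(q,v_{0})$ is a closed subspace of codimension $j_{2}$, strictly positive for $Q_{2}(q)$.

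Next I would prove the two leaf inclusions and differentiate them. For $s \le 0$ one has $\alpha_{1}(s;q)=-\int_{s}^{0}\alpha_{0,1}(\vartheta^{r}(q))\,dr \le -\int_{s}^{0}\alpha_{0,2}(\vartheta^{r}(q))\,dr=\alpha_{2}(s;q)$ since $\alpha_{0,1} \ge \alpha_{0,2}$, so finiteness of the amenability integral \eqref{EQ: AmenabilityH} for the weight $\alpha_{2}$ forces it for $\alpha_{1}$; hence every $Q_{2}$-amenable trajectory is $Q_{1}$-amenable and $\mathfrak{A}_{2}(q) \subset \mathfrak{A}_{1}(q)$. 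Symmetrically, for $s \ge 0$ one has $\beta_{1}(s;q) \ge \beta_{2}(s;q)$ because $\beta_{0,1} \ge \beta_{0,2}$, so finiteness of the integral in \eqref{EQ: VerticalFibreIntEstimate} for $\beta_{1}$ forces it for $\beta_{2}$; hence positive $Q_{1}$-equivalence implies positive $Q_{2}$-equivalence and $\mathfrak{A}^{v}_{1}(q,v_{0}) \subset \mathfrak{A}^{v}_{2}(q,v_{0})$. Since by Theorems \ref{TH: HorizontalDifferentiability} and \ref{TH: VertialLeavesDifferentiability} the tangent spaces to these leaves are themselves leaves of the corresponding type for the linearization semicocycle $\Xi$, to which the very same monotonicity of exponents applies, the second inclusion descends to $\mathcal{T}_{v_{0}}\mathfrak{A}^{v}_{1}(q,v_{0}) \subset \mathcal{T}_{v_{0}}\mathfrak{A}^{v}_{2}(q,v_{0})$.

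Then I would verify transversality and count dimensions. Fix $q$ and $v_{0} \in \mathfrak{A}_{1}(q)$, and note that $v_{0} \in \mathfrak{A}^{v}_{2}(q,v_{0})$. Applying Theorem \ref{TH: ExpTrackingTheorem} to $\Xi$ over $(q,v_{0})$ with the admissible set $\mathcal{M}=\mathcal{T}_{v_{0}}\mathfrak{A}_{1}(q)$ and the first family gives the splitting $\mathbb{E}=\mathcal{T}_{v_{0}}\mathfrak{A}_{1}(q) \oplus \mathcal{T}_{v_{0}}\mathfrak{A}^{v}_{1}(q,v_{0})$; together with the tangent-space inclusion just obtained this yields $\mathcal{T}_{v_{0}}\mathfrak{A}_{1}(q) + \mathcal{T}_{v_{0}}\mathfrak{A}^{v}_{2}(q,v_{0})=\mathbb{E}$. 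Since $\mathcal{T}_{v_{0}}\mathfrak{A}_{1}(q)$ is finite-dimensional it is complemented in $\mathbb{E}$, so $\mathfrak{A}_{1}(q)$ and $\mathfrak{A}^{v}_{2}(q,v_{0})$ meet transversally at $v_{0}$; by the intersection theorem for transversal $C^{1}$-submanifolds (Corollary 73.50 in \cite{Zeidler1988}) the set $\mathfrak{A}_{1}(q) \cap \mathfrak{A}^{v}_{2}(q,v_{0})$ is a $C^{1}$-submanifold of $\mathbb{E}$ with tangent space $\mathcal{T}_{v_{0}}\mathfrak{A}_{1}(q) \cap \mathcal{T}_{v_{0}}\mathfrak{A}^{v}_{2}(q,v_{0})$. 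As this sum equals $\mathbb{E}$ while $\mathcal{T}_{v_{0}}\mathfrak{A}_{1}(q)$ is $j_{1}$-dimensional and $\mathcal{T}_{v_{0}}\mathfrak{A}^{v}_{2}(q,v_{0})$ has codimension $j_{2}$, the intersection has dimension $j_{1}-j_{2}$, which is the assertion.

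The step I expect to be the main obstacle is the transversality argument: one must extract the splitting $\mathbb{E}=\mathcal{T}_{v_{0}}\mathfrak{A}_{1}(q) \oplus \mathcal{T}_{v_{0}}\mathfrak{A}^{v}_{1}(q,v_{0})$ cleanly from the structure of the leaves of $\Xi$ established in Section \ref{SEC: ConstructionIM}, and, just as carefully, check that the leaf inclusion $\mathfrak{A}^{v}_{1} \subset \mathfrak{A}^{v}_{2}$ genuinely descends to an inclusion of tangent spaces at the common base point $v_{0}$ — this uses that both are $C^{1}$-submanifolds through $v_{0}$, so near $v_{0}$ the first sits inside the second as a $C^{1}$-submanifold. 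The remaining steps are direct citations of the results of Sections \ref{SEC: ConstructionIM}--\ref{SEC: InertialManifolds} together with the elementary monotonicity and dimension computations above.
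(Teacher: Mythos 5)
Your proof is correct and follows exactly the paper's own reasoning: establish the inclusions $\mathfrak{A}_{2}(q)\subset\mathfrak{A}_{1}(q)$ and $\mathfrak{A}^{v}_{1}(q,v_{0})\subset\mathfrak{A}^{v}_{2}(q,v_{0})$ from the monotonicity of the exponents, transfer them to tangent spaces via the differentiability theorems, deduce transversality from the known splitting $\mathbb{E}=\mathcal{T}_{v_{0}}\mathfrak{A}_{1}(q)\oplus\mathcal{T}_{v_{0}}\mathfrak{A}^{v}_{1}(q,v_{0})$, and invoke Corollary 73.50 of Zeidler. The only difference is that you spell out the dimension count $j_{1}-j_{2}$ and the integral-monotonicity argument for the leaf inclusions in more detail than the paper, which leaves these as implicit.
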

Let us consider an example with $\alpha_{0,1}(\cdot) \equiv \beta_{0,1}(\cdot) \equiv \nu_{1} > 0$ and $\alpha_{0,2}(\cdot) \equiv \beta_{0,2}(\cdot) \equiv \nu_{2} < 0$. Then $\mathfrak{A}^{v}_{1}(q,v_{0})$ is the strongly stable manifold, $\mathfrak{A}_{1}(q)$ is the slow-strongly-unstable manifold, $\mathfrak{A}^{v}_{2}(q)$ is the strongly-stable-slow manifold, $\mathfrak{A}^{h}_{2}(q,v_{0})$ is the strongly unstable manifold and $\mathfrak{A}_{1}(q) \cap \mathfrak{A}^{v}_{2}(q,v_{0})$ is the slow manifold at $v_{0} \in \mathfrak{A}_{1}(q)$.

\section{Abstract applications: low-dimensional dynamics}
\label{SEC: AbstractApplic}
\subsection{Exponential stability and almost automorphy in almost periodic cocycles}
\label{SUBSEC: ExpStabAndAAAP}
In this section we suppose that $(\mathcal{Q},\vartheta)$ is a minimal almost periodic (in the sense of Bohr) flow.

For almost periodic cocycles a simple way to verify \textbf{(BA)} is given by the following lemma.
\begin{lemma}
	\label{LEM: ApSectionSemiBounded}
	Suppose that for some $q_{0} \in \mathcal{Q}$ there exists $v_{0} \in \mathbb{E}$ such that the trajectory $\mathbb{R}_{+} \ni t \mapsto \psi^{t}(q,v_{0}) \in \mathbb{E}$ lies in a compact set $\mathcal{K}$. Then for every $q \in \mathcal{Q}$ there exists a bounded complete trajectory $w^{*}_{q}(\cdot)$ over $q$ lying in $\mathcal{K}$.
\end{lemma}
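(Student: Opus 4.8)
The plan is to exploit the minimality of the almost periodic base flow $(\mathcal{Q},\vartheta)$ together with the compactness assumption to produce, over each $q\in\mathcal{Q}$, a complete trajectory that remains in $\mathcal{K}$, and then upgrade this to a uniform bound using that $\mathcal{K}$ is compact (hence bounded). First I would fix $q\in\mathcal{Q}$. By minimality of $(\mathcal{Q},\vartheta)$, the point $q$ lies in the $\omega$-limit set (and indeed the orbit closure) of $q_0$, so there is a sequence $t_k\to+\infty$ with $\vartheta^{t_k}(q_0)\to q$. For a fixed $T>0$ consider, for all large $k$ (so that $t_k\geq T$), the points $\psi^{t_k-T}(q_0,v_0)=\psi^{t_k-T}(\vartheta^{0}(q_0),v_0)$; these all lie in $\mathcal{K}$, which is compact, so after passing to a subsequence they converge to some $w_{-T}\in\mathcal{K}$. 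Applying the continuity of the cocycle map $(t,q,v)\mapsto\psi^t(q,v)$, the shifted trajectories $t\mapsto\psi^{t}(\vartheta^{t_k-T}(q_0),\psi^{t_k-T}(q_0,v_0))=\psi^{t+t_k-T}(q_0,v_0)$ converge, on $[0,\infty)$, to $t\mapsto\psi^{t}(q,w_{-T})$, a trajectory over $q$ defined for $t\geq -T$ (after reindexing time) that stays in $\mathcal{K}$ by closedness of $\mathcal{K}$.

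Next I would let $T\to+\infty$ along a diagonal/Cantor argument: for $T=1,2,\ldots$ this produces trajectories over $q$ defined on $[-T,\infty)$, all valued in $\mathcal{K}$, and consistency on overlaps (using uniqueness of forward evolution under the cocycle property, i.e. that a trajectory over $q$ defined for $t\geq -T$ restricts to one defined for $t\geq -T'$ when $T'<T$) lets me glue them into a single complete trajectory $w^{*}_q(\cdot)\colon\mathbb{R}\to\mathbb{E}$ over $q$ with $w^{*}_q(t)\in\mathcal{K}$ for all $t\in\mathbb{R}$. Since $\mathcal{K}$ is compact it is bounded, say $\mathcal{K}\subset\{\|v\|_{\mathbb{E}}\leq M_b\}$ for some $M_b>0$ independent of $q$; this gives both the existence of $w^{*}_q(\cdot)$ and the uniform bound $\sup_{q}\sup_{t\leq 0}\|w^{*}_q(t)\|_{\mathbb{E}}\leq M_b$ required in \textbf{(BA)}.

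The main obstacle, and the point needing care, is the diagonal gluing step: one must ensure that the limit trajectories obtained for different values of $T$ are mutually compatible so that they define a genuine complete trajectory rather than merely a family of finite-time trajectories. This is handled by always extracting nested subsequences (the subsequence used at stage $T+1$ being a subsequence of the one at stage $T$), so that on each overlap $[-T,\infty)$ the limits coincide by uniqueness of the cocycle evolution; continuity of $\psi$ is used throughout to pass limits through the cocycle relation $v(t+s)=\psi^{t}(\vartheta^{s}(q),v(s))$. A minor secondary point is to note that minimality is exactly what guarantees $q\in\overline{\{\vartheta^{t}(q_0):t\geq 0\}}$ with approach times tending to $+\infty$ (using that in a minimal flow every orbit is dense and, in the almost periodic case, recurrent), so that the pullback construction can be carried out over an arbitrary $q$.
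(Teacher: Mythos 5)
Your proof is correct and takes essentially the same route as the paper: use minimality to find $\tau_k\to+\infty$ with $\vartheta^{\tau_k}(q_0)\to q$, shift the compact trajectory back by $\tau_k$, and extract a limiting complete trajectory over $q$ in $\mathcal{K}$. The only difference is that you carry out the diagonal/Cantor gluing argument explicitly (nested subsequences over $T=1,2,\ldots$), whereas the paper compresses this to a single extraction of a convergent subsequence of $w_k(0)$ and asserts the limiting complete trajectory directly.
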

\begin{proof}
	Since $\vartheta$ is minimal almost periodic, for every $q \in \mathcal{Q}$ there exists a sequence $\tau_{k} \to +\infty$ such that $\vartheta^{\tau_{k}}(q_{0}) \to q$ $k \to +\infty$. Consider the trajectories $w_{k}(t):=\psi^{t+\tau_{k}}(q_{0},v_{0})$ passing through $\psi^{\tau_{k}}(q_{0},v_{0})$ over $\vartheta^{\tau_{k}}(q_{0})$ and defined for $t \geq -\tau_{k}$. Since the sequence $w_{k}(0)$ lies in $\mathcal{K}$, there exists a converging subsequence and the corresponding to it limiting complete trajectory $w^{*}_{q}(\cdot)$ over $q$ lying in $\mathcal{K}$.
\end{proof}

We have the following theorem.
\begin{theorem}
	\label{TH: AlmostPeriodicityTh}
	Let the cocycle satisfy \textbf{(H1)},\textbf{(H2)} with $j=0$, \textbf{(H3)} with $\nu_{0}(\cdot)$ being continuous and $\nu^{-} > 0$, \textbf{(ACOM)} with $\gamma^{+} = \nu^{+}$, \textbf{(ULIP)} and \textbf{(CD)}. Suppose that for some $q_{0} \in \mathcal{Q}$ there exists $v_{0} \in \mathbb{E}$ such that the trajectory $\mathbb{R}_{+} \ni t \mapsto \psi^{t}(q,v_{0}) \in \mathbb{E}$ is compact. Then for every $q \in \mathcal{Q}$ there exists a unique almost periodic trajectory $v^{*}_{q}(\cdot)$ over $q$ which continuously depend on $q$ and exponentially stable in the sense that there exists a constant $M>0$ such that for any $v_{0} \in \mathbb{E}$, $q \in \mathcal{Q}$ and $t \geq 0$ we have
	\begin{equation}
		\label{EQ: ExponentialStabilityAPcocycles}
		\| \psi^{t}(q,v_{0}) - \psi^{t}(q,v^{*}_{q}(0)) \|_{\mathbb{E}} \leq M \cdot e^{\nu(t;q)} \cdot \| v_{0} - v^{*}_{q}(0) \|_{\mathbb{E}}.
	\end{equation}
\end{theorem}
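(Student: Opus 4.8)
The plan is to apply the machinery developed in Sections \ref{SEC: ConstructionIM}--\ref{SEC: InertialManifolds} in the degenerate case $j=0$, in which the negative spaces $\mathbb{E}^{-}(q)$ collapse to $\{0\}$ and the $V_{q}$-orthogonal projectors $\Pi_{q}$ vanish identically. In this situation \textbf{(PROJ)} is trivially satisfied (with $M_{\Pi}=0$), and the principal leaf $\mathfrak{A}(q)$, being a $0$-dimensional Lipschitz admissible submanifold of $\mathbb{E}$, consists of a single point for each $q$; call it $v^{*}_{q}(0)$. The amenable complete trajectory through it is $v^{*}_{q}(\cdot)$. To set this up rigorously, I first need to produce an amenable trajectory so that \textbf{(PL)} holds: this is exactly where the hypothesis about the compact forward trajectory enters. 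By Lemma \ref{LEM: ApSectionSemiBounded}, compactness of $t\mapsto\psi^{t}(q_{0},v_{0})$ yields for every $q\in\mathcal{Q}$ a bounded (indeed compact-valued) complete trajectory $w^{*}_{q}(\cdot)$ over $q$; since $\nu^{-}>0$, any bounded complete trajectory is amenable (the integral $\int_{-\infty}^{0}e^{2\nu(s;q)}|w^{*}_{q}(s)|^{2}_{\mathbb{H}}\,ds$ converges because $\nu(s;q)\le\nu^{-}s\to-\infty$). This simultaneously verifies \textbf{(PL)} and \textbf{(BA)} (with the uniform bound $M_{b}$ coming from the compact set $\mathcal{K}$).

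Next I would invoke the already-proved theorems. Theorem \ref{TH: PrincipalLeaveConstruction} gives that $\mathfrak{A}(q)=[w^{*}_{q}(\cdot)](0)$ is a $C_{Lip}(q)$-Lipschitz $\varkappa_{0}$-admissible set over $q$ which, being $0$-dimensional, is a single point $v^{*}_{q}(0)$; moreover $\psi^{t}(q,\cdot)$ maps $\mathfrak{A}(q)$ onto $\mathfrak{A}(\vartheta^{t}(q))$, so the family $\{v^{*}_{q}(\cdot)\}$ is coherent, i.e. $\psi^{t}(q,v^{*}_{q}(0))=v^{*}_{\vartheta^{t}(q)}(t)$ and $v^{*}_{q}(\cdot)$ is a genuine complete trajectory. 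Theorem \ref{TH: ContinuousDependenceFibres} (applicable since $\alpha^{-}=\nu^{-}>0$, $\nu_{0}(\cdot)$ continuous, and \textbf{(ACOM)}, \textbf{(ULIP)}, \textbf{(BA)} hold) yields that $q\mapsto v^{*}_{q}(0)$ is continuous. The exponential stability estimate \eqref{EQ: ExponentialStabilityAPcocycles} comes from the exponential tracking theorem: here I would use that with $j=0$ the whole space is a single vertical leaf, so every $v_{0}\in\mathbb{E}$ is positively equivalent over $q$ to $v^{*}_{q}(0)$; thus $\textbf{(H3)}^{+}_{w}$ in the form of \textbf{(H3)} with $\beta_{0}=\nu_{0}$ applies and Theorem \ref{TH: ExpTrackingTheorem} (or directly the Squeezing Lemma \ref{LEM: SqueezingPrinciple} together with \textbf{(ULIP)}) gives $\|\psi^{t}(q,v_{0})-\psi^{t}(q,v^{*}_{q}(0))\|_{\mathbb{E}}\le M e^{\nu(t;q)}\|v_{0}-v^{*}_{q}(0)\|_{\mathbb{E}}$, with $M$ uniform because the relevant constants ($C^{-1}_{q}$, $(1-\varkappa_{0})^{-1}$, $M_{\Pi}=0$) are uniformly bounded over the minimal base $\mathcal{Q}$ by Corollary \ref{COR: OrtProjectosBounded} under \textbf{(CD)}.

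It remains to upgrade "bounded complete trajectory, continuous in $q$" to "almost periodic trajectory". This is the step I expect to be the main obstacle, since none of the earlier results address almost periodicity per se. The idea is standard for minimal almost periodic flows: define $\Phi\colon\mathcal{Q}\to\mathbb{E}$ by $\Phi(q):=v^{*}_{q}(0)$, which is continuous and satisfies the equivariance $\psi^{t}(q,\Phi(q))=\Phi(\vartheta^{t}(q))$. Then $v^{*}_{q}(t)=\Phi(\vartheta^{t}(q))$, so $v^{*}_{q}(\cdot)$ is the pullback along $t\mapsto\vartheta^{t}(q)$ of a continuous function on a compact minimal almost periodic flow; such pullbacks are Bohr almost periodic. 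Concretely, given $\varepsilon>0$, uniform continuity of $\Phi$ on the compact $\mathcal{Q}$ produces $\delta>0$ such that $\operatorname{dist}_{\mathcal{Q}}(q_{1},q_{2})<\delta$ implies $\|\Phi(q_{1})-\Phi(q_{2})\|_{\mathbb{E}}<\varepsilon$; almost periodicity of $\vartheta$ then gives a relatively dense set of $\varepsilon'$-almost periods of the flow (for a suitable $\varepsilon'$ matching $\delta$), which are automatically $\varepsilon$-almost periods of $v^{*}_{q}(\cdot)$. Uniqueness of the almost periodic trajectory follows from Lemma \ref{LEM: AmenableTrajectories}: two amenable trajectories over the same $q$ lie in the same pseudo-ordering class, and since $j=0$ that class is a singleton — alternatively, the difference of any two bounded complete trajectories decays both forward and backward by \eqref{EQ: ExponentialStabilityAPcocycles} and its analogue, forcing them to coincide. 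I would also record that \eqref{EQ: ExponentialStabilityAPcocycles} with $\nu^{+}<0$ would give exponential attraction, but here only $\nu^{-}>0$ is assumed, so the estimate is stated as written; the almost periodicity, not decay, is the content.
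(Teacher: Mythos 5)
Your proof takes essentially the same route as the paper: Lemma~\ref{LEM: ApSectionSemiBounded} provides a bounded complete trajectory over each $q$, hence \textbf{(BA)} and \textbf{(PL)}; the principal-leaf machinery (Theorems~\ref{TH: PrincipalLeaveConstruction} and~\ref{TH: ContinuousDependenceFibres}) yields the unique amenable trajectory $v^{*}_{q}(\cdot)$ and its continuous dependence on $q$; \textbf{(CD)} with Corollary~\ref{COR: OrtProjectosBounded} gives \textbf{(PROJ)} and the uniform bound on $C^{-1}_q$ over compact $\mathcal{Q}$; and the exponential tracking theorem supplies the quantitative estimate. Your explicit argument for almost periodicity --- the equivariance $\psi^{t}(q,\Phi(q))=\Phi(\vartheta^{t}(q))$ combined with uniform continuity of $\Phi$ on the compact base --- is a correct direct verification; the paper cites Bochner's criterion instead, but these are two equivalent standard routes, and your version is arguably clearer about where uniqueness enters.

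One remark needs correction. Your closing sentence, that with ``only $\nu^{-}>0$ assumed \ldots{} the almost periodicity, not decay, is the content,'' reveals a misreading caused by a sign typo in the paper's displayed estimate \eqref{EQ: ExponentialStabilityAPcocycles}. That estimate is obtained from \eqref{EQ: ExponentialTrackingEstimate}, which reads $\ldots\, e^{-\beta(t;q)}$ with a minus sign; here $\beta=\nu$, so the correct form is $\ldots\, e^{-\nu(t;q)}$. Since $\nu^{-}>0$ gives $\nu(t;q)\geq \nu^{-}t\to+\infty$, the factor $e^{-\nu(t;q)}$ decays exponentially, and $v^{*}_{q}(\cdot)$ is globally exponentially attracting. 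Thus the exponential attraction \emph{is} genuinely part of the theorem's content, and the hypothesis $\nu^{-}>0$ is precisely what ensures it; reasoning backward from the typo'd printed inequality to deny this leads you to the opposite of what your own derivation from Theorem~\ref{TH: ExpTrackingTheorem} establishes.
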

\begin{proof}
	From Lemma \ref{LEM: ApSectionSemiBounded} we get that there exists a bounded complete trajectory $v^{*}_{q}(\cdot)$ over each $q \in \mathcal{Q}$. In particular, we have \textbf{(BA)} satisfied. Moreover, since $\nu^{-}>0$ and $j=0$, such a trajectory is unique (see Lemma \ref{LEM: AmenableTrajectories}) and the uniqueness guarantees its almost-periodicity by a standard argument via the Bochner theorem (see Section 1.2 in \cite{LevitanZhikov1982}). Since \textbf{(CD)} is satisfied, Corollary \ref{COR: OrtProjectosBounded} implies that \textbf{(PROJ)} holds. Moreover, the continuity of $\nu_{0}(\cdot)$ and Theorem \ref{TH: ContinuousDependenceFibres} guarantee that $v^{*}_{q}(0)$ depend continuously on $q \in \mathcal{Q}$.
	
	Using Theorem \ref{TH: ExponentialTrackingIM} and \eqref{EQ: ExponentialTrackingEstimate}, we get \eqref{EQ: ExponentialStabilityAPcocycles} since $C^{-1}_{q}$ are uniformly bounded from above due to the compactness of $\mathcal{Q}$. The proof is finished.
\end{proof}

Conditions \textbf{(H1)}, \textbf{(H2)} with $j=0$ and \textbf{(H3)} can be considered as an abstract generalization of the strong monotonicity property, which is well-known in the theory of almost periodic differential equations (see the monographs of B.M.~Levitan and V.V.~Zhikov \cite{LevitanZhikov1982} or A.A.~Pankov). Namely, one can think that these conditions guarantee the existence of a family of positive-definite (not necessarily coercive!) bilinear forms that provide the strong monotonicity. Thus, the abstract context of Theorem \ref{TH: AlmostPeriodicityTh} unifies and extends some classical results of B.P.~Demidovich \cite{PavlovTributeDemidovich2004}, V.A.~Yakubovich \cite{Yakubovich1964,Yakubovich1988Periodic}, B.M.~Levitan and V.V.~Zhikov \cite{LevitanZhikov1982}, A.A.~Pankov \cite{Pankov1990} and their further developments. 

In Section \ref{SEC: Applications} we will provide the so-called frequency conditions for the existence of a constant operator $P$ and a constant exponent $\nu_{0}$ in the case of particular classes of equations. Our main tool is the Frequency Theorem \cite{Anikushin2020FreqDelay,Anikushin2020FreqParab}. Moreover, V.A.~Yakubovich was first who considered concrete realizations of Theorem \ref{TH: AlmostPeriodicityTh} in the case when the operators $P(q)$ depend on $q$. Namely, in \cite{Yakubovich1988Periodic} he studied a certain class of periodic ODEs via the Frequency Theorem for periodic systems in $\mathbb{R}^{n}$.

In some papers (see, for example, the one by N.~Yu.~Kalinin and V.~Reitmann \cite{KalininReitmann2012}) there were problems, concerning with generalizations of the mentioned classical results to the infinite-dimensional context, caused by the lack of coercivity of bilinear forms. As our Theorem \ref{TH: AlmostPeriodicityTh} shows, more accurate considerations allow to avoid unnecessary additional constructions (as in \cite{KalininReitmann2012}) and to obtain more concrete and natural results.

Even in the simplest case described in Theorem \ref{TH: AlmostPeriodicityTh}, almost periodic minimal sets in the corresponding skew-product flow or their projections into $\mathbb{E}$ (uniform attractors) may have complicated fractal structure due to some nonregularity of the driving system $(\mathcal{Q},\vartheta)$ and/or number-theoretic phenomena \cite{Anikushin2019Liouv,AnikushinReitRom2019}. We also do not know how to generalize the quantitative results \cite{Anikushin2019Liouv,AnikushinReitRom2019}, which are highly based on the coercivity of the quadratic forms $P(q)$ in finite-dimensions, for infinite-dimensional systems.

Now it will be convenient in this section to speak in terms of the skew-product semiflow $\pi^{t} \colon \mathcal{Q} \times \mathbb{E} \to \mathcal{Q} \times \mathbb{E}$ associated with the cocycle $\psi$ as $\pi^{t}(q,v)=(\vartheta^{t}(q),\psi^{t}(q,v))$ for $t \geq 0$, $q \in \mathcal{Q}$ and $v \in \mathbb{E}$.

The following theorem generalizes the result of W.~Shen and Y.~Yi \cite{ShenYi1998AADyn} for scalar almost periodic ODEs and scalar parabolic equations in one-dimensional domains.
\begin{theorem}
	\label{TH: AAdynamicsInAPcocycles}
	Let the cocycle satisfy \textbf{(H1)} w.~r.~t. a constant operator $P$ and a constant decomposition $\mathbb{E}=\mathbb{E}^{+} \oplus \mathbb{E}^{-}$, \textbf{(H2)} with $j=1$, \textbf{(H3)} with $\nu_{0}(\cdot)$ being continuous and $\nu^{-} > 0$, \textbf{(ACOM)} with $\gamma^{+}=\nu^{+}$ and \textbf{(ULIP)}. Then the $\omega$-limit set (w. r. t. $\pi$) of any point with a bounded positive semi-orbit consists of at most $2$ minimal sets. Moreover, every compact minimal set of $\pi$ is almost automorphic.
\end{theorem}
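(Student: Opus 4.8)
\textbf{Proof proposal for Theorem \ref{TH: AAdynamicsInAPcocycles}.}

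The plan is to reduce the problem to the classical Poincar\'{e}-Bendixson-type dichotomy for one-dimensional ordered dynamics by using the rank-one cone $\mathcal{C}_{q} = \{ v \in \mathbb{E} \ | \ V_{q}(v) \leq 0 \}$ provided by \textbf{(H1)}, \textbf{(H2)} with $j=1$. Since the operator $P$ and the splitting are constant, the two convex halves $\mathcal{C}^{+}$ and $-\mathcal{C}^{+}$ of $\mathcal{C}$ are well-defined and independent of $q$, and the relation $v_{1} \preceq v_{2}$ iff $v_{2}-v_{1} \in \mathcal{C}^{+}$ is a genuine partial order on $\mathbb{E}$ (convexity gives transitivity, as noted before Subsection \ref{SUBSEC: HorizontalLeaves}). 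First I would record that, by Lemma \ref{LEM: MonotonicityPrinciple} (Cone Invariance), this order is strongly preserved: if $v_{1}$ and $v_{2}$ are pseudo-ordered over $q$ (equivalently, comparable in $\preceq$ up to sign), then $\psi^{t}(q,v_{1})$ and $\psi^{t}(q,v_{2})$ are strictly pseudo-ordered over $\vartheta^{t}(q)$ for $t \geq \tau_{V}$. Hence the skew-product semiflow $\pi$ is eventually strongly monotone with respect to the constant cone $\mathcal{C}^{+}$ fibered over $\mathcal{Q}$, which is precisely the setting in which the Shen--Yi theory of monotone almost periodic skew-product semiflows applies.

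Second, I would invoke the machinery built in Section \ref{SEC: ConstructionIM}: under \textbf{(H3)} with $\nu^{-}>0$, \textbf{(ACOM)} with $\gamma^{+}=\nu^{+}$, \textbf{(ULIP)} (and \textbf{(PROJ)}, which here follows from constancy of $P$), Theorem \ref{TH: ExponentialTrackingIM} gives, over every $q$, a two-dimensional (since $j=1$, $\dim\mathbb{E}^{-}(q)=1$... wait: the principal leaf $\mathfrak{A}(q)$ is $\operatorname{dim}\mathbb{E}^{-}(q)=1$-dimensional) principal leaf $\mathfrak{A}(q)$ which attracts every trajectory at exponential rate with exponential tracking, and the central projector $\Pi^{c}_{q}$ is continuous. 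Thus any point with bounded positive semiorbit is exponentially shadowed by a trajectory lying on the one-dimensional slow manifold $\mathfrak{A}$; consequently its $\omega$-limit set coincides with the $\omega$-limit set of the shadowing trajectory, and the whole asymptotic analysis takes place inside the flow $\pi|_{\mathfrak{A}}$ (which is a genuine flow by Theorem \ref{TH: InvertibilityTheoremBA} once \textbf{(BA)} is secured via Lemma \ref{LEM: ApSectionSemiBounded} applied to a point with bounded semiorbit). On $\mathfrak{A}$, fibers are order intervals in a rank-one cone, so $\pi|_{\mathfrak{A}}$ is a one-dimensional strongly monotone almost periodic flow.

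Third, for such flows the Shen--Yi structure theorem states: the $\omega$-limit set of any bounded semiorbit is either a single minimal set or consists of exactly two minimal sets (linearly ordered), and every compact minimal set is almost automorphic; I would quote this (their monograph, already cited as \cite{ShenYi1998AADyn}) after checking that its hypotheses --- a minimal almost periodic base, a strongly monotone skew-product semiflow with relatively compact orbit closures --- are met. Compactness of orbit closures in $\mathfrak{A}$ comes from \textbf{(ACOM)} together with boundedness; strong monotonicity on $\mathfrak{A}$ comes from the strict pseudo-ordering in Lemma \ref{LEM: MonotonicityPrinciple} restricted to the admissible leaf, where $\Pi_{q}$ is a homeomorphism onto the one-dimensional $\mathbb{E}^{-}(q)$ and therefore converts $\prec$ into the usual order on $\mathbb{R}$. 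The main obstacle I anticipate is exactly this last verification: one must confirm that the abstract pseudo-ordering on $\mathfrak{A}(q)$ genuinely induces a $C^{0}$ strong order relation compatible with the skew-product structure in the precise technical sense demanded by the Shen--Yi framework (in particular that order intervals have nonempty interior in the fiber topology, which here is automatic since the fibers are one-dimensional, but the fiberwise continuity of the order cone must be tracked through the central projector). Once that compatibility is in place, the theorem follows directly.
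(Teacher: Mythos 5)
The proposal is correct and matches what the paper indicates: the paper provides no proof here, deferring to the companion reference \cite{Anikushin2021DiffJ}, whose very title (``Almost automorphic dynamics in almost periodic cocycles with one-dimensional inertial manifolds'') is precisely your strategy --- reduce via exponential tracking to the one-dimensional inertial manifold, on which the rank-one constant cone gives a total order, and invoke Shen--Yi's structure theory for monotone skew-product semiflows over a minimal almost periodic base. The concern you flagged at the end (verifying compatibility with the Shen--Yi technical framework) is the right thing to check but closes without difficulty: constancy of $P$ makes \textbf{(PROJ)} and \textbf{(CD)} automatic, $\Pi_{q}$ is an order-isomorphism of $\mathfrak{A}(q)$ onto the one-dimensional $\mathbb{E}^{-}$, strong monotonicity on $\mathfrak{A}$ follows from the strict negative pseudo-ordering of admissible sets preserved by Theorem \ref{TH: PrincipalLeaveConstruction}, and precompactness of bounded semiorbits (needed both for Lemma \ref{LEM: ApSectionSemiBounded} to secure \textbf{(BA)} and for Shen--Yi's hypotheses) follows from \textbf{(ACOM)}.
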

Some details of the proof are given in \cite{Anikushin2021DiffJ}.

\subsection{Convergence and Lyapunov stability in periodic cocycles}
\label{SUBSEC: ConvergencePerCoc}
Here we suppose that $(\mathcal{Q},\vartheta)$ is a minimal $\sigma$-periodic flow.

\begin{theorem}
	\label{TH: PerCocyclesConvergence}
	Let the cocycle satisfy \textbf{(H1)} w.~r.~t. a constant operator $P$ and a constant decomposition $\mathbb{E}=\mathbb{E}^{+} \oplus \mathbb{E}^{-}$,\textbf{(H2)} with $j = 1$, \textbf{(H3)} with $\nu_{0}(\cdot)$ being continuous and $\nu^{-} > 0$, \textbf{(ACOM)} with $\gamma^{+}=\nu^{+}$ and \textbf{(ULIP)}. Then any bounded in the future trajectory $\mathbb{R}_{+} \ni t \mapsto \psi^{t}(q,v_{0})$ converges to a $\sigma$-periodic trajectory.
\end{theorem}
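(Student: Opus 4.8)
The plan is to apply the Poincaré--Bendixson-type machinery developed from the inertial manifold construction for rank-one cones. First I would record the structural consequences of the standing hypotheses: since $j=1$, \textbf{(H3)} with $\nu^{-}>0$, \textbf{(ACOM)} with $\gamma^{+}=\nu^{+}$ and \textbf{(ULIP)} hold, Theorem \ref{TH: PrincipalLeaveConstruction} applies provided \textbf{(PL)} and \textbf{(PROJ)} are verified. Here \textbf{(PROJ)} is automatic because $P$ is constant (so the $V$-orthogonal projector $\Pi$ is a single bounded operator), and \textbf{(PL)} follows for the given trajectory: the bounded-in-the-future orbit $t\mapsto\psi^{t}(q,v_{0})$, together with \textbf{(ACOM)}, produces via a pullback/limit argument (as in Lemma \ref{LEM: GeneralCompactnessLemma}, or the argument of Lemma \ref{LEM: ApSectionSemiBounded}) a bounded complete trajectory over $q$, which is amenable since $\nu^{-}>0$. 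Thus the principal leaf $\mathfrak{A}(q)$ exists, is a $1$-dimensional Lipschitz (indeed continuous-in-$q$, by Theorem \ref{TH: ContinuousDependenceFibres}, once we note \textbf{(CD)} is trivial for constant $P$ and \textbf{(BA)} follows from the bounded complete trajectory just produced) submanifold of $\mathbb{E}$, and by Theorem \ref{TH: ExponentialTrackingIM} the given trajectory is exponentially tracked by a trajectory lying on $\mathfrak{A}$.

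Next I would reduce the problem to the dynamics on the inertial manifold. Let $v^{*}_{0}\in\mathfrak{A}(q)$ be the point from Theorem \ref{TH: ExponentialTrackingIM} with $v^{*}_{0}\in[v_{0}]^{+}(q)$, so $\|\psi^{t}(q,v_{0})-\psi^{t}(q,v^{*}_{0})\|_{\mathbb{E}}\to 0$ exponentially. Hence the $\omega$-limit set of $v_{0}$ (in the skew-product semiflow $\pi$) equals that of $v^{*}_{0}$, and it suffices to show the orbit through $v^{*}_{0}$ on the invariant set $\mathfrak{A}=\bigcup_{q}\{q\}\times\mathfrak{A}(q)$ converges to a $\sigma$-periodic trajectory. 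On $\mathfrak{A}$ the skew-product flow is invertible (Theorem \ref{TH: InvertibilityTheoremBA}), and over the $\sigma$-periodic base $\mathcal{Q}$ the fibre $\mathfrak{A}(q)$ is a one-dimensional manifold homeomorphic (via $\Pi$) to $\mathbb{E}^{-}$, i.e. to $\mathbb{R}$. So the dynamics on $\mathfrak{A}$, read through the chart $\Pi$, is a flow on (a subset of) $\mathcal{Q}\times\mathbb{R}$ over the periodic circle $\mathcal{Q}$ --- effectively a scalar periodic ODE. For such a system the Poincaré return map $\mathcal{P}$ (advance by period $\sigma$) is an orientation-preserving homeomorphism of an interval of $\mathbb{R}$ onto its image, so it is monotone; a bounded forward orbit of a monotone interval map converges to a fixed point, which corresponds to a $\sigma$-periodic trajectory of $\pi$, hence of the cocycle.

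The step I expect to be the main obstacle is making precise that the return map $\mathcal{P}$ on the one-dimensional fibre is genuinely monotone and that the relevant forward orbit is bounded and relatively compact in the fibre. Monotonicity should come from the rank-one cone structure: for $j=1$ the cone $\mathcal{C}_{q}=\{V_{q}\leq 0\}$ has two convex halves and (as remarked after Lemma \ref{LEM: SqueezingPrinciple}) defines a genuine partial order on $\mathbb{E}$; the strong cone invariance (Lemma \ref{LEM: MonotonicityPrinciple}) then forces $\psi^{\sigma}(q,\cdot)$ restricted to $\mathfrak{A}(q)$ to preserve this order, so $\mathcal{P}$ is monotone in the chart. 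Relative compactness of the forward orbit on $\mathfrak{A}$ follows from boundedness plus \textbf{(ACOM)} (the orbit is asymptotically compact), so the $\omega$-limit set is nonempty, compact, connected and invariant; combined with monotonicity of $\mathcal{P}$ it must be a single periodic orbit. I would assemble these pieces carefully, citing the relevant lemmas above, and conclude that the original trajectory converges to a $\sigma$-periodic trajectory, with the convergence rate inherited from the exponential tracking estimate \eqref{EQ: ExponentialTrackingEstimate} for the ``fast'' part and from the one-dimensional convergence on $\mathfrak{A}$ for the ``slow'' part.
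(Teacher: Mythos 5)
Your plan matches the paper's proof essentially step for step: establish \textbf{(BA)}, \textbf{(PL)}, \textbf{(PROJ)} (the last trivially, since $P$ is constant), pass via the exponential tracking estimate to the one-dimensional principal leaf $\mathfrak{A}(q)\cong\mathbb{R}$, and conclude from monotonicity of the Poincar\'e map $T=\psi^{\sigma}(q,\cdot)$ on $\mathfrak{A}(q)$ that the reduced orbit converges to a fixed point, hence the original trajectory to a $\sigma$-periodic trajectory. The one place you differ in emphasis is the justification of monotonicity of $T$: you extract it from the rank-one cone's two convex halves together with cone invariance along pairs of amenable trajectories (a connectivity argument in $\mathcal{C}\setminus\{0\}$), whereas the paper appeals to the triviality of the bundle $\mathfrak{A}\cong\mathcal{Q}\times\mathbb{R}$ --- available because $P$ and the splitting are constant --- and the resulting orientation-preservation of the fibre maps; your cone-order argument is the more explicit of the two and in fact fills in what the paper dismisses as ``easy to see.''
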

\begin{proof}
	Let $v_{0} \in \mathbb{E}$, $q \in \mathcal{Q}$ and $v(t):=\psi^{t}(q,v_{0})$ be a bounded trajectory. By Lemma \ref{LEM: ApSectionSemiBounded} we have \textbf{(BA)} satisfied. Let $v^{*}(t)=\psi^{t}(q,v^{*}_{0})$ be the amenable trajectory over $q$ given by Theorem \ref{TH: ExponentialTrackingIM}. Since $\nu^{-}>0$, we have $\|v(t)-v^{*}(t) \|_{\mathbb{E}} = O(e^{-\nu(t;q)})$ as $t \to +\infty$. Let us consider the Poincar\'{e} map $T \colon \mathfrak{A}(q) \to \mathfrak{A}(q)$ given by $T w := \psi^{\sigma}(q,w)$ for $w \in \mathfrak{A}(q)$. Since $\mathfrak{A}(q)$ is homeomorphic to $\mathbb{R}$, we may assume that $\mathfrak{A}(q)$ is endowed with the natural order induced from $\mathbb{R}$ by the homeomorphism (which is provided by $\Pi_{q}$). Since $\mathfrak{A} = \bigcup_{q \in \mathcal{Q}} \mathfrak{A}(q)$ is homeomorphic to $\mathcal{Q} \times \mathbb{R}$, it is easy to see that the sequence of iterates $T^{k}(v^{*}_{0})$, $k=1,2,\ldots$, either stationary (and thus $v^{*}(\cdot)$ is $\sigma$-periodic) or monotone and bounded (since $v(\cdot)$ is bounded). Thus, there exists a limiting trajectory $w^{*}(\cdot)$ over $q$, which is, clearly, $\sigma$-periodic. It is easy to see that $v^{*}(t)-w^{*}(t) \to 0$ and, consequently, $v(t) - w^{*}(t) \to 0$ as $t \to +\infty$. The proof is finished.
\end{proof}

\begin{exercise}
	In terms of Theorem \ref{TH: PerCocyclesConvergence} show that if the operators $P(q)$ and the decompositions $\mathbb{E} = \mathbb{E}^{+}(q) \oplus \mathbb{E}^{-}(q)$ from \textbf{(H1)} are not constant and \textbf{(CD)} is satisfied, then the convergence holds to $2 \sigma$-periodic trajectories. This doubling of the period is caused by the fact that the bundle $\mathfrak{A} = \bigcup_{q \in \mathcal{Q}} \mathfrak{A}(q)$ may be homeomorphic to a M\"{o}bius strip (see \cite{Anikushin2021DiffJ}).
\end{exercise}

We say that a closed bounded set $\mathcal{S} \subset \mathbb{E}$ is a \textit{sink} for the cocycle if there is an open set $\mathcal{U} \subset \mathbb{E}$ (called a neighborhood of $\mathcal{S}$) such that $\mathcal{S} \subset \mathcal{U}$ and for every $v_{0} \in \mathcal{U}$ and $q \in \mathcal{Q}$ there exists $t_{0}=t_{0}(q,v_{0})$ such that $\psi^{t}(q,v_{0}) \in \mathcal{S}$ for all $t \geq t_{0}$.

\begin{theorem}
	\label{TH: PeriodicCocyclesStability}
	Let the cocycle satisfy \textbf{(H1)} w.~r.~t. a constant operator $P$ and a constant decomposition $\mathbb{E}=\mathbb{E}^{+} \oplus \mathbb{E}^{-}$,\textbf{(H2)} with $j = 1$, \textbf{(H3)} with $\nu_{0}(\cdot)$ being continuous and $\nu^{-} > 0$, \textbf{(ACOM)} with $\gamma^{+}=\nu^{+}$, \textbf{(ULIP)} and \textbf{(CD)}. Suppose there exists a sink $\mathcal{S}$. Then there exists at least one $\sigma$-periodic trajectory, which is Lyapunov stable.
\end{theorem}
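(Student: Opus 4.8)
The plan is to combine the convergence result of Theorem~\ref{TH: PerCocyclesConvergence} with a topological fixed-point argument on the one-dimensional principal leaf. First I would note that \textbf{(CD)} together with the compactness of $\mathcal{Q}$ (it is minimal $\sigma$-periodic, hence compact) gives \textbf{(PROJ)} via Corollary~\ref{COR: OrtProjectosBounded}, and that \textbf{(BA)} follows from the existence of the sink $\mathcal{S}$ by applying Lemma~\ref{LEM: ApSectionSemiBounded}: any orbit eventually enters the bounded closed set $\mathcal{S}$, so it lies in a compact set (using \textbf{(ACOM)} to upgrade boundedness of the orbit to precompactness of its closure, or simply working with the $\omega$-limit set inside $\mathcal{S}$). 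Thus all hypotheses of Theorems~\ref{TH: ContinuousDependenceFibres}, \ref{TH: ExponentialTrackingIM} and \ref{TH: PerCocyclesConvergence} are available, and the bundle $\mathfrak{A} = \bigcup_{q \in \mathcal{Q}} \mathfrak{A}(q)$ is defined; since $j=1$ with a \emph{constant} decomposition, $\mathfrak{A}(q)$ is homeomorphic to $\mathbb{R}$ via $\Pi_q$, and $\mathfrak{A}$ is homeomorphic to $\mathcal{Q}\times\mathbb{R}$ (no M\"obius twist, because $P$ is constant).

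Next I would produce a periodic trajectory inside the sink. Fix $q_0 \in \mathcal{Q}$; the Poincar\'e map $T\colon \mathfrak{A}(q_0) \to \mathfrak{A}(q_0)$, $Tw := \psi^{\sigma}(q_0,w)$, is well-defined (item~2 of Theorem~\ref{TH: PrincipalLeaveConstruction}), continuous, and order-preserving on $\mathfrak{A}(q_0)\cong\mathbb{R}$ with its induced order (monotonicity is the standard consequence of the one-dimensional cone structure, as used in the proof of Theorem~\ref{TH: PerCocyclesConvergence}). By Theorem~\ref{TH: ExponentialTrackingIM}, for any $v_0$ in the neighborhood $\mathcal{U}$ of $\mathcal{S}$ the point $v_0^* := \Pi^c_{q_0}(v_0) \in \mathfrak{A}(q_0)$ tracks $v_0$ exponentially; since $\psi^t(q_0,v_0) \in \mathcal{S}$ eventually and $\mathcal{S}$ is bounded, the forward orbit of $v_0^*$ under $T$ is bounded in $\mathfrak{A}(q_0)$. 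A bounded monotone sequence of iterates of a continuous order-preserving self-map of $\mathbb{R}$ converges to a fixed point of $T$; that fixed point is the initial value $w^*(0)$ of a $\sigma$-periodic trajectory $w^*(\cdot)$ over $q_0$, and by the cocycle property $q \mapsto \psi^{\cdot}(q, w^*_q(0))$ gives a $\sigma$-periodic trajectory over every $q$ (one has to check $w^*$ stays in $\mathcal{S}$, which follows because $\mathcal{S}$ is forward-absorbing).

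Finally I would prove Lyapunov stability of this periodic trajectory $w^*(\cdot)$. Here the exponential tracking estimate \eqref{EQ: ExponentialTrackingEstimate} is the key tool, exactly as in Theorem~\ref{TH: StabilityOfInvariantSets}: for $v_0$ near $w^*_q(0)$ we have $\|\psi^t(q,v_0) - \psi^t(q, v_0^*)\|_{\mathbb{E}} \le R \cdot \operatorname{dist}(v_0,\mathfrak{A}(q))\, e^{-\nu(t;q)} \to 0$, so it suffices to control the ``slow'' part $\psi^t(q,v_0^*)$ on $\mathfrak{A}(q)$. The hard part will be showing that $w^*(\cdot)$ is Lyapunov stable \emph{within} the leaf --- i.e.\ that on the one-dimensional ordered set $\mathfrak{A}(q)$ the periodic orbit is stable under the flow. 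This is a one-dimensional dynamics argument: among all $\sigma$-periodic points of $T$ in $\mathfrak{A}(q_0)$, the monotonicity of $T$ forces the dynamics between consecutive fixed points to be monotone, and one picks $w^*(0)$ to be a fixed point that is approached from at least one side (a ``semistable'' or stable fixed point of $T$); combined with the periodicity of the driving flow and continuity in $q$ (Theorem~\ref{TH: ContinuousDependenceFibres}, ensuring the bundle $\mathfrak{A}$ and the order vary continuously), this yields stability of the full periodic orbit in $\mathcal{Q}\times\mathbb{E}$. I expect the delicate point to be organizing the order structure so that a genuinely Lyapunov stable (not merely semistable) $\sigma$-periodic orbit is selected --- if the sink forces orbits in from both sides one gets a stable fixed point of $T$ directly, and if only from one side one may need to invoke that $\mathcal S$ is a sink for \emph{all} $q$ simultaneously to rule out a drift; I would handle this by working on the compact absorbing set $\bigcap_{t\ge 0}\overline{\psi^t(\mathcal{Q},\mathcal{U})}\cap\mathfrak{A}$ and taking an extremal fixed point of $T$ on it.
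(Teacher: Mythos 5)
Your framework is the right one and matches the paper's strategy at a high level: reduce to the Poincar\'{e} map $T$ on the ordered one-dimensional leaf $\mathfrak{A}(q)$, get a fixed point by monotone iteration, then upgrade leaf-stability to full Lyapunov stability via the exponential tracking estimate, i.e. via Theorem~\ref{TH: StabilityOfInvariantSets}. However, the step you explicitly flag as delicate --- selecting a fixed point of $T$ that is genuinely stable from \emph{both} sides, not merely semistable --- is exactly where your proposal leaves a real gap, and the heuristics you offer (``take an extremal fixed point on the compact absorbing set'', ``invoke that $\mathcal{S}$ is a sink for all $q$ to rule out a drift'') do not close it. Taking, say, $v^{*}_{-} := \inf$ of the periodic points in $\mathcal{S}\cap\mathfrak{A}(q)$ gives a point stable from below (any orbit starting below it in $\mathcal{U}\cap\mathfrak{A}(q)$ must, by the sink property and the convergence Theorem~\ref{TH: PerCocyclesConvergence}, converge to a periodic orbit, and none lies below $v^{*}_{-}$), but nothing forces $v^{*}_{-}$ to be stable from above: the interval just above it may satisfy $T(x)>x$, in which case orbits drift upward to the next fixed point. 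A single extremal choice therefore does not suffice.

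The paper resolves this with a two-stage extremal argument that you should adopt. First set $\Gamma$ to be the (compact) set of $\sigma$-periodic points in $\mathcal{S}\cap\mathfrak{A}(q)$ and $v^{*}_{-}:=\inf\Gamma$; as above, $v^{*}_{-}$ is stable from below. Next define $\Gamma^{-}$ to be the set of all points of $\mathcal{S}\cap\mathfrak{A}(q)$ that are stable from below in this sense --- it is nonempty (it contains $v^{*}_{-}$) and compact --- and set $v^{*}_{+}:=\sup\Gamma^{-}$. Since $v^{*}_{+}\in\Gamma^{-}$ it is stable from below, and the mirror-image of the earlier argument shows it is also stable from above: if some interval $(v^{*}_{+},v^{*}_{+}+\varepsilon)$ had $T(x)>x$, those orbits would converge upward to a periodic point $b>v^{*}_{+}$, and that $b$ would be stable from below, contradicting the definition of $v^{*}_{+}$. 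Thus $v^{*}_{+}$ is amenably Lyapunov stable, and Theorem~\ref{TH: StabilityOfInvariantSets} (which you correctly identified as the bridge) finishes the proof. So your outline is correct but incomplete precisely at the step you suspected; the $\inf$-then-$\sup$ construction over one-sidedly stable points is the missing idea.
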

\begin{proof}
	Let $q \in \mathcal{Q}$. From our assumptions and Theorem \ref{TH: PerCocyclesConvergence} there exist at least one $\sigma$-periodic trajectory over $q$. Let $\Gamma \subset \mathfrak{A}(q)$ denote the set of all $v_{0} \in \mathcal{S}$ such that $t \mapsto \psi^{t}(q,v_{0})$ is $\sigma$-periodic. Clearly, $\Gamma$ is compact. Let $v^{*}_{-}:= \inf \Gamma$ (w.~r.~t. the induced order on $\mathfrak{A}(q)$ as in the proof of Theorem \ref{TH: PerCocyclesConvergence}). It is easy to see that $v^{*}_{-}$ is stable w.~r.~t. perturbations from $\mathfrak{A}(q) \cap \mathcal{U}$ from below, i.~e. for every $v_{0} \in \mathfrak{A}(q) \cap \mathcal{U}$ such that $v_{0} < v^{*}$ we have that $\psi^{t}(q,v_{0}) - \psi^{t}(q,v^{*}_{-}) \to 0$ as $t \to +\infty$. Indeed, otherwise there should exists a limiting $\sigma$-periodic trajectory $w^{*}(\cdot)$ over $q$ with the property $w^{*}(0) < v^{*}_{-}$ that contradicts to the construction of $v^{*}_{-} \in \Gamma$. Thus, the set $\Gamma^{-}$ of all points $v_{0} \in \mathfrak{A}(q) \cap \mathcal{S}$ that are stable from below in $\mathfrak{A}(q) \cap \mathcal{U}$ (in the above given sense), is not empty and it is also compact. Let $v^{*}_{+} := \sup \Gamma^{-}$. Then analogous arguments show that $v^{*}_{+}$ is stable from above in $\mathfrak{A}(q) \cap \mathcal{U}$ and, consequently, it is amenably Lyapunov stable. The Lyapunov stability now follows from Theorem \ref{TH: StabilityOfInvariantSets} and the proof is finished.
\end{proof}

We left the following lemma as an exercise for the reader.
\begin{lemma}
	Under the hypotheses of Theorem \ref{TH: PerCocyclesConvergence} any isolated Lyapunov stable $\sigma$-periodic trajectory is asymptotically Lyapunov stable.
\end{lemma}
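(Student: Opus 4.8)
The plan is to collapse the whole statement to the one–dimensional dynamics of the Poincaré map on the principal leaf, and then to transfer the conclusion back via Theorem~\ref{TH: StabilityOfInvariantSets}. First I would record that all required hypotheses are available. Since $(\mathcal{Q},\vartheta)$ is $\sigma$-periodic, $\vartheta^{\sigma}=\operatorname{id}$; \textbf{(H3)} with the constant operator $P$ implies $\textbf{(H3)}^{-}_{w}$ and $\textbf{(H3)}^{+}_{w}$ with $Q=P$, $\alpha_{0}=\beta_{0}=\nu_{0}$ and $\alpha^{-}=\beta^{-}=\nu^{-}>0$; \textbf{(PROJ)} is trivial because $P$ and the splitting $\mathbb{E}=\mathbb{E}^{+}\oplus\mathbb{E}^{-}$ are constant; the given periodic trajectory $v^{*}(\cdot)$ over some $q$ is bounded, so Lemma~\ref{LEM: ApSectionSemiBounded} yields \textbf{(BA)}, and as $\nu^{-}>0$ every bounded complete trajectory is amenable, giving \textbf{(PL)}. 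Hence Theorems~\ref{TH: PrincipalLeaveConstruction}, \ref{TH: ExponentialTrackingIM} and \ref{TH: StabilityOfInvariantSets} all apply, and (as in the proof of Theorem~\ref{TH: PerCocyclesConvergence}) the principal leaf $\mathfrak{A}(q)$ is homeomorphic to $\mathbb{R}$ via $\Pi_{q}$; the Poincaré map $T\colon\mathfrak{A}(q)\to\mathfrak{A}(q)$, $Tw:=\psi^{\sigma}(q,w)$, is a homeomorphism of $\mathbb{R}$, and it is \emph{orientation-preserving} (i.e. increasing): it is the last member of the continuous family of homeomorphisms $g_{t}:=\Pi_{\vartheta^{t}(q)}\psi^{t}(q,\cdot)\Pi_{q}^{-1}$ of $\mathbb{E}^{-}\cong\mathbb{R}$, $t\in[0,\sigma]$, which starts at $g_{0}=\operatorname{id}$, so its orientation, a continuous $\{\pm1\}$-valued function of $t$, stays $+1$. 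Finally, $\sigma$-periodic trajectories over $q$ correspond bijectively to fixed points of $T$, so $p:=v^{*}(0)$ is an \emph{isolated} fixed point of $T$, and by Theorem~\ref{TH: StabilityOfInvariantSets} the assumed Lyapunov stability of $v^{*}(\cdot)$ is equivalent to Lyapunov stability of $p$ as a fixed point of $T$.

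The core step is the purely one–dimensional claim that an isolated Lyapunov stable fixed point $p$ of an increasing homeomorphism $T$ of $\mathbb{R}$ is asymptotically stable. Choose $\varepsilon_{0}>0$ so that $T$ has no fixed point in $(p-\varepsilon_{0},p)\cup(p,p+\varepsilon_{0})$; on each of these intervals $T(x)-x$ is continuous and nowhere zero, hence of constant sign. I claim Lyapunov stability forces $T(x)<x$ on $(p,p+\varepsilon_{0})$: if instead $T(x)>x$ there, then for any $\varepsilon\in(0,\varepsilon_{0})$ with corresponding $\delta$ from Lyapunov stability, each $x_{0}\in(p,p+\delta)$ produces an orbit $T^{k}(x_{0})$ which is trapped in $(p,p+\varepsilon)$ (above $p$ because $T$ fixes $p$ and is increasing, below $p+\varepsilon$ by Lyapunov stability) and strictly increasing there, hence converges to a fixed point in $(p,p+\varepsilon_{0})$ --- a contradiction. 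Symmetrically $T(x)>x$ on $(p-\varepsilon_{0},p)$. Consequently, for $x_{0}\in(p,p+\varepsilon_{0})$ the orbit $T^{k}(x_{0})$ is decreasing and bounded below by $p$, so it converges to a fixed point of $T$ in $[p,x_{0}]$, which can only be $p$; likewise $T^{k}(x_{0})\to p$ for $x_{0}\in(p-\varepsilon_{0},p)$, and trivially for $x_{0}=p$. Thus $T^{k}(x_{0})\to p$ for all $x_{0}$ in the $\varepsilon_{0}$-neighbourhood of $p$, which together with Lyapunov stability is exactly asymptotic stability of $p$.

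It remains to translate this back. For $v_{0}\in\mathfrak{A}(q)$ near $p$ the iterates $T^{k}(v_{0})=\psi^{k\sigma}(q,v_{0})$ converge to $p=v^{*}(0)$ and stay in a compact set, so joint continuity of $\psi$ (uniform continuity on $[0,\sigma]$ times this compact set) gives $\sup_{s\in[0,\sigma]}\|\psi^{s}(q,T^{k}(v_{0}))-v^{*}(s)\|\to0$, hence $\psi^{t}(q,v_{0})$ approaches the periodic trajectory as $t\to+\infty$. Therefore the invariant, uniformly bounded family $\mathcal{K}=\{\mathcal{K}(q')\}$ carried by $v^{*}(\cdot)$ (which lies in $\mathfrak{A}$) is amenably asymptotically Lyapunov stable, and Theorem~\ref{TH: StabilityOfInvariantSets} --- whose hypotheses, including $\alpha^{-},\beta^{-}>0$, were checked above --- upgrades this to asymptotic Lyapunov stability of $v^{*}(\cdot)$. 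The only genuinely delicate points are the two uses of Theorem~\ref{TH: StabilityOfInvariantSets} that bridge the ``amenable'' and ``full'' notions of stability, and the verification that $T$ is orientation-preserving: without it the one-dimensional claim (and the conclusion) would fail, as the example $T(x)=-x$ shows; everything else is routine.
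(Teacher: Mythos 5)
The paper does not prove this lemma --- it is explicitly left as an exercise --- so there is no author's proof to compare against; I can only assess your argument on its own merits. Your proof is correct and follows the route the statement's placement clearly suggests: pass to the Poincar\'{e} map $T$ on the one-dimensional principal leaf $\mathfrak{A}(q)\cong\mathbb{R}$, establish the elementary one-dimensional fact that an \emph{increasing} homeomorphism cannot have an isolated Lyapunov stable but non-attracting fixed point, and then transfer the conclusion back via Theorem~\ref{TH: StabilityOfInvariantSets}. Your explicit verification that $T$ is orientation-preserving, via the isotopy $g_t=\Pi_{\vartheta^t(q)}\circ\psi^t(q,\cdot)\circ\Pi_q^{-1}$ from the identity, is a genuinely useful addition: the paper's own proof of Theorem~\ref{TH: PerCocyclesConvergence} silently uses that the iterates $T^k$ are \emph{monotone} (not merely bounded), which is exactly the orientation-preservation of $T$, and nowhere justifies it.

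Two small expository points. First, when you write that Theorem~\ref{TH: StabilityOfInvariantSets} makes Lyapunov stability of $v^*(\cdot)$ \emph{equivalent} to Lyapunov stability of $p$ as a fixed point of $T$, this is a slight overclaim as stated: that theorem only bridges stability in $\mathbb{E}$ with \emph{amenable} (continuous-time, on-$\mathfrak{A}$) stability; the further step from continuous-time stability on $\mathfrak{A}$ to discrete-time stability for $T$ in one direction is trivial (restrict to times $k\sigma$), and in the other direction requires the uniform-continuity-on-$[0,\sigma]$ argument that you do supply in your final paragraph for the asymptotic version. Since you only use the trivial direction to set up the one-dimensional argument, and you supply the nontrivial direction where you actually need it, nothing is broken --- it is just phrased too strongly in the middle. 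Second, a somewhat shorter route is available that avoids re-deriving the one-dimensional dynamics: Lyapunov stability keeps nearby forward orbits inside the isolation neighborhood of the periodic orbit, hence bounded in the future; Theorem~\ref{TH: PerCocyclesConvergence} then forces convergence to \emph{some} $\sigma$-periodic trajectory; by Lyapunov stability the limit lies in the isolation neighborhood, and by isolation it must be $v^*(\cdot)$. This packages the monotonicity of the Poincar\'{e} iterates inside the already-proved theorem rather than repeating it, at the cost of not making the orientation-preservation explicit, which --- as your counterexample $T(x)=-x$ shows --- is the one point where a careless reduction could go wrong.
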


Theorem \ref{TH: PerCocyclesConvergence} generalizes the well-known convergence theorem of J.L.~Massera \cite{Massera1950} for scalar periodic ODEs. Both theorems contain and extend the results of R.A.~Smith for periodic ODEs and delay equations \cite{Smith1986Massera,Smith1990ConvDelay} and their development from our previous work \cite{Anikushin2020Red}.

For $j=2$ one can obtain an extension of the second Massera theorem on the existence of $\sigma$-periodic orbits for planar $\sigma$-periodic systems as follows.
\begin{theorem}
	Let the cocycle satisfy \textbf{(H1)},\textbf{(H2)} with $j = 2$, \textbf{(H3)}, \textbf{(ACOM)} with $\gamma^{+}=\nu^{+}$, \textbf{(ULIP)} and \textbf{(CD)}. If there exists a compact trajectory, then there exists a $\sigma$-periodic trajectory.
\end{theorem}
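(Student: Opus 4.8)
The plan is to reduce the existence of a $\sigma$-periodic trajectory to a fixed-point statement for the Poincar\'{e} map on the two-dimensional principal leaf, mimicking the strategy of Theorem~\ref{TH: PerCocyclesConvergence} but now exploiting that $j=2$ gives a genuinely planar inertial manifold. First I would invoke Lemma~\ref{LEM: ApSectionSemiBounded} to deduce \textbf{(BA)} from the existence of a compact trajectory: the $\omega$-limit set of that trajectory is compact and nonempty, hence it projects to a compact invariant family, and minimality of $(\mathcal{Q},\vartheta)$ propagates boundedness in the past to every fiber over every $q$. Corollary~\ref{COR: OrtProjectosBounded} (using \textbf{(CD)} and compactness of $\mathcal{Q}$) gives \textbf{(PROJ)} and uniform bounds on $C_q^{-1}$, so all the standing hypotheses of Theorem~\ref{TH: PrincipalLeaveConstruction} and Theorem~\ref{TH: ContinuousDependenceFibres} are in force. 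Thus I obtain the principal leaves $\mathfrak{A}(q)$, each a $2$-dimensional Lipschitz (indeed, under \textbf{(DIFF)} if available, $C^1$) admissible submanifold, depending continuously on $q$ via the map $\Phi$ of Theorem~\ref{TH: ContinuousDependenceFibres}, and by Theorem~\ref{TH: InvertibilityTheoremBA} the restricted skew-product flow $\pi^t$ on $\mathfrak{A}=\bigcup_q\{q\}\times\mathfrak{A}(q)$ is an honest flow (a group of transformations). The compact trajectory, being amenable (since $\nu^-$ need not be positive here — wait, \textbf{(H3)} is assumed but the sign of $\nu^-$ is not imposed; if $\nu^->0$ then compact $\Rightarrow$ amenable directly, otherwise one uses that a compact trajectory has bounded-in-the-past extensions which are amenable after adjusting; in any case a compact trajectory lies on $\mathfrak{A}$) projects into $\mathfrak{A}$, so $\mathfrak{A}$ contains a compact $\pi$-invariant set.

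Next I would set up the reduced dynamics. Fix $q\in\mathcal{Q}$ and consider the Poincar\'{e} map $T\colon \mathfrak{A}(q)\to\mathfrak{A}(q)$, $Tw:=\psi^\sigma(q,w)$, which by Theorem~\ref{TH: PrincipalLeaveConstruction}(2) and Theorem~\ref{TH: InvertibilityTheoremBA} is a homeomorphism of $\mathfrak{A}(q)$ onto itself. Via the homeomorphism $\Pi_q\colon\mathfrak{A}(q)\to\mathbb{E}^-(q)$, where $\mathbb{E}^-(q)$ is a $2$-dimensional normed space, $T$ is conjugate to a homeomorphism $\widehat T$ of $\mathbb{R}^2$. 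The compact $\pi$-invariant set inside $\mathfrak{A}$ gives, after projecting to the fiber over $q$ (and using that $\vartheta$ is $\sigma$-periodic, so $\vartheta^\sigma(q)=q$), a nonempty compact $\widehat T$-invariant set $K\subset\mathbb{R}^2$. A $\sigma$-periodic trajectory over $q$ corresponds exactly to a fixed point of $\widehat T$. The plan is to produce such a fixed point by a Brouwer-type argument: $\widehat T$ is an orientation-preserving (this needs checking — continuity in $t$ of the flow $\pi^t$ on $\mathfrak{A}$ from Theorem~\ref{TH: InvertibilityTheoremBA} shows $\widehat T$ is isotopic to the identity, hence orientation-preserving) homeomorphism of the plane possessing a nonempty compact invariant set; take $\mathcal{U}$ to be a large open disk containing the convex hull of $K$ — then, because $K$ is invariant, one shows $\widehat T$ maps a suitable compact convex neighborhood of $K$ into itself (or applies the fact that an orientation-preserving planar homeomorphism with a bounded invariant set, equivalently a bounded orbit, must have a fixed point), and Brouwer's fixed-point theorem yields a fixed point of $\widehat T$, hence a $\sigma$-periodic trajectory.

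The main obstacle is the last step: an orientation-preserving homeomorphism of $\mathbb{R}^2$ with a bounded orbit need not, on the face of it, preserve any convex region, so one cannot naively apply Brouwer to the convex hull of $K$. The correct tool is the Brouwer plane translation theorem (or Brouwer's lemma on translation arcs): a fixed-point-free orientation-preserving homeomorphism of the plane has no non-wandering points, in particular no bounded orbit and no nonempty compact invariant set. Since $\widehat T$ does possess a nonempty compact invariant set $K$ (coming from the compact trajectory), $\widehat T$ must have a fixed point. So the technical heart of the argument is (i) verifying that $\widehat T$ is orientation-preserving — which I would do by noting that $t\mapsto\pi^t|_{\mathfrak{A}}$ is a continuous path of homeomorphisms from $\mathrm{id}$ to the time-$\sigma$ map, and \textbf{(CD)} guarantees the bundle $\mathfrak{A}\to\mathcal{Q}$ restricts over the periodic orbit to a trivial (not M\"{o}bius) bundle, or alternatively checking orientation directly via the linearization when \textbf{(DIFF)} holds — and (ii) correctly transporting the compact invariant set in $\mathfrak{A}$ down to the single fiber over the period point $q$, for which one uses $\vartheta^\sigma(q)=q$ and the fiber-preserving nature of $\pi^\sigma$. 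Once a fixed point $w^*\in\mathfrak{A}(q)$ of $T$ is found, the trajectory $t\mapsto\psi^t(q,w^*)$ is $\sigma$-periodic over the $\sigma$-periodic base point, completing the proof.
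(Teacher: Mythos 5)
Your proposal follows essentially the same route as the paper's proof: pass through Lemma~\ref{LEM: ApSectionSemiBounded} to get \textbf{(BA)}, build the two-dimensional principal leaf $\mathfrak{A}(q)\cong\mathbb{R}^{2}$, observe that the compact trajectory forces bounded iterates of the Poincar\'{e} map $T=\psi^{\sigma}(q,\cdot)$ on $\mathfrak{A}(q)$, and conclude by a Brouwer-type fixed-point result. The paper cites Pliss (1966) for that last step; you make explicit that the relevant tool is Brouwer's plane translation theorem (a fixed-point-free orientation-preserving homeomorphism of $\mathbb{R}^{2}$ has no bounded orbit), which is what the Pliss reference contains.

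The place where your verification does not actually go through is the orientation-preservation of $\widehat T$. The isotopy $\pi^{t}$, $t\in[0,\sigma]$, does \emph{not} stay inside the fiber $\mathfrak{A}(q)$ — it moves through the fibers $\mathfrak{A}(\vartheta^{t}(q))$. If you choose a continuous trivialization $\Phi_{t}\colon\mathbb{R}^{2}\to\mathbb{E}^{-}(\vartheta^{t}(q))$, the isotopy argument only shows that $\Phi_{\sigma}^{-1}T\Phi_{0}$ is orientation-preserving, while the map you need is $\widehat T=\Phi_{0}^{-1}T\Phi_{0}=(\Phi_{0}^{-1}\Phi_{\sigma})\circ(\Phi_{\sigma}^{-1}T\Phi_{0})$, whose orientation type equals that of the monodromy $\Phi_{0}^{-1}\Phi_{\sigma}$ of the bundle $\bigcup_{q}\mathbb{E}^{-}(q)\to\mathcal{Q}$. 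And \textbf{(CD)} only gives continuity of the dichotomy projectors; it does not force this bundle to be trivial over the circle. This is exactly the M\"{o}bius phenomenon already pointed out (for $j=1$) in the exercise right after Theorem~\ref{TH: PerCocyclesConvergence}, where the period doubles. The theorem as stated is the only one in Section~\ref{SEC: AbstractApplic} that does not demand a \emph{constant} $P$ and constant decomposition; without that hypothesis (or, e.g., an explicit orientability assumption on the bundle) the Poincar\'{e} map could a priori be orientation-reversing, and Brouwer's translation theorem applied to $T^{2}$ would only yield a $2\sigma$-periodic trajectory. The paper glosses over this with the citation to Pliss; your instinct to flag it is right, but neither of your two proposed justifications (the isotopy through $\pi^{t}$, or \textbf{(CD)}$\Rightarrow$triviality) closes the gap. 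The clean fix is to assume a constant $P$, as in the neighboring theorems. You also correctly flag the other silent assumption — bounded $\Rightarrow$ amenable requires $\nu^{-}>0$, which the statement omits although the paper's proof uses it.
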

\begin{proof}
	Let us suppose there is a compact trajectory over some $q \in \mathcal{Q}$. Then there exists a bounded (and, consequently, amenable) complete trajectory $v^{*}(\cdot)$ over $q$. Thus, Theorem \ref{TH: PrincipalLeaveConstruction} guarantees that the set $\mathfrak{A}(q)$ is homeomorphic to $\mathbb{R}^{2}$. Thus, the iterations of $v^{*}(0) \in \mathfrak{A}(q)$ are bounded under the Poincar\'{e} map $T \colon \mathfrak{A}(q) \to \mathfrak{A}(q)$ and, consequently, $T$ has a fixed point (a complete proof can be found in the monograph of V.A.~Pliss\cite{Pliss1966}). Clearly, this point corresponds to a $\sigma$-periodic orbit. The proof is finished.
\end{proof}

\subsection{The Poincar\'{e}-Bendixson theory for semiflows}
\label{SUBSEC: PoincareBendixsonTheory}
During this subsection we work with a semiflow in $\mathbb{E}$ which we denote by $\varphi$ and its time-$t$ map by $\varphi^{t}$.

Let us start with the following theorem.
\begin{theorem}
	\label{TH: PBTrichotomy}
	Let the semiflow $\varphi$ satisfy \textbf{(H1)}, \textbf{(H2)} with $j=2$, \textbf{(H3)}, \textbf{(ACOM)} with $\gamma=\nu^{+}$ and \textbf{(ULIP)}. Then the $\omega$-limit set $\omega(v_{0})$ of any point $v_{0} \in \mathbb{E}$ with a bounded semiorbit is one of the following:
	\begin{description}
		\item[\textbf{(PBT1)}] A stationary point;
		\item[\textbf{(PBT2)}] A periodic orbit;
		\item[\textbf{(PBT3)}] A union of some set of stationary points $\mathcal{N}$ and a set of complete orbits whose $\alpha$- and $\omega$-limit sets lie in $\mathcal{N}$.
	\end{description}
\end{theorem}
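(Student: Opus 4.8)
The plan is to reduce the infinite-dimensional Poincaré--Bendixson problem to a classical planar one by exploiting the inertial manifold constructed in Theorem \ref{TH: PrincipalLeaveConstruction}. The semiflow $\varphi$ is a cocycle over the trivial one-point base $\mathcal{Q}$, so all of the preceding theory applies with $\vartheta$ trivial. First I would verify the hypotheses of Theorem \ref{TH: PrincipalLeaveConstruction} (equivalently Theorem \ref{TH: HorizontalFibresAlongCompleteOrbits}): under \textbf{(H1)}, \textbf{(H2)} with $j=2$, \textbf{(H3)}, \textbf{(ACOM)} with $\gamma^{+}=\nu^{+}$ and \textbf{(ULIP)} we get $\textbf{(H3)}^{-}_{w}$ (the weakened form), and \textbf{(PROJ)} holds automatically since $\mathcal{Q}$ is a point (Corollary \ref{COR: OrtProjectosBounded}, or directly: a single bounded projector). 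Since $v_{0}$ has a bounded semiorbit, $\omega(v_{0})$ is nonempty, compact, connected and invariant; moreover any point of $\omega(v_{0})$ lies on a bounded complete orbit, so \textbf{(PL)} is satisfied along $\omega(v_{0})$ (bounded complete trajectories are amenable because $\nu^{-}$ may be taken positive; if $\nu^{-}\le 0$ one first shifts the exponent, which is harmless by the remark after \textbf{(H3)}, or one argues that the integral in \eqref{EQ: AmenabilityH} converges on the compact set $\omega(v_{0})$). This produces the principal leaf $\mathfrak{A}$, a two-dimensional Lipschitz (indeed, by Theorem \ref{TH: HorizontalDifferentiability} under \textbf{(DIFF)}, $C^{1}$) submanifold which is invariant and onto which $\Pi_{q}$ is a homeomorphism with $\mathbb{E}^{-}$.

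\textbf{Key steps.} The main steps, in order, are: (1) Show $\omega(v_{0}) \subset \mathfrak{A}$. This follows from the exponential tracking of Theorem \ref{TH: ExponentialTrackingIM}: for the tracking we additionally need $\textbf{(H3)}^{+}_{w}$, which here is obtained from \textbf{(H3)} itself (the full two-sided inequality \eqref{EQ: PropH3inequality} implies both the $\alpha$- and $\beta$-versions with the same operator); the point $v_{0}$ is positively asymptotic to a point $v^{*}_{0}\in\mathfrak{A}$, hence $\omega(v_{0})=\omega(v^{*}_{0})\subset\mathfrak{A}$ by invariance of $\mathfrak{A}$. Alternatively, one shows directly that every complete bounded orbit through a point of $\omega(v_{0})$ is amenable, hence lies in $\mathfrak{A}$. (2) Transport the dynamics to the plane: the homeomorphism $\Pi := \Pi_{q} \colon \mathfrak{A} \to \mathbb{E}^{-} \cong \mathbb{R}^{2}$ conjugates the restricted semiflow $\varphi^{t}|_{\mathfrak{A}}$ to a semiflow on (an open subset of) $\mathbb{R}^{2}$; by the invertibility statement (Theorem \ref{TH: InvertibilityTheoremBA}, with $\mathcal{Q}$ a point) $\varphi^{t}|_{\mathfrak{A}}$ is in fact a flow, so the conjugated object is a genuine planar flow, or more precisely a continuous flow on a $2$-manifold. (3) Apply the classical Poincaré--Bendixson trichotomy (Poincaré--Bendixson--Schwartz, valid on arbitrary $2$-manifolds, or simply the planar theorem after noting $\mathbb{E}^{-}\cong\mathbb{R}^{2}$) to the compact invariant set $\Pi(\omega(v_{0}))$: it is a stationary point, a periodic orbit, or a graph of stationary points joined by connecting orbits. (4) Pull back via $\Pi^{-1}$, a homeomorphism, to conclude the same trichotomy for $\omega(v_{0})$ in $\mathbb{E}$, noting that $\Pi$ carries stationary points to stationary points and periodic orbits to periodic orbits, and that $\alpha$-limit sets of complete orbits inside $\mathfrak{A}$ correspond under $\Pi$ as well (using that such complete orbits are bounded hence remain in $\mathfrak{A}$).

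\textbf{Main obstacle.} The step that requires the most care is \emph{not} the planar Poincaré--Bendixson argument itself but establishing cleanly that $\omega(v_{0})\subset\mathfrak{A}$ together with the assertion that $\varphi^{t}$ restricted to $\mathfrak{A}$ (or at least to a neighborhood of $\omega(v_{0})$ within $\mathfrak{A}$) is invertible, so that the time-reversed flow needed for $\alpha$-limit sets makes sense. For the containment one must be slightly careful when $\nu^{-}$ is not positive: amenability of a bounded complete orbit then needs the integrability $\int_{-\infty}^{0} e^{2\nu(s)} |v(s)|_{\mathbb{H}}^{2}\,ds < +\infty$, which still holds because $\omega(v_{0})$ is compact (so $|v(s)|_{\mathbb{H}}$ is bounded) and $\nu^{-} \le \nu(s)/s$ can be made to ensure convergence after the harmless exponent shift mentioned above; I would isolate this as a short lemma. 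A second minor point: to invoke the Poincaré--Bendixson theorem one should check that the restricted flow on $\mathfrak{A}$ has only isolated behavior near fixed points in the generic case, or simply use the version of the theorem (for flows on surfaces without any genericity hypothesis) that yields exactly the three alternatives \textbf{(PBT1)}--\textbf{(PBT3)}; citing Smith's own formulation \cite{Smith1994PB1} or Hale's book suffices. Everything else is bookkeeping: translating the abstract hypotheses into those of Theorems \ref{TH: PrincipalLeaveConstruction} and \ref{TH: ExponentialTrackingIM}, which is immediate once $\mathcal{Q}$ is a point.
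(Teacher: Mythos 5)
Your proposal follows the same route as the paper: pass to the $2$-dimensional inertial manifold $\mathfrak{A}$ (Theorem \ref{TH: PrincipalLeaveConstruction}), use the exponential tracking of Theorem \ref{TH: ExponentialTrackingIM} to place $\omega(v_{0})$ inside $\mathfrak{A}$, and apply the classical planar Poincar\'{e}--Bendixson trichotomy via the homeomorphism $\Pi_{q}\colon\mathfrak{A}\to\mathbb{E}^{-}\cong\mathbb{R}^{2}$. The paper's own argument is essentially a remark to this effect, explicitly flagging the transversal issue for merely continuous planar flows (H\'{a}jek) versus $C^{1}$ inertial forms, which your last paragraph also addresses.
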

Since under the assumptions of Theorem \ref{TH: PBTrichotomy} there exists a two-dimensional inertial manifold $\mathfrak{A}$, which is homeomorphic to the plane $\mathbb{R}^{2}$ (see Theorem \ref{TH: PrincipalLeaveConstruction}), and the exponential tracking property holds (see Theorem \ref{TH: ExponentialTrackingIM}), its conclusion follows from the standard plane arguments. However, for this it is required to construct transversals at nonstationary points. Such a theory is developed by O.~H\'{a}jek \cite{Hajek1968}. However, it is only useful in the abstract context, where we have no differential equation describing the dynamics of $\varphi$ on the inertial manifold (the so-called inertial form). In applications, we always have Lipschitz or $C^{1}$-differentiable inertial forms and, consequently, transversals can be constructed via more standard methods and without appealing to \cite{Hajek1968}.

The following lemmas are obvious consequences of Theorem \ref{TH: PBTrichotomy}.
\begin{lemma}
	\label{COR: Th1CorAlphaLimit}
	Under the hypotheses of Theorem \ref{TH: PBTrichotomy} the trichotomy \textbf{(PBT1)}, \textbf{(PBT2)} and \textbf{(PBT3)} holds for the $\alpha$-limit set of any complete trajectory bounded in the past.
\end{lemma}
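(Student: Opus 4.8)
The plan is to reduce the statement to Theorem \ref{TH: PBTrichotomy} by a time-reversal argument restricted to the inertial manifold. First I would invoke Theorem \ref{TH: PrincipalLeaveConstruction} (whose hypotheses are exactly those of Theorem \ref{TH: PBTrichotomy}, with $j=2$) to obtain the two-dimensional principal leaf $\mathfrak{A}$, and Theorem \ref{TH: InvertibilityTheoremBA} (note that \textbf{(BA)} holds here since a complete trajectory bounded in the past lies in $\mathfrak{A}$ and is amenable as $\nu^{-}>0$ — or rather, one should verify \textbf{(BA)} is available, which it is once we have any bounded-in-the-past complete trajectory) to see that $\varphi$ restricted to $\mathfrak{A}$ generates a genuine \emph{flow}, not just a semiflow. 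In particular, for the given complete trajectory $v(\cdot)$ bounded in the past, its $\alpha$-limit set coincides with the $\alpha$-limit set computed inside the invertible two-dimensional system $(\mathfrak{A},\pi)$.

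Next I would observe that $\alpha$-limit sets of the flow on $\mathfrak{A}$ are $\omega$-limit sets of the \emph{reversed} flow. Since $\mathfrak{A}$ is homeomorphic to $\mathbb{R}^{2}$ (Theorem \ref{TH: PrincipalLeaveConstruction}), the reversed flow is again a flow on a planar region, and the Poincar\'{e}--Bendixson trichotomy applies to it. Concretely: the backward orbit $\{v(t) : t \leq 0\}$ is bounded, hence relatively compact by \textbf{(ACOM)} (which forces the Kuratowski measure of the orbit closure to vanish, so the orbit closure is compact), so $\alpha(v(\cdot))$ is a nonempty compact connected invariant subset of $\mathfrak{A}$. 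Running the argument of Theorem \ref{TH: PBTrichotomy} with time reversed — using the same transversal construction (H\'{a}jek's theory in the abstract case, or the Lipschitz/$C^{1}$ inertial form in applications) — yields that $\alpha(v(\cdot))$ is either a stationary point \textbf{(PBT1)}, a periodic orbit \textbf{(PBT2)}, or a union of stationary points together with connecting orbits \textbf{(PBT3)}. One should note that stationary points and periodic orbits are invariant under time reversal, and for \textbf{(PBT3)} the roles of $\alpha$- and $\omega$-limit sets of the connecting orbits simply swap, so the stated form is preserved verbatim.

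The one point requiring genuine care is that Theorem \ref{TH: PBTrichotomy} is stated for a \emph{semiflow}, whereas to talk about $\alpha$-limit sets and to reverse time we need the dynamics on $\mathfrak{A}$ to be invertible. This is exactly the content of Theorem \ref{TH: InvertibilityTheoremBA} together with item 2 of Theorem \ref{TH: PrincipalLeaveConstruction} (the cocycle maps $\mathfrak{A}(q)$ homeomorphically onto $\mathfrak{A}(\vartheta^{t}(q))$), so the obstacle is mostly bookkeeping: one must confirm \textbf{(BA)} is in force, which follows because the hypothesis — existence of a complete trajectory bounded in the past — furnishes precisely the required bounded-in-the-past complete trajectory, and in the autonomous setting $\mathcal{Q}$ is a point so \eqref{EQ: BAUniformBound} is automatic. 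After that, the proof is the remark that "the $\alpha$-limit set of $\varphi$ is the $\omega$-limit set of the reversed flow $\varphi$ on $\mathfrak{A}$, and the latter is again a planar flow, so Theorem \ref{TH: PBTrichotomy} applies." I would write this up in two or three sentences rather than reproving the trichotomy. The main obstacle, to the extent there is one, is making sure the transversal/Jordan-curve machinery underlying Theorem \ref{TH: PBTrichotomy} is manifestly symmetric under time reversal — but since it only uses the planar topology of $\mathfrak{A}$ and the flow (not semiflow) property, which we have established, there is no real difficulty.
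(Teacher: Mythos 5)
Your proof is correct and follows the route the paper has in mind: the paper offers no explicit argument, calling the lemma an ``obvious consequence'' of Theorem~\ref{TH: PBTrichotomy}, whose own proof is the reduction to classical planar Poincar\'{e}--Bendixson on the two-dimensional inertial manifold $\mathfrak{A}$. You correctly identify the only points that need checking --- that a complete trajectory bounded in the past is amenable (hence lies on $\mathfrak{A}$), that this same trajectory furnishes \textbf{(BA)} trivially since $\mathcal{Q}$ is a singleton, that Theorem~\ref{TH: InvertibilityTheoremBA} upgrades the semiflow on $\mathfrak{A}$ to a flow so that time reversal and $\alpha$-limit sets make sense there, and that the backward orbit closure is compact by \textbf{(ACOM)} --- and then the standard reversed-time planar trichotomy gives the result.
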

\begin{lemma}
	\label{COR: Th1CorIsolatedOrbit}
	Under the hypotheses of Theorem \ref{TH: PBTrichotomy} any isolated orbitally stable periodic orbit is asymptotically orbitally stable. 
\end{lemma}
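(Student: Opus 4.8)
The plan is to derive Lemma \ref{COR: Th1CorIsolatedOrbit} as a routine consequence of the Poincar\'{e}--Bendixson trichotomy in Theorem \ref{TH: PBTrichotomy} together with the planar structure of the inertial manifold $\mathfrak{A}$ and the exponential tracking property from Theorem \ref{TH: ExponentialTrackingIM}. First I would recall that under the hypotheses of Theorem \ref{TH: PBTrichotomy} there is a two-dimensional inertial manifold $\mathfrak{A}$ homeomorphic to $\mathbb{R}^{2}$, and any periodic orbit $\Gamma$ of the semiflow $\varphi$ necessarily lies on $\mathfrak{A}$ (since it is a bounded complete trajectory, hence amenable, hence contained in the principal leaf). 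By the exponential tracking property, orbital stability/asymptotic orbital stability of $\Gamma$ in $\mathbb{E}$ is equivalent to the same property considered within the two-dimensional manifold $\mathfrak{A}$: any trajectory in $\mathbb{E}$ is exponentially shadowed by one on $\mathfrak{A}$, and the ``fast'' component decays, so the $\omega$-limit set of a point in $\mathbb{E}$ near $\Gamma$ coincides with that of its tracking trajectory on $\mathfrak{A}$. Thus it suffices to work on the plane.

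Next I would carry out the standard planar argument. Suppose $\Gamma$ is an isolated orbitally stable periodic orbit. ``Isolated'' means there is a neighborhood containing no other periodic orbit; ``orbitally stable'' means trajectories starting close to $\Gamma$ remain close. Take a small tubular neighborhood $\mathcal{U}$ of $\Gamma$ in $\mathfrak{A}$, small enough that it contains no stationary points and no other periodic orbit, and so that orbital stability keeps forward trajectories of points in $\mathcal{U}$ inside a slightly larger neighborhood; by the Jordan curve theorem $\Gamma$ separates $\mathfrak{A} \cong \mathbb{R}^{2}$ into an inside and an outside, and the tube $\mathcal{U}$ has an inner annulus and an outer annulus. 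For a point $v_{0}$ in, say, the inner annular part of $\mathcal{U}$, the forward orbit stays in $\mathcal{U}$ (orbital stability), so its $\omega$-limit set is a nonempty compact connected invariant subset of $\mathcal{U}$. Apply Theorem \ref{TH: PBTrichotomy}: the $\omega$-limit set is either a stationary point (impossible, $\mathcal{U}$ has none), a periodic orbit (which must then be $\Gamma$, since $\Gamma$ is the only periodic orbit in $\mathcal{U}$), or a union of stationary points and connecting orbits (again impossible since there are no stationary points). Hence $\omega(v_{0}) = \Gamma$, and $\operatorname{dist}(\varphi^{t}(v_{0}),\Gamma) \to 0$. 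The same argument applies to the outer annular region. Therefore every trajectory starting in $\mathcal{U}$ converges to $\Gamma$, which is precisely asymptotic orbital stability.

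The genuinely delicate point in this plan is the reduction from $\mathbb{E}$ to $\mathfrak{A}$, i.e. justifying that orbital (asymptotic) stability of $\Gamma$ as a subset of $\mathbb{E}$ is captured faithfully by the restricted dynamics on the inertial manifold. This uses the exponential tracking estimate \eqref{EQ: ExponentialTrackingEstimate}: for $v_{0}$ near $\Gamma$ in $\mathbb{E}$ there is $v_{0}^{*} \in \mathfrak{A}(q)$ (here $\mathcal{Q}$ is a one-point set since $\varphi$ is a semiflow) with $v_{0}^{*}$ positively equivalent to $v_{0}$ and $\|\varphi^{t}(v_{0}) - \varphi^{t}(v_{0}^{*})\|_{\mathbb{E}} \leq R \cdot \operatorname{dist}(v_{0},\mathfrak{A})\, e^{-\beta^{-} t}$, while $\operatorname{dist}(v_{0}^{*},\Gamma)$ is controlled by $\operatorname{dist}(v_{0},\Gamma)$ via the local Lipschitz estimates of Theorem \ref{TH: ExpTrackingTheorem} (cf. the proof of Theorem \ref{TH: StabilityOfInvariantSets}). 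Combining these two bounds with the triangle inequality gives orbital stability of $\Gamma$ on $\mathfrak{A}$ from orbital stability in $\mathbb{E}$, and conversely the convergence on $\mathfrak{A}$ plus the exponential decay of the fast part yields convergence in $\mathbb{E}$. The planar Poincar\'{e}--Bendixson step itself is then entirely classical. I would phrase the whole thing compactly, citing Theorem \ref{TH: ExponentialTrackingIM} and Theorem \ref{TH: PBTrichotomy} and leaving the elementary planar details to the reader, much as the paper does for Lemma \ref{COR: Th1CorAlphaLimit}.
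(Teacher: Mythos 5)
Your proposal is correct and matches the paper's intended argument exactly: the paper gives no explicit proof, merely labelling this lemma an ``obvious consequence'' of Theorem \ref{TH: PBTrichotomy}, but the surrounding remarks (the two-dimensional inertial manifold homeomorphic to $\mathbb{R}^{2}$, the exponential tracking from Theorem \ref{TH: ExponentialTrackingIM}, and the equivalence of stability on $\mathfrak{A}$ and on $\mathbb{E}$ via Theorem \ref{TH: StabilityOfInvariantSets}) describe precisely the reduction you carry out, followed by the classical planar annulus argument. No gaps.
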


Let $\mathcal{D}$ be some set containing a stationary point $v^{*}_{0}$. We say that $\mathcal{D}$ is a $k$-\textit{dimensional local unstable set} for $v^{*}_{0}$ if 
\begin{description}
	\item[\textbf{(U1)}] $\mathcal{D}$ is a homeomorphic image of some open $k$-dimensional cube;
	\item[\textbf{(U2)}] For every point $w_{0} \in \mathcal{D}$ there is a unique complete trajectory $w(\cdot)$ with $w(0)=w_{0}$ and $\varphi^{t}(w_{0}):=w(t) \to v^{*}_{0}$ as $t \to -\infty$;
	\item[\textbf{(U3)}] For every $\varepsilon>0$ there exists $\delta>0$ such that if $\|w_{0} - v^{*}_{0}\|_{\mathbb{E}}<\delta$ and $w_{0} \in \mathcal{D}$ then $\|\varphi^{t}(w)-v^{*}_{0}\|_{\mathbb{E}}<\varepsilon$ for all $t \leq 0$.
\end{description}

We call a stationary point $v^{*}_{0} \in \mathbb{E}$ \textit{terminal} if either it is Lyapunov stable or there is a $2$-dimensional unstable set for $v^{*}_{0}$. In the latter case we call $v^{*}_{0}$ an \textit{unstable terminal point}. The role of terminal points is in the following.
\begin{lemma}
	\label{COR: TerminalPoints}
	Under the hypotheses of Theorem \ref{TH: PBTrichotomy} suppose that $\omega(v_{0})$ contains a terminal point $v^{*}_{0}$. Then $\omega(v_{0}) = \{ v^{*}_{0} \}$.
\end{lemma}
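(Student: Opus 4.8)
\textbf{Proof proposal for Lemma \ref{COR: TerminalPoints}.}

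The plan is to use the structure of $\omega$-limit sets provided by the Poincar\'e--Bendixson trichotomy (Theorem \ref{TH: PBTrichotomy}) together with the connectedness and invariance of $\omega(v_{0})$. Recall that $\omega(v_{0})$ is a nonempty compact connected invariant set (since the semiorbit of $v_{0}$ is bounded and, under \textbf{(ACOM)}, asymptotically compact). Suppose $v^{*}_{0}\in\omega(v_{0})$ is terminal. I would split into the two cases in the definition of terminal point.

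First, if $v^{*}_{0}$ is Lyapunov stable: since $v^{*}_{0}\in\omega(v_{0})$, the trajectory $\varphi^{t}(v_{0})$ enters any prescribed neighborhood of $v^{*}_{0}$ at some time; Lyapunov stability then forces the trajectory to remain in an arbitrarily small neighborhood of $v^{*}_{0}$ for all subsequent times, hence $\omega(v_{0})=\{v^{*}_{0}\}$. This is the easy case and is essentially immediate. (Strictly, one picks $\varepsilon>0$, takes the $\delta$ from Lyapunov stability, finds $t_{0}$ with $\|\varphi^{t_{0}}(v_{0})-v^{*}_{0}\|_{\mathbb{E}}<\delta$, and concludes $\|\varphi^{t}(v_{0})-v^{*}_{0}\|_{\mathbb{E}}<\varepsilon$ for all $t\geq t_{0}$; since $\varepsilon$ is arbitrary, $\omega(v_{0})=\{v^{*}_{0}\}$.)

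Second, the substantive case: $v^{*}_{0}$ is an unstable terminal point, so there is a $2$-dimensional local unstable set $\mathcal{D}$ for $v^{*}_{0}$ in the sense of \textbf{(U1)}--\textbf{(U3)}. The strategy is to work inside the two-dimensional inertial manifold $\mathfrak{A}$ (which is homeomorphic to $\mathbb{R}^{2}$ by Theorem \ref{TH: PrincipalLeaveConstruction}) and exploit planarity. First I would observe that $\omega(v_{0})\subset\mathfrak{A}$ because of the exponential tracking property (Theorem \ref{TH: ExponentialTrackingIM}): the $\omega$-limit set of any bounded semiorbit coincides with the $\omega$-limit set of the tracked trajectory lying on $\mathfrak{A}$, and $\mathfrak{A}(q)$ is closed. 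Next, since $\mathcal{D}$ is a two-dimensional unstable set and $\mathfrak{A}$ is itself two-dimensional with $v^{*}_{0}\in\mathfrak{A}$, the set $\mathcal{D}$ must (at least locally near $v^{*}_{0}$) coincide with a neighborhood of $v^{*}_{0}$ in $\mathfrak{A}$, by invariance of domain — every orbit through a point near $v^{*}_{0}$ on $\mathfrak{A}$ and backward-converging to $v^{*}_{0}$ lies in $\mathcal{D}$. Consequently no complete orbit in $\omega(v_{0})$ other than the constant one can have $v^{*}_{0}$ in its $\alpha$- or $\omega$-limit set while staying bounded away from $v^{*}_{0}$: near $v^{*}_{0}$ all of $\mathfrak{A}$ is "unstable", so points are repelled in backward time and no orbit can accumulate on $v^{*}_{0}$ in forward time except by being the fixed point itself. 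Then I would apply the trichotomy of Theorem \ref{TH: PBTrichotomy} to $\omega(v_{0})$: it cannot be a single periodic orbit (a periodic orbit does not contain a stationary point), so it is either $\{v^{*}_{0}\}$ — in which case we are done — or a union of stationary points and connecting orbits as in \textbf{(PBT3)}. In the latter case, by connectedness and the fact that $v^{*}_{0}$ lies in $\omega(v_{0})$, there would have to be a complete orbit in $\omega(v_{0})$ with $v^{*}_{0}$ as its $\alpha$- or $\omega$-limit point; the $\alpha$-limit possibility is excluded directly by \textbf{(U3)} (backward-asymptotic orbits stay in $\mathcal{D}$, hence near $v^{*}_{0}$, contradicting that the orbit also limits onto other points of $\omega(v_{0})$ unless $\omega(v_{0})$ is just $\{v^{*}_{0}\}$), and the $\omega$-limit possibility is excluded because the two-dimensionality of the unstable set makes $v^{*}_{0}$ a repeller within $\mathfrak{A}$, so no nontrivial orbit on $\mathfrak{A}$ can converge to it forward in time. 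This forces $\omega(v_{0})=\{v^{*}_{0}\}$.

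The main obstacle I anticipate is making rigorous the step that an unstable terminal point is a genuine repeller \emph{within} the two-dimensional manifold $\mathfrak{A}$ — i.e., that the two-dimensional unstable set $\mathcal{D}$ fills a full neighborhood of $v^{*}_{0}$ in $\mathfrak{A}$ and that there are no stable directions along $\mathfrak{A}$ at $v^{*}_{0}$. This requires combining invariance of domain (to see $\mathcal{D}$ is open in $\mathfrak{A}$) with the uniqueness of backward trajectories on $\mathfrak{A}$ (provided by Theorem \ref{TH: InvertibilityTheoremBA}, which makes the flow on $\mathfrak{A}$ invertible) and with property \textbf{(U3)} to control the backward behavior. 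Once that local picture is established, the rest is the classical planar Poincar\'e--Bendixson argument applied on $\mathfrak{A}$, ruling out homoclinic-type connecting orbits to a repelling fixed point.
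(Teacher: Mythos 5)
Your first case (Lyapunov-stable terminal point) is correct. For the unstable terminal case you correctly assemble most of the right ingredients: $\omega(v_{0})$ lies on $\mathfrak{A}$ by exponential tracking; $\mathcal{D}\subset\mathfrak{A}$ because every point of $\mathcal{D}$ lies on a backward-bounded complete trajectory by \textbf{(U2)}, hence on an amenable one when $\nu^{-}>0$; $\mathcal{D}$ is open in $\mathfrak{A}$ by invariance of domain; and backward continuation on $\mathfrak{A}$ is unique by Theorem~\ref{TH: InvertibilityTheoremBA}.

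However, the step in which you exclude connecting orbits having $v^{*}_{0}$ in their $\alpha$-limit set is not correct. Property \textbf{(U3)} controls only the \emph{past} of a point $w_{0}\in\mathcal{D}$ close to $v^{*}_{0}$: if $\|w_{0}-v^{*}_{0}\|_{\mathbb{E}}<\delta$ then $\|\varphi^{t}(w_{0})-v^{*}_{0}\|_{\mathbb{E}}<\varepsilon$ for $t\leq 0$, with no assertion at all for $t>0$. A connecting orbit $\gamma$ in $\omega(v_{0})$ with $\alpha(\gamma)=\{v^{*}_{0}\}$ is near $v^{*}_{0}$ only for very negative times; applying \textbf{(U3)} at a time $s$ where $\gamma(s)$ is near $v^{*}_{0}$ merely recovers that $\gamma(s')$ is near $v^{*}_{0}$ for $s'\leq s$, which you already had, and leaves $\gamma$ free to escape forward toward another equilibrium. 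So the claim that ``backward-asymptotic orbits stay near $v^{*}_{0}$'' is false, and the $\alpha$-limit branch of the trichotomy is not ruled out. This is a genuine gap.

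The clean repair bypasses \textbf{(PBT1)}--\textbf{(PBT3)} entirely and runs your Lyapunov-stable argument backward in time, applied to the \emph{original} tracked trajectory rather than to hypothetical connecting orbits inside the limit set. Set $u_{0}:=\Pi^{c}_{q}(v_{0})\in\mathfrak{A}(q)$ and let $u(\cdot)$ be the complete trajectory on $\mathfrak{A}$ through $u_{0}$; exponential tracking gives $\omega(v_{0})=\omega(u_{0})$. Since $v^{*}_{0}\in\omega(u_{0})$, pick $t_{k}\to+\infty$ with $u(t_{k})\to v^{*}_{0}$; for $k$ large, $u(t_{k})\in\mathcal{D}$ because $\mathcal{D}$ is a neighborhood of $v^{*}_{0}$ in $\mathfrak{A}$ and $u(t_{k})\in\mathfrak{A}$. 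The backward trajectory through $u(t_{k})$ provided by \textbf{(U2)} is backward-bounded, hence amenable, hence on $\mathfrak{A}$, and by uniqueness of backward continuation on $\mathfrak{A}$ it coincides with $u(t_{k}+\cdot)$ restricted to non-positive arguments. Now for any $\varepsilon>0$ take $\delta>0$ from \textbf{(U3)} and $k$ so large that $\|u(t_{k})-v^{*}_{0}\|_{\mathbb{E}}<\delta$; then $\|u(s)-v^{*}_{0}\|_{\mathbb{E}}<\varepsilon$ for all $s\leq t_{k}$, in particular $\|u_{0}-v^{*}_{0}\|_{\mathbb{E}}<\varepsilon$. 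Letting $\varepsilon\to 0$ gives $u_{0}=v^{*}_{0}$, hence $u(\cdot)\equiv v^{*}_{0}$ and $\omega(v_{0})=\{v^{*}_{0}\}$. Notice that \textbf{(U3)} must be applied to the orbit whose $\omega$-limit set you are studying, not to orbits lying inside $\omega(v_{0})$; this is precisely why \textbf{(U3)} is needed in the definition of a terminal point rather than just \textbf{(U1)}--\textbf{(U2)}.
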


We call a closed bounded subset $\mathcal{A} \subset \mathbb{E}$ an \textit{attractor} if there exists an open set $\mathcal{U} \subset \mathbb{E}$ such that $\mathcal{A} \subset \mathcal{U}$ and for any $v_{0} \in \mathcal{U}$ the positive semiorbit $\gamma^{+}(v_{0})$ is compact in $\mathbb{E}$ and $\omega(v_{0}) \subset \mathcal{A}$. We call any such set $\mathcal{U}$ a \textit{neighborhood} of the attractor $\mathcal{A}$.
\begin{theorem}
	\label{TH: PeriodicOrbitExis}
	Let the hypotheses of Theorem \ref{TH: PBTrichotomy} hold. Suppose there is an attractor $\mathcal{A}$ that either contains no stationary points or the only stationary points in $\mathcal{A}$ are unstable terminal points. Then the set $\mathcal{A}$ contains at least one orbitally stable periodic orbit.
\end{theorem}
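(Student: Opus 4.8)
The plan is to transport the problem onto the two-dimensional inertial manifold $\mathfrak{A}$ and then invoke the classical Poincaré--Bendixson dichotomy supplied by Theorem \ref{TH: PBTrichotomy} together with its corollaries. First I would fix a neighbourhood $\mathcal{U}$ of the attractor $\mathcal{A}$ as in the definition, and pick any point $v_{0} \in \mathcal{U}$. Its positive semiorbit $\gamma^{+}(v_{0})$ is compact, so in particular bounded, and hence by Theorem \ref{TH: ExponentialTrackingIM} (item 1, applied with $\mathcal{M} = \mathfrak{A}$, since here $\mathcal{Q}$ is a point and $\mathfrak{A}$ is the principal leaf) there is a point $v^{*}_{0} \in \mathfrak{A}$ with $v^{*}_{0} \in [v_{0}]^{+}$ and $\|\varphi^{t}(v_{0}) - \varphi^{t}(v^{*}_{0})\|_{\mathbb{E}} \to 0$ exponentially. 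Consequently $\omega(v_{0}) = \omega(v^{*}_{0})$, and $\omega(v^{*}_{0}) \subset \mathfrak{A}$ because $\mathfrak{A}$ is invariant and closed (being a Lipschitz graph). Thus $\omega(v^{*}_{0})$ is a nonempty compact invariant subset of $\mathfrak{A} \subset \mathcal{U}$ lying in $\mathcal{A}$, and by Theorem \ref{TH: PBTrichotomy} it is of one of the types \textbf{(PBT1)}, \textbf{(PBT2)}, \textbf{(PBT3)}.

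Next I would rule out the bad cases using the hypothesis on stationary points. If $\mathcal{A}$ contains no stationary points, then \textbf{(PBT1)} and \textbf{(PBT3)} are impossible for $\omega(v^{*}_{0}) \subset \mathcal{A}$, so $\omega(v^{*}_{0})$ is a periodic orbit $\Gamma$. If instead every stationary point in $\mathcal{A}$ is an unstable terminal point, I would argue as follows: should $\omega(v^{*}_{0})$ contain a stationary point $p$, then $p \in \mathcal{A}$ is terminal, and Lemma \ref{COR: TerminalPoints} forces $\omega(v^{*}_{0}) = \{p\}$; but then $v^{*}_{0}$ lies in the stable set of $p$, which contradicts the fact that $p$ is an \emph{unstable} terminal point provided we choose $v^{*}_{0}$ off the (at most one-dimensional, since $\mathfrak{A}$ is two-dimensional and the unstable set is two-dimensional) stable manifold of $p$ — more carefully, the set of points in $\mathfrak{A} \cap \mathcal{U}$ whose $\omega$-limit is a single unstable terminal point is nowhere dense, so a generic $v_{0}$ avoids it. Hence for a suitable choice of $v_{0}$ we again conclude $\omega(v^{*}_{0})$ is a periodic orbit $\Gamma \subset \mathcal{A}$.

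It then remains to produce an \emph{orbitally stable} periodic orbit; an arbitrary periodic orbit in $\omega$-limit need not be stable. Here I would work inside the plane $\mathfrak{A} \cong \mathbb{R}^{2}$: the periodic orbit $\Gamma$ bounds a disc $\mathcal{J}$, and I would consider the family of periodic orbits contained in $\overline{\mathcal{J}}$ that are nested around a fixed interior reference (using the Jordan curve theorem and the fact that distinct periodic orbits in the plane are either disjoint or equal, so are linearly ordered by enclosure). Taking the innermost such orbit — the intersection of the decreasing family of closed discs bounded by them, whose boundary is again a periodic orbit by a limiting argument together with the no-stationary-points/terminal-points hypothesis to exclude degeneration to a point — I would show it is orbitally stable: every orbit starting in a thin annular neighbourhood between it and $\Gamma$ has, by Theorem \ref{TH: PBTrichotomy} applied on $\mathfrak{A}$, an $\omega$-limit that is a periodic orbit trapped in that annulus (stationary points being excluded or terminal), forcing the annulus to shrink and hence giving orbital stability in $\mathfrak{A}$; orbital stability in $\mathbb{E}$ then follows from the exponential tracking of Theorem \ref{TH: ExponentialTrackingIM} exactly as Lyapunov stability is transferred in Theorem \ref{TH: StabilityOfInvariantSets}. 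The main obstacle I anticipate is this last step: carefully setting up the nested family and the limiting argument for the innermost periodic orbit, and in the terminal-point case, verifying that one can choose $v_{0}$ (equivalently $v^{*}_{0}$) so that $\omega(v^{*}_{0})$ is genuinely a periodic orbit rather than a single unstable terminal point — this requires knowing that the union of stable manifolds of the finitely (or discretely) many terminal points meets $\mathfrak{A} \cap \mathcal{U}$ in a meagre set, which uses the $2$-dimensionality of $\mathfrak{A}$ against the $2$-dimensionality of the unstable sets.
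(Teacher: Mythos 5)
Your proposal follows exactly the route the paper gestures at: reduce to the two-dimensional inertial manifold $\mathfrak{A}$ via exponential tracking, invoke the planar Poincar\'{e}--Bendixson trichotomy of Theorem \ref{TH: PBTrichotomy} together with Lemma \ref{COR: TerminalPoints} to obtain a periodic $\omega$-limit in $\mathcal{A}$, extract an orbitally stable periodic orbit by a nesting argument in the plane, and transfer orbital stability from $\mathfrak{A}$ to $\mathbb{E}$ via Theorem \ref{TH: StabilityOfInvariantSets}. Since the paper supplies only the remark that the result ``can be obtained by combining standard arguments for flows on the plane and Theorem \ref{TH: StabilityOfInvariantSets}'', your reconstruction is the intended skeleton, and you correctly identify the two nontrivial steps it delegates to classical planar theory.

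A few places should be tightened. First, the dimension count for the stable set of an unstable terminal point is off: if $\mathcal{D}$ is a two-dimensional local unstable set inside the two-dimensional manifold $\mathfrak{A}$, then by invariance of domain $\mathcal{D}$ is an \emph{open} neighbourhood of $p$ in $\mathfrak{A}$, so $p$ is a topological source in $\mathfrak{A}$ and its stable set within $\mathfrak{A}$ is $\{p\}$, not a one-dimensional curve; the same observation shows the stationary points of $\mathcal{A}$ (all of which lie in $\mathfrak{A}$, being bounded complete trajectories) are isolated, hence finitely many by compactness of $\mathcal{A}$, which is what actually makes the genericity step go through and deserves to be said explicitly. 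Second, ``$\mathfrak{A} \subset \mathcal{U}$'' is a slip: $\mathfrak{A}$ is an unbounded Lipschitz graph; what you use is $\omega(v^{*}_{0}) = \omega(v_{0}) \subset \mathcal{A} \subset \mathcal{U}$. Third and most substantively, in the nesting argument the disc $\overline{\mathcal{J}}$ bounded by $\Gamma$ in $\mathfrak{A}$ generally contains stationary points \emph{not} in $\mathcal{A}$ (the index forces at least one equilibrium inside, but the hypothesis controls only those in $\mathcal{A}$); you therefore need to restrict the nested family to periodic orbits that are themselves $\omega$-limits of points in $\mathcal{U}$ (and hence lie in $\mathcal{A}$), and to rule out degeneration of the nested family to a rest point by noting that such a limit would be a stationary point in $\mathcal{A}$, which by hypothesis either does not exist or is an unstable terminal point and hence cannot be a one-sided limit of attracting periodic orbits. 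These are precisely the ``standard'' planar details the paper leaves implicit, and you flag them honestly, so the gap is one of execution rather than of approach.
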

The conclusion of Theorem \ref{TH: PeriodicOrbitExis} can be obtained by combining standard arguments for flows on the plane and Theorem \ref{TH: StabilityOfInvariantSets}, which guarantees the equivalence of stability properties for invariant sets on $\mathfrak{A}$ and entire $\mathbb{E}$.

The above theorems explains the geometry behind the works of R.A.~Smith on the Poincar\'{e}-Bendixson theory \cite{Smith1994PB2,Smith1994PB1,Smith1992,Smith1987OrbStab}. In the case $\varphi$ is $C^{1}$-differentiable our approach allows to extend the results from another series of papers by R.A.~Smith concerned with the Poincar\'{e} index theorem \cite{Smith1981IndexTh1,Smith1984PBindex} and isolated periodic orbits \cite{Smith1984IsolatedOrbits}.

An interesting application of R.A.~Smith's theory for the case of ODEs is given by I.M.~Burkin \cite{Burkin2015} (see also I.M.~Burkin and N.N.~Khien \cite{Burkin2014Hidden}) for localization of hidden oscillations in control systems (see also the survey of G.A.~Leonov and N.V.~Kuznetsov \cite{LeoKuz2013Hidden}). As we have showed, these ideas have a natural generalization to the infinite-dimensional context. We refer to our work \cite{Anikushin2021SS} for an example.
\section{Applications to differential equations}
\label{SEC: Applications}

In applications given in the next subsections we consider only the case of a constant operator $P$ and a constant decomposition $\mathbb{E} = \mathbb{E}^{-} \oplus \mathbb{E}^{+}$. We also do not discuss the existence of the so-called \textit{inertial form}, i.~e. the system of ordinary differential equations in $\mathbb{E}^{-}$, which describes the dynamics on the inertial manifold, and leave this to the reader as an exercise. Let us add here some remarks concerning this for the autonomous case. We consider equations of the form
\begin{equation}
	\label{EQ: ApplicationsExampleDiffEq}
	\dot{v} = Av(t)+BF(Cv(t))
\end{equation}
To get the equations on the inertial manifold $\mathfrak{A}$ one should consider the parametrization $\Phi \colon \mathbb{E}^{-} \to \mathfrak{A}$ and the projector $\Pi \colon \mathbb{E} \to \mathbb{E}^{-}$. All the considered equations have smoothing properties and the inertial manifold consists of complete trajectories, which are the most smooth ones and, in particular, they are classical solutions. From this for any amenable trajectory $v(t)$ and its projection $\zeta(t) := \Pi v(t)$ (note that $v(t)=\Phi(\zeta(t))$) we get the inertial form
\begin{equation}
	\label{EQ: InertialForm}
	\dot{\zeta}(t) = \Pi \left[ A\Phi(\zeta(t)) + BF(C \zeta(t)) \right] =: f(\zeta(t)),
\end{equation}
Thus, for any trajectory on the inertial manifold there is a corresponding solution to \eqref{EQ: InertialForm}. To show that this correspondence is bijective one has to show that the vector field $f \colon \mathbb{E}^{-} \to \mathbb{E}^{-}$ is globally Lipschitz. Needles to say, in the case when $\mathfrak{A}$ is $C^{1}$-differentiable, the vector field in \eqref{EQ: ApplicationsExampleDiffEq} is tangent at points of $\mathfrak{A}$ and $f$ is nothing more than its coordinate representation in the chart given by $\Pi$. We left details to the reader.

Below, we give applications for ODEs, neutral delay equations and semilinear parabolic equations. Analogous results can be obtained for many other types of equations such as certain hyperbolic equations, parabolic equations with nonlinear boundary conditions, PDEs with delays and systems of coupled equations.

\subsection{Ordinary differential equations}
\label{SUBSEC: ODEs}

Let us consider the following class of ordinary differential equations in $\mathbb{R}^{n}$:
\begin{equation}
	\label{EQ: ExampleODE}
	\dot{v}(t)=Av(t)+BF(\vartheta^{t}(q),Cv(t))+W(\vartheta^{t}(q)),
\end{equation}
where $A$, $B$, $C$ are $n \times n$, $n \times m$ and $r \times n$ matrices respectively; $\vartheta$ is a flow on a complete metric space $\mathcal{Q}$, $W \colon \mathcal{Q} \to \mathbb{R}^{n}$ is bounded continuous function and $F \colon \mathcal{Q} \times \mathbb{R}^{r} \to \mathbb{R}^{m}$ is a nonlinear continuous function such that for some constant $\Lambda>0$ we have
\begin{equation}
		\label{EQ: ODELipschitz}
		|F(q,y_{1}) - F(q,y_{2})|_{\Xi} \leq \Lambda |y_{1}-y_{2}|_{\mathbb{M}} \text{ for all } y_{1},y_{2} \in \mathbb{R}^{r}, q \in \mathcal{Q}.
\end{equation}
Here $\Xi = \mathbb{R}^{m}$ and $\mathbb{M} = \mathbb{R}^{r}$ are endowed with some (not necessarily Euclidean) inner products.

Let $v(t;q,v_{0})$ be the solution with $v(0;q,v_{0})=v_{0}$. Due to \eqref{EQ: ODELipschitz} the solution $v(t;q,v_{0})$ exists for all $t \in \mathbb{R}$ and, moreover, it depends continuously on $(t,q,v_{0})$ (see, for example, Theorem 3.2, Chapter II in \cite{HartmanODE1992}). Thus, the family of maps $\psi^{t}(q,\cdot) \colon \mathbb{R}^{n} \to \mathbb{R}^{n}$ given by
\begin{equation}
	\psi^{t}(q,v_{0}):=v(t;q,v_{0}), \text{ where } v_{0} \in \mathbb{R}^{n}, q \in \mathcal{Q} \text{ and } t \geq 0.
\end{equation}
is a cocycle in $\mathbb{R}^{n}=\mathbb{E} = \mathbb{H}$ over the flow $(\mathcal{Q},\vartheta)$, which satisfies \textbf{(ULIP)} with $\tau_{S} = 0$. Since $\mathbb{E}$ is finite-dimensional and \textbf{(ULIP)} holds, we have have \textbf{(UCOM)} (and, in particular, \textbf{(ACOM)}) satisfied.

Suppose we are given with a quadratic form $\mathcal{F}(v,\xi)$ in $v \in \mathbb{E}$ and $\xi \in \Xi$ such that $\mathcal{F}(v,0) \geq 0$ and $\mathcal{F}(v_{1}-v_{2},\xi_{1}-\xi_{2}) \geq 0$ for all $v_{1},v_{2} \in \mathbb{E}$, $q \in \mathcal{Q}$ and $\xi_{1}=F(q,Cv_{1})$, $\xi_{2}=F(q,Cv_{2})$. By $\mathcal{F}^{\mathbb{C}}$ we denote its Hermitian extension to the complexifications $\mathbb{E}^{\mathbb{C}}$ and $\Xi^{\mathbb{C}}$, i.e. $\mathcal{F}^{\mathbb{C}}(v_{1}+iv_{2},\xi_{1}+i\xi_{2}) = \mathcal{F}(v_{1},\xi_{1}) + \mathcal{F}(v_{2},\xi_{2})$ for all $v_{1},v_{2} \in \mathbb{E}$ and $\xi_{1},\xi_{2} \in \Xi$.

Now we are going to provide conditions for \textbf{(H1)}, \textbf{(H2)}, \textbf{(H3)} to hold. For $\nu_{0} \in \mathbb{R}$ such $A + \nu_{0} I$ admits an exponential dichotomy, we denote by $\mathbb{E}^{s}(\nu_{0})$ and $\mathbb{E}^{u}(\nu_{0})$ the stable and unstable subspaces of $A + \nu_{0} I$ respectively.

We have the following theorem.
\begin{theorem}
	\label{TH: ODEsInertialManifolds}
	Suppose that for some $\nu_{0} \in \mathbb{R}$ the matrix $A$ has exactly $j \geq 0$ eigenvalues $\lambda$ with $\operatorname{Re} \lambda > - \nu_{0}$ and no eigenvalues lie on the line $-\nu_{0} + i\mathbb{R}$. Let the frequency inequality
	\begin{equation}
		\label{EQ: FreqInequalityODE}
		\sup_{\xi \in \Xi^{\mathbb{C}}}\frac{\mathcal{F}^{\mathbb{C}}(-(A - (\nu_{0} + i\omega) I)^{-1}B\xi,\xi)}{|\xi|^{2}_{\Xi^{\mathbb{C}}}} < 0 \text{ for all } \omega \in \mathbb{R}
	\end{equation}
    be satisfied. Then there exists a symmetric matrix $P$ such that the cocycle $(\psi,\vartheta)$ generated by \eqref{EQ: ExampleODE} satisfies \textbf{(H1)} with $\mathbb{E}^{-} = \mathbb{E}^{u}(\nu_{0})$ and $\mathbb{E}^{+} = \mathbb{E}^{s}(\nu_{0})$, \textbf{(H2)} with the given $j$ and \textbf{(H3)} with $\nu_{0}(\cdot) \equiv \nu_{0}$, $\tau_{P} = 0$ and some $\delta_{P}>0$.
\end{theorem}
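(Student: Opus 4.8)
The key is the Frequency Theorem of Yakubovich--Likhtarnikov (in its classical finite-dimensional form, as recalled in the author's earlier works \cite{Likhtarnikov1977,Likhtarnikov1976}). The plan is to bring \eqref{EQ: ExampleODE} into a form suitable for that theorem by a shift of spectrum, then to read off a quadratic Lyapunov functional, and finally to convert the resulting integral inequality into the trajectory form \textbf{(H3)}. First I would substitute $v(t) = e^{-\nu_0 t} u(t)$; if $v(\cdot)$ solves the variational (difference) equation $\dot{w} = Aw + B\eta$ along two solutions of \eqref{EQ: ExampleODE}, then $u = e^{\nu_0 t} w$ solves $\dot{u} = (A + \nu_0 I)u + B\widetilde\eta$ with $\widetilde\eta = e^{\nu_0 t}\eta$. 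Since $A$ has exactly $j$ eigenvalues with $\operatorname{Re}\lambda > -\nu_0$ and none on $-\nu_0 + i\mathbb{R}$, the matrix $A + \nu_0 I$ is hyperbolic with $j$-dimensional unstable subspace $\mathbb{E}^u(\nu_0)$ and stable subspace $\mathbb{E}^s(\nu_0)$; this is exactly the controllability/stabilizability-type nondegeneracy needed. The frequency inequality \eqref{EQ: FreqInequalityODE}, after noting $(A - (\nu_0 + i\omega)I)^{-1} = ((A+\nu_0 I) - i\omega I)^{-1}$, is precisely the frequency-domain condition for the pair $(A+\nu_0 I, B)$ and the quadratic form $\mathcal{F}$ applied to the transfer trajectory.

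Next I would invoke the Frequency Theorem to produce a symmetric matrix $P$ and a constant $\delta > 0$ such that the quadratic form $V(v) = \langle v, Pv\rangle$ satisfies the infinitesimal dissipation inequality
\begin{equation}
	\label{EQ: ODEProofInfinitesimal}
	2\langle (A+\nu_0 I)v, Pv\rangle + \mathcal{F}(v,\eta) \leq -\delta|v|^2 \quad \text{for all } v, \eta \text{ with } \eta = Bv\text{-admissible},
\end{equation}
or more precisely the standard Yakubovich inequality $2\operatorname{Re}\langle(A+\nu_0I)v + B\xi, Pv\rangle + \mathcal{F}(v,\xi) \leq -\delta|v|^2$ for all $v \in \mathbb{E}^{\mathbb{C}}$, $\xi \in \Xi^{\mathbb{C}}$. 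Moreover the theorem guarantees that $P$ is positive on the stable subspace and negative on the unstable subspace of $A+\nu_0 I$, which gives \textbf{(H1)} with $\mathbb{E}^- = \mathbb{E}^u(\nu_0)$, $\mathbb{E}^+ = \mathbb{E}^s(\nu_0)$ and the uniform bound on $\|P\|$ (trivial, $P$ is a single matrix); and \textbf{(H2)} with $\dim\mathbb{E}^- = j$. The boundedness requirement \eqref{EQ: OperatorsBound} is automatic here.

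Then I would verify \textbf{(H3)}. Take two solutions $v_1(\cdot), v_2(\cdot)$ of \eqref{EQ: ExampleODE}, set $w(t) = v_1(t) - v_2(t)$; it satisfies $\dot{w} = Aw + B\eta(t)$ with $\eta(t) = F(\vartheta^t q, Cv_1) - F(\vartheta^t q, Cv_2)$, and by the Lipschitz constraint \eqref{EQ: ODELipschitz} together with the hypothesis that $\mathcal{F}(v_1-v_2, \xi_1-\xi_2) \geq 0$ along such pairs, we have $\mathcal{F}(w(t),\eta(t)) \geq 0$ for all $t$. Now compute $\frac{d}{dt}\big[ e^{2\nu_0 t} V(w(t)) \big] = e^{2\nu_0 t}\big( 2\langle \dot w + \nu_0 w, Pw\rangle \big) = e^{2\nu_0 t}\big(2\langle (A+\nu_0 I)w + B\eta, Pw\rangle\big) \leq e^{2\nu_0 t}\big(-\delta|w|^2 - \mathcal{F}(w,\eta)\big) \leq -\delta e^{2\nu_0 t}|w|^2$, using \eqref{EQ: ODEProofInfinitesimal} and $\mathcal{F}(w,\eta)\geq 0$. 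Integrating from $l$ to $r$ (any $r \geq l$, so $\tau_P = 0$) yields exactly \eqref{EQ: PropH3inequality} with $\nu_0(\cdot)\equiv\nu_0$, $\delta_P = \delta$, and $\mathbb{H} = \mathbb{E} = \mathbb{R}^n$. This gives \textbf{(H3)}, completing the proof. The only mild subtlety is matching conventions: the differential Yakubovich inequality must be integrated, and one must check that the transfer-function expression in \eqref{EQ: FreqInequalityODE} is the one the Frequency Theorem consumes after the spectral shift — but this is bookkeeping, not a genuine obstacle. The substantive content is entirely carried by the Frequency Theorem; the rest is the Leibniz-rule computation above.
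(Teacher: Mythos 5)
Your proposal follows exactly the route the paper intends: the paper itself gives no detailed proof, saying only that the statement ``is standard and can be derived from the Frequency Theorem'', and your spectral shift $v = e^{-\nu_0 t}u$, invocation of the Yakubovich inequality for the pair $(A+\nu_0 I, B)$ with the storage function $\mathcal{F}$, and the Leibniz-rule integration that produces \eqref{EQ: PropH3inequality} with $\tau_P=0$ and $\delta_P=\delta$ is precisely that standard derivation. The passage from the pointwise Yakubovich inequality to \textbf{(H3)} via $\tfrac{d}{dt}\bigl[e^{2\nu_0 t}V(w)\bigr] = 2e^{2\nu_0 t}\langle(A+\nu_0 I)w + B\eta, Pw\rangle \le -\delta e^{2\nu_0 t}|w|^2 - e^{2\nu_0 t}\mathcal{F}(w,\eta)$ is correct, and you correctly identify that the Frequency Theorem supplies the sign structure of $P$ on $\mathbb{E}^s(\nu_0)$ and $\mathbb{E}^u(\nu_0)$ giving \textbf{(H1)} and \textbf{(H2)}.

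One item to repair: the algebraic identity $(A-(\nu_0+i\omega)I)^{-1} = ((A+\nu_0 I)-i\omega I)^{-1}$ that you invoke to reconcile \eqref{EQ: FreqInequalityODE} with the shifted system is false — these differ by a shift of $2\nu_0 I$. The resolvent that actually appears in the frequency condition for $(A+\nu_0 I, B)$ at frequency $\omega$ is $(A+\nu_0 I - i\omega I)^{-1} = (A - (-\nu_0 + i\omega)I)^{-1}$, i.e.\ the resolvent of $A$ evaluated on the critical line $\operatorname{Re}p = -\nu_0$ singled out by the dichotomy hypothesis. The form in \eqref{EQ: FreqInequalityODE}, which evaluates on $\operatorname{Re}p = \nu_0$, appears to contain a sign slip (it would require $A$ to have no spectrum on a different line than the hypothesis provides); compare the delay version \eqref{EQ: Frequency-domainDelay} and the parabolic general choice $|W(-\nu_0+i\omega)|$, both of which use $-\nu_0+i\omega$. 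Your computation is consistent with the corrected form; just drop the false identity and write the resolvent as $(A-(-\nu_0+i\omega)I)^{-1}$ directly.
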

The proof of Theorem \ref{TH: ODEsInertialManifolds} is standard and can be derived from the Frequency Theorem obtained in \cite{Anikushin2020FreqDelay,Anikushin2020FreqParab}.

Let us mention here a general choice of the quadratic form derived from the Lipschitz property \eqref{EQ: ODELipschitz} as $\mathcal{F}(v,\xi) = \Lambda^{2}|Cv|^{2}_{\mathbb{M}} - |\xi|^{2}_{\Xi}$. Then \eqref{EQ: FreqInequalityODE} can be described in terms of the transfer matrix $W(p) = C(A-pI)^{-1}B$ as
\begin{equation}
	|W(-\nu_{0}+i\omega)|_{\Xi^{\mathbb{C}} \to \mathbb{M}^{\mathbb{C}}} < \Lambda^{-1} \text{ for all } \omega \in \mathbb{R}.
\end{equation}
This inequality was used by R.A.~Smith \cite{Smith1986Massera,Smith1987OrbStab}. For more specific cases, more delicate choices of quadratic forms, leading to the so-called Circle Criterion and its variations, are possible (see \cite{Gelig1978,Burkin2014Hidden,Burkin2015}).

If $F$ has a continuous derivative $F'_{y}$ in $y$, then the linearization semicocycle $\Xi$ can be derived from the linearized equation
\begin{equation}
	\label{EQ: ODElinearizedEquation}
	\dot{\xi}(t)=A\xi(t) + BF'_{y}(\vartheta^{t}(q),C\psi^{t}(q,v_{0}))C\xi(t)
\end{equation}
as $\Xi^{t}(q,v_{0},\xi_{0}):=\xi(t;q,v_{0},\xi_{0})$, where $\xi(t;q,v_{0},\xi_{0})$ is the solution to \eqref{EQ: ODElinearizedEquation} with $\xi(0;q,v_{0},\xi_{0})=\xi_{0}$. Standard arguments show that \textbf{(DIFF)} is satisfied (see Theorem 3.1, Chapter V in \cite{HartmanODE1992}).

\subsection{Neutral delay equations in $\mathbb{R}^{n}$: frequency-domain inequalities and equations with small delays}
\label{SUBSEC: DelayEquations}

In this subsection we consider the following class of neutral delay differential equations in $\mathbb{R}^{n}$:
\begin{equation}
	\label{EQ: ClassicalDelayEquation}
	\frac{d}{dt}[x(t)+D_{0}x_{t}] = \widetilde{A}x_{t} + \widetilde{B}F(\vartheta^{t}(q),\widetilde{C}x_{t}) + \widetilde{W}(\vartheta^{t}(q)),
\end{equation}
where $x_{t}(\theta) := x(t+\theta)$ for $\theta \in [-\tau,0]$ denotes the history segment; $\tau>0$ is a constant; $D_{0}, \widetilde{A} \colon C([-\tau,0];\mathbb{R}^{n}) \to \mathbb{R}^{n}$, $\widetilde{B} \colon \mathbb{R}^{m} \to \mathbb{R}^{n}$ and $\widetilde{C} \colon C([-\tau,0];\mathbb{R}^{n}) \to \mathbb{R}^{r}$ are bounded linear operators and $D_{0}$ is nonatomic at zero; $\vartheta$ is a flow on a complete metric space $\mathcal{Q}$, $\widetilde{W} \colon \mathcal{Q} \to \mathbb{R}^{n}$ is a bounded continuous function and $F \colon \mathcal{Q} \times \mathbb{R}^{r} \to \mathbb{R}^{m}$ is a nonlinear continuous function such that for some constant $\Lambda > 0$ we have
\begin{equation}
	\label{EQ: DelayLipschitz}
	|F(q,y_{1}) - F(q,y_{2})|_{\Xi} \leq \Lambda |y_{1}-y_{2}|_{\mathbb{M}} \text{ for all } y_{1},y_{2} \in \mathbb{R}^{r}, q \in \mathcal{Q}.
\end{equation}
Here $\Xi =\mathbb{R}^{m}$ and $\mathbb{M} =\mathbb{R}^{r}$ are endowed with some (not necessarily Euclidean) inner products.

Let us discuss the well-posedness of \eqref{EQ: ClassicalDelayEquation} as an evolutionary equation in a proper Hilbert space. For this consider the Hilbert space $\mathbb{H} := \mathbb{R}^{n} \times L_{2}(-\tau,0;\mathbb{R}^{n})$ with the usual norm, which we denote by $|\cdot|_{\mathbb{H}}$, and the operator $A \colon \mathcal{D}(A) \subset \mathbb{H} \to \mathbb{H}$ given by
\begin{equation}
	\label{EQ: OperatorADefinition}
	(y, \phi)
	\overset{A}{\mapsto}
	\left(\widetilde{A}\phi, \frac{d}{d \theta} \phi\right),
\end{equation}
where $(y,\phi) \in \mathcal{D}(A) := \{ (x,\phi) \in \mathbb{H} \ | \ \phi(0)+D_{0}\phi=y, \phi \in W^{1,2}(-\tau,0;\mathbb{R}^{n})  \}$. It can be shown that $A$ is a generator of a $C_{0}$-semigroup in $\mathbb{H}$, which we denote by $G(t)$. Let the bounded linear operator $B \colon \Xi \to \mathbb{H}$ be defined as $B \xi := (\widetilde{B}\xi,0)$ for $\Xi \in \Xi$ and let the unbounded linear operator $C \colon \mathbb{H} \to \mathbb{R}^{r}$ be defined as $C(x,\phi):=\widetilde{C}\phi$ for $\phi \in C([-\tau,0];\mathbb{R}^{n})$. Put also $W(q):=(\widetilde{W}(q),0)$. Now \eqref{EQ: ClassicalDelayEquation} can be written as an abstract evolution equation in $\mathbb{H}$ as
\begin{equation}
	\label{EQ: AbstractDelayHilberSpace}
	\dot{v}(t) = Av(t) + BF(\vartheta^{t}(q),Cv(t)) + W(\vartheta^{t}(q)).
\end{equation}
It can be shown that its solutions generate a cocycle in $\mathbb{H}$, satisfying \textbf{(ULIP)}, as $\psi^{t}(q,v_{0}):=v(t;q,v_{0})$, where $v(t;q,v_{0})$ is a solution to \eqref{EQ: AbstractDelayHilberSpace} such that $v(0;q,v_{0}) = v_{0}$. This solution is determined from the variation of constants formula
\begin{equation}
	v(t)=G(t)v_{0} + \int_{0}^{t}G(t-s) \left[ BF(\vartheta^{s}(q),Cv(s)) + W(\vartheta^{s}(q)) \right]ds.
\end{equation}
However, it is should be emphasized in that sense $Cv(s)$ is understood since $v(s)$ may not correspond to a continuous function. We refer to \cite{AnikushinNDE2022,Anikushin2020FreqDelay} for this. 

If we embed $\mathbb{E} = C([-\tau,0];\mathbb{R}^{n})$ into $\mathbb{H}$ as $\phi \mapsto (\phi(0)+D_{0}\phi,\phi)$, then the cocycle obtained through the classical solutions (see, for example, J.K.~Hale \cite{Hale1977}) on $\mathbb{E}$ agrees with $\psi$. Moreover, when $D_{0}$ can be extended to a bounded operator in $L_{2}$, we in fact have $\psi^{\tau}(q,\mathbb{H}) \subset \mathbb{E}$ and \textbf{(ULIP)} satisfied with $\tau_{S} = \tau$. Since this seems to fail in the general case, we will study $\psi$ in $\mathbb{H}$.

When $D_{0} \equiv 0$, the cocycle satisfies \textbf{(UCOM)} with $\tau_{ucom} = 2 \tau$. However, in the general case this is not the case, but one can obtain asymptotic compactness, i.e. \textbf{(ACOM)}, under additional assumptions. Let us consider the value
\begin{equation}
	\label{EQ: NDEdecayexponent}
	a_{D} := \sup\{ \operatorname{Re}p \ | \ \operatorname{det}\left[ I + \delta(p) \right] = 0 \}.
\end{equation}
In the case the set under $\operatorname{sup}$ is empty (for example, when $D_{0} \equiv 0$), we put $a_{D}:=-\infty$. It can be shown that $\psi$ satisfies \textbf{(ACOM)} with any $\gamma^{+}>a_{D}$ in both spaces $\mathbb{E}$ and $\mathbb{H}$.

By the Riesz representation theorem, there are functions $a(\cdot), \delta_{0}(\cdot) \colon [-\tau,0] \to \mathbb{R}^{n \times n}$ and $c(\cdot) \colon [-\tau,0] \to \mathbb{R}^{r \times n}$ of bounded variation such that for any $\phi \in C([-\tau,0];\mathbb{R}^{n})$ we have
\begin{equation}
	\widetilde{A}\phi = \int_{-\tau}^{0} d a(\theta)\phi(\theta), \ D_{0}\phi = \int_{-\tau}^{0}d\delta_{0}(\theta)\phi(\theta) \text{ and } \widetilde{C}\phi = \int_{-\tau}^{0} d c(\theta) \phi(\theta).
\end{equation}
Let us also consider the matrix-valued functions of $p \in \mathbb{C}$ given by
\begin{equation}
	\label{EQ: NDE AlphaDeltaGamma}
	\alpha(p) = \int_{-\tau}^{0}e^{p\theta}d a(\theta), \ \delta(p) = \int_{-\tau}^{0}e^{p\theta}d\delta_{0}(\theta) \text{ and } \gamma(p) = \int_{-\tau}^{0}e^{p\theta} dc(\theta).
\end{equation}

Now we are going to discuss verification of \textbf{(H1)}, \textbf{(H2)} and \textbf{(H3)}, following our work \cite{Anikushin2020FreqDelay}.

Firstly, it can be shown that the spectrum of $A$ as an operator in $\mathbb{E}$ or $\mathbb{H}$ is discrete and determined by the roots $p \in \mathbb{C}$ of
\begin{equation}
	\label{EQ: SpectrumDelayODE}
	\operatorname{det} \left[ pI + p\delta(p) - \alpha(p) \right] = 0.
\end{equation}
It is known that for any $-\nu_{0} > a_{D}$ there exists only finite number of roots with $\operatorname{Re}p \geq -\nu_{0}$ and we have proper exponential dichotomies for the semigroup $G(t)$. Namely, in the case there is no roots with $\operatorname{Re}p=-\nu_{0}$, there are subspaces $\mathbb{E}^{s}(\nu_{0})$ and $\mathbb{E}^{u}(\nu_{0})$, where $\mathbb{E}^{u}(\nu_{0})$ is the $j$-dimensional spectral subspace corresponding to the roots with $\operatorname{Re}p > -\nu_{0}$ and $\mathbb{E}^{s}(\nu_{0})$ is a complementary subspace with proper growth rates determined by the spectral bound (see Theorems 10.1 and 10.3 of Chapter 12 in \cite{Hale1977}). Moreover, we have analogous decomposition in $\mathbb{H}$ with the spaces $\mathbb{H}^{s}(\nu_{0})$ and $\mathbb{H}^{u}(\nu_{0})$ (see \cite{AnikushinNDE2022}).

Let $W(p)=C(A-pI)^{-1}B$ be the transfer operator of the triple $(A,B,C)$. In terms of the functions introduced in \eqref{EQ: NDE AlphaDeltaGamma}, it can be described as $W(p) = \gamma(p) (\alpha(p) - p I -p\delta(p))^{-1}\widetilde{B}$.

Let $\mathcal{G}(y,\xi)$ be a quadratic form of $y \in \mathbb{M}$ and $\xi \in \Xi$. Put $\mathcal{F}(v,\xi):= \mathcal{G}(Cv,\xi)$ and let $\mathcal{G}$ be such that $\mathcal{F}(v,0) \geq 0$ for all $v \in \mathbb{E}$ and $\mathcal{F}(Cv_{1}-Cv_{2},F(q,Cv_{1})-F(q,Cv_{2})) \geq 0$ for all $v_{1},v_{2} \in \mathbb{E}$ and $q \in \mathcal{Q}$. We have the following theorem.
\begin{theorem}
	\label{TH: DelayODEFreqCond}
	Let $\mathcal{F}$ be as above and let $D_{0}$ have no singular part. Suppose that for some $\nu_{0} < -a_{D}$, where $a_{D}$ is given by \eqref{EQ: NDEdecayexponent}, the characteristic equation \eqref{EQ: SpectrumDelayODE} does not have roots with $\operatorname{Re}p = -\nu_{0}$ and have exactly $j$ roots with $\operatorname{Re}p > -\nu_{0}$. Let the frequency inequality 
	\begin{equation}
		\label{EQ: Frequency-domainDelay}
		\sup\limits_{\xi \in \Xi}\frac{\mathcal{G}^{\mathbb{C}}(-W(i\omega - \nu_{0})\xi,\xi)}{|\xi|^{2}_{\Xi}} < 0 \text{ for all } \omega \in \mathbb{R}
	\end{equation}
	be satisfied. Then there exist a self-adjoint operator $P \in \mathcal{L}(\mathbb{H})$ such that the cocycle $(\psi,\vartheta)$ generated by \eqref{EQ: AbstractDelayHilberSpace} in $\mathbb{H}$ satisfies \textbf{(H1)} with $\mathbb{E}^{-}(q) \equiv \mathbb{E}^{-} = \mathbb{H}^{u}(\nu_{0})$ and $\mathbb{E}^{+}(q) \equiv \mathbb{E}^{+} = \mathbb{H}^{s}(\nu_{0})$, \textbf{(H2)} with the given $j$ and \textbf{(H3)} with $\nu_{0}(\cdot) \equiv \nu_{0}$, $\tau_{P} = 0$ and some $\delta_{P}>0$.
\end{theorem}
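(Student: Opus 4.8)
The plan is to deduce the statement from the infinite-dimensional Frequency Theorem for delay equations established in \cite{Anikushin2020FreqDelay}, applied to the exponentially shifted linear control system associated with \eqref{EQ: AbstractDelayHilberSpace}. First I would pass to weighted variables: for a trajectory $v(\cdot)$ of the linear inhomogeneous system $\dot v = Av + B\xi$ set $\widetilde v(t) := e^{\nu_{0}t}v(t)$ and $\widetilde\xi(t) := e^{\nu_{0}t}\xi(t)$, so that $\widetilde v$ solves $\dot{\widetilde v} = (A+\nu_{0}I)\widetilde v + B\widetilde\xi$. By the assumption on \eqref{EQ: SpectrumDelayODE} (no roots on $\operatorname{Re}p=-\nu_{0}$, exactly $j$ roots to its right, and $\nu_{0}<-a_{D}$), the semigroup generated by $A+\nu_{0}I$ admits an exponential dichotomy in $\mathbb{H}$ with $j$-dimensional unstable subspace $\mathbb{H}^{u}(\nu_{0})$ and complement $\mathbb{H}^{s}(\nu_{0})$; the condition that $D_{0}$ has no singular part is what guarantees the required spectral and dichotomy estimates (cf. \cite{Hale1977,AnikushinNDE2022}). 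These serve as $\mathbb{E}^{-}$ and $\mathbb{E}^{+}$, which immediately gives \textbf{(H2)} with the prescribed $j$.

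Next I would invoke the Frequency Theorem for the triple $(A+\nu_{0}I,B,C)$ with cost functional $\mathcal{F}(v,\xi)=\mathcal{G}(Cv,\xi)$. The point is that $W(i\omega-\nu_{0})=C(A-(i\omega-\nu_{0})I)^{-1}B$ is exactly the transfer function of $A+\nu_{0}I$ evaluated on the imaginary axis, so \eqref{EQ: Frequency-domainDelay} is precisely the frequency inequality required as input. The theorem then produces a self-adjoint operator $P\in\mathcal{L}(\mathbb{H})$ and a constant $\delta>0$ such that $V(v):=\langle v,Pv\rangle$ satisfies, for every admissible pair $(\widetilde v,\widetilde\xi)$ of the shifted system and all $0\le l\le r$,
\[
V(\widetilde v(r))-V(\widetilde v(l))\le \int_{l}^{r}\big[\mathcal{F}(\widetilde v(s),\widetilde\xi(s))-\delta|\widetilde v(s)|_{\mathbb{H}}^{2}\big]\,ds,
\]
together with the sign information that $P$ is negative on $\mathbb{H}^{u}(\nu_{0})$ and positive on $\mathbb{H}^{s}(\nu_{0})$ (the standard Lyapunov-type dichotomy output of the theorem, obtainable by letting $l\to-\infty$ on the unstable subspace and $r\to+\infty$ on the stable one in the inequality with $\widetilde\xi\equiv0$). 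Uniform boundedness of $\|P\|$ in \textbf{(H1)} is trivial since $P$ is a single constant operator, so \textbf{(H1)} holds with $M_{P}=\|P\|$.

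To obtain \textbf{(H3)} I would take two trajectories $v_{1}(\cdot),v_{2}(\cdot)$ of the nonlinear cocycle over a fixed $q$, put $w:=\psi^{\cdot}(q,v_{1})-\psi^{\cdot}(q,v_{2})$ and $\xi(t):=F(\vartheta^{t}(q),C\psi^{t}(q,v_{1}))-F(\vartheta^{t}(q),C\psi^{t}(q,v_{2}))$, so that $w$ solves $\dot w=Aw+B\xi$. The assumption on $\mathcal{G}$ gives $\mathcal{F}(w(s),\xi(s))=\mathcal{G}(Cw(s),\xi(s))\ge0$ for all $s$, hence also $\mathcal{F}(\widetilde w(s),\widetilde\xi(s))=e^{2\nu_{0}s}\mathcal{F}(w(s),\xi(s))\ge0$ for the weighted quantities $\widetilde w=e^{\nu_{0}\cdot}w$, $\widetilde\xi=e^{\nu_{0}\cdot}\xi$. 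Substituting into the displayed inequality and rewriting in the original variables yields
\[
e^{2\nu_{0}r}V(w(r))-e^{2\nu_{0}l}V(w(l))\le-\delta\int_{l}^{r}e^{2\nu_{0}s}|w(s)|_{\mathbb{H}}^{2}\,ds,
\]
which is \eqref{EQ: PropH3inequality} with $\nu_{0}(\cdot)\equiv\nu_{0}$, $\tau_{P}=0$ and $\delta_{P}=\delta$; the sign conditions on $P$ established above are exactly \textbf{(H1)} with $\mathbb{E}^{-}=\mathbb{H}^{u}(\nu_{0})$ and $\mathbb{E}^{+}=\mathbb{H}^{s}(\nu_{0})$, completing the proof modulo the cited Frequency Theorem.

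The main obstacle is not this bookkeeping but verifying that the Frequency Theorem genuinely applies in the present setting: the observation operator $C$ is unbounded on $\mathbb{H}=\mathbb{R}^{n}\times L_{2}(-\tau,0;\mathbb{R}^{n})$ (it acts through $\widetilde C$ on pointwise/boundary data), and the cocycle $\psi$ acts on an $L_{2}$-type space whose elements need not be continuous functions, so $Cv(s)$ has meaning only as an $L_{2}^{loc}$-in-time object. The classical Likhtarnikov--Yakubovich theorem does not cover this; one must use the extension in \cite{Anikushin2020FreqDelay} designed precisely for this class of admissible observations, and one must check that the nonlinear cocycle solutions coincide with the admissible control-system solutions in the sense required there (well-posedness, density, and validity of the quadratic constraint $\mathcal{F}(w(s),\xi(s))\ge0$ for such generalized trajectories). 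The hypothesis that $D_{0}$ has no singular part enters here as well, ensuring the semigroup regularity and dichotomy estimates needed for the theorem's hypotheses.
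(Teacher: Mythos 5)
Your proposal correctly reconstructs the argument the paper defers entirely to \cite{Anikushin2020FreqDelay}: pass to exponentially weighted variables, use the dichotomy of $A+\nu_{0}I$ to produce $\mathbb{H}^{u}(\nu_{0})$ and $\mathbb{H}^{s}(\nu_{0})$, feed the triple $(A+\nu_{0}I,B,C)$ and the cost form $\mathcal{F}$ into the infinite-dimensional Frequency Theorem, extract a constant self-adjoint $P$ with the required sign structure, and then plug in differences of nonlinear trajectories and use the sector constraint $\mathcal{F}\ge0$ to obtain \textbf{(H3)}. You also correctly identify the real technical obstacle — $C$ is unbounded on $\mathbb{H}$ and acts through $\widetilde C$ on pointwise data, so the classical Likhtarnikov--Yakubovich theorem does not apply and the extension in \cite{Anikushin2020FreqDelay} with its notion of admissible observation along $L^{2}_{loc}$ trajectories is essential, with $D_{0}$ having no singular part being the hypothesis that makes the regularity and dichotomy estimates go through. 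This is exactly the intended route.

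There is, however, a sign slip in the displayed form of the Frequency Theorem's output. You write
\[
V(\widetilde v(r))-V(\widetilde v(l))\le \int_{l}^{r}\big[\mathcal{F}(\widetilde v(s),\widetilde\xi(s))-\delta|\widetilde v(s)|_{\mathbb{H}}^{2}\big]\,ds,
\]
and then, after noting $\mathcal{F}(\widetilde w(s),\widetilde\xi(s))\ge0$ along differences of nonlinear trajectories, claim the conclusion
\[
e^{2\nu_{0}r}V(w(r))-e^{2\nu_{0}l}V(w(l))\le-\delta\int_{l}^{r}e^{2\nu_{0}s}|w(s)|_{\mathbb{H}}^{2}\,ds.
\]
But dropping a \emph{nonnegative} term from the right-hand side of a $\le$ inequality goes the wrong way: with a $+\mathcal{F}$ term you only get a lower bound for the right side, not the upper bound you need. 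The Frequency Theorem in fact delivers the dissipation inequality with $-\mathcal{F}$ on the right,
\[
V(\widetilde v(r))-V(\widetilde v(l))\le \int_{l}^{r}\big[-\mathcal{F}(\widetilde v(s),\widetilde\xi(s))-\delta|\widetilde v(s)|_{\mathbb{H}}^{2}\big]\,ds,
\]
(equivalently, $2\langle (A+\nu_0 I)v+B\xi,Pv\rangle+\mathcal{F}(v,\xi)\le-\delta|v|^2$ in differential form), and this is also the sign needed for your dichotomy argument that determines the sign of $P$ on $\mathbb{H}^{u}(\nu_{0})$ and $\mathbb{H}^{s}(\nu_{0})$: only with $-\mathcal{F}$ (and $\mathcal{F}(\cdot,0)\ge0$) does setting $\widetilde\xi\equiv0$ give a monotone decrease of $V$ along free trajectories. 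Correct the sign and the rest of your derivation of \textbf{(H1)}, \textbf{(H2)} and \textbf{(H3)} goes through as you outlined.
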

For a proof see \cite{Anikushin2020FreqDelay}.

Again in the case of the Lipschitz property \eqref{EQ: DelayLipschitz} a general choice of the quadratic form is $\mathcal{F}(v,\xi) = \Lambda^{2}|Cv|^{2}_{\mathbb{M}} - |\xi|^{2}_{\Xi}$. Then \eqref{EQ: Frequency-domainDelay} reads as
\begin{equation}
	|W(- \nu_{0} + i\omega )|_{\Xi^{\mathbb{C}} \to \mathbb{M}^{\mathbb{C}}} < \Lambda^{-1} \text{ for all } \omega \in \mathbb{R}.
\end{equation}
This extends the frequency inequality used by R.A.~Smith \cite{Smith1990ConvDelay,Smith1992} in the case $D_{0} \equiv 0$. Below we will discuss its relation to the works of Yu.A.~Ryabov \cite{Ryabov1967}, R.D.~Driver \cite{Driver1968SmallDelays}, C.~Chicone \cite{Chicone2003} and S.~Chen and J.~Shen \cite{ChenShen2021} in the case of equations with small delays.

Analogously, in the case $F$ has a continuous derivative $F'_{y}$ in $y$, the linearization semicocycle $\Xi$ can be obtained through the linearized equation 
\begin{equation}
	\label{EQ: NDELinearized}
	\dot{x}(t) =A\xi(t) + BF'_{y}(\vartheta^{t}(q),C\psi^{t}(q,v_{0}))C\xi(t)
\end{equation}
as $\Xi^{t}(q,v_{0},\xi_{0}):=\xi(t;q,v_{0},\xi_{0})$, where $\xi(t;q,v_{0},\xi_{0})$ is the solution to \eqref{EQ: NDELinearized} with $\xi(0;q,v_{0},\xi_{0})=\xi_{0}$. It can be shown that \textbf{(DIFF)} is satisfied (see \cite{AnikushinNDE2022, Anikushin2020Semigroups, Hale1977}).

\subsubsection{Equations with small delays}

Let us consider \eqref{EQ: ClassicalDelayEquation} with $\widetilde{A} \equiv 0$ and $\widetilde{B}$ being the identity $n \times n$-matrix, i.~e. the delay equation
\begin{equation}
	\label{EQ: ConstantDelayExample}
	\frac{d}{dt}\left( x(t) + D_{0}x_{t} \right)=F(t,\widetilde{C}x_{t}),
\end{equation}
where $F$ still satisfies \eqref{EQ: DelayLipschitz} for some $\Lambda>0$. Let us assume that $\Xi = \mathbb{R}^{m}$ and $\mathbb{M}=\mathbb{R}^{r}$ are endowed with the Euclidean scalar products. We assume that $\widetilde{C} \colon C([-\tau,0]; \mathbb{R}^{n}) \to \mathbb{R}^{r}$ has the form
\begin{equation}
	\label{EQ: SmallDelaysOpC}
	\widetilde{C}\phi = \left( \int_{-\tau}^{0}\phi_{j_{1}}(\theta)d\mu_{1}(\theta),\ldots,  \int_{-\tau}^{0}\phi_{j_{r}}(\theta)d\mu_{r}(\theta) \right),
\end{equation}
where $\mu_{k}$ are Borel signed measures on $[-\tau,0]$ having total variation $\leq 1$ and $1 \leq j_{1} \leq j_{2} \leq \ldots \leq j_{r} \leq n$ are integers. In particular, taking $\mu_{k} = \delta_{-\tau_{k}}$ with some $\tau_{k} \in [0,\tau]$, we may get discrete delays since $\int_{-\tau}^{0}\phi_{j_{k}}(\theta)d\mu_{k}(\theta) = \phi_{j_{k}}(-\tau_{k})$.

Considering \eqref{EQ: ConstantDelayExample} in the context of Theorem \ref{TH: DelayODEFreqCond}, gives the following.
\begin{theorem}
	\label{TH: SmallDelayNdimTheorem}
	For \eqref{EQ: ConstantDelayExample} suppose that \eqref{EQ: SmallDelaysOpC} is satisfied and the Lipschitz constant $\Lambda$ from \eqref{EQ: DelayLipschitz} is calculated w.~r.~t. the Euclidean norms. Suppose that $D_{0}$ has no singular part and for some $\nu_{0} > 0$ we have
	\begin{equation}
		\label{EQ: SmallDelaysSqueezingKappa}
		\varkappa(\nu_{0}) := e^{ \tau \nu_{0}} \cdot \|D_{0}\| < 1.
	\end{equation}
	Let the inequality
	\begin{equation}
		\label{EQ: SmallDelaysThMainIneq}
		\sqrt{r} \cdot \frac{e^{\tau \nu_{0}}}{\nu_{0}} \cdot \frac{1}{1 - \varkappa(\nu_{0})} < \Lambda^{-1}.
	\end{equation}
	be satisfied. Then Theorem \ref{TH: DelayODEFreqCond} is applicable for \eqref{EQ: ConstantDelayExample} with $j=n$.
\end{theorem}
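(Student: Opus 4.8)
The plan is to verify directly all the hypotheses of Theorem \ref{TH: DelayODEFreqCond} for equation \eqref{EQ: ConstantDelayExample}, taking the quadratic form to be $\mathcal{F}(v,\xi) = \Lambda^{2}|Cv|^{2}_{\mathbb{M}} - |\xi|^{2}_{\Xi}$ (whose required positivity properties are exactly \eqref{EQ: DelayLipschitz}), the exponent $\nu_{0}$ from the statement, and $j = n$. Since here $\widetilde{A} \equiv 0$ and $\widetilde{B} = I$, one has $\alpha(p) \equiv 0$, so the characteristic function in \eqref{EQ: SpectrumDelayODE} factors as $\det[pI + p\delta(p)] = p^{n}\det[I + \delta(p)]$, and the transfer operator of $(A,B,C)$ becomes $W(p) = -p^{-1}\gamma(p)(I + \delta(p))^{-1}$.

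First I would locate the spectrum of $A$. For $\operatorname{Re}p \geq -\nu_{0}$ and $\theta \in [-\tau,0]$ we have $|e^{p\theta}| = e^{\operatorname{Re}p\cdot\theta} \leq e^{\nu_{0}\tau}$, and since $D_{0}$ has no singular part its representing measure has total variation $\|D_{0}\|$; hence $\|\delta(p)\| \leq e^{\nu_{0}\tau}\|D_{0}\| = \varkappa(\nu_{0}) < 1$ on the closed half-plane $\operatorname{Re}p \geq -\nu_{0}$, and — using continuity of $\nu \mapsto \varkappa(\nu)$ — on a slightly larger half-plane $\operatorname{Re}p \geq -\nu_{0}'$ with $\nu_{0}' > \nu_{0}$. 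Thus $I + \delta(p)$ is boundedly invertible there by a Neumann series, so $\det[I + \delta(p)]$ has no zeros with $\operatorname{Re}p \geq -\nu_{0}'$; in particular $\nu_{0} < -a_{D}$, there are no characteristic roots on $\operatorname{Re}p = -\nu_{0}$, and the only root with $\operatorname{Re}p > -\nu_{0}$ is $p = 0$, of multiplicity exactly $n$ because $\det[I + \delta(0)] \neq 0$. This supplies the spectral hypotheses of Theorem \ref{TH: DelayODEFreqCond} with $j = n$.

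Second I would verify the frequency inequality \eqref{EQ: Frequency-domainDelay}, which for the chosen $\mathcal{F}$ reads $|W(-\nu_{0} + i\omega)|_{\Xi^{\mathbb{C}} \to \mathbb{M}^{\mathbb{C}}} < \Lambda^{-1}$ for all $\omega \in \mathbb{R}$. Setting $p = -\nu_{0} + i\omega$ and estimating the three factors of $W(p) = -p^{-1}\gamma(p)(I + \delta(p))^{-1}$ separately: $|p^{-1}| = (\nu_{0}^{2} + \omega^{2})^{-1/2} \leq \nu_{0}^{-1}$; $\|(I + \delta(p))^{-1}\| \leq (1 - \varkappa(\nu_{0}))^{-1}$ as above; and, using the structure \eqref{EQ: SmallDelaysOpC}, $\gamma(p)$ is the $r \times n$ matrix whose $k$-th row has the single nonzero entry $\int_{-\tau}^{0}e^{p\theta}d\mu_{k}(\theta)$ in column $j_{k}$, of modulus $\leq e^{\nu_{0}\tau}$ since $\operatorname{Var}\mu_{k} \leq 1$, so that $|\gamma(p)v|^{2} \leq e^{2\nu_{0}\tau}\sum_{k=1}^{r}|v_{j_{k}}|^{2} \leq r\,e^{2\nu_{0}\tau}|v|^{2}$, i.e. $\|\gamma(p)\| \leq \sqrt{r}\,e^{\nu_{0}\tau}$ in the Euclidean operator norm. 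Multiplying the three bounds gives $|W(-\nu_{0}+i\omega)| \leq \sqrt{r}\cdot e^{\tau\nu_{0}}\nu_{0}^{-1}(1 - \varkappa(\nu_{0}))^{-1}$, which is $< \Lambda^{-1}$ precisely by \eqref{EQ: SmallDelaysThMainIneq}. With the root count and the frequency inequality in hand, Theorem \ref{TH: DelayODEFreqCond} applies with $j = n$, which is the assertion.

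The only genuinely delicate point is the bookkeeping in the transfer-function estimate — keeping the Euclidean operator norm throughout so that the factor $\sqrt{r}$ is neither lost nor worsened, and using the correct matrix/operator-norm convention for $\|\delta(p)\| \leq \varkappa(\nu_{0})$; the spectral localization (including the strict inequality $\nu_{0} < -a_{D}$, obtained from continuity of $\varkappa(\cdot)$) and the final invocation of Theorem \ref{TH: DelayODEFreqCond} are routine.
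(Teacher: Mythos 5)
Your proof is correct and fills in the ``simple computation'' that the paper defers to its reference: the Neumann-series localization of $\det[I+\delta(p)]\neq 0$ on $\operatorname{Re}p\geq -\nu_0$ (giving $\nu_0<-a_D$, no roots on the critical line, and exactly $n$ roots at $p=0$) together with the three-factor bound on $W(p)=-p^{-1}\gamma(p)(I+\delta(p))^{-1}$ reproduce \eqref{EQ: SmallDelaysThMainIneq} precisely, which is exactly what Theorem \ref{TH: DelayODEFreqCond} requires. One minor imprecision worth noting: the identification of $\|D_0\|$ with the total variation of its representing function is a general Riesz-representation fact, not a consequence of the ``no singular part'' hypothesis (which is inherited from Theorem \ref{TH: DelayODEFreqCond} for other reasons), but this does not affect your argument.
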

The proof is just simple computation (see \cite{Anikushin2020FreqDelay}).

In particular, when $D_{0} \equiv 0$ and $\nu = \tau^{-1}$ the inequality in \eqref{EQ: SmallDelaysThMainIneq} takes the form
\begin{equation}
	\label{EQ: SmallDelaysDzeroIneq}
	\sqrt{r} \cdot e \cdot \tau < \Lambda^{-1}.
\end{equation} 

In \cite{Chicone2003} C.~Chicone extended works of Yu.A.~Ryabov \cite{Ryabov1967} and R.D.~Driver \cite{Driver1968SmallDelays} on the existence of $n$-dimensional inertial manifolds for delay equations in $\mathbb{R}^{n}$ with small delays. Let us compare their results with Theorem \ref{TH: SmallDelayNdimTheorem}. First of all, the class of equations in \cite{Chicone2003} is greater than \eqref{EQ: ConstantDelayExample}, but our class covers many equations arising in practice. Secondly, in \cite{Chicone2003} it is used a Lipschitz constant $K$ of the nonlinearity as a map from $C([-\tau,0];\mathbb{R}^{n}) \to \mathbb{R}^{n}$. Thus, in our case $K = \sqrt{r} \Lambda$ since for $\phi_{1}$,$\phi_{2} \in C([-\tau,0];\mathbb{R}^{n})$ we have
\begin{equation}
	|F(q,\widetilde{C}\phi_{1})-F(q,\widetilde{C}\phi_{2})|_{\Xi} \leq \Lambda \cdot \|\widetilde{C}\| \cdot \| \phi_{1}-\phi_{2}\|_{\infty} \leq \Lambda \cdot \sqrt{r} \cdot \| \phi_{1}-\phi_{2}\|_{\infty}
\end{equation}
and one can see that the used estimates are sharp for general $\widetilde{C}$.

A theorem of Ryabov and Driver (see item 1) of Theorem 2.1 from \cite{Chicone2003}; see also \cite{Ryabov1967,Driver1968SmallDelays}) guarantees the existence of an inertial manifold if $K\tau e < 1$ or, equivalently, $\sqrt{r} e \tau < \Lambda^{-1}$. This is exactly our condition \eqref{EQ: SmallDelaysDzeroIneq}. A small strengthening of the inequality $\sqrt{r} e \tau < \Lambda^{-1}$, which as $\tau \to 0$ remains asymptotically the same, guarantees the exponential tracking (see item 2) of Theorem 2.1 from \cite{Chicone2003}), but do not provide any estimate for the exponent. Moreover, Theorem 2.2 from \cite{Chicone2003} guarantees $C^{1}$-differentiability of the inertial manifold under $F \in C^{1}$ and the rougher condition $2 \sqrt{r} \sqrt{e} \tau < \Lambda^{-1}$. Our geometric approach shows that, in fact, these properties (at least in the case of \eqref{EQ: ConstantDelayExample}) are satisfied without further strengthening of \eqref{EQ: SmallDelaysDzeroIneq}. This solves the question posed by R.D. Driver in \cite{Driver1968SmallDelays} (see Remark on p. 338 therein) in the considered class of equations. Moreover, our approach gives the exact description for the exponent of attraction $\nu = \tau^{-1}$ and allows to construct stable foliations along the manifold.

\subsection{Semilinear parabolic equations: frequency-domain inequalities, Spectral Gap Condition and the Spatial Averaging Principle}
\label{SUBSEC: SemilinearParabolicEquations}

Let $A \colon \mathcal{D}(A) \subset \mathbb{H}_{0} \to \mathbb{H}_{0}$ be a sectorial operator a real Hilbert space $\mathbb{H}_{0}$ having a compact resolvent and such that $\operatorname{Re}\operatorname{Spec}(A) > -\lambda$ for some $\lambda \in \mathbb{R}$. Then one can define the powers of $A+\lambda I$ and a scale of Hilbert spaces $\mathbb{H}_{\alpha}:=\mathcal{D}((A+\lambda I)^{\alpha})$ with proper inner products $( \cdot,\cdot)_{\alpha}$ (see Section 1.4 in D.~Henry \cite{Henry1981}). Let $\alpha \in [0,1)$ be fixed and let $\mathbb{M},\Xi$ be some Hilbert spaces. Suppose that linear bounded operators $C \colon \mathbb{H}_{\alpha} \to \mathbb{M}$ and $B \colon \Xi \to \mathbb{H}$ are given. Let $(\mathcal{Q},\vartheta)$ be a flow on a complete metric space $\mathcal{Q}$. Consider a bounded continuous function $W \colon \mathcal{Q} \to \mathbb{H}$ and let $F \colon \mathcal{Q} \times \mathbb{M} \to \Xi$ be a continuous nonlinear map such that for some constant $\Lambda>0$ we have
\begin{equation}
	\label{EQ: SPELipschitzProperty}
	|F(q,y_{1})-F(q,y_{2})|_{\Xi} \leq \Lambda |y_{1}-y_{2}|_{\mathbb{M}} \text{ for all } y_{1},y_{2} \in \mathbb{M}, q \in \mathcal{Q}.
\end{equation}
We consider the nonlinear evolution equation
\begin{equation}
	\label{EQ: ParabolicNonAutEquation}
	\dot{v}(t) = -Av(t) + BF(\vartheta^{t}(q),Cv(t)) + W(\vartheta^{t}(q)).
\end{equation}
Following the fixed point arguments of Theorem 4.2.3 as in the monograph of I.~Chueshov \cite{Chueshov2015}, it can be shown that for any $v_{0} \in \mathbb{H}_{\alpha}$ and $q \in \mathcal{Q}$ there exists a unique mild solution $v(t)=v(t;q,v_{0})$ to \eqref{EQ: ParabolicNonAutEquation} defined for $t \geq 0$ and such that $v(0)=v_{0}$. Let $G(t)$, $t \geq 0$, denote the $C_{0}$-semigroup generated by $-A$. Then this solution satisfies the variation of constants formula as
\begin{equation}
	\label{EQ: ParabVarOfConstantsAsem}
	v(t)=G(t)v_{0}+\int_{0}^{t}G(t-s)\left[BF(\vartheta^{s}(q),Cv(s))+W(\vartheta^{s}(q))\right]ds.
\end{equation}
From this it can be shown that the family of maps given by
\begin{equation}
	\psi^{t}(q,v_{0}):=v(t;q,v_{0}), \text{ where } v_{0} \in \mathbb{H}_{\alpha}, q \in \mathbb{R} \text{ and } t \geq 0
\end{equation}
is a cocycle in $\mathbb{E}=\mathbb{H}:=\mathbb{H}_{\alpha}$ over $(\mathcal{Q},\vartheta)$. From \eqref{EQ: SPELipschitzProperty} we get \textbf{(ULIP)} satisfied and due to the smoothing property of \eqref{EQ: ParabolicNonAutEquation} (see \cite{Chueshov2015,Henry1981}) we also have \textbf{(UCOM)} with any $\tau_{ucom} > 0$.

Note that $G(t)$ is a $C_{0}$-semigroup in any space from the scale and the spectra of its generators coincide. Suppose that for some $\nu_{0} \in \mathbb{R}$ there exist exactly $j$ eigenvalues of $-A$ with $\operatorname{Re}p > -\nu_{0}$ and no eigenvalues satisfy $\operatorname{Re}p = -\nu_{0}$. Then there exists a decomposition $\mathbb{H}_{\alpha} = \mathbb{H}^{s}_{\alpha}(\nu_{0}) \oplus \mathbb{H}^{u}(\nu_{0})$, where $\mathbb{H}^{u}_{\alpha}(\nu_{0})$ is the spectral subspace corresponding to the roots with $\operatorname{Re}p > -\nu_{0}$ and $\mathbb{H}^{s}_{\alpha}(\nu_{0})$ is a complementary subspace.

Consider a quadratic form $\mathcal{F}(v,\xi)$ of $v \in \mathbb{H}_{\alpha}$ and $\xi \in \Xi$ such that $\mathcal{F}(v,0) \geq 0$ and $\mathcal{F}(v_{1}-v_{2},F(q,Cv_{1})-F(q,Cv_{2})) \geq 0$ for all $v,v_{1},v_{2} \in \mathbb{H}_{\alpha}$, $q \in \mathcal{Q}$ and $\xi \in \Xi$.
\begin{theorem}
	\label{TH: ParabEqsMainAssumptionsVer}
    Suppose there is $\nu_{0} \in \mathbb{R}$ such that the operator $-A$ does not have eigenvalues with $\operatorname{Re}p = -\nu_{0}$ and there are exactly $j$ eigenvalues with $\operatorname{Re}p > -\nu_{0}$. Let the frequency inequality
	\begin{equation}
		\label{EQ: FreqParabGeneral}
		\sup_{\xi \in \Xi^{\mathbb{C}}}\frac{\mathcal{F}^{\mathbb{C}}(-(-A - (\nu_{0} + i\omega) I)^{-1}B\xi,\xi)}{|\xi|^{2}_{\Xi^{\mathbb{C}}}} < 0 \text{ for all } \omega \in \mathbb{R}
	\end{equation}
    be satisfied. Then there exists a self-adjoint operator $P \in \mathcal{L}(\mathbb{H}_{\alpha})$ such that the cocycle $(\psi,\vartheta)$ generated by \eqref{EQ: ParabolicNonAutEquation} in $\mathbb{H}_{\alpha}$ satisfies \textbf{(H1)} with $\mathbb{E}^{+}(q) \equiv \mathbb{E}^{+} := \mathbb{H}^{s}_{\alpha}(\nu_{0})$ and $\mathbb{E}^{-}(q) \equiv \mathbb{E}^{-}:=\mathbb{H}^{u}_{\alpha}(\nu_{0})$; \textbf{(H2)} with the given $j$ and \textbf{(H3)} with $\nu_{0}(\cdot) \equiv \nu_{0}$, $\tau_{P} =0$ and some $\delta_{P} > 0$.
\end{theorem}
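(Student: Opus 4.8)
The plan is to obtain the operator $P$ from the infinite-dimensional Frequency Theorem of \cite{Anikushin2020FreqParab}, applied to the linear part of \eqref{EQ: ParabolicNonAutEquation} with the spectral shift $\nu_0$ incorporated, following the same scheme as the proof of Theorem \ref{TH: ODEsInertialManifolds} for ODEs. First I would note that, to get \textbf{(H3)} with $\nu_0(\cdot)\equiv\nu_0$ and $\tau_P=0$, it suffices to produce a bounded self-adjoint $P\in\mathcal L(\mathbb H_\alpha)$ and $\delta_P>0$ such that, for every classical solution of the linear control system $\dot w=-Aw+B\xi$ and every input $\xi$,
\[
	\frac{d}{dt}\langle w,Pw\rangle+2\nu_0\langle w,Pw\rangle+\mathcal F(w,\xi)\le-\delta_P|w|^2_{\mathbb H},
\]
together with the sign-definiteness $\langle v,Pv\rangle>0$ for $0\ne v\in\mathbb H^s_\alpha(\nu_0)$ and $\langle v,Pv\rangle<0$ for $0\ne v\in\mathbb H^u_\alpha(\nu_0)$. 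The Frequency Theorem is precisely the tool producing such a $P$: after the substitution $u=e^{\nu_0 t}w$ (which turns the dissipation rate into the shifted generator $-A+\nu_0 I$), its hypotheses reduce to (i) sectoriality and compactness of the resolvent of $A$, giving discreteness of the spectrum, analyticity and smoothing of $G(t)$ on the scale $\mathbb H_\beta$, and the required admissibility of the bounded operators $B\colon\Xi\to\mathbb H$ and $C\colon\mathbb H_\alpha\to\mathbb M$; (ii) the assumption that $-A$ has no eigenvalue on $\{\operatorname{Re}p=-\nu_0\}$ and exactly $j$ of them to the right, which yields the exponential dichotomy of the shifted generator with a $j$-dimensional unstable subspace; and (iii) the frequency inequality \eqref{EQ: FreqParabGeneral}, which is exactly the frequency condition of the theorem written through the transfer map $\xi\mapsto-(-A-(\nu_0+i\omega)I)^{-1}B\xi$. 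The theorem then yields the desired $P$, $\delta_P$ and the indefiniteness of $\langle\cdot,P\cdot\rangle$ on the dichotomy subspaces, which is \textbf{(H1)} with $\mathbb E^+=\mathbb H^s_\alpha(\nu_0)$, $\mathbb E^-=\mathbb H^u_\alpha(\nu_0)$ (the bound \eqref{EQ: OperatorsBound} being trivial since $P(q)\equiv P$), together with \textbf{(H2)} for the given $j$.

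It then remains to transfer the linear inequality to the cocycle. Given two mild solutions $v_1,v_2$ of \eqref{EQ: ParabolicNonAutEquation} over the same $q\in\mathcal Q$, the difference $w=v_1-v_2$ solves $\dot w=-Aw+B\xi$ with $\xi(t)=F(\vartheta^t(q),Cv_1(t))-F(\vartheta^t(q),Cv_2(t))$, the inhomogeneity $W$ cancelling; the quadratic constraint $\mathcal F(w,\xi)\ge0$, guaranteed by the hypothesis on $\mathcal F$ applied with $\xi_i=F(q,Cv_i)$, lets me drop the $\mathcal F$-term from the displayed inequality. Multiplying by $e^{2\nu_0 s}=e^{2\nu(s,q)}$ and integrating over $[l,r]$ produces \eqref{EQ: PropH3inequality} for classical solutions, and the general case follows by approximating initial data in $\mathbb H_\alpha$ and using the continuous-dependence part of \textbf{(ULIP)} together with the variation-of-constants formula \eqref{EQ: ParabVarOfConstantsAsem} to pass to the limit in all terms, including $\int|v_1(s)-v_2(s)|^2_{\mathbb H}\,ds$. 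This establishes \textbf{(H3)} with $\nu_0(\cdot)\equiv\nu_0$, $\tau_P=0$ and this $\delta_P$, completing the proof.

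The main obstacle is entirely inside steps (i)--(iii): verifying, in the parabolic scale, the admissibility and regularity hypotheses under which \cite{Anikushin2020FreqParab} applies. Because $C$ may lose up to $\alpha$ derivatives, one must check that the cost $\mathcal F$ evaluated along mild solutions is controlled by the analytic smoothing and \eqref{EQ: ParabVarOfConstantsAsem}, so that the solution operator of the associated linear-quadratic problem is well-defined on $\mathbb H_\alpha$, and that the local dissipation inequality, first obtained on smooth data, extends by density. A secondary bookkeeping point is to match the sign and shift conventions of \eqref{EQ: FreqParabGeneral} with those of the cited Frequency Theorem after the change of variables $u=e^{\nu_0 t}w$; this is routine, since all of the analytic weight is carried by the cited theorem, and the present argument is essentially a translation between the cone-field language of \textbf{(H1)}--\textbf{(H3)} and the Lyapunov-operator language of the Frequency Theorem, parallel to the ODE case of Theorem \ref{TH: ODEsInertialManifolds}.
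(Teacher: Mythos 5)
Your proposal is correct and matches the approach the paper intends: as with Theorems~\ref{TH: ODEsInertialManifolds} and~\ref{TH: DelayODEFreqCond}, the paper defers the proof to the Frequency Theorem of \cite{Anikushin2020FreqParab}, and your reconstruction --- obtaining $P$ and the shifted dissipation inequality from that theorem, then dropping the $\mathcal F$-term via the constraint $\mathcal F(v_1-v_2,F(q,Cv_1)-F(q,Cv_2))\ge 0$, multiplying by $e^{2\nu_0 s}$, integrating, and extending by density using \eqref{EQ: ParabVarOfConstantsAsem} and \textbf{(ULIP)} --- is exactly that scheme, with the sign/shift conventions correctly matched.
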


As in the previous cases, under the Lipschitz property \eqref{EQ: SPELipschitzProperty} a general choice of the quadratic form is given by $F(v,xi):=\Lambda^{2} |v|^{2}_{\mathbb{M}} - |\xi|^{2}_{\Xi}$. In this case the frequency inequality \eqref{EQ: FreqParabGeneral} can be described in terms of the transfer operator $W(p):=C(-A-pI)^{-1}B$ as
\begin{equation}
	\label{EQ: SPESmithFreqIneq}
	|W(-\nu_{0}+i\omega)|_{\Xi^{\mathbb{C}} \to \mathbb{M}^{\mathbb{C}}} < \Lambda^{-1} \text{ for all } \omega \in \mathbb{R}.
\end{equation}
This also extends the frequency inequality used by R.A.~Smith \cite{Smith1994PB1,Smith1994PB2}.

In the case $F$ has a continuous derivative $F'_{y}$ in $y$, the linearization cocycle $\Xi$ can be derived from the linearized equation 
\begin{equation}
	\label{EQ: SPELinearized}
	\dot{\xi}(t)=-A\xi(t) + BF'_{y}(\vartheta^{t}(q),\psi^{t}(q,v_{0}))C\xi(t)
\end{equation}
as $\Xi^{t}(q,v_{0},\xi_{0}):=\xi(t;q,v_{0},\xi_{0})$, where $\xi(t;q,v_{0},\xi_{0})$ is the solution to \eqref{EQ: SPELinearized} with $\xi(0;q,v_{0},\xi_{0})=\xi_{0}$. It can be shown that \textbf{(DIFF)} is satisfied (see, for example, Theorem 3.4.4 in \cite{Henry1981}).

Now we are going to discuss some particular cases of the frequency inequality in \eqref{EQ: FreqParabGeneral} and its modifications.

\subsubsection{Spectral Gap Condition}

Let us consider the special case of \eqref{EQ: SPESmithFreqIneq} for $A$ being a self-adjoint positive operator; $\Xi := \mathbb{H}_{\beta}$ for some $\beta \in [0,\alpha]$; $\mathbb{M}:=\mathbb{H}_{\alpha}$ and $C$, $B$ being the identity operators. Let $0 < \lambda_{1} \leq \lambda_{2} \leq \ldots$ be the eigenvalues of $A$. Let us fix $j$ such that $\lambda_{j+1}-\lambda_{j}>0$ and find $\nu_{0} \in (\lambda_{j},\lambda_{j+1})$ such that \eqref{EQ: SPESmithFreqIneq} is satisfied, that is
\begin{equation}
	\label{EQ: FrequencyConditionParabBefore}
	\| (A+(-\nu_{0} + i \omega)I)^{-1} \|_{\mathbb{H}_{\beta} \to \mathbb{H}_{\alpha}} < \Lambda^{-1}, \text{ for all } \omega \in \mathbb{R}.
\end{equation}
Using the orthonormal basis corresponding to the eigenvalues, one can easily show that
\begin{equation}
	\| (A - (\nu_{0} + i \omega)I)^{-1} \|_{\mathbb{H}_{\beta} \to \mathbb{H}_{\alpha}} = \sup_{k} \frac{\lambda^{\alpha-\beta}_{k} }{|\lambda_{k} - \nu - i\omega|} \leq \sup_{k}\frac{\lambda^{\alpha-\beta}_{k} }{|\lambda_{k} - \nu|}.
\end{equation}
Moreover, due to some monotonicity, we have
\begin{equation}
	\label{EQ: ParabSpectralNormMax}
	\sup_{k}\frac{\lambda^{\alpha-\beta}_{k}}{ |\lambda_{k} - \nu| } = \max\left\{ \frac{\lambda^{\alpha-\beta}_{j}}{\nu - \lambda_{j}}, \frac{\lambda^{\alpha-\beta}_{j+1}}{\lambda_{j+1} - \nu} \right\}
\end{equation}
and the norm will be the smallest possible if the values under the maximum in \eqref{EQ: ParabSpectralNormMax} coincide. So, for this one should take
\begin{equation}
	\nu_{0} = \frac{\lambda^{\alpha-\beta}_{j+1}}{\lambda^{\alpha-\beta}_{j}+\lambda^{\alpha-\beta}_{j+1}} \cdot \lambda_{j} + \frac{\lambda^{\alpha-\beta}_{j}}{\lambda^{\alpha-\beta}_{j}+\lambda^{\alpha-\beta}_{j+1}} \cdot \lambda_{j+1} \in ( \lambda_{j},\lambda_{j+1} ).
\end{equation}
For such a choice of $\nu_{0}$ the frequency condition \eqref{EQ: FrequencyConditionParabBefore} takes the form
\begin{equation}
	\label{EQ: SpectralGapConditionParab}
	\frac{ \lambda_{j+1} - \lambda_{j} }{\lambda^{\alpha-\beta}_{j} + \lambda^{\alpha-\beta}_{j+1} } > \Lambda,
\end{equation}
known as the Spectral Gap Condition. Its first nonoptimal version appeared in the pioneering paper of C.~Foias, G.R.~Sell and R.~Temam \cite{FoiasSellTemam1988}. The optimal version of the Spectral Gap Condition was firstly discovered by M.~Miklav\v{c}i\v{c} \cite{Miclavcic1991} and A.V.~Romanov \cite{Romanov1994}.

In fact, condition \eqref{EQ: SpectralGapConditionParab} is too rough to obtain concrete results for particular equations. For $-A$ being the Laplace operator in a domain with Dirichlet boundary conditions, the Weyl law guarantees that \eqref{EQ: SpectralGapConditionParab} is satisfied for a sufficiently large $j$ provided that $\alpha = \beta = 0$ and the domain is one-dimensional. For $\alpha > 0$ and/or $-A$ being a vector-valued Laplacian, direct applications of the Spectral Gap Condition do not guarantee the existence of inertial manifolds since it is known to be violated or it is unknown whether it holds. Although, there indeed may be obstacles for their existence (see J.~Mallet-Paret, G.R.~Sell, Z.~Shao \cite{MalletParetSellShao1993}; A.~Eden, S.V.~Zelik, V.K.~Kalantarov \cite{EdenZelikKalantarov2013}; A.V.~Romanov \cite{Romanov2016}), violation of the Spectral Gap Condition or, more generally, the frequency inequality \eqref{EQ: FreqParabGeneral} does not necessarily prevent inertial manifolds to exist for particular subclasses of equations.

In \cite{KostiankoZelik20171dI,KostiankoZelik20181dII} A.~Kostianko and S.~Zelik studied the existence of inertial manifolds for reaction-diffusion-advection systems. For such equations we have $\alpha = 1/2$ and $-A$ is a vector-valued Laplacian in $1D$-domain with Dirichlet, Neumann or periodic boundary conditions. It is shown that in the case of Dirichlet or Neumann boundary conditions a proper nonlocal change of variables transforms the system into a system, for which a modified Spectral Gap Condition (which can be derived from \eqref{EQ: FreqParabGeneral}) is satisfied. For periodic boundary conditions, the existence of inertial manifolds is guaranteed only in the scalar case and for systems they may not exist in general \cite{KostiankoZelik20181dII}.

Moreover, in applications, a given system can be written in the form \eqref{EQ: ParabolicNonAutEquation} by different ways, resulting in distinct frequency inequalities. In the case of \eqref{EQ: SPESmithFreqIneq} having variying operators $A$, $B$ and $C$ give a lot of flexibility. This allowed R.A.~Smith to obtain nontrivial results concerned with the existence of two-dimensional inertial manifolds for the FitzHugh-Nagumo system and the diffusive Goodwin system \cite{Smith1994PB1,Smith1994PB2}.

\subsubsection{Spatial Averaging Principle}
\label{EQ: SPEspatialaveraginge}

For scalar equations in $2D$ and $3D$ domains and $-A$ being the Laplace operator, the Spectral Gap Condition is violated in general. For such problems J.~Mallet-Paret and G.R.~Sell \cite{MalletParetSell1988IM} suggested a method to construct inertial manifolds known as the Spatial Averaging Principle. As it is shown by A.~Kostianko and S.~Zelik \cite{KostiankoZelik2015}, A.~Kostianko et al. \cite{KostiankoZelikSA2020}, assumptions of the Spatial Averaging Principle also lead to a squeezing condition w.~r.~t. a certain quadratic functional. We will show that it leads to the existence of nonuniformly normally hyperbolic inertial manifolds. Moreover, our results allow to construct also a $C^{1}$-differentiable vertical foliation and show that these manifolds are stable under perturbations of the vector field.

Let us consider here the autonomous semilinear parabolic equations of the form:
\begin{equation}
	\label{EQ: ParabSpatialAvEquation}
	\partial_{t}A^{-2\gamma}v + Au + F(u) = 0.
\end{equation}
Here $A \colon \mathbb{H}_{0} \to \mathbb{H}_{0}$ is a positive self-adjoint operator such that $A^{-1} \colon \mathbb{H}_{0} \to \mathbb{H}_{0}$ is compact, $\gamma \geq 0$ is fixed and $F \colon \mathbb{H}_{0} \to \mathbb{H}_{0}$ is a bounded and globally Lipschitz nonlinearity.

It is shown in \cite{KostiankoZelikSA2020} (see Proposition 2.1 therein) that \eqref{EQ: ParabSpatialAvEquation} under the imposed conditions generates a dissipative semiflow $\varphi$ in $\mathbb{E} = \mathbb{H} := \mathbb{H}_{-\gamma}$, which satisfies \textbf{(ULIP)} and \textbf{(UCOM)}.

Let $0<\lambda_{1}\leq \lambda_{2} \leq \ldots$ be the eigenvalues of $A$ and $e_{1},e_{2},\ldots$ be the corresponding orthonormal basis of eigenvectors. Let $j \geq 0$ be fixed and let $\operatorname{Pr}_{j} \colon \mathbb{H}_{0} \to \mathbb{H}_{0}$ be the orthogonal projector onto $\mathbb{E}^{-}:=\operatorname{Span}\{ e_{1},\ldots,e_{j} \}$. Note that $\operatorname{Pr}_{j}$ can be extended to the orthogonal operator in $\mathbb{H}_{-\gamma}$. Let $\mathbb{E}^{+}$ be the orthogonal in $\mathbb{H}_{-\gamma}$ complement to $\mathbb{E}^{-}$. Let us consider the quadratic form in $\mathbb{H}_{-\gamma}$ given by
\begin{equation}
	\label{EQ: SpatialAveragingQForm}
	V(v):=|(I-\operatorname{Pr}_{j})v|^{2}_{-\gamma} - |\operatorname{Pr}_{j}v|^{2}_{-\gamma}.
\end{equation}
it is clear that $V(\cdot)$ is positive on $\mathbb{E}^{+}$ and negative on $\mathbb{E}^{-}$.

We assume in addition that $F \colon \mathbb{H} \to \mathbb{H}$ is Fr\'{e}chet continuously differentiable (this can be relaxed a bit, see \cite{KostiankoZelik2015,KostiankoZelikSA2020}). Then the difference $\xi(t)=v_{1}(t)-v_{2}(t)$ of any two solutions to \eqref{EQ: ParabSpatialAvEquation} satisfies
\begin{equation}
	\label{EQ: LinearSpatialAveraging}
	\partial_{t}A^{-2\gamma}\xi(t) + A\xi(t) + l(t) \xi(t) = 0,
\end{equation}
where $l(t) = \int_{0}^{1}F'(s v_{1}(t)+(1-s)v_{2}(t)) ds$. It is shown in \cite{KostiankoZelikSA2020,KostiankoZelik2015} that the Spatial Averaging Principle leads to the so-called strong cone property in a differential form , which can be stated as follows.
\begin{description}
	\item[\textbf{(SCP1)}] There are constants $0 < \alpha_{-} \leq \alpha_{+} < +\infty$ and $\delta>0$ such that for any pair of solutions $v_{1}(t), v_{2}(t)$ to \eqref{EQ: ParabSpatialAvEquation} there is a Borel measurable function $\alpha \colon \mathbb{R}_{+} \to \mathbb{R}$ such that $\alpha^{-} \leq \alpha(t) \leq \alpha^{+}$ and we have
	\begin{equation}
		\label{EQ: SCPinequality}
		\frac{d}{dt}V(\xi(t)) + 2\alpha(t)V(\xi(t)) \leq -\delta |\xi(t)|^{2}_{-\gamma}.
	\end{equation}
    satisfied for all $t > 0$ and any solution $\xi(\cdot)$ to \eqref{EQ: LinearSpatialAveraging} with $l(t) = \int_{0}^{1}F'(s v_{1}(t)+(1-s)v_{2}(t)) ds$.
\end{description}
We have the following lemma.
\begin{lemma}
	Under \textbf{(SCP1)} the semiflow $\varphi$ in $\mathbb{H}_{-\gamma}$ generated by \eqref{EQ: ParabSpatialAvEquation} satisfies $\textbf{(H3)}^{-}_{w}$ (see Subsection \ref{SUBSEC: HorizontalLeaves}) with $\alpha_{0}(\cdot) := \alpha^{+}$ and $\textbf{(H3)}^{+}_{w}$ (see Subsection \ref{SUBSEC: VerticalFoliation}) with $\beta_{0}(\cdot) := \alpha^{-}$ and the quadratic form $V(\cdot)$ from \eqref{EQ: SpatialAveragingQForm}.
\end{lemma}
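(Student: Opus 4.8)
The plan is to integrate the differential inequality \eqref{EQ: SCPinequality} and show that the two integrated forms required by $\textbf{(H3)}^{-}_{w}$ and $\textbf{(H3)}^{+}_{w}$ follow, with the roles of the constant exponents $\alpha^{+}$ and $\alpha^{-}$ reflecting which direction of the pseudo-ordering we need to preserve. First I would record the elementary fact that for the constant exponents we have $\alpha(t;q) = \alpha^{+} t$ when $\alpha_{0}(\cdot) \equiv \alpha^{+}$ and $\beta(t;q) = \alpha^{-} t$ when $\beta_{0}(\cdot) \equiv \alpha^{-}$, so that $\nu(t)$ is linear and the weighted energy $e^{2\alpha^{\pm} t}V(\xi(t))$ can be differentiated by the Leibniz rule. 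Here $\xi(\cdot)$ is the difference $v_{1}(\cdot)-v_{2}(\cdot)$ of two solutions, which by the discussion preceding the lemma solves \eqref{EQ: LinearSpatialAveraging} with the appropriate $l(t)$, so that \textbf{(SCP1)} applies to it; the identification $|\xi(t)|^{2}_{-\gamma} = |\psi^{s}(q,v_1)-\psi^{s}(q,v_2)|^{2}_{\mathbb{H}}$ is just unravelling notation since $\mathbb{H} = \mathbb{H}_{-\gamma}$.

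Next I would treat $\textbf{(H3)}^{-}_{w}$. Multiplying \eqref{EQ: SCPinequality} by $e^{2\alpha^{+}t}$ and using $\alpha(t) \le \alpha^{+}$ together with the sign hypothesis $V(\xi(t)) \le 0$ (valid on the whole interval because, by the differential inequality, negative pseudo-ordering at the initial time propagates forward — this is exactly the integrated monotonicity one extracts by first running the argument with $\delta = 0$), one gets
\begin{equation}
	\frac{d}{dt}\left( e^{2\alpha^{+}t} V(\xi(t)) \right) = e^{2\alpha^{+}t}\left( \frac{d}{dt}V(\xi(t)) + 2\alpha^{+}V(\xi(t)) \right) \le e^{2\alpha^{+}t}\left( \frac{d}{dt}V(\xi(t)) + 2\alpha(t)V(\xi(t)) \right) \le -\delta e^{2\alpha^{+}t}|\xi(t)|^{2}_{-\gamma},
\end{equation}
where the middle inequality uses $2(\alpha^{+}-\alpha(t))V(\xi(t)) \le 0$. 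Integrating from $l$ to $r$ (with $\tau_Q = 0$, since the semiflow is already smoothing and $\mathbb{E}=\mathbb{H}$) yields precisely \eqref{EQ: H3WeakMinus} with $\delta_Q = \delta$. For $\textbf{(H3)}^{+}_{w}$ I would run the mirror-image argument: here the relevant sign hypothesis is $V(\xi(r)) \ge 0$, which by the same propagation (now backward, again via the $\delta=0$ comparison) forces $V(\xi(t)) \ge 0$ on $[0,r]$; multiplying \eqref{EQ: SCPinequality} by $e^{2\alpha^{-}t}$ and using $\alpha(t) \ge \alpha^{-}$ together with $V(\xi(t)) \ge 0$ gives $2(\alpha^{-}-\alpha(t))V(\xi(t)) \le 0$, hence the same chain of inequalities produces \eqref{EQ: H3WeakPlus} with $\beta_0 \equiv \alpha^{-}$ and $\delta_Q = \delta$.

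I should also check the structural hypotheses bundled into $\textbf{(H3)}^{\pm}_{w}$ that concern \textbf{(H1)}, \textbf{(H2)}: the quadratic form $V(\cdot)$ from \eqref{EQ: SpatialAveragingQForm} is manifestly symmetric and bounded on $\mathbb{H}_{-\gamma}$, the decomposition $\mathbb{H}_{-\gamma} = \mathbb{E}^{+} \oplus \mathbb{E}^{-}$ is $\| \cdot \|_{-\gamma}$-orthogonal with $V$ positive on $\mathbb{E}^{+}$ and negative on $\mathbb{E}^{-}$ (as noted right after \eqref{EQ: SpatialAveragingQForm}), $\dim \mathbb{E}^{-} = j$, and the uniform bound \eqref{EQ: OperatorsBound} is trivial since $V$ does not depend on $q$; thus \textbf{(H1)} and \textbf{(H2)} hold with $j_Q = j$. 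The only genuinely delicate point — and the step I expect to be the main obstacle — is justifying the sign-propagation claims cleanly, i.e. that if $V(\xi(0)) \le 0$ then $V(\xi(t)) \le 0$ for all $t>0$ (and dually for the $\ge 0$ case starting at $t=r$). The cleanest route is to note that the $\delta=0$ version of \eqref{EQ: SCPinequality}, namely $\frac{d}{dt}\left(e^{2A(t)}V(\xi(t))\right) \le 0$ where $A(t) = \int_0^t \alpha(s)\,ds$, shows $t \mapsto e^{2A(t)}V(\xi(t))$ is nonincreasing, so the sign of $V(\xi(t))$ once nonpositive stays nonpositive; for the dual statement one uses that a nonincreasing function which is $\ge 0$ at the right endpoint is $\ge 0$ on the whole interval to its left. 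Everything else is the routine Gronwall-type integration already used repeatedly in the paper, so I would not spell it out in full.
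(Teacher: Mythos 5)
Your proposal is correct and follows essentially the same route as the paper's own proof: use the sign hypothesis (propagated forward, respectively backward, via monotonicity of the $\delta=0$ comparison) to replace $\alpha(t)$ by the constant $\alpha^{+}$ (resp.\ $\alpha^{-}$) in \eqref{EQ: SCPinequality}, then integrate by the Leibniz rule. The only difference is that you make the sign-propagation step and the \textbf{(H1)}/\textbf{(H2)} bookkeeping explicit where the paper leaves them to the reader.
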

\begin{proof}
	Let us consider \eqref{EQ: SCPinequality} with $\xi(t) = v_{1}(t)-v_{2}(t)$, where $v_{1}(\cdot),v_{2}(\cdot)$ are two solutions to \eqref{EQ: ParabSpatialAvEquation}.
    
    If $V(v_{1}(0)-v_{2}(0)) \leq 0$, then from \eqref{EQ: SCPinequality} we have that $V(v_{1}(s)-v_{2}(s)) \leq 0$ for all $s \geq 0$ and, consequently, for $s > 0$ we have
    \begin{equation}
    	\frac{d}{dt}V(\xi(s)) + 2\alpha_{+}V(\xi(s)) \leq -\delta |\xi(s)|^{2}_{-\gamma}.
    \end{equation}
    and we get $\textbf{(H3)}^{-}_{w}$ after integrating the inequality
    \begin{equation}
    	\frac{d}{ds}\left[e^{2\alpha_{+}s}V(v_{1}(s)-v_{2}(s))\right] \leq -\delta e^{2\alpha_{+} s}|v_{1}(s)-v_{2}(s)|^{2}_{-\gamma}.
    \end{equation}
   
     If $V(v_{1}(r)-v_{2}(r)) \geq 0$, then we must have $V(v_{1}(s)-v_{2}(s)) \geq 0$ for all $s \in [0,r]$ and analogous arguments show that $\textbf{(H3)}^{+}_{w}$ with $\beta_{0}(\cdot) := \alpha^{-}$ is satisfied.
\end{proof}
Thus, under \textbf{(SCP1)}, Theorem \ref{TH: PrincipalLeaveConstruction} gives a $C^{1}$-differentiable inertial manifold $\mathfrak{A}$ for the semiflow $\varphi$ (if there exists at least one amenable trajectory). Moreover, Theorem \ref{TH: ExponentialTrackingIM} establishes the exponential tracking property and a $C^{1}$-differentiable vertical foliation given by the positive equivalence w.~r.~t. the form $V(\cdot)$.

Let us consider the linearized along the trajectory of $v_{0} \in \mathbb{H}_{-\gamma}$ equation
\begin{equation}
	\label{EQ: ParabSAPlinearized}
	\partial_{t}A^{-2\gamma}\xi(t) + A\xi(t) + F'(\varphi^{t}(v_{0}))\xi(t)=0
\end{equation}
and define the linearization semicocycle $\Xi$ as $\Xi^{t}(v_{0},\xi_{0}):=\xi(t;v_{0},\xi_{0})$, where $\xi(t;v_{0},\xi_{0})$ is the solution to \eqref{EQ: ParabSAPlinearized} with $\xi(0;v_{0},\xi_{0}) = \xi_{0}$.

In fact, \textbf{(SCP1)} is verified under the following assumption.
\begin{description}
	\item[\textbf{(SCP2)}] There exists a Borel measurable function $\alpha_{0} \colon \mathbb{H}_{0} \to \mathbb{R}^{+}$ such that $0 < \alpha_{-} \leq \alpha_{0}(\cdot) \leq \alpha_{+} < +\infty$ for some $\alpha^{-}$ and $\alpha_{+}$. Moreover, there exists $\delta>0$ such that for any $v_{0} \in \mathbb{H}_{-\gamma}$ and any solution $\xi(t)=\xi(t;v_{0},\xi_{0})$ to \eqref{EQ: ParabSAPlinearized} we have for all $t > 0$ the inequality
	\begin{equation}
		\label{EQ: SCP2inequality}
		\frac{d}{dt}V(\xi(t)) + 2\alpha(\vartheta^{t}(v_{0}))V(\xi(t)) \leq -\delta |\xi(t)|^{2}_{-\gamma}.
	\end{equation}
	satisfied.
\end{description}
It is convenient to consider $\Xi$ and $\phi$ in the abstract context with $\mathbb{E}:=\mathbb{H}_{0}$ and $\mathbb{H}:=\mathbb{H}_{-\gamma}$, where \textbf{(ULIP)} with any $\tau_{S} > 0$ and \textbf{(UCOM)} with any $\tau_{ucom} > 0$ (see Proposition 2.3 in \cite{KostiankoZelikSA2020}). Then \textbf{(SCP2)} implies that $\Xi$ as a cocycle over the semiflow $\varphi$ in $\mathcal{Q} := \mathbb{H}_{0}$ satisfies \textbf{(H3)} with $\nu_{0}(\cdot):=\alpha_{0}(\cdot)$ and the constant quadratic form $V(\cdot)$. Moreover, it is clear that for any $v_{0} \in \mathfrak{A}$, we have
\begin{equation}
	\begin{split}
		\mathcal{T}_{v_{0}}\mathfrak{A} = \left\{ \xi_{0} \in \mathbb{H}_{0} \ \vline \ \int_{0}^{\infty}e^{2\alpha^{+}s}|\xi(s)|^{2}_{-\gamma}ds < +\infty \right\} =\\= 
		\left\{ \xi_{0} \in \mathbb{H}_{0} \ \vline \ \int_{0}^{\infty}e^{2\alpha(s;q)}|\xi(s)|^{2}_{-\gamma}ds < +\infty \right\},
	\end{split}
\end{equation}
where the first equality is given by definition and the second comes from the inequality $\alpha_{0}(\cdot) \leq \alpha^{+}$ and the fact that both sets are $j$-dimensional linear subspaces of $\mathbb{H}_{0}$.

Moreover, from \eqref{EQ: SCP2inequality} it is clear \textbf{(SCP2)} will be satisfied for $\alpha_{0}(\cdot)$ changed to $\alpha_{0}(\cdot) + \varepsilon$ for any $\varepsilon<\delta/2$. Thus, Theorem \ref{TH: NormalHyperbolicity} guarantees that $\mathfrak{A}$ is normally hyperbolic with $\varkappa^{h}(\cdot) := \alpha_{0}(\cdot)$ and $\varkappa^{v}(\cdot) := \alpha_{0}(\cdot) + \varepsilon$ provided that $\alpha_{0}$ is continuous on $\mathfrak{A}$.

To show that $\mathfrak{A}$ is robust under perturbations, let $P = \operatorname{Id} - 2\operatorname{Pr}_{j}$ denote the self-adjoint operator corresponding to $V(\cdot)$ and note that \eqref{EQ: SCP2inequality} is equivalent to that for all $v_{0} \in \mathbb{H}_{0}$ and $\xi_{0} \in \mathbb{H}_{1}$ we have (see Lemma 4.2 in \cite{KostiankoZelik2015})
\begin{equation}
	2(-A\xi_{0} - F'(v_{0})\xi_{0} + \alpha_{0}(v_{0}) \xi_{0}, P\xi_{0})_{0} \leq -\delta |\xi_{0}|^{2}_{-\gamma}.
\end{equation}
Then it is clear that any $C^{1}$-perturbation $\widetilde{F}$ of $F$ such that $\sup_{v \in \mathbb{H}_{0}}|F'(v) - \widetilde{F}'(v)| < \delta / 2$ preserves \textbf{(SCP2)} for the perturbed system. This allows to apply Theorem \ref{TH: RobustnessIM} to get the robustness of $\mathfrak{A}$.

One can verify \textbf{(SCP2)} in the case when the nonlinearity $F$ satisfies the Spatial Averaging Principle \cite{KostiankoZelikSA2020,KostiankoZelik2015}. Since its definition and applications require a lot of technical preparations, we refer the interested reader to the paper of A.~Kostianko et al. \cite{KostiankoZelikSA2020}, where applications to scalar diffusion equations, Cahn-Hilliard type equations with periodic boundary conditions and modified Navier-Stokes equations are given. Moreover, in the recent paper of A.~Kostianko, C.~Sun and S.~Zelik \cite{KostiankoSunZelik2021IMGinzburgLandau}, the Spatial Averaging Principle is applied to 3D complex Ginzburg-Landau equation after a change of variables, leading to the so-called spatio-temporal averaging.
\section*{Acknowledgements}
The author is thankful to Sergei Zelik for many useful discussions on the topic.
%\section*{Funding}
%The reported study was funded by RFBR according to the research project \textnumero~20-31-90008; by a grant in the subsidies form from the federal budget for the creation and development of international world-class math centers, agreement between MES RF and PDMI RAS No. 075-15-2019-1620; by V.~A.~Rokhlin grant for young mathematicians of St.~Petersburg.

\bibliographystyle{amsplain}

\begin{thebibliography}{10}
	
\bibitem{AnikushinNDE2022}
Anikushin~M.M. Spectral decompositions, asymptotic compactness and
smoothing of neutral delay equations in Hilbert spaces. (in process)

\bibitem{Anikushin2020FreqDelay}
Anikushin~M.M. Frequency theorem and inertial manifolds for neutral delay equations, \textit{arXiv preprint}, arXiv:2003.12499v4 (2022)

\bibitem{Anikushin2020Semigroups}
Anikushin~M.M. Nonlinear semigroups for delay equations in Hilbert spaces, inertial manifolds and dimension estimates, \textit{arXiv preprint}, arXiv:2004.13141v3 (2022)

\bibitem{Anikushin2021SS}
Anikushin~M.M., Romanov~A.O. Hidden and unstable periodic orbits as a result of homoclinic bifurcations in the {S}uarez-{S}chopf delayed oscillator and the irregularity of {ENSO}. \textit{arXiv preprint}, arXiv:2102.11711v3 (2022).

\bibitem{Anikushin2021Thesis}
Anikushin M.M. Geometric theory of inertial manifolds for compact cocycles in Banach spaces. Ph.D. thesis, St. Petersburg University, St. Petersburg (2021).
	
\bibitem{Anikushin2020FreqParab}
Anikushin M.M. Frequency theorem for parabolic equations and its relation to inertial manifolds theory, \textit{J. Math. Anal. Appl.}, \textbf{505}(1), 125454 (2021).
	
\bibitem{Anikushin2021DiffJ}
Anikushin M.M. Almost automorphic dynamics in almost periodic cocycles with one-dimensional inertial manifolds. \textit{Differ. Uravn. Protsessy Upravl.}, 2 (2021), in Russian.

\bibitem{Anikushin2020Red}
Anikushin M.M. A non-local reduction principle for cocycles in Hilbert spaces. \textit{J. Differ. Equations}, \textbf{269}(9), 6699--6731 (2020).

\bibitem{Anikushin2019+OnCom} 
Anikushin M.M. On the compactness of solutions to certain operator inequalities arising from the Likhtarnikov-Yakubovich frequency theorem. \textit{Vestnik St. Petersb. Univ. Math.}, \textbf{54}(4), 301--310 (2021)

\bibitem{Anikushin2019Liouv}
Anikushin~M.M. On the Liouville phenomenon in estimates of fractal dimensions of forced quasi-periodic oscillations, \textit{Vestn. St. Petersbg. Univ., Math.}, \textbf{52}(3), 234--243 (2019).

\bibitem{AnikushinReitRom2019}
Anikushin~M.M., Reitmann V., Romanov A.O. Analytical and numerical estimates of the fractal dimension of forced quasiperiodic oscillations in control systems, \textit{Differ. Uravn. Protsessy Upravl.}, 2 (2019), in Russian

\bibitem{BarreiraPesin2007}
Barreira~L., Pesin~Ya.B. \textit{Nonuniform Hyperbolicity: Dynamics of Systems with Nonzero Lyapunov Exponents}. Cambridge: Cambridge University Press (2007).

\bibitem{BatesLuZeng2000}
Bates~P.W., Lu~K., Zeng~C. Invariant foliations near normally hyperbolic invariant manifolds for semiflows. \textit{Trans. Am. Math. Soc.}, \textbf{352}(10), 4641--4676 (2000).

\bibitem{BatesLuZeng1998}
Bates~P.W., Lu~K., Zeng~C. Existence and Persistence of Invariant Manifolds for Semiflows in Banach space. American Mathematical Soc. (1998).

%\bibitem{BatkaiPiazzera2005}
%B\'{a}tkai A., and Piazzera S. \textit{Semigroups for Delay Equations}. A K Peters, Wellesley (2005).

\bibitem{BonattiCrovisier2016}
Bonatti C., Crovisier S. Center manifolds for partially hyperbolic sets without strong unstable connections. \textit{J. Inst. Math. Jussieu}, \textbf{15}(4), 785--828 (2016)

\bibitem{BoutetChueshovRezounenko1998}
Boutet~de~Monvel~L., Chueshov~I.D., Rezounenko~A.V. Inertial manifolds for retarded semilinear parabolic equations, \textit{Nonlinear Anal. Theory Methods Appl.}, \textbf{34}(6), 907--925 (1998).

\bibitem{Burkin2015}
Burkin~I.M. Method of ``transition into space of derivatives'': 40 years of evolution. \textit{Differ. Equ.}, \textbf{51}(13), 1717--1751 (2015).

\bibitem{Burkin2014Hidden} 
Burkin~I.M., Khien~N.N. Analytical-numerical methods of finding hidden oscillations in multidimensional dynamical systems. \textit{Differ. Equ.}, \textbf{50}(13), 1695--1717 (2014).

\bibitem{ChenShen2021}
Chen~S., Shen~J. Smooth inertial manifolds for neutral differential equations with small delays. \textit{J. Dyn. Diff. Equat.} (2021).

\bibitem{ChepyzhovKostiankoZelik2019}
Chepyzhov V. V., Kostianko A., Zelik S. Inertial manifolds for the hyperbolic relaxation of semilinear parabolic equations. \textit{Discrete \& Continuous Dynamical Systems - B}, \textbf{24}(3), 1115--1142 (2019).

\bibitem{Chicone2004}
Chicone~C. Inertial flows, slow flows, and combinatorial identities for delay equations. \textit{J. Dyn. Diff. Equat.}, \textbf{16}(3), 805--831 (2004).

\bibitem{Chicone2003}
Chicone~C. Inertial and slow manifolds for delay equations with small delays. \textit{J. Differ. Equations}, \textbf{190}(2), 364--406 (2003).

\bibitem{ChowLuSell1992}
Chow~S.-N., Lu~K., Sell~G.R. Smoothness of inertial manifolds. \textit{J. Math. Anal. Applic.}, \textbf{169}, 283--312 (1992).

\bibitem{Chueshov2015}
Chueshov~I.D. \textit{Dynamics of Quasi-stable Dissipative Systems}. Berlin: Springer (2015).

%\bibitem{ChueshovScheutzow2001}
%Chueshov~I.D., Scheutzow~M. Inertial manifolds and forms for stochastically perturbed retarded semilinear parabolic equations. \textit{J. Dyn. Differ. Equ.}, \textbf{13}(2), 355--380 (2001).

\bibitem{Driver1968SmallDelays}
Driver~R.D. On Ryabov's asymptotic characterization of the solutions of quasi-linear differential equations with small delays. \textit{SIAM Rev.}, \textbf{10}(3), 329--341 (1968).

\bibitem{EdenZelikKalantarov2013}
Eden~A., Zelik~S.V., Kalantarov~V.K. Counterexamples to regularity of Man\'{e} projections in the theory of attractors. \textit{Russian Math. Surveys}, \textbf{68}(2), 199--226 (2013).

%\bibitem{EngelNagel2000}
%Engel~K.-J., Nagel~R. \textit{One-Parameter Semigroups for Linear Evolution Equations}. Springer-Verlag (2000).

\bibitem{Fabbri2003FreqTh}
Fabbri~R., Johnson~R., N\'{u}\~{n}ez~C. On the Yakubovich Frequency Theorem for linear non-autonomous control processes. \textit{Discrete \& Continuous Dynamical Systems-A}, \textbf{9}(3), 677--704 (2003).

%\bibitem{FengWangWu2017}
%Feng~L., Wang~Yi and Wu J. Semiflows ``Monotone with Respect to High-Rank Cones'' on a Banach Space. \textit{SIAM J. Math. Anal.}, \textbf{49}(1), 142--161 (2017).

\bibitem{FoiasSellTemam1988}
Foias~C., Sell~G.R., Temam~R. Inertial manifolds for nonlinear evolutionary equations. \textit{J. Differ. Equations}, \textbf{73}(2), 309--353 (1988).

\bibitem{Gelig1978}
Gelig~A.Kh., Leonov~G.A., Yakubovich~V.A. \textit{Stability of Nonlinear Systems with Non-Unique Equilibrium State}. Nauka, Moscow (1978).

\bibitem{Hadamard1901}
Hadamard~J. Sur l'it\'{e}ration et les solutions asymptotiques des \'{e}quations diff\'{e}rentielles. \textit{Bull. Soc. Math. France}, \textbf{29}, 224--228 (1901)

\bibitem{Hajek1968}
H\'{a}jek~O. \textit{Dynamical Systems in the Plane}. Academic Press (1968).

\bibitem{Hale2000}
Hale~J.K. \textit{Asymptotic behavior of dissipative systems}. American Mathematical Soc. (2000).

\bibitem{Hale1977}
Hale~J.K. \textit{Theory of Functional Differential Equations}. Springer-Verlag (1977).

\bibitem{HartmanODE1992}
Hartman~P. \textit{Ordinary Differential Equations}, Birk-hauser, Boston (1982).

\bibitem{Henry1981}
Henry~D. \textit{Geometric Theory of Semilinear Parabolic Equations}. Springer-Verlag, 1981.

\bibitem{Hopf1948}
Hopf E. A mathematical example displaying features of turbulence. \textit{Comm. Pure Appl. Math.}, \textbf{1}(4), 303--322 (1948).

%\bibitem{Hirsch1988}
%Hirsch~M.W. Stability and convergence in strongly monotone dynamical systems, \textit{J. Reine Angew. Math.}, \textbf{383}, 1--53 (1988).

\bibitem{KalininReitmann2012}
Kalinin~Yu.N., Reitmann~V. Almost periodic solutions in control systems with monotone nonlinearities, \textit{Differ. Uravn. Protsessy Upravl.}, 4 (2012).

\bibitem{KokschSiegmund2002}
Koksch~N., Siegmund~S. Pullback attracting inertial manifolds for nonautonomous dynamical systems. \textit{J. Dyn. Differ. Equ.}, \textbf{14}(4), 889--941 (2002).

\bibitem{KostiankoZelikSA2020}
Kostianko~A., Li~X., Sun~C., Zelik~S. Inertial manifolds via spatial averaging revisited. \textit{SIAM J. Math. Anal.}, \textbf{54}(1), 268--305 (2022).

\bibitem{KostiankoZelik2019Kwak}
Kostianko~A., Zelik~S. Kwak transform and inertial manifolds revisited. \textit{J. Dyn. Differ. Equ.}, 1--21 (2021).

\bibitem{KostiankoSunZelik2021IMGinzburgLandau}
Kostianko A., Sun C., Zelik S. Inertial manifolds for 3D complex Ginzburg-Landau equations with periodic boundary conditions. arXiv preprint arXiv:2106.10538 (2021).

\bibitem{KostiankoZelik20181dII}
Kostianko~A. Zelik~S. Inertial manifolds for 1D reaction-diffusion-advection systems. Part II: periodic boundary conditions, \textit{Commun. Pure Appl. Anal.}, \textbf{17}(1), 285--317 (2018).

\bibitem{KostiankoZelik20171dI}
Kostianko~A. Zelik~S. Inertial manifolds for 1D reaction-diffusion-advection systems. Part I: Dirichlet and Neumann boundary conditions, \textit{Commun. Pure Appl. Anal.}, \textbf{16}(6), 2357--2376 (2017).

\bibitem{KostiankoZelik2015}
Kostianko~A., Zelik~S. Inertial manifolds for the 3D Cahn-Hilliard equations with periodic boundary conditions, \textit{Commun. Pure Appl. Anal.}, \textbf{14}(5), 2069--2094 (2015).

%\bibitem{Krein1971}
%Krein~S.G. \textit{Linear Differential Equations in Banach Space}, AMS, (1971).

\bibitem{KuzReit2020}
Kuznetsov~N.V., Reitmann~V. \textit{Attractor Dimension Estimates for Dynamical Systems: Theory and Computation}. Switzerland: Springer International Publishing AG (2020).

\bibitem{LeoKuz2013Hidden}
Leonov~G.A., Kuznetsov~N.V. Hidden attractors in dynamical systems. From hidden oscillations in Hilbert–Kolmogorov, Aizerman, and Kalman problems to hidden chaotic attractor in Chua circuits. \textit{J. Bifurcat. Chaos}, \textbf{23}(1) (2013).

\bibitem{LevitanZhikov1982}
Levitan~B.M., Zhikov~V.V. \textit{Almost Periodic Functions and Differential Equations}. CUP Archive (1982).

\bibitem{Likhtarnikov1977}
Likhtarnikov~A.L., Yakubovich~V.A. The frequency theorem for continuous one-parameter semigroups. \textit{Math. USSR-Izv.} \textbf{41}(4), 895--911 (1977), in Russian.

\bibitem{Likhtarnikov1976}
Likhtarnikov~A.L., Yakubovich~V.A. The frequency theorem for equations of evolutionary type. \textit{Sib. Math. J.}, \textbf{17}(5), 790--803 (1976).

%\bibitem{LouisWexler1991}
%Louis~J.-Cl., Wexler~D. The Hilbert space regulator problem and operator Riccati equation under stabilizability. \textit{Annales de la Soci\'{e}t\'{e} Scientifique de Bruxelles}, \textbf{105}(4), 137--165 (1991).

\bibitem{MalletParetSellShao1993}
Mallet-Paret~J., Sell~G.R., Shao~Z. Obstructions to the existence of normally hyperbolic inertial manifolds. \textit{Indiana University Mathematics Journal}, 1027--1055 (1993).

\bibitem{MalletParetSell1988IM}
Mallet-Paret~J., Sell~G.R. Inertial manifolds for reaction diffusion equations in higher space dimensions. \textit{J. Amer. Math. Soc.}, \textbf{1}(4), 805--866 (1988).

\bibitem{Mane1977}
Man\'{e} R. Reduction of semilinear parabolic equations to finite dimensional $C^{1}$ flows. \textit{Lecture Notes in Math. V. 597}, N. Y.: Springer-Verlag, 361--378 (1977)

\bibitem{Massera1950}
Massera~J.L., The existence of periodic solutions of systems of differential equations, \textit{Duke Math. J.}, \textbf{17}(4), 457--475 (1950).

\bibitem{Miclavcic1991}
Miklav\v{c}i\v{c}~M. A sharp condition for existence of an inertial manifold. \textit{J. Dyn. Differ. Equ.}, \textbf{3}(3), 437--456 (1991).

\bibitem{Pankov1990}
Pankov~A.A., \textit{Bounded and Almost Periodic Solutions of Nonlinear Operator Differential Equations}, Kluwer Academic Publishers, London (1990).

\bibitem{PavlovTributeDemidovich2004}
Pavlov~A., et al. Convergent dynamics, a tribute to Boris Pavlovich Demidovich. \textit{Systems \& Control Letters}, \textbf{52}(3-4), 257--261 (2004).

\bibitem{Pliss1966}
Pliss~V.A., \textit{Nonlocal Problems of the Theory of Oscillations}, Academic Press, New York (1966).

\bibitem{Popov1961}
Popov~V.M. On absolute stability of non-linear automatic
control systems. \textit{Avtomat. i Telemekh.}, \textbf{22}(8), 961--979 (1961), in Russian

\bibitem{Proskurnikov2015}
Proskurnikov~A.V. A new extension of the infinite-dimensional KYP lemma in the coercive case, \textit{IFAC-PapersOnLine}, \textbf{48}(1), 246--251 (2015).

\bibitem{Robinson1996AsympComl}
Robinson~J.C. The asymptotic completeness of inertial manifolds. \textit{Nonlinearity}, \textbf{9}(5), 1325--1340 (1996).

\bibitem{Romanov2016}
Romanov~A.V. On the hyperbolicity properties of inertial manifolds
of reaction-diffusion equations. \textit{Dynamics of PDE}, \textbf{13}(3), 263--272 (2016).

\bibitem{Romanov1994}
Romanov~A.V. Sharp estimates of the dimension of inertial manifolds for nonlinear parabolic equations. \textit{Izvestiya: Mathematics}, \textbf{43}(1), 31--47 (1994).

\bibitem{RosaTemam1996}
Rosa~R.,  Temam~R. Inertial manifolds and normal hyperbolicity. \textit{Acta Applicandae Mathematica}, \textbf{45}(1), 1--50 (1996).

\bibitem{Ryabov1967}
Ryabov~Yu.A. Asymptotic properties of solutions of weakly nonlinear systems with small delay. \textit{Trudy Sem. Teor. Differential. Uravnenii s Otklon. Argumentom Univ. Druzhby Narodov Patrisa Lumumby}, 5, 213--222 (1967), in Russian

%\bibitem{SackerSell1994}
%Sacker R. J., Sell G. R. Dichotomies for linear evolutionary equations in Banach spaces. \textit{J. Differ. Equations}, \textbf{113}(1), 17--67 (1994).

%\bibitem{Sanches2009}
%Sanchez L. A. Cones of rank 2 and the Poincar\'{e}–Bendixson property for a new class of monotone systems. \textit{J. Differ. Equations}, \textbf{246}(5), 1978--1990 (2009).

%\bibitem{Schmalfuss2005}
%Schmalfuss~B. Inertial manifolds for random differential equations. \textit{Probability and Partial Differential Equations in Modern Applied Mathematics}. Springer, New York, 213--236 (2005).

%\bibitem{SellYou2002}
%Sell G. R., You Y. \textit{Dynamics of Evolutionary Equations}, Springer Science \& Business Media, (2002).

\bibitem{SellYou1992}
Sell~G.R., You~Y. Inertial manifolds: the non-self-adjoint case. \textit{J. Differ. Equations}, \textbf{96}(2), 203--255 (1992).

\bibitem{ShenYi1998AADyn}
Shen~W., Yi~Y. \textit{Almost Automorphic and Almost Periodic Dynamics in Skew-Product Semiflows}. American Mathematical Soc. (1998).

%\bibitem{SmithHL2017}
%Smith~H.L. Monotone dynamical systems: Reflections on new advances \& applications, \textit{Discrete \& Continuous Dynamical Systems - A}, \textbf{37}(1), 485--504 (2017).

\bibitem{Smith1994PB2}
Smith~R.A. Orbital stability and inertial manifolds for certain reaction diffusion systems. \textit{P. Lond. Math. Soc.}, \textbf{3}(1), 91--120 (1994).

\bibitem{Smith1994PB1}
Smith~R.A. Poincar\'{e}–Bendixson theory for certain reaction–diffusion boundary-value problems. \textit{Proc. Roy. Soc. Edinburgh Sect. A}, \textbf{124}(1), 33--69 (1994).

\bibitem{Smith1992}
Smith~R.A. Poincar\'{e}-Bendixson theory for certain retarded functional-differential equations. \textit{Differ. Integral Equ.}, \textbf{5}(1), 213--240 (1992).

\bibitem{Smith1990ConvDelay}
Smith~R.A., Convergence theorems for periodic retarded functional differential equations, \textit{Proc. Lond. Math. Soc.}, \textbf{3}(3), 581--608 (1990).

\bibitem{Smith1987OrbStab}
Smith~R.A. Orbital stability for ordinary differential equations. \textit{J. Differ. Equations}, \textbf{69}(2), 265--287 (1987).

\bibitem{Smith1986Massera}
Smith~R.A. Massera's convergence theorem for periodic nonlinear differential equations. \textit{J. Math. Anal. Appl.}, \textbf{120}(2), 679--708 (1986).

\bibitem{Smith1984PBindex}
Smith~R.A. Poincar\'{e} index theorem concerning periodic orbits of differential equations. \textit{Proc. Lond. Math. Soc.}, \textbf{3}(2), 341--362 (1984).

\bibitem{Smith1984IsolatedOrbits}
Smith~R.A. Certain differential equations have only isolated periodic orbits. \textit{Ann. Mat. Pura Appl.}, \textbf{137}(1), 217--244 (1984).

\bibitem{Smith1981IndexTh1}
Smith~R.A. An index theorem and Bendixson's negative criterion for certain differential equations of higher dimension. \textit{Proc. R. Soc. Edinb. A: Math.}, \textbf{91}(1-2), 63--77 (1981).

\bibitem{Smith1980}
Smith~R.A. Existence of periodic orbits of autonomous retarded functional differential equations. \textit{Math. Proc. Cambridge}, \textbf{88}(1) (1980).

%\bibitem{Terescak1994}
%Tere\v{s}\v{c}\'{a}k I. Dynamical systems with discrete Lyapunov functionals. Ph.D. thesis, Comenius University, Bratislava (1994).

\bibitem{Yakubovich1988Periodic}
Yakubovich V. A. Dichotomy and absolute stability of nonlinear systems with periodically nonstationary linear part. \textit{Syst. Control. Lett.}, \textbf{11}(3), 221--228 (1988).

\bibitem{Yakubovich1964}
Yakubovich V.A. Method of matrix unequalities in theory of nonlinear control systems stability. I. Forced oscillations absolute stability. \textit{Avtomat. i Telemekh}, \textbf{25}(7), 1017--1029 (1964).

\bibitem{Zeidler1988}
Zeidler E. \textit{Nonlinear Functional Analysis and Its Applications: IV: Applications to Mathematical Physics}. Springer-Verlag, New York (1988).

\bibitem{Zelik2014}
Zelik~S. Inertial manifolds and finite-dimensional reduction for dissipative PDEs. \textit{P. Roy. Soc. Edinb. A}, \textbf{144}(6), 1245--1327 (2014).

\end{thebibliography}

\end{document}